\documentclass[11pt]{amsart}
\usepackage{amsmath}
\usepackage{amsfonts}
\usepackage{amssymb}
\usepackage[all]{xy}           %xypic macro for latex2.09
\usepackage{xcolor}
\usepackage{bbding}
\usepackage{txfonts}
\usepackage{amscd}

\usepackage[shortlabels]{enumitem}
\usepackage{ifpdf}
\ifpdf
  \usepackage[colorlinks,final,%backref=page,
  hyperindex]{hyperref}
\else
  \usepackage[colorlinks,final,%backref=page,
  hyperindex]{hyperref}
\fi
\usepackage{tikz}
\usepackage[active]{srcltx}

\makeatletter

\newtheorem{thm}{Theorem}[section]
\newtheorem{lem}[thm]{Lemma}
\newtheorem{cor}[thm]{Corollary}
\newtheorem{pro}[thm]{Proposition}
\newtheorem{ex}[thm]{Example}

\newtheorem{defi}[thm]{Definition}

\numberwithin{equation}{section}

\newcommand{\kt}{\mathfrak{t}}
\newcommand{\g}{\mathfrak{g}}
\newcommand{\gl}{\mathrm{End}}
\newcommand{\kl}{\mathfrak{l}}
\newcommand{\kr}{\mathfrak{r}}

\newcommand{\bz}{\mathbb{Z}}

\newcommand{\fl}{\mathbf{l}}
\newcommand{\fr}{\mathbf{r}}

\newcommand{\id}{\mathrm{id}}

\newcommand{\ad}{\mathrm{ad}}

\newcommand{\ZYBE}{\mathrm{ZYBE}}
\newcommand{\LYBE}{\mathrm{LYBE}}
\newcommand{\CYBE}{\mathrm{CYBE}}
\newcommand{\AYBE}{\mathrm{AYBE}}
\newcommand{\DAYBE}{\mathrm{DAYBE}}

\begin{document}

\title[Some constructions of ASI bialgebras and their applications to Lie bialgebras]
{Some constructions of antisymmetric infinitesimal bialgebras and their applications
to Lie bialgebras}

\author{Bo Hou}
\address{School of Mathematics and Statistics, Henan University, Kaifeng 475004,
China}
\email{houbo@henu.edu.cn, bohou1981@163.com}

%\author{Ru Li}
%\address{School of Mathematics and Statistics, Henan University, Kaifeng 475004,
%China}
%\email{13037698973@163.com}
\vspace{-5mm}

%\date{\today}

\begin{abstract}
In this paper, we mainly provide some methods for constructing antisymmetric
infinitesimal (ASI) bialgebras and Lie bialgebras using Zinbiel bialgebras and
diassociative bialgebras. We first show that there is an ASI bialgebra structure
on the tensor product of a diassociative bialgebra and a quadratic Zinbiel algebra,
while there is an infinite-dimensional ASI bialgebra structure on the tensor product
of a Zinbiel bialgebra and a quadratic $\bz$-graded diassociative algebra. For a
special quadratic $\bz$-graded Leibniz algebra, the property that its tensor product
with a Zinbiel bialgebra forms an ASI bialgebra characterizes the Zinbiel bialgebra.
By examining the relationship between solutions of the (classical) Yang-Baxter
equation in a Zinbiel algebra and the induced associative algebra, we prove that
the induced ASI bialgebra is quasi-triangular (resp. triangular, factorizable)
whenever the original Zinbiel bialgebra is quasi-triangular (resp. triangular,
factorizable). These conclusions enable us to provide a method for constructing Lie
bialgebras from Zinbiel bialgebras and two approaches for constructing Lie bialgebras
from diassociative bialgebras. We also provide specific descriptions of the
connections between the solutions of the Yang-Baxter equations and the
connections between the $O$-operators corresponding to these constructions.
\end{abstract}

\keywords{Lie bialgebra, antisymmetric infinitesimal bialgebra, Leibniz bialgebra,
diassociative bialgebra, Zinbiel bialgebra, classical Yang-Baxter equation,
$\mathcal{O}$-operator.}
\makeatletter
\@namedef{subjclassname@2020}{\textup{2020} Mathematics Subject Classification}
\makeatother
\subjclass[2020]{
17A30, %Nonassociative algebras satisfying other identities
17D25, %Lie-admissible algebras
%17A36, %Automorphisms, derivations, other operators (nonassociative rings and algebras)
%17A40, %Ternary compositions
%17B10, %Representations, algebraic theory
17B38, %Yang-Baxter equations and Rota-Baxter operators
%17B40, %Automorphisms,derivations,other operators
%17B60, %Lie (super)algebras associated to other structures (associative, Jordan, ect.)
17B62. %Lie bialgebras; Lie coalgebras
%17B63, %Poisson algebras
}

\maketitle

\vspace{-10mm}
\tableofcontents %Ŀ¼

%\setcounter{section}{0}

%\allowdisplaybreaks

%\end{document}
\vspace{-10mm}
%%%%%%%%%%%%%%%%%%%%%%%%%%%%%%%%%%%%%%%%%%%%%%%%%%%%%%%%%%%%%%%%%%%%%%%%%%%%%%%%%
%    section  1   Introduction
%%%%%%%%%%%%%%%%%%%%%%%%%%%%%%%%%%%%%%%%%%%%%%%%%%%%%%%%%%%%%%%%%%%%%%%%%%%%%%%%%%%%%%
\section{Introduction}\label{sec:intr}
The aim of this paper is to provide some construction methods for antisymmetric
infinitesimal bialgebras and Lie bialgebras by discussing the relationship between Lie
bialgebras, antisymmetric infinitesimal bialgebras, diassociative bialgebra, Zinbiel
bialgebras and Leibniz bialgebra.

%%%%%%%%%%%%%%%%%%%%%%%%%%%%%%%%%%%%%%%%%%%%%%%%%%%%%%%%%%%%%%%%%%%%%%%%%%%%%%%%
\smallskip\noindent
1.1. {\bf Bialgebra theory.}
Lie bialgebra was first introduced by Drinfeld in the context of the theory of
Yang-Baxter equations, and are closely related to quantum groups as deformations of
universal enveloping algebras \cite{Dri,CP}. Etingof and Kazhdan proved that every Lie
bialgebra has a corresponding quantized universal enveloping algebra, that is, there
exists a quantization for every Lie bialgebra \cite{EK}.
Later Lie bialgebras have found applications in many other areas of mathematics and
mathematical physics such as the theory of Hopf algebra deformations of universal
enveloping algebras \cite{ES}, string topology and symplectic field theory \cite{CFL},
Goldman-Turaev theory of free loops in Riemann surfaces with boundaries \cite{Gol},
and so on. The associative analog of Lie bialgebras was quickly proposed and developed
rapidly. Infinitesimal bialgebras first appeared in the work of Joni and Rota to give
an algebraic framework for the calculus of divided differences \cite{JR}, and studied
in \cite{Agu}. The antisymmetric version of infinitesimal bialgebras was introduced
in \cite{Zhe} by using the name associative $\mathrm{D}$-bialgebra. Later this structure
was studied systematically by Bai under the name antisymmetric infinitesimal
bialgebra (ASI bialgebra) \cite{Bai}. %The ASI bialgebra can be characterized
%by the well-known matched pair of associative algebras and a double construction of
%a Frobenius algebra.

Generally speaking, a bialgebra structure is a vector space equipped with both
an algebra structure and a coalgebra structure satisfying certain compatibility conditions.
In recent years, the bialgebra theories of various algebra structures have been extensively
developed, such as left-symmetric bialgebras (also called pre-Lie bialgebras)\cite{Bai1},
dendriform $\mathrm{D}$-bialgebras \cite{Bai}, Jordan bialgebras \cite{Zhe}, perm bialgebras
\cite{Hou,LZB}, Leibniz bialgebras \cite{TS}, Novikov bialgebras \cite{HBG},
Jacobi-Jordan bialgebras \cite{BCHM}, Zinbiel bialgebras \cite{Wan},
diassociative bialgebras (also called associative bi-dialgebras) \cite{HLLZ,Lu}, and so on.
As an important class of quasi-triangular Lie bialgebras, factorizable Lie bialgebras
are used to connect classical $r$-matrices with certain factorization problems, and have
various applications in integrable systems \cite{BGN,RS}. Recently, the factorizable Lie
bialgebras have received further research \cite{LS}, and the factorizable ASI
bialgebras \cite{SW}, factorizable Leibniz bialgebras \cite{BLST},
factorizable Zinbiel bialgebras \cite{Wan}, factorizable Novikov bialgebras \cite{CH},
factorizable diassociative bialgebras \cite{Lu} have been studied.

In recent years, the relationship between different types of bialgebra structures
has received a lot of attention. Since the operad of (left) Novikov algebras
and the operad of right Novikov algebras are Koszul dual, in \cite{HBG}, Hong, Bai
and Guo have proposed a method for constructing infinite-dimensional Lie bialgebras
using the affinization of Novikov bialgebras. Similarly, Lin, Zhou and Bai have
constructed infinite-dimensional Lie bialgebras by using the pre-Lie bialgebras
and perm bialgebras, respectively \cite{LZB}. Hou and Lin have constructed Lie bialgebras
by using the Leibniz bialgebras and Zinbiel bialgebras \cite{HL}. In \cite{Hou1}, we have
provided a method for constructing infinite-dimensional ASI bialgebras using the
affinization of dendriform $\mathrm{D}$-bialgebras, and given some methods for
constructing Lie bialgebras by discussing in detail the connections between
dendriform $D$-bialgebras, pre-Lie bialgebras, ASI bialgebras and Lie bialgebras.
Recently, we have discussed the close connections between Lie bialgebras, Leibniz
bialgebras, diassociative bialgebras and ASI bialgebras in \cite{HL}. In this paper,
we further investigate the connections between these bialgebra structures and provide
some methods for constructing ASI bialgebras and Lie bialgebras.

%%%%%%%%%%%%%%%%%%%%%%%%%%%%%%%%%%%%%%%%%%%%%%%%%%%%%%%%%%%%%%%%%%%%%%%%%%%%%%%%
\smallskip\noindent
1.2. {\bf Antisymmetric infinitesimal bialgebras via diassociative bialgebras and
Zinbiel bialgebras.} First, by showing that the tensor product of a diassociative
(co)algebra and a Zinbiel (co)algebra admits an (co)associative (co)algebra structure,
we are able to construct ASI bialgebras on the tensor product of diassociative
bialgebras and quadratic Zinbiel algebras. Moreover, by establishing the relationship
between solutions of the (classical) associative Yang-Baxter equation ($\AYBE$) in
the induced associative algebra and solutions of the diassociative Yang-Baxter equation
($\DAYBE$) in the original diassociative algebra, we can construct triangular Lie
bialgebras from the corresponding triangular diassociative bialgebras.

\smallskip\noindent
{\bf Theorem I } ( Theorems \ref{thm:dias-asbia} and \ref{thm:indu-triASI} )
{\it Let $(D, \dashv, \vdash, \theta_{\dashv}, \theta_{\vdash})$ be a diassociative
bialgebra and $(B, \diamond, \varpi)$ be a quadratic Zinbiel algebra.
Define a linear map $\Delta: D\otimes B\rightarrow(D\otimes B)\otimes(D\otimes B)$ by
$$
\Delta(d\otimes b)=\theta_{\vdash}(d)\bullet\nu_{\varpi}(b)
+\theta_{\dashv}(d)\bullet\tau(\nu_{\varpi}(b))
$$
for any $d\in D$ and $b\in B$, where $\tau: B\otimes B\rightarrow B\otimes B$ is given by
$\tau(b_{1}\otimes b_{2})=b_{2}\otimes b_{1}$ for any $b_{1}, b_{2}\in B$.
Then $(D\otimes B, \cdot, \Delta)$ is an ASI bialgebra.

In particular, the induced ASI bialgebra $(D\otimes B, \cdot, \Delta)$ is triangular
if $(D, \dashv, \vdash, \theta_{\dashv}, \theta_{\vdash})$ is triangular.}

\smallskip
A crucial construction of infinite-dimensional Lie algebras is known as the process
of affinization. Roughly speaking, the affinization of a given algebra structure consists
of defining an infinite-dimensional algebra structure and then obtaining another algebra
structure on the tensor product of the original algebra with the infinite-dimensional
one, which in turn can resolve the original algebra structure. Here, we use the
affinization of Zinbiel bialgebras to construct infinite-dimensional ASI bialgebras.
There is a $\bz$-graded quadratic diassociative algebra constructed by the Laurent
polynomials in \cite{Lu}. We obtain an affinization characterization
of Zinbiel algebras by this $\bz$-graded diassociative algebra, and show that there
exists a completed coassociative coalgebra structure on the tensor product of a Zinbiel
coalgebra and a completed diassociative coalgebra, which could give a characterization
of the Zinbiel coalgebra by its affinization with a completed diassociative coalgebra.
Therefore, we obtain that there is a natural completed ASI bialgebra structure on the
tensor product of a Zinbiel bialgebra and a quadratic $\bz$-graded diassociative algebra.
If the quadratic $\bz$-graded diassociative algebra constructed by the Laurent
polynomials, an affinization characterization of a Leibniz bialgebra is given.

\smallskip\noindent
{\bf Theorem II } ( Theorem \ref{thm:den-perm-ass} )
{\it Let $(B, \diamond, \nu)$ be a finite-dimensional Zinbiel bialgebra, $(D=\oplus_{i\in\bz}
D_{i}, \dashv, \vdash, \omega)$ be a quadratic $\bz$-graded diassociative algebra and
$(D\otimes B, \cdot)$ be the induced $\bz$-graded associative algebra from $(B, \diamond)$
by $(D=\oplus_{i\in\bz}D_{i}, \dashv, \vdash)$. Define a linear
map $\Delta: D\otimes B\rightarrow(D\otimes B)\,\hat{\otimes}\,(D\otimes B)$ by
$$
\Delta(d\otimes b)=\theta_{\vdash,\omega}(d)\bullet\nu(b)
+\theta_{\dashv,\omega}(d)\bullet\tau(\nu(b))
$$
for any $d\in D$ and $b\in B$. Then $(D\otimes B, \cdot, \Delta)$ is a completed
ASI bialgebra.

Moreover, if $(D=\oplus_{i\in\bz}D_{i}, \dashv, \vdash, \omega)$ is the quadratic
$\bz$-graded diassociative algebra given in Example \ref{ex:grdiaco}, then $(D\otimes B,
\cdot, \Delta)$ is a completed ASI bialgebra if and only if $(B, \diamond, \nu)$ is
a Zinbiel bialgebra.}

\smallskip
In particular, if the quadratic $\bz$-graded diassociative algebra $(D=\oplus_{i\in\bz}
D_{i}, \dashv, \vdash, \omega)$ is a finite-dimensional quadratic diassociative algebra,
we get the induced ASI bialgebra $(D\otimes B, \cdot, \Delta)$ is is a quasi-triangular
(resp. triangular, factorizable) if $(B, \diamond, \nu)$ is quasi-triangular (resp.
triangular, factorizable).

%%%%%%%%%%%%%%%%%%%%%%%%%%%%%%%%%%%%%%%%%%%%%%%%%%%%%%%%%%%%%%%%%%%%%%%%%%%%%%%%
\smallskip\noindent
1.3. {\bf Some constructions of Lie bialgebras.} In \cite{Bai}, Bai have shown that
there is naturally a Lie bialgebra structure on every ASI bialgebra. Based on the
construction of ASI bialgebras we have provided above, we present three approaches
to constructing Lie bialgebras. First, starting from a Zinbiel bialgebra
$(B, \diamond, \nu)$ and using its tensor product with a quadratic diassociative algebra
$(D, \dashv, \vdash, \omega)$, we can obtain an ASI bialgebra, which naturally leads
to a Lie bialgebra:
$$
\xymatrix@C=2cm@R=0.6cm{
\txt{$(B, \diamond, \nu)$ \\
{\tiny a diassociative bialgebra}}
\ar[r]^-{\mbox{\tiny Cor. \ref{cor:indassbia}}}
&\txt{$(D\otimes B, \cdot, \Delta)$ \\ {\tiny an ASI bialgebra}}
\ar[r]^-{\mbox{\tiny Pro. \ref{pro:ASI-Liebia}}}
& \txt{$(D\otimes B, [-,-], \delta)$ \\ {\tiny a Lie bialgebra}}}
$$
This construction method retains many properties of the original Zinbiel bialgebra.
For instance, we can prove that the constructed Lie bialgebra is quasi-triangular
(resp. triangular, factorizable) if the original Zinbiel bialgebra is
quasi-triangular (resp. triangular, factorizable). %This method also provides a way
%to construct solutions of $\CYBE$ in a Lie algebra.

Second, starting from a diassociative bialgebra $(D, \dashv, \vdash, \theta_{\dashv},
\theta_{\dashv})$ and using its tensor product with a quadratic Zinbiel algebra
$(B, \diamond, \varpi)$, we can obtain an ASI bialgebra, which naturally leads
to a Lie bialgebra:
$$
\xymatrix@C=2cm@R=0.6cm{
\txt{$(D, \dashv, \vdash, \theta_{\dashv}, \theta_{\vdash})$ \\
{\tiny a diassociative bialgebra}}
\ar[r]^-{\mbox{\tiny Thm. \ref{thm:dias-asbia}}}
&\txt{$(D\otimes B, \cdot, \Delta)$ \\ {\tiny an ASI bialgebra}}
\ar[r]^-{\mbox{\tiny Pro. \ref{pro:ASI-Liebia}}}
& \txt{$(D\otimes B, [-,-], \delta)$ \\ {\tiny a Lie bialgebra}}}
$$
For this structure, we can obtain that the induced Lie bialgebra is triangular if the
original diassociative bialgebra is triangular.
Recently, in both \cite{HLLZ} and \cite{Lu}, it was obtained that there is naturally a
Leibniz bialgebra structure on a diassociative bialgebra. This result provides us
with a third method for constructing Lie bialgebras. Given a diassociative bialgebra
$(D, \dashv, \vdash, \theta_{\dashv}, \theta_{\vdash})$, we obtain a Leibniz bialgebra
$(D, \ast, \vartheta)$. By using the tensor product of this Leibniz bialgebra and a
quadratic Zinbiel algebra, we can obtain a Lie bialgebra:
$$
\xymatrix@C=2cm@R=0.6cm{
\txt{$(D, \dashv, \vdash, \theta_{\dashv}, \theta_{\vdash})$ \\
{\tiny a diassociative bialgebra}}
\ar[r]^-{\mbox{\tiny Pro. \ref{pro:DASI-Leibbia}}}
&\txt{$(D, \ast, \vartheta)$ \\ {\tiny a Leibniz bialgebra}}
\ar[r]^-{\mbox{\tiny Thm. \ref{thm:liebia-LZ}}}
& \txt{$(D\otimes B, [-,-], \delta)$ \\ {\tiny a Lie bialgebra}}}
$$
This construction still retains the triangulation of bialgebras. It is worth noting
that the Lie bialgebras obtained from these two constructions starting from a
diassociative bialgebra are consistent, that is, there is a commutative diagram:
$$
\xymatrix@C=2cm@R=0.7cm{
\txt{$(D, \dashv, \vdash, \theta_{\dashv}, \theta_{\vdash})$ \\
{\tiny a diassociative bialgebra}}
\ar[d]_{{\rm Pro.}~\ref{pro:DASI-Leibbia}} \ar[r]^{{\rm Thm.}~\ref{thm:dias-asbia}}
&\txt{$(D\otimes B, \cdot, \Delta)$\\  {\tiny an ASI bialgebra}}
\ar[d]^{{\rm Pro.}~\ref{pro:ASI-Liebia}} \\
\txt{$(D, \ast, \vartheta)$ \\ {\tiny a Leibniz bialgebra}}
\ar[r]^{{\rm Thm.}~\ref{thm:liebia-LZ}\quad}
& \txt{$(D\otimes B, [-,-], \delta)$ \\ {\tiny a Lie bialgebra}}}
$$
Moreover, this commutative diagram also holds for triangular diassociative bialgebras,
triangular ASI bialgebras, triangular Leibniz bialgebras and triangular Lie bialgebra.

In \cite{Kup}, Kupershmidt found that the $\CYBE$ in tensor form on Lie algebras can be
converted into an $\mathcal{O}$-operator associated to the coregular representation.
That is, there is a one-to-one correspondence between skew-symmetric solutions of the
$\CYBE$ and a class of $\mathcal{O}$-operators. Moreover, triangular Lie bialgebras
are closely related to the skew-symmetric solutions of the $\CYBE$: a skew-symmetric
solutions of the $\CYBE$ gives rise to a natural Lie algebra structure to the dual
space of a Lie algebra, and the original Lie algebra becomes a triangular Lie bialgebra.
And these conclusions have been proven to be valid for algebraic structures such as
associative algebras, diassociative algebras, Leibniz algebras, Zinbiel algebras, and so on.
In this paper, by discussing the connection between the solutions of the $\DAYBE$ in a
diassociative algebra $(D, \dashv, \vdash)$ and the solutions of the (calssical) Leibniz
Yang-Baxter equation in the induced Leibniz algebra $(D, \ast)$, the solutions of the
$\CYBE$ in the induced Lie algebra $(D\otimes B, [-,-])$, as well as the solutions of
the $\AYBE$ in the related associative algebra $(D\otimes B, \cdot)$, we present the
close connections among triangular diassociative bialgebras, triangular Leibniz bialgebras,
triangular ASI bialgebras and triangular Lie bialgebras, as well as the connections among
$\mathcal{O}$-operators on these algebra structures. These results, when put together,
yield the following commutative diagram:

{\small
\begin{displaymath}
\xymatrix@R=0.4cm@C=-0.3cm{
&\txt{{\small $(D, \dashv, \vdash, \theta_{\dashv,r}, \theta_{\vdash,r})$ }\\
{\tiny a triangular diassociative bialgebra}}
\ar@{->}[rr]|-{\txt{\tiny\textcolor{red}{Thm.}~\ref{thm:indu-triASI}}}
\ar@{->}[ld]|-{\txt{\tiny\textcolor{red}{Pro.}~\ref{pro:qtdiAss-qtLeib}}}
\ar@{<.}[dd]|-(0.8){\txt{\tiny\textcolor{red}{Pro.}~\ref{pro:tri-di}}}&
&\txt{{\small $(D\otimes B, \cdot, \Delta_{\widetilde{r}})$} \\
{\tiny a triangular ASI bialgebra}}
\ar@{->}[ld]|-{\txt{\tiny\textcolor{red}{Pro.}~\ref{pro:qtAss-qtLie}}}
\ar@{<-}[dd]|-{\txt{\tiny\textcolor{red}{Pro.}~\ref{pro:quasass-bia}}}&\\
%2
\txt{{\small $(D, \ast, \vartheta_{r})$} \\ {\tiny a triangular Leibniz bialgebra}}
\ar@{<-}[dd]|-{\txt{\tiny\textcolor{red}{Pro.}~\ref{pro:sLib-bia}}}
\ar@{->}[rr]|-(0.75){\txt{\tiny\textcolor{red}{Thm.}~\ref{pro:sLeib-sLie}}}&
&\txt{{\small $(D\otimes B, [-,-], \delta_{\widetilde{r}})$}\\
{\tiny a triangular Lie bialgebra}}
\ar@{<-}[dd]|-(0.7){\txt{\tiny\textcolor{red}{Pro.}~\ref{pro:splie-bia}}} \\
%3
&\txt{{\small\bf $r$}\\ {\tiny\bf a symmetric solution} \\
{\tiny\bf of the $\DAYBE$ in $(D, \dashv, \vdash)$}}
\ar@{.>}[dd]|-(0.75){\txt{\tiny\textcolor{red}{Pro.}~\ref{pro:o-dia}}}
\ar@{.>}[rr]|-(0.2){\rm\textcolor{red}{Pro.}~\ref{pro:DAYBE-AYBE}}&
&\txt{{\small\bf $\widetilde{r}$} \\  {\tiny\bf a skew-symmetric solution} \\
{\tiny\bf of the $\AYBE$ in $(D\otimes B, \cdot)$}}
\ar@{->}[dd]|-{\txt{\tiny\textcolor{red}{Pro.}~\ref{pro:o-ass}}}\\
%4
\txt{{\small\bf $r$}\\ {\tiny\bf a symmetric solution} \\
{\tiny\bf of the $\LYBE$ in $(D, \ast)$}}
\ar@{->}[dd]|-{\txt{\tiny\textcolor{red}{Pro.}~\ref{pro:o-leib}}}
\ar@{<.}[ru]|-{\txt{\tiny\textcolor{red}{Pro.}~\ref{pro:diass-Leib-YBE}}}
\ar@{->}[rr]|-(0.2){\txt{\tiny\textcolor{red}{Pro.}~\ref{pro:LYBE-CYBE}}}&
&\txt{{\small\bf $\widetilde{r}$}\\ {\tiny\bf a skew-symmetric solution} \\
{\tiny\bf of the $\CYBE$ in $(D\otimes B, [-,-])$}}
\ar@{<-}[ru]|-{\txt{\tiny\textcolor{red}{Pro.}~\ref{pro:ass-Lie-YBE}}}
\ar@{->}[dd]|-(0.75){\txt{\tiny\textcolor{red}{Pro.}~\ref{pro:o-lie}}}\\
%5
&\txt{{\small $r^{\sharp}$}\\{\tiny an $\mathcal{O}$-operator of $(D, \dashv, \vdash)$} \\
{\tiny associated to coregular bimodule}}
\ar@{.>}[ld]|-{\txt{\tiny\textcolor{red}{Pro.}~\ref{pro:o-diass-leib}}} &
&\txt{{\small $\widetilde{r}^{\sharp}=r^{\sharp}\otimes\kappa^{\sharp}$}\\
{\tiny an $\mathcal{O}$-operator of $(D\otimes B, \cdot)$} \\
{\tiny associated to coregular bimodule}}
\ar@{<.}[ll]|-(0.75){\txt{\tiny\textcolor{red}{$-\otimes\kappa^{\sharp}$}}}\\
%6
\txt{{\small $r^{\sharp}$}\\{\tiny an $\mathcal{O}$-operator of $(D, \ast)$} \\
{\tiny associated to coregular representation}}&
&\txt{{\small $\widetilde{r}^{\sharp}=r^{\sharp}\otimes\kappa^{\sharp}$}\\
{\tiny an $\mathcal{O}$-operator of $(D\otimes B, [-,-])$} \\
{\tiny associated to coregular representation}}
\ar@{<-}[ru]|-{\txt{\tiny\textcolor{red}{Pro.}~\ref{pro:o-ass-lie}}}
\ar@{<-}[ll]|-{\txt{\tiny\textcolor{red}{$-\otimes\kappa^{\sharp}$}}}&}
\end{displaymath}
}

%%%%%%%%%%%%%%%%%%%%%%%%%%%%%%%%%%%%%%%%%%%%%%%%%%%%%%%%%%%%%%%%%%%%%%%%%%%%%%%%
\smallskip\noindent
1.3. {\bf Outline of the paper.}
This paper is organized as follows. In Section \ref{sec:Prelim} we recall the notions
of diassociative algebras and bimodules over diassociative algebras. We show in
Proposition \ref{pro:comm-diag} that there is a commutative diagram of diassociative
algebras, the induced associative algebras (by the tensor product with Zinbiel algebra),
the induced Leibniz algebras and Lie algebras. In Section \ref{sec:ASI} we analyze
the Lie bialgebra structure on the tensor product of a diassociative bialgebra and
a quadratic Zinbiel algebra, and provide in Theorem \ref{thm:indu-triASI} that the Lie
bialgebra structure is triangular if the original diassociative bialgebra is triangular.
In Section \ref{sec:zinb-ASI} we show that the tensor product of a finite-dimensional
Zinbiel bialgebra and a quadratic $\bz$-graded diassociative algebra can be naturally
endowed with a completed ASI bialgebra. The converse of this result also holds when the
quadratic $\bz$-graded diassociative algebra is special (see Theorem \ref{thm:den-perm-ass}).
In particular, if the quadratic $\bz$-graded diassociative algebra is a finite-dimensional
quadratic diassociative algebra, we get in Theorem \ref{thm:indu-qutriass} that the
induced ASI bialgebra is quasi-triangular (resp. triangular, factorizable) if the original
Zinbiel bialgebra is quasi-triangular (resp. triangular, factorizable).
In Section \ref{sec:diass-lie} we present a method for constructing Lie bialgebras
from Zinbiel bialgebras and two approaches for constructing Lie bialgebras from
diassociative bialgebras. The two approaches for constructing Lie bialgebras from
diassociative bialgebras are essentially consistent, and by discussing the relationship
between the solutions of (classical) Yang-Baxter equation in these algebra structures,
the three-dimensional commutative diagram above is presented.

Throughout this paper, we fix $\Bbbk$ as a field of characteristic zero.
All the vector spaces, algebras are over $\Bbbk$ and are finite-dimensional
unless otherwise specified, and all tensor products are also over $\Bbbk$.
We denote the identity map by $\id$. For any finite-dimensional vector spaces $A$
and $V$, we denote $V^{\ast}$ the dual space of $V$, and fix the following notations:
\begin{enumerate}
\item[$(i)$] Let $f: V\rightarrow V$ be a linear map. Define a linear map $f^{\ast}:
      V^{\ast}\rightarrow V^{\ast}$ by
      $\langle f^{\ast}(\xi),\; v\rangle=\langle\xi,\; f(v)\rangle$,
      for any $v\in V$ and $\xi\in V^{\ast}$.
\item[$(ii)$] Let $\mu: A\rightarrow\gl(V)$ be a linear map. Define a linear
      map $\mu^{\ast}: A\rightarrow\gl(V^{\ast})$ by
      $\langle\mu^{\ast}(a)(\xi),\; v\rangle=-\langle\xi,\; \mu(a)(v)\rangle$,
      for any $a\in A$, $v\in V$ and $\xi\in V^{\ast}$.
\item[$(iii)$] For any $r\in A\otimes A$, we define a linear map
      $r^{\sharp}: A^{\ast}\rightarrow A$ by
      $\langle\xi_{2},\; r^{\sharp}(\xi_{1})\rangle=\langle\xi_{1}\otimes\xi_{2},\;
      r\rangle$, for any $\xi_{1}, \xi_{2}\in A^{\ast}$.
\end{enumerate}

%%%%%%%%%%%%%%%%%%%%%%%%%%%%%%%%%%%%%%%%%%%%%%%%%%%%%%%%%%%%%%%%%%%%%%%%%%%%%%%%%
%    section  2   Preliminaries
%%%%%%%%%%%%%%%%%%%%%%%%%%%%%%%%%%%%%%%%%%%%%%%%%%%%%%%%%%%%%%%%%%%%%%%%%%%%%%%%%%%%%%
\section{Preliminaries on diassociative algebras and related algebra structures}
\label{sec:Prelim}
In this section, we recall the notions of diassociative algebras and bimodules over a
diassociative algebra. The definitions of some related algebra structures are given,
such as associative algebras, Leibniz algebras, Zinbiel algebras and Lie algebras.
Recall that an {\bf associative algebra} $(A, \cdot)$ is a vector space $A$ equipped with
a binary operation $\cdot: A\otimes A\rightarrow A$ such that for any $a_{1}, a_{2}, a_{3}
\in A$,
$$
a_{1}\cdot(a_{2}\cdot a_{3})=(a_{1}\cdot a_{2})\cdot a_{3}.
$$
Diassociative algebra (also called associative dialgebra) is a generalization of
associative algebra, which was first introduced by Loday in the early 1990s in connection
with  periodicity phenomena in algebraic $K$-theory.

\begin{defi}\label{de:dialg}
A {\bf diassociative algebra} $(D, \dashv, \vdash)$ is a vector space $D$ equipped with
two bilinear maps called respectively left product and right product: $\dashv, \vdash:
D\otimes D\rightarrow D$, such that $(D, \dashv)$ and $(D, \vdash)$
are associative algebras and satisfying the following axioms:
$$
(d_{1}\dashv d_{2})\dashv d_{3}=d_{1}\dashv(d_{2}\vdash d_{3}),\qquad
(d_{1}\vdash d_{2})\dashv d_{3}=d_{1}\vdash(d_{2}\dashv d_{3}),\qquad
(d_{1}\dashv d_{2})\vdash d_{3}=(d_{1}\vdash d_{2})\vdash d_{3},
$$
for any $d_{1}, d_{2}, d_{3}\in D$
\end{defi}

Recall that a {\bf (left) Zinbiel algebra} $(B, \diamond)$ is a vector space $B$ together
with a bilinear map $\diamond: B\times B\rightarrow B$ satisfying the following
Zinbiel identity:
$$
b_{1}\diamond(b_{2}\diamond b_{3})=(b_{1}\diamond b_{2})\diamond b_{3}
+(b_{2}\diamond b_{1})\diamond b_{3},
$$
for any $b_{1}, b_{2}, b_{3}\in B$. Each associative algebra can be view as a
diassociative algebra by define $\dashv=\cdot=\vdash$. Conversely, we can get an associative
algebra by the tensor product of a diassociative algebra and a Zinbiel algebra.

\begin{pro}\label{pro:diass-ass}
Let $(D, \dashv, \vdash)$ be a diassociative algebra and $(B, \diamond)$ be a
Zinbiel algebra. Define a binary operation $\cdot: (D\otimes B)\otimes(D\otimes B)
\rightarrow D\otimes B$ by
$$
(d_{1}\otimes b_{1})\cdot(d_{2}\otimes b_{2})=(d_{1}\vdash d_{2})\otimes(b_{1}\diamond b_{2})
+(d_{1}\dashv d_{2})\otimes(b_{2}\diamond b_{1}),
$$
for any $d_{1}, d_{2}\in D$ and $b_{1}, b_{2}\in B$. Then $(D\otimes B, \cdot)$
is an associative algebra, which is called {\bf the induced associative algebra from
$(D, \dashv, \vdash)$ and $(B, \diamond)$}.
\end{pro}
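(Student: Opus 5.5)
Since both sides of the associativity identity are trilinear, it suffices to verify it on pure tensors $x_i=d_i\otimes b_i$ for $i=1,2,3$. The plan is to expand $(x_1\cdot x_2)\cdot x_3$ and $x_1\cdot(x_2\cdot x_3)$ into four terms each and then match them using the diassociative axioms in $D$ and the Zinbiel identity in $B$. Throughout I write $L_B$ for the $B$-part of a term. I will also use the symmetric product $b\ast b'=b\diamond b'+b'\diamond b$, which is associative (and commutative) in any Zinbiel algebra. In this notation the Zinbiel identity reads $b_1\diamond(b_2\diamond b_3)=(b_1\ast b_2)\diamond b_3$.

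First I expand the left-hand side. Here $(x_1\cdot x_2)\cdot x_3$ equals
\[
((d_1\vdash d_2)\vdash d_3)\otimes((b_1\diamond b_2)\diamond b_3)+((d_1\vdash d_2)\dashv d_3)\otimes(b_3\diamond(b_1\diamond b_2))
\]
\[
+((d_1\dashv d_2)\vdash d_3)\otimes((b_2\diamond b_1)\diamond b_3)+((d_1\dashv d_2)\dashv d_3)\otimes(b_3\diamond(b_2\diamond b_1)).
\]
The third diassociative axiom identifies $(d_1\dashv d_2)\vdash d_3$ with $(d_1\vdash d_2)\vdash d_3$. The first and third terms therefore combine into $((d_1\vdash d_2)\vdash d_3)\otimes((b_1\ast b_2)\diamond b_3)$, which by Zinbiel equals $(d_1\vdash(d_2\vdash d_3))\otimes(b_1\diamond(b_2\diamond b_3))$.

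Next I expand the right-hand side. Here $x_1\cdot(x_2\cdot x_3)$ equals
\[
(d_1\vdash(d_2\vdash d_3))\otimes(b_1\diamond(b_2\diamond b_3))+(d_1\dashv(d_2\vdash d_3))\otimes((b_2\diamond b_3)\diamond b_1)
\]
\[
+(d_1\vdash(d_2\dashv d_3))\otimes(b_1\diamond(b_3\diamond b_2))+(d_1\dashv(d_2\dashv d_3))\otimes((b_3\diamond b_2)\diamond b_1).
\]
The first term matches the combined term from the left-hand side. The second axiom, $(d_1\vdash d_2)\dashv d_3=d_1\vdash(d_2\dashv d_3)$, makes the $D$-parts of the second term on the left and the third term on the right agree. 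Their $B$-parts are $b_3\diamond(b_1\diamond b_2)=(b_3\ast b_1)\diamond b_2$ and $b_1\diamond(b_3\diamond b_2)=(b_1\ast b_3)\diamond b_2$, which are equal by commutativity of $\ast$.

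It remains to match the last term on the left with the second and fourth terms on the right. Associativity of $\dashv$ together with the first axiom gives $(d_1\dashv d_2)\dashv d_3=d_1\dashv(d_2\dashv d_3)=d_1\dashv(d_2\vdash d_3)$. So the second and fourth terms on the right combine into $(d_1\dashv(d_2\dashv d_3))\otimes((b_2\ast b_3)\diamond b_1)$. On the left, the $B$-part is $b_3\diamond(b_2\diamond b_1)=(b_3\ast b_2)\diamond b_1$, which is the same.

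The only delicate step is this bookkeeping: pairing the terms correctly and noticing that the Zinbiel identity must be used in its "symmetrized" form in each matching. Everything else follows directly from the definitions, and the identity holds on all of $D\otimes B$ by linearity.
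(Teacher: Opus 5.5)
Your proof is correct and follows essentially the same route as the paper: expand both triple products, rewrite the $B$-parts with the Zinbiel identity and the $D$-parts with associativity of $\vdash$, $\dashv$ and the three mixed diassociative axioms, then match terms. Writing the Zinbiel identity through the symmetrized product $b\diamond b'+b'\diamond b$ is only a tidier bookkeeping of the paper's six-term expansion, and you only need its commutativity, not its associativity.
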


\begin{proof}
Since $(D, \dashv, \vdash)$ is a diassociative algebra and $(B, \diamond)$ is a
Zinbiel algebra, for any $d_{1}, d_{2}, d_{3}\in D$ and $b_{1}, b_{2}, b_{3}\in B$,
by direct calculation, we have
\begin{align*}
&\;((d_{1}\otimes b_{1})\cdot(d_{2}\otimes b_{2}))\cdot(d_{3}\otimes b_{3})\\
%=&\;((d_{1}\vdash d_{2})\otimes(b_{1}b_{2})+(d_{1}\dashv d_{2})\otimes(b_{2}b_{1}))
%\cdot(d_{3}\otimes b_{3})\\
%=&\;((d_{1}\vdash d_{2})\vdash d_{3})\otimes((b_{1}b_{2})b_{3})
%+((d_{1}\vdash d_{2})\dashv d_{3})\otimes(b_{3}(b_{1}b_{2}))\\[-1mm]
%&\qquad+((d_{1}\dashv d_{2})\vdash d_{3})\otimes((b_{2}b_{1})b_{3})
%+((d_{1}\dashv d_{2})\dashv d_{3})\otimes(b_{3}(b_{2}b_{1}))\\
=&\;((d_{1}\vdash d_{2})\vdash d_{3})\otimes((b_{1}b_{2})b_{3})
+((d_{1}\vdash d_{2})\dashv d_{3})\otimes((b_{3}b_{1})b_{2})
+((d_{1}\vdash d_{2})\dashv d_{3})\otimes((b_{1}b_{3})b_{2})\\[-1mm]
&\quad+((d_{1}\dashv d_{2})\vdash d_{3})\otimes((b_{2}b_{1})b_{3})
+((d_{1}\dashv d_{2})\dashv d_{3})\otimes((b_{3}b_{2})b_{1})
+((d_{1}\dashv d_{2})\dashv d_{3})\otimes((b_{2}b_{3})b_{1})\\
=&\;((d_{1}\vdash(d_{2}\vdash d_{3}))\otimes((b_{1}b_{2})b_{3})
+((d_{1}\vdash(d_{2}\vdash d_{3}))\otimes((b_{2}b_{1})b_{3})
+((d_{1}\dashv(d_{2}\vdash d_{3}))\otimes((b_{2}b_{3})b_{1})\\[-1mm]
&\quad+((d_{1}\vdash(d_{2}\dashv d_{3}))\otimes((b_{1}b_{3})b_{2})
+((d_{1}\vdash(d_{2}\dashv d_{3}))\otimes((b_{3}b_{1})b_{2})
+((d_{1}\dashv(d_{2}\dashv d_{3}))\otimes((b_{3}b_{2})b_{1})\\
=&\;(d_{1}\otimes b_{1})\cdot((d_{2}\otimes b_{2})\cdot(d_{3}\otimes b_{3})).
\end{align*}
Hence $(D\otimes B, \cdot)$ is an associative algebra.
\end{proof}

Recall that a (left) {\bf Leibniz algebra} $(L, \ast)$ is a vector space $L$ together
with a bilinear map $\ast: L\times L\rightarrow L$ satisfying the following Leibniz identity:
$$
x_{1}\ast(x_{2}\ast x_{3})=(x_{1}\ast x_{2})\ast x_{3}+x_{2}\ast(x_{1}\ast x_{3}),
$$
for any $x_{1}, x_{2}, x_{3}\in L$. In a Leibniz algebra $(L, \ast)$, if $\ast$ is
anticommutative, i.e., $x_{1}\ast x_{2}=-x_{2}\ast x_{1}$, then $(L, \ast)$ is a
{\bf Lie algebra}.
In a Lie algebra, we usually denote the binary operation $\ast$ by bracket $[-,-]$.
It is well-known that an associative algebra forms a Lie algebra under the commutator.
Similar results also exist for diassociative algebras and Leibniz algebras.

\begin{pro}[\cite{Lod}]\label{pro:commtor}
Let $(D, \dashv, \vdash)$ be a diassociative algebra. Define a new binary operation $\ast$
on $D$ by
$$
d_{1}\ast d_{2}=d_{1}\vdash d_{2}-d_{2}\dashv d_{1}
$$
for any $d_{1}, d_{2}\in D$, then $(D, \ast)$ is a Leibniz algebra, which is called
the {\bf Leibniz algebra induced by $(D, \dashv, \vdash)$}.
\end{pro}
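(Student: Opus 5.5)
We verify the left Leibniz identity
$$
d_{1}\ast(d_{2}\ast d_{3})=(d_{1}\ast d_{2})\ast d_{3}+d_{2}\ast(d_{1}\ast d_{3})
$$
by direct expansion, using only the five axioms of Definition \ref{de:dialg}: associativity of $\dashv$ and of $\vdash$, and the three mixed identities. Expanding $\ast$ gives four terms on the left-hand side and eight on the right-hand side.

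The left-hand side is
$$
d_1\vdash(d_2\vdash d_3)-d_1\vdash(d_3\dashv d_2)-(d_2\vdash d_3)\dashv d_1+(d_3\dashv d_2)\dashv d_1 .
$$
Similarly, $(d_1\ast d_2)\ast d_3$ expands to
$$
(d_1\vdash d_2)\vdash d_3-(d_2\dashv d_1)\vdash d_3-d_3\dashv(d_1\vdash d_2)+d_3\dashv(d_2\dashv d_1),
$$
and $d_2\ast(d_1\ast d_3)$ expands to
$$
d_2\vdash(d_1\vdash d_3)-d_2\vdash(d_3\dashv d_1)-(d_1\vdash d_3)\dashv d_2+(d_3\dashv d_1)\dashv d_2 .
$$

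We now match terms in four groups.
\begin{enumerate}
\item The third axiom gives $(d_2\dashv d_1)\vdash d_3=(d_2\vdash d_1)\vdash d_3$. By associativity of $\vdash$ this equals $d_2\vdash(d_1\vdash d_3)$, so these two right-hand terms cancel.
\item The second axiom gives $(d_1\vdash d_3)\dashv d_2=d_1\vdash(d_3\dashv d_2)$, matching the second left-hand term.
\item By associativity of $\dashv$ and the first axiom,
$$
(d_3\dashv d_1)\dashv d_2=d_3\dashv(d_1\dashv d_2)=d_3\dashv(d_1\vdash d_2),
$$
so these two right-hand terms cancel.
\item The second axiom gives $d_2\vdash(d_3\dashv d_1)=(d_2\vdash d_3)\dashv d_1$, matching the third left-hand term.
\end{enumerate}

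After these cancellations the right-hand side is
$$
(d_1\vdash d_2)\vdash d_3-d_1\vdash(d_3\dashv d_2)+d_3\dashv(d_2\dashv d_1)-(d_2\vdash d_3)\dashv d_1 .
$$
Associativity of $\vdash$ turns the first term into $d_1\vdash(d_2\vdash d_3)$. Associativity of $\dashv$ turns the third term into $(d_3\dashv d_2)\dashv d_1$. This is exactly the left-hand side, so $(D,\ast)$ is a Leibniz algebra.

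The only real work is this bookkeeping of the twelve terms. The step most prone to error is choosing which of the three mixed axioms converts each term, especially in groups 1 and 3, where the third and first axioms respectively are used to replace $\dashv$ by $\vdash$ inside a product.
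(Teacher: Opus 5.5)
Your twelve-term expansion is correct: each of the three mixed axioms is applied in the right direction, and everything cancels as you claim. In group 3 the first axiom alone already gives $(d_{3}\dashv d_{1})\dashv d_{2}=d_{3}\dashv(d_{1}\vdash d_{2})$, so the step through associativity of $\dashv$ is unnecessary but harmless. The paper gives no proof of its own here and simply cites Loday, and the standard argument for this fact is exactly this kind of direct term-by-term check.
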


Recently, we consider the construction of Lie bialgebras from Leibniz bialgebras
and Zinbiel bialgebras in \cite{HL1}. The starting point for this construction is that
there exists a Lie algebra structure on the tensor product of a Leibniz algebra and
a Zinbiel algebra.

\begin{pro}[\cite{HL1}]\label{pro:L-Z-lie}
Let $(B, \diamond)$ be a Zinbiel algebra and $(L, \ast)$ be a Leibniz algebra.
Define a binary operation $[-,-]$ on $L\otimes B$ by
$$
[x_{1}\otimes b_{1},\; x_{2}\otimes b_{2}]=(x_{1}\ast x_{2})\otimes(b_{1}\diamond b_{2})
-(x_{2}\ast x_{1})\otimes(b_{2}\diamond b_{1}),
$$
for any $x_{1}, x_{2}\in L$ and $b_{1}, b_{2}\in B$. Then $(L\otimes B,\; [-, -])$
is a Lie algebra, called {\bf the induced Lie algebra} from $(L, \ast)$ and $(B, \diamond)$.
\end{pro}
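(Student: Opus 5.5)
The plan is to check the two Lie algebra axioms for the bracket on $L\otimes B$ directly on pure tensors; bilinearity is built into the definition.

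\textbf{Skew-symmetry.} This is immediate. Interchanging $x_{1}\otimes b_{1}$ and $x_{2}\otimes b_{2}$ in the defining formula swaps the two terms and flips their signs, so
$$
[x_{2}\otimes b_{2},\; x_{1}\otimes b_{1}]=-[x_{1}\otimes b_{1},\; x_{2}\otimes b_{2}].
$$

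\textbf{Jacobi identity.} I will verify it in the Leibniz-type form
$$
[X_{1},[X_{2},X_{3}]]=[[X_{1},X_{2}],X_{3}]+[X_{2},[X_{1},X_{3}]]
$$
for $X_{i}=x_{i}\otimes b_{i}$. By skew-symmetry this is equivalent to the cyclic Jacobi identity. Each side expands into eight terms of the form $(\text{$\ast$-monomial in } x_{1},x_{2},x_{3})\otimes(\text{$\diamond$-monomial in } b_{1},b_{2},b_{3})$, with the bracketing pattern on the $L$ side mirroring that on the $B$ side. For instance,
$$
[X_{1},[X_{2},X_{3}]]=(x_{1}\ast(x_{2}\ast x_{3}))\otimes(b_{1}\diamond(b_{2}\diamond b_{3}))-\big((x_{2}\ast x_{3})\ast x_{1}\big)\otimes\big((b_{2}\diamond b_{3})\diamond b_{1}\big)-\cdots .
$$

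\textbf{Normal forms.} To compare the two sides I reduce both factors to normal forms.
\begin{itemize}
\item[(1)] On the $B$ side, the Zinbiel identity rewrites every right-nested product $b_{i}\diamond(b_{j}\diamond b_{k})$ as a sum of left-nested monomials $(b_{\sigma1}\diamond b_{\sigma2})\diamond b_{\sigma3}$. This is exactly the rewriting used in the proof of Proposition~\ref{pro:diass-ass}.
\item[(2)] On the $L$ side, the Leibniz identity lets me rewrite every $x_{i}\ast(x_{j}\ast x_{k})$ in terms of left-nested monomials.
\item[(3)] The key structural input is the standard left-Leibniz consequence that $(x\ast y+y\ast x)\ast z=0$. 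This follows by subtracting the Leibniz identity from itself with the first two arguments swapped. It kills the symmetric parts that arise.
\end{itemize}

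\textbf{Matching coefficients.} After these rewritings, both sides become a sum of terms $((x_{\sigma1}\ast x_{\sigma2})\ast x_{\sigma3})\otimes((b_{\tau1}\diamond b_{\tau2})\diamond b_{\tau3})$. In a Zinbiel algebra one also has $(b\diamond b')\diamond b''=(b'\diamond b)\diamond b''$ up to the commutative product, since $b\diamond b'+b'\diamond b$ is associative commutative. Together with (3), this lets me group terms pairwise and compare coefficients permutation by permutation.

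An alternative, more conceptual route would use the operadic fact that Leibniz and Zinbiel are Koszul dual, so their tensor product (Manin white product) is Lie. I would present the hands-on check, as the paper does for Proposition~\ref{pro:diass-ass}.

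\textbf{Main obstacle.} The main obstacle is the bookkeeping in the Jacobi verification: correctly tracking the 24 terms and the signs. The point where cancellations could fail is terms such as $((x_{2}\ast x_{3})\ast x_{1})\otimes((b_{2}\diamond b_{3})\diamond b_{1})$. For these I would first try to pair them with their $(2\leftrightarrow3)$ counterparts and apply (3), checking that the symmetric combination $b_{2}\diamond b_{3}+b_{3}\diamond b_{2}$ appears so that they cancel.
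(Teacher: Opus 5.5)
\textbf{Verdict: genuine gap.} Skew-symmetry and the reduction of Jacobi to the Leibniz form are fine. Your ingredients (1) and (3) are also correct. But the normal-form step (2) is false, and the coefficient comparison built on it cannot be carried out.

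\textbf{Why step (2) fails.} In a left Leibniz algebra the identity reads $(x_i\ast x_j)\ast x_k=x_i\ast(x_j\ast x_k)-x_j\ast(x_i\ast x_k)$. It rewrites left-nested products in terms of right-nested ones, not the other way round. By (3), left-nested monomials are antisymmetric in their first two slots, so they span too little. Concretely, take $\mathrm{span}\{a,b,c\}$ with $a\ast a=b$, $a\ast b=c$ and all other products of basis vectors zero. The left Leibniz identity holds there and every $(x\ast y)\ast z$ vanishes, yet $a\ast(a\ast a)=c\neq 0$. So the $L$-factor $x_1\ast(x_2\ast x_3)$ of your very first term has no left-nested normal form.

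\textbf{The Zinbiel claim is also wrong.} The monomials $(b\diamond b')\diamond b''$ and $(b'\diamond b)\diamond b''$ are linearly independent in the free Zinbiel algebra; the six left-nested monomials form a basis of its multilinear degree-$3$ part. Only their sum is controlled: $(b\diamond b'+b'\diamond b)\diamond b''=b\diamond(b'\diamond b'')$.

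\textbf{The pair you single out does not cancel.} Consider $-((x_2\ast x_3)\ast x_1)\otimes((b_2\diamond b_3)\diamond b_1)+((x_3\ast x_2)\ast x_1)\otimes((b_3\diamond b_2)\diamond b_1)$. Under (3) it collapses to $-((x_2\ast x_3)\ast x_1)\otimes(b_2\diamond(b_3\diamond b_1))$, which is nonzero in general. It has to be absorbed by terms on the other side.

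\textbf{How to repair it.} The fix uses only your ingredients (1) and (3): normalize just the $B$-factors.
\begin{enumerate}
\item Use the Zinbiel identity to write every $B$-factor of $[X_1,[X_2,X_3]]-[[X_1,X_2],X_3]-[X_2,[X_1,X_3]]$ as a combination of the six monomials $(b_i\diamond b_j)\diamond b_k$. It is then enough to show that each $L$-coefficient vanishes; no linear independence is needed.
\item The coefficients of $(b_1\diamond b_2)\diamond b_3$, $(b_1\diamond b_3)\diamond b_2$ and $(b_2\diamond b_3)\diamond b_1$ are, up to sign, instances of the Leibniz identity. For example, the first is $x_1\ast(x_2\ast x_3)-(x_1\ast x_2)\ast x_3-x_2\ast(x_1\ast x_3)$.
\item Each of the other three coefficients is a Leibniz instance plus an instance of (3). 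For example, the coefficient of $(b_2\diamond b_1)\diamond b_3$ is $x_1\ast(x_2\ast x_3)-x_2\ast(x_1\ast x_3)+(x_2\ast x_1)\ast x_3=(x_1\ast x_2+x_2\ast x_1)\ast x_3=0$.
\end{enumerate}
Equivalently, you can put the $L$-factors in right-nested form (the correct normal form for left Leibniz algebras) and the $B$-factors in left-nested form. Each side then becomes eight terms, and they agree term by term.

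The paper itself gives no proof of this proposition, only a citation, so your argument has to stand on its own.
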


Thus, for diassociative algebras, associative algebra, Lie algebras and Leibniz
algebras, we have a commutative diagram as given by the following proposition.

\begin{pro}\label{pro:comm-diag}
Let $(A, \dashv, \vdash)$ be a diassociative algebra and $(B, \diamond)$ be a
Zinbiel algebra. Then we have the following commutative diagram:
$$
\xymatrix@C=2cm@R=0.5cm{
\txt{$(D, \dashv, \vdash)$ \\ {\tiny a diassociative algebra}}
\ar[r]^-{{\rm Pro.}~\ref{pro:commtor}}
\ar[d]_-{{\rm Pro.}~\ref{pro:diass-ass}}
& \txt{$(D, \ast)$ \\ {\tiny a Leibniz algebra}}
\ar[d]^-{{\rm Pro.}~\ref{pro:L-Z-lie}}\\
\txt{$(D\otimes B, \cdot)$ \\ {\tiny an associative algebra}}
\ar[r]^-{{\rm\tiny commutator}}
&\txt{$(D\otimes B, [-,-])$\\ {\tiny a Lie algebra}}}
$$
\end{pro}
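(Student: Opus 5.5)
We have to check that the two routes from $(D,\dashv,\vdash)$ to a Lie bracket on $D\otimes B$ give the same operation. Since both routes produce bilinear operations on $D\otimes B$, it is enough to compare them on pure tensors $d_{1}\otimes b_{1}$ and $d_{2}\otimes b_{2}$ with $d_{1},d_{2}\in D$ and $b_{1},b_{2}\in B$. The plan is to expand both brackets explicitly and match terms, using only the defining formulas from Propositions \ref{pro:diass-ass}, \ref{pro:commtor} and \ref{pro:L-Z-lie}. No identity of diassociative or Zinbiel algebras is needed beyond what those propositions already use to guarantee that the structures are well defined.

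First, take the lower-left route: form the induced associative algebra of Proposition \ref{pro:diass-ass} and then its commutator. This gives
\begin{align*}
&(d_{1}\otimes b_{1})\cdot(d_{2}\otimes b_{2})-(d_{2}\otimes b_{2})\cdot(d_{1}\otimes b_{1})\\
=&\;(d_{1}\vdash d_{2})\otimes(b_{1}\diamond b_{2})+(d_{1}\dashv d_{2})\otimes(b_{2}\diamond b_{1})\\
&\quad-(d_{2}\vdash d_{1})\otimes(b_{2}\diamond b_{1})-(d_{2}\dashv d_{1})\otimes(b_{1}\diamond b_{2}).
\end{align*}
Grouping by the Zinbiel factor, this equals
$$
(d_{1}\vdash d_{2}-d_{2}\dashv d_{1})\otimes(b_{1}\diamond b_{2})
-(d_{2}\vdash d_{1}-d_{1}\dashv d_{2})\otimes(b_{2}\diamond b_{1}).
$$

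Next, take the upper-right route: use Proposition \ref{pro:commtor}, where $d_{1}\ast d_{2}=d_{1}\vdash d_{2}-d_{2}\dashv d_{1}$, and then Proposition \ref{pro:L-Z-lie}. This gives
$$
[d_{1}\otimes b_{1},\,d_{2}\otimes b_{2}]=(d_{1}\ast d_{2})\otimes(b_{1}\diamond b_{2})-(d_{2}\ast d_{1})\otimes(b_{2}\diamond b_{1}).
$$
Substituting the definition of $\ast$ turns this into exactly the grouped expression obtained from the first route. The two operations therefore coincide on pure tensors, hence on all of $D\otimes B$ by bilinearity, and the diagram commutes.

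The only real care point is bookkeeping. The paper states the proposition with $(A,\dashv,\vdash)$ but draws the diagram with $D$, so one should read it with $A=D$ throughout. One must also keep the order $b_{2}\diamond b_{1}$ in the $\dashv$-term of Proposition \ref{pro:diass-ass} correctly aligned with the second term of Proposition \ref{pro:L-Z-lie}; once that is tracked, the argument is a direct computation with no genuine obstacle.
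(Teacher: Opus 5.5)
Your proposal is correct and follows essentially the same route as the paper: expand both brackets on pure tensors and compare, with your grouping by the Zinbiel factor making the match with $d_{1}\ast d_{2}=d_{1}\vdash d_{2}-d_{2}\dashv d_{1}$ immediate. Your expansion is in fact the accurate one, since the paper's displayed expansions of both brackets swap $d_{1}$ and $d_{2}$ in the $\dashv$-terms; they do so identically in both, so its conclusion is unaffected.
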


\begin{proof}
We denote $[-,-]_{1}$ the bracket on $D\otimes B$ induced by $\ast$, that is,
\begin{align*}
[d_{1}\otimes b_{1},\; d_{2}\otimes b_{2}]_{1}
&=(d_{1}\ast d_{2})\otimes(b_{1}\diamond b_{2})-(d_{2}\ast d_{1})\otimes(b_{2}\diamond b_{1})\\
&=(d_{1}\vdash d_{2})\otimes(b_{1}\diamond b_{2})
-(d_{1}\dashv d_{2})\otimes(b_{1}\diamond b_{2})\\[-1mm]
&\quad -(d_{2}\vdash d_{1})\otimes(b_{2}\diamond b_{1})
+(d_{2}\dashv d_{1})\otimes(b_{2}\diamond b_{1}),
\end{align*}
for any $d_{1}, d_{2}\in D$ and $b_{1}, b_{2}\in B$. On the other hand, if we
denote $[-,-]_{2}$ the bracket on $D\otimes B$ induced by $\cdot$, that is,
\begin{align*}
[d_{1}\otimes b_{1},\; d_{2}\otimes b_{2}]_{2}
&=(d_{1}\otimes b_{1})\cdot(d_{2}\otimes b_{2})-(d_{2}\otimes b_{2})\cdot(d_{1}\otimes b_{1})\\
&=(d_{1}\vdash d_{2})\otimes(b_{1}\diamond b_{2})
+(d_{2}\dashv d_{1})\otimes(b_{2}\diamond b_{1})\\[-1mm]
&\quad -(d_{2}\vdash d_{1})\otimes(b_{2}\diamond b_{1})
-(d_{1}\dashv d_{2})\otimes(b_{1}\diamond b_{2}).
\end{align*}
Thus, $[-,-]_{1}=[-,-]_{2}$, and so that the diagram is commutative.
\end{proof}

This commutative diagram provides us with two methods for constructing Lie algebras,
and both methods yield consistent Lie algebras. One of the main objectives of this paper
is to elevate this commutative diagram to the level of bialgebras and provide
some construction methods for Lie bialgebras.
Next, we consider bimodules over a diassociative algebra. Let $(A, \cdot)$ be an associative
algebra $V$ be a vector space and $\kl, \kr: A\rightarrow\gl(V)$ be two linear maps.
If for any $a_{1}, a_{2}\in A$, $\kl(a_{1}\cdot a_{2})=\kl(a_{1})\kl(a_{2})$,
$\kl(a_{1})\kr(a_{2})=\kl(a_{2})\kl(a_{1})$ and $\kr(a_{1}\cdot a_{2})
=\kr(a_{2})\kr(a_{1})$, then $(V, \kl, \kr)$ is called a {\bf bimodule} over
$(A, \cdot)$. In particular, $(A, \fl_{A}, \fr_{A})$ is a bimodule over $(A, \cdot)$,
which is celled the {\bf regular bimodule} over $(A, \cdot)$, where $\fl_{A}(a_{1})(a_{2})
=a_{1}\cdot a_{2}=\fr_{A}(a_{2})(a_{1})$ for any $a_{1}, a_{2}\in A$.

\begin{defi}\label{def:SL-mod}
Let $(D, \dashv, \vdash)$ be a diassociative algebra and $V$ be a $\Bbbk$-vector space.
If there exist four bilinear maps $\kl_{\dashv}, \kr_{\dashv}, \kl_{\vdash}, \kr_{\vdash}:
D\rightarrow\gl(V)$, such that $(V, \kl_{\dashv}, \kr_{\dashv})$ is a bimodule
over $(D, \dashv)$, $(V, \kl_{\vdash}, \kr_{\vdash})$ is a bimodule over $(D, \vdash)$
and for any $d_{1}, d_{2}\in D$,
\begin{align*}
\kr_{\vdash}(d_{1}\vdash d_{2})&=\kr_{\vdash}(d_{1}\dashv d_{2}),
&& \kl_{\vdash}(d_{1})\kr_{\vdash}(d_{2})=\kl_{\vdash}(d_{1})\kr_{\dashv}(d_{2}),
&& \kl_{\vdash}(d_{1})\kl_{\vdash}(d_{2})=\kl_{\vdash}(d_{1})\kl_{\dashv}(d_{2}),  \\
\kr_{\vdash}(d_{1})\kr_{\dashv}(d_{2})&=\kr_{\dashv}(d_{2}\vdash d_{1}),
&& \kr_{\vdash}(d_{1})\kl_{\dashv}(d_{2})=\kl_{\dashv}(d_{2})\kr_{\vdash}(d_{1}),
&&\; \kl_{\vdash}(d_{1}\dashv d_{2})=\kl_{\dashv}(d_{1})\kl_{\vdash}(d_{2}),  \\
\kr_{\dashv}(d_{1})\kr_{\vdash}(d_{2})&=\kr_{\dashv}(d_{1})\kr_{\dashv}(d_{2}),
&& \kr_{\dashv}(d_{1})\kl_{\vdash}(d_{2})=\kr_{\dashv}(d_{1})\kl_{\dashv}(d_{2}),
&&\; \kl_{\dashv}(d_{1}\vdash d_{2})=\kl_{\dashv}(d_{1}\dashv d_{2}),
\end{align*}
we call $(V, \kl_{\dashv}, \kr_{\dashv}, \kl_{\vdash}, \kr_{\vdash})$ is a
{\bf bimodule} over $(D, \dashv, \vdash)$.
\end{defi}

Let $(V, \kl_{\dashv}, \kr_{\dashv}, \kl_{\vdash}, \kr_{\vdash})$ and $(V', \kl'_{\dashv},
\kr'_{\dashv}, \kl'_{\vdash}, \kr'_{\vdash})$ be two bimodules over a diassociative algebra
$(D, \dashv, \vdash)$ and $f: V\rightarrow V'$ be a linear map. Then $f$ is called a
{\bf morphism of bimodule} if $f(\beta(d)(v))=\beta'(d)(f(v))$ for any
$\beta\in\{\kl_{\dashv}, \kr_{\dashv}, \kl_{\vdash}, \kr_{\vdash}\}$, $d\in A$ and $v\in V$.
This two bimodules $(V, \kl_{\dashv}, \kr_{\dashv}, \kl_{\vdash}, \kr_{\vdash})$ and
$(V', \kl'_{\dashv}, \kr'_{\dashv}, \kl'_{\vdash}, \kr'_{\vdash})$ are called {\bf
isomorphic as bimodules} if the morphism $f$ is a bijection. It is easy to see that
a diassociative algebra $(D, \dashv, \vdash)$ is a bimodule over itself under the actions
$\fl_{\dashv}, \fr_{\dashv}, \fl_{\vdash}, \fr_{\vdash}: D\rightarrow\gl(D)$,
$\fl_{\dashv}(d_{1})(d_{2})=d_{1}\dashv d_{2}$,
$\fr_{\dashv}(d_{1})(d_{2})=d_{2}\dashv d_{1}$,
$\fl_{\vdash}(d_{1})(d_{2})=d_{1}\vdash d_{2}$ and
$\fr_{\vdash}(d_{1})(d_{2})=d_{2}\vdash d_{1}$ for $d_{1}, d_{2}\in D$.
This bimodule is called the {\bf regular bimodule} over $(D, \dashv, \vdash)$.
Moreover, by direct calculations, we can give an equivalent condition as follows.

\begin{pro}\label{pro:bimodule}
Let $(D, \dashv, \vdash)$ be a diassociative algebra, $V$ be a $\Bbbk$-vector space and
$\kl_{\dashv}, \kr_{\dashv}, \kl_{\vdash}, \kr_{\vdash}: D\rightarrow\gl(V)$ be four
linear maps. Then $(V, \kl_{\dashv}, \kr_{\dashv}, \kl_{\vdash}, \kr_{\vdash})$ is a
bimodule over $(D, \dashv, \vdash)$ if and only if $D\oplus V$ is a diassociative algebra under
the following operations:
\begin{align*}
(d_{1}, v_{1})\dashv(d_{2}, v_{2})&:=\big(d_{1}\dashv d_{2},\ \
\kl_{\dashv}(d_{1})(v_{2})+\kr_{\dashv}(d_{2})(v_{1})\big),\\
(d_{1}, v_{1})\vdash(d_{2}, v_{2})&:=\big(d_{1}\vdash d_{2},\ \
\kl_{\vdash}(d_{1})(v_{2})+\kr_{\vdash}(d_{2})(v_{1})\big),
\end{align*}
for all $d_{1}, d_{2}\in D$ and $v_{1}, v_{2}\in V$. This diassociative algebra is called
a {\bf semidirect product} of $(D, \dashv, \vdash)$ by bimodule $(V, \kl_{\dashv},
\kr_{\dashv}, \kl_{\vdash}, \kr_{\vdash})$, denoted by $D\ltimes V$.
\end{pro}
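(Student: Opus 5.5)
The plan is to expand both diassociative products on $D\oplus V$ in components and compare, axiom by axiom, with the conditions of Definition~\ref{def:SL-mod}. The $D$-component of any identity among the five axioms of Definition~\ref{de:dialg} (associativity of $\dashv$, associativity of $\vdash$, and the three mixed axioms) just reproduces the same identity in $D$, so it holds automatically. All the content sits in the $V$-component. Since the products are bilinear and every term is linear in the $V$-entries, I will test each axiom on triples $(d_{1},v_{1}),(d_{2},v_{2}),(d_{3},v_{3})$. The $V$-component then splits into three independent pieces: the terms in $v_{1}$, in $v_{2}$ and in $v_{3}$. Setting two of the $v_{i}$ equal to zero isolates each piece, so the axiom holds on $D\oplus V$ if and only if each piece vanishes identically.

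Next I work through the axioms in turn.

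\emph{Associativity of $\dashv$.} Expanding $((d_{1},v_{1})\dashv(d_{2},v_{2}))\dashv(d_{3},v_{3})=(d_{1},v_{1})\dashv((d_{2},v_{2})\dashv(d_{3},v_{3}))$ gives:
\begin{itemize}
\item from $v_{3}$: $\kl_{\dashv}(d_{1}\dashv d_{2})=\kl_{\dashv}(d_{1})\kl_{\dashv}(d_{2})$;
\item from $v_{2}$: $\kr_{\dashv}(d_{3})\kl_{\dashv}(d_{1})=\kl_{\dashv}(d_{1})\kr_{\dashv}(d_{3})$;
\item from $v_{1}$: $\kr_{\dashv}(d_{3})\kr_{\dashv}(d_{2})=\kr_{\dashv}(d_{2}\dashv d_{3})$.
\end{itemize}
Together these say exactly that $(V,\kl_{\dashv},\kr_{\dashv})$ is a bimodule over $(D,\dashv)$.

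\emph{Associativity of $\vdash$.} The same computation gives the bimodule conditions over $(D,\vdash)$.

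\emph{The three mixed axioms.} Each produces three operator identities, one per $v_{i}$, for nine in total. For example, the first axiom $(d_{1}\dashv d_{2})\dashv d_{3}=d_{1}\dashv(d_{2}\vdash d_{3})$ yields:
\begin{itemize}
\item from $v_{3}$: $\kl_{\dashv}(d_{1}\dashv d_{2})=\kl_{\dashv}(d_{1})\kl_{\vdash}(d_{2})$;
\item from $v_{2}$: $\kr_{\dashv}(d_{3})\kl_{\dashv}(d_{1})=\kl_{\dashv}(d_{1})\kr_{\vdash}(d_{3})$;
\item from $v_{1}$: $\kr_{\dashv}(d_{3})\kr_{\dashv}(d_{2})=\kr_{\dashv}(d_{2}\vdash d_{3})$.
\end{itemize}
The second and third mixed axioms give analogous triples. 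I would list all nine explicitly.

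The main obstacle is that these nine identities are not literally the nine listed in Definition~\ref{def:SL-mod}. Several displayed conditions are written in a different form, for instance $\kl_{\dashv}(d_{1}\vdash d_{2})=\kl_{\dashv}(d_{1}\dashv d_{2})$ and $\kr_{\dashv}(d_{1})\kr_{\vdash}(d_{2})=\kr_{\dashv}(d_{1})\kr_{\dashv}(d_{2})$. So I must show the two systems are equivalent \emph{modulo} the two bimodule conditions. For example:
\begin{itemize}
\item Combining $\kl_{\dashv}(d_{1}\dashv d_{2})=\kl_{\dashv}(d_{1})\kl_{\dashv}(d_{2})$ with the derived $\kl_{\dashv}(d_{1}\dashv d_{2})=\kl_{\dashv}(d_{1})\kl_{\vdash}(d_{2})$ should convert between the two forms of the left-action conditions.
\item Similarly, $\kr_{\dashv}(d_{2}\vdash d_{3})=\kr_{\dashv}(d_{3})\kr_{\dashv}(d_{2})=\kr_{\dashv}(d_{2}\dashv d_{3})$ relates to the right-action conditions.
\end{itemize}
I would pair each derived identity with its counterpart in the definition and use the $(D,\dashv)$- and $(D,\vdash)$-bimodule laws to pass between them. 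The delicate point is matching the arguments and order conventions so that no condition is lost in either direction; where the correspondence is not immediate, I would recompute that specific piece carefully.

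Once the nine mixed-axiom identities are matched with the conditions of Definition~\ref{def:SL-mod}, both implications follow at once. If $V$ is a bimodule, every $V$-component identity holds, so $D\oplus V$ is diassociative. Conversely, if $D\oplus V$ is diassociative, isolating the $v_{i}$-pieces recovers every bimodule condition.
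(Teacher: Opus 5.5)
Your componentwise expansion is the right computation, and what it establishes is correct. $D\oplus V$ is diassociative if and only if two things hold. First, $(V,\kl_{\dashv},\kr_{\dashv})$ and $(V,\kl_{\vdash},\kr_{\vdash})$ are bimodules over $(D,\dashv)$ and $(D,\vdash)$. Second, the nine identities produced by the mixed axioms hold. The third axiom is left-bracketed on both sides, so its triple has a different shape from the first two: $\kl_{\vdash}(d_{1}\dashv d_{2})=\kl_{\vdash}(d_{1}\vdash d_{2})$, $\kr_{\vdash}(d_{3})\kl_{\dashv}(d_{1})=\kr_{\vdash}(d_{3})\kl_{\vdash}(d_{1})$, $\kr_{\vdash}(d_{3})\kr_{\dashv}(d_{2})=\kr_{\vdash}(d_{3})\kr_{\vdash}(d_{2})$.

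The gap is exactly the step you flagged: the two systems are \emph{not} equivalent modulo the bimodule laws, so the matching cannot be done. Your own sample conversions show it. They yield $\kl_{\dashv}(d_{1})\kl_{\dashv}(d_{2})=\kl_{\dashv}(d_{1})\kl_{\vdash}(d_{2})$ and $\kr_{\dashv}(d_{1}\vdash d_{2})=\kr_{\dashv}(d_{1}\dashv d_{2})$. Definition~\ref{def:SL-mod} instead lists $\kl_{\vdash}(d_{1})\kl_{\vdash}(d_{2})=\kl_{\vdash}(d_{1})\kl_{\dashv}(d_{2})$ and $\kr_{\vdash}(d_{1}\vdash d_{2})=\kr_{\vdash}(d_{1}\dashv d_{2})$. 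Carrying your computation through, the nine listed conditions are, modulo the bimodule laws, what one obtains from the axioms of Definition~\ref{de:dialg} with $\dashv$ and $\vdash$ interchanged. For instance, $\kr_{\vdash}(d_{1}\vdash d_{2})=\kr_{\vdash}(d_{1}\dashv d_{2})$ encodes $x\vdash(y\vdash z)=x\vdash(y\dashv z)$, and $\kl_{\dashv}(d_{1}\vdash d_{2})=\kl_{\dashv}(d_{1}\dashv d_{2})$ encodes $(x\vdash y)\dashv z=(x\dashv y)\dashv z$. Neither follows from Definition~\ref{de:dialg}.

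So the statement as printed fails in both directions, and the paper's ``It is straightforward'' passes over the same convention mismatch.

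For ``only if'', use the regular bimodule: $D\ltimes D\cong D\otimes\Bbbk[\epsilon]/(\epsilon^{2})$ is diassociative. Take $D=\Bbbk^{2}$ with standard basis $u_{1},u_{2}$ and $m\dashv n=n_{1}m$, $m\vdash n=m_{1}n$, where $m_{1},n_{1}$ are first coordinates; this satisfies Definition~\ref{de:dialg}. Then $u_{1}\vdash(u_{1}\vdash u_{2})=u_{2}$ while $u_{1}\vdash(u_{1}\dashv u_{2})=0$, so $\kr_{\vdash}(d_{1}\vdash d_{2})=\kr_{\vdash}(d_{1}\dashv d_{2})$ fails.

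For ``if'', take $D=\Bbbk$ with $\dashv=\vdash$ the multiplication, $V=\Bbbk^{2}$, $\kl_{\dashv}=\kl_{\vdash}=0$, $\kr_{\vdash}(1)=\id$, and $\kr_{\dashv}(1)=P$ an idempotent with $P\neq\id$. Every condition of Definition~\ref{def:SL-mod} holds. Yet for $x=(0,v)$, $y=z=(1,0)$ one gets $(x\vdash y)\dashv z=(0,Pv)\neq(0,v)=x\vdash(y\dashv z)$ whenever $Pv\neq v$.

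The repair is to replace the nine mixed conditions in Definition~\ref{def:SL-mod} by the nine identities your expansion produces. Equivalently, modulo the bimodule laws, use the listed conditions with $\dashv$ and $\vdash$ swapped throughout. With that corrected definition, your expansion is already a complete proof. Also, read the middle associative-bimodule law as $\kl(a_{1})\kr(a_{2})=\kr(a_{2})\kl(a_{1})$, which is what you used; the printed version is a typo.
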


\begin{proof}
It is straightforward.
\end{proof}

%Let $V$ be a vector space. Denote the standard pairing between the dual space
%$V^{\ast}$ and $V$ by
%\begin{align*}
%\langle-,-\rangle:\quad V^{\ast}\otimes V\rightarrow \Bbbk, \qquad\quad
%\langle \xi,\; v \rangle:=\xi(v),
%\end{align*}
%for any $\xi\in V^{\ast}$ and $v\in V$. Let $V$, $W$ be two vector spaces. For a linear
%map $\varphi: V\rightarrow W$, the transpose map $\varphi^{\ast}: W^{\ast}\rightarrow
%V^{\ast}$ is defined by
%\begin{align*}
%\langle \varphi^{\ast}(\xi),\; v \rangle:=\langle\xi,\; \varphi(v)\rangle,
%\end{align*}
%for any $v\in V$ and $\xi\in W^{\ast}$. Let $(D, \dashv, \vdash)$ be a diassociative algebra
%and $V$ be a vector space. For a linear map $\psi: A\rightarrow\gl(V)$, the linear map
%$\psi^{\ast}: D\rightarrow\gl(V^{\ast})$ is defined by
%\begin{align*}
%\langle\psi^{\ast}(d)(\xi),\; v\rangle:=-\langle\xi,\; \psi(d)(v)\rangle,
%\end{align*}
%for any $d\in D$, $v\in V$, $\xi\in V^{\ast}$. That is, $\psi^{\ast}(d)=\psi(d)^{\ast}$
%for all $d\in D$.
For any bimodule $(V, \kl_{\dashv}, \kr_{\dashv}, \kl_{\vdash},
\kr_{\vdash})$ over a diassociative algebra $(D, \dashv, \vdash)$, one can check that
$(V^{\ast}, \kr_{\vdash}^{\ast}-\kr_{\dashv}^{\ast}, -\kl_{\vdash}^{\ast},
-\kr_{\dashv}^{\ast}$, $\kl_{\dashv}^{\ast}-\kl_{\vdash}^{\ast})$ is again a
bimodule over $(D, \dashv, \vdash)$. In particular, we get $(D^{\ast}, \fr_{\vdash}^{\ast}
-\fr_{\dashv}^{\ast}, -\fl_{\vdash}^{\ast}, -\fr_{\dashv}^{\ast}$, $\fl_{\dashv}^{\ast}
-\fl_{\vdash}^{\ast})$ is a bimodule over $(D, \dashv, \vdash)$, which is called
the {\bf coregular bimodule}.
Let $V$ be a vector space and $\omega(-,-)$ be a bilinear form on $V$. Recall that
\begin{enumerate}\itemsep=0pt
\item[-] $\omega(-,-)$ is called {\bf nondegenerate} if $\omega(v_{1},\;
        v_{2})=0$ for any $v_{2}\in V$, then $v_{1}=0$;
\item[-] $\omega(-,-)$ is called {\bf symmetric} if $\omega(v_{1},\; v_{2})
        =\omega(v_{2},\; v_{1})$ for any $v_{1}, v_{2}\in V$;
\item[-] $\omega(-,-)$ is called {\bf skew-symmetric} if $\omega(v_{1},\; v_{2})
        =-\omega(v_{2},\; v_{1})$, for any $v_{1}, v_{2}\in V$.
\end{enumerate}
Let $\omega(-,-)$ be a bilinear form on a diassociative algebra $(D, \dashv, \vdash)$.
Then $\omega(-,-)$ is called {\bf invariant} if for any $d_{1}, d_{2}, d_{3}\in D$,
$$
\omega(d_{1}\vdash d_{2},\; d_{3})=\omega(d_{1},\; d_{2}\vdash d_{3}-d_{2}\dashv d_{3})
\qquad\mbox{and}\qquad \omega(d_{1}\dashv d_{2},\; d_{3})=\omega(d_{1},\; d_{2}\vdash d_{3}).
$$
A (skew-symmetric) {\bf quadratic diassociative algebra}
$(D, \dashv, \vdash, \omega)$ is a diassociative algebra $(D, \dashv, \vdash)$ with a
skew-symmetric, nondegenerate and invariant bilinear form $\omega(-,-): D\otimes
D\rightarrow\Bbbk$. By direct calculation, we have:

\begin{pro}\label{pro:dual}
Let $(D, \dashv, \vdash)$ be a diassociative algebra. Then the regular bimodule $(D,
\fl_{\dashv}, \fr_{\dashv}$, $\fl_{\vdash}, \fr_{\vdash})$ and the coregular bimodule
$(D^{\ast}, \fr_{\vdash}^{\ast}-\fr_{\dashv}^{\ast}, -\fl_{\vdash}^{\ast},
-\fr_{\dashv}^{\ast}, \fl_{\dashv}^{\ast}-\fl_{\vdash}^{\ast})$ are isomorphic as
bimodules over $(D, \dashv, \vdash)$ if there exists a skew-symmetric,
nondegenerate invariant bilinear form $\omega(-,-)$ on $(D, \dashv, \vdash)$.
\end{pro}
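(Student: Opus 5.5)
The natural candidate for the isomorphism is the map $\omega^{\sharp}: D\rightarrow D^{\ast}$ induced by the form, $\langle\omega^{\sharp}(d_{1}),\; d_{2}\rangle=\omega(d_{1},\; d_{2})$ for $d_{1}, d_{2}\in D$. Since $D$ is finite-dimensional and $\omega$ is nondegenerate, $\omega^{\sharp}$ is injective and hence a linear bijection. So the statement reduces to showing that $\omega^{\sharp}$ is a morphism of bimodules. Concretely, for each of the four actions we need
$$
\omega^{\sharp}(\fl_{\dashv}(d)(d'))=(\fr_{\vdash}^{\ast}-\fr_{\dashv}^{\ast})(d)(\omega^{\sharp}(d')),\qquad
\omega^{\sharp}(\fr_{\dashv}(d)(d'))=-\fl_{\vdash}^{\ast}(d)(\omega^{\sharp}(d')),
$$
$$
\omega^{\sharp}(\fl_{\vdash}(d)(d'))=-\fr_{\dashv}^{\ast}(d)(\omega^{\sharp}(d')),\qquad
\omega^{\sharp}(\fr_{\vdash}(d)(d'))=(\fl_{\dashv}^{\ast}-\fl_{\vdash}^{\ast})(d)(\omega^{\sharp}(d')).
$$

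Pairing each identity with an arbitrary $d''\in D$ and using the convention $\langle\mu^{\ast}(a)\xi,\; v\rangle=-\langle\xi,\; \mu(a)v\rangle$ turns them into four scalar identities:
\begin{align*}
\omega(d\dashv d',\; d'')&=\omega(d',\; d''\dashv d-d''\vdash d),\\
\omega(d'\dashv d,\; d'')&=\omega(d',\; d\vdash d''),\\
\omega(d\vdash d',\; d'')&=\omega(d',\; d''\dashv d),\\
\omega(d'\vdash d,\; d'')&=\omega(d',\; d\vdash d''-d\dashv d'').
\end{align*}
The second and fourth of these are exactly the two invariance axioms, with $(d_{1}, d_{2}, d_{3})=(d', d, d'')$.

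For the first and third, I will use skew-symmetry to move $d''$ into the first slot and then apply invariance. For the third: $\omega(d',\; d''\dashv d)=-\omega(d''\dashv d,\; d')=-\omega(d'',\; d\vdash d')=\omega(d\vdash d',\; d'')$, using the second invariance axiom and then skew-symmetry. For the first: $\omega(d',\; d''\dashv d-d''\vdash d)=-\omega(d''\dashv d,\; d')+\omega(d''\vdash d,\; d')$. By the two invariance axioms this equals $-\omega(d'',\; d\vdash d')+\omega(d'',\; d\vdash d'-d\dashv d')=-\omega(d'',\; d\dashv d')=\omega(d\dashv d',\; d'')$.

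The main obstacle is purely bookkeeping: the sign conventions for $\mu^{\ast}$ and the argument order in $\fr$ must be matched carefully against the specific form of the coregular bimodule. Otherwise the map would have to be replaced by $-\omega^{\sharp}$ or by its transpose, which only changes it by a sign since $\omega$ is skew-symmetric. Once the four scalar identities are written out correctly, each follows from the two invariance axioms and skew-symmetry, as above.
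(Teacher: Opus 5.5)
Your proposal is correct and supplies exactly the ``direct calculation'' the paper only asserts without writing out: $\omega^{\sharp}$ is bijective by nondegeneracy and finite-dimensionality, and the four paired identities correctly match the coregular actions under the convention $\langle\mu^{\ast}(a)\xi, v\rangle=-\langle\xi,\mu(a)v\rangle$. Each identity then follows from the two invariance axioms and skew-symmetry as you show. Your closing hedge about replacing $\omega^{\sharp}$ by $-\omega^{\sharp}$ is vacuous, since the morphism conditions are linear in the map and a sign change could not fix a failure; but your computations check out, so it does no harm.
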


%%%%%%%%%%%%%%%%%%%%%%%%%%%%%%%%%%%%%%%%%%%%%%%%%%%%%%%%%%%%%%%%%%%%%%%%%%%%%%%%%
%    section  3   ASI
%%%%%%%%%%%%%%%%%%%%%%%%%%%%%%%%%%%%%%%%%%%%%%%%%%%%%%%%%%%%%%%%%%%%%%%%%%%%%%%%%%%%%%
\section{Antisymmetric infinitesimal bialgebras induced by diassociative bialgebras}
\label{sec:ASI}
In this section, we show that there is an ASI bialgebra structure
on the tensor product of a diassociative bialgebra and a quadratic Zinbiel algebra. In
particular, we show that a quasi-triangular diassociative bialgebra induces a
quasi-triangular antisymmetric infinitesimal bialgebra structure on the tensor product.

Recall that a {\bf coassociative coalgebra} is a pair $(A, \Delta)$, where $A$ is a vector
space and $\Delta: A\rightarrow A\otimes A$ is a linear map satisfying
$(\Delta\otimes\id)\Delta=(\id\otimes\Delta)\Delta$.
A {\bf diassociative coalgebra} is a triple $(D, \theta_{\dashv}, \theta_{\vdash})$,
where $(D, \theta_{\dashv})$ and $(D, \theta_{\vdash})$ are coassociative coalgebras,
and satisfying the following conditions:
\begin{align}
(\theta_{\dashv}\otimes\id)\theta_{\dashv}
=(\id\otimes\theta_{\vdash})\theta_{\dashv},\qquad\quad
(\theta_{\vdash}\otimes\id)\theta_{\dashv}
=(\id\otimes\theta_{\dashv})\theta_{\vdash},\qquad\quad
(\theta_{\dashv}\otimes\id)\theta_{\vdash}
=(\theta_{\vdash}\otimes\id)\theta_{\vdash}.    \label{codi}
\end{align}
One can check that $(D, \theta_{\dashv}, \theta_{\vdash})$ is a diassociative coalgebra
if and only if $(D, \theta_{\dashv}^{\ast}, \theta_{\vdash}^{\ast})$ is a diassociative
algebra. A {\bf Zinbiel coalgebra} $(B, \nu)$ is a vector space $B$ with a linear map
$\nu: B\rightarrow B\otimes B$ such that
$$
(\id\otimes\nu)\nu=(\nu\otimes\id)\nu+(\tau\otimes\id)(\nu\otimes\id)\nu,
$$
where $\tau: B\otimes B\rightarrow B\otimes B$ is the twist map defined by
$\tau(b_{1}\otimes b_{2}):=b_{2}\otimes b_{1}$ for all $b_{1}, b_{2}\in B$. First,
we show that there is a coassociative coalgebra on the tensor product of a
diassociative coalgebra and a Zinbiel coalgebra.

\begin{pro}\label{pro:dco-coas}
Let $(B, \nu)$ be a Zinbiel coalgebra and $(D, \theta_{\dashv}, \theta_{\vdash})$ be a
diassociative coalgebra. Define a linear map $\Delta: D\otimes B\rightarrow
(D\otimes B)\otimes(D\otimes B)$ by
\begin{align*}
\Delta(d\otimes b)&=\theta_{\vdash}(d)\bullet\nu(b)
+\theta_{\dashv}(d)\bullet\tau(\nu(b))\\
&:=\sum_{(b)}\sum_{(d)}(d_{(1)}\otimes b_{(1)})\otimes(d_{(2)}\otimes b_{(2)})
+\sum_{(b)}\sum_{[d]}((d_{[1]}\otimes b_{(2)})\otimes(d_{[2]}\otimes b_{(1)}),
\end{align*}
for any $d\in D$ and $b\in B$, where $\theta_{\vdash}(d)=\sum_{(d)}
d_{(1)}\otimes d_{(2)}$, $\theta_{\dashv}(d)=\sum_{[d]}d_{[1]}\otimes d_{[2]}$ and
$\nu(b)=\sum_{(b)}b_{(1)}\otimes b_{(2)}$ in the Sweedler notation. Then $(D\otimes B,
\Delta)$ is a coassociative coalgebra.
\end{pro}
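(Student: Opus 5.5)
The cleanest route is to dualize rather than expand coassociativity directly. Since everything is finite-dimensional, $(D,\theta_{\dashv},\theta_{\vdash})$ is a diassociative coalgebra if and only if $(D^{\ast},\theta_{\dashv}^{\ast},\theta_{\vdash}^{\ast})$ is a diassociative algebra. The paper states this, and the convention for which dual product plays the role of $\dashv$ and which of $\vdash$ is fixed by comparing the three identities in \eqref{codi} with Definition \ref{de:dialg}. Likewise $(B,\nu)$ is a Zinbiel coalgebra if and only if $(B^{\ast},\nu^{\ast})$ is a Zinbiel algebra. Here the dual of $(\id\otimes\nu)\nu=(\nu\otimes\id)\nu+(\tau\otimes\id)(\nu\otimes\id)\nu$ is exactly the Zinbiel identity, with $\xi_{1}\diamond\xi_{2}:=\nu^{\ast}(\xi_{1}\otimes\xi_{2})$. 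Identify $(D\otimes B)^{\ast}=D^{\ast}\otimes B^{\ast}$ using the pairing $\langle \eta\otimes\xi,\, d\otimes b\rangle=\langle\eta,d\rangle\langle\xi,b\rangle$. Then $\Delta$ is coassociative if and only if $\Delta^{\ast}$ is an associative product on $D^{\ast}\otimes B^{\ast}$.

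The key step is to compute $\Delta^{\ast}$ explicitly. Pairing $\Delta(d\otimes b)$ against $(\eta_{1}\otimes\xi_{1})\otimes(\eta_{2}\otimes\xi_{2})$ gives two contributions. The first term gives $\langle\theta_{\vdash}^{\ast}(\eta_{1}\otimes\eta_{2}),d\rangle\langle\xi_{1}\diamond\xi_{2},b\rangle$. In the second term, $B$-components appear in the order $b_{(2)},b_{(1)}$, so it gives $\langle\theta_{\dashv}^{\ast}(\eta_{1}\otimes\eta_{2}),d\rangle\langle\xi_{2}\diamond\xi_{1},b\rangle$. Therefore
\begin{align*}
\Delta^{\ast}\big((\eta_{1}\otimes\xi_{1})\otimes(\eta_{2}\otimes\xi_{2})\big)
=\theta_{\vdash}^{\ast}(\eta_{1}\otimes\eta_{2})\otimes(\xi_{1}\diamond\xi_{2})
+\theta_{\dashv}^{\ast}(\eta_{1}\otimes\eta_{2})\otimes(\xi_{2}\diamond\xi_{1}).
\end{align*}
Writing $\theta_{\vdash}^{\ast}$ as $\vdash$ and $\theta_{\dashv}^{\ast}$ as $\dashv$ on $D^{\ast}$, this is exactly the product of Proposition \ref{pro:diass-ass} for the diassociative algebra $D^{\ast}$ and the Zinbiel algebra $B^{\ast}$. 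Proposition \ref{pro:diass-ass} then gives associativity of $\Delta^{\ast}$, and hence coassociativity of $\Delta$.

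The main obstacle is bookkeeping. I must check that the transposition conventions match precisely, namely which of $\theta_{\dashv}^{\ast},\theta_{\vdash}^{\ast}$ is the left and which the right product. I must also check that the axioms in \eqref{codi} dualize to the axioms of Definition \ref{de:dialg} with the same labelling, not with $\dashv$ and $\vdash$ swapped or with the opposite multiplication. I would verify this term by term on the three mixed identities. If the dual turns out to be a diassociative algebra only after passing to opposite products, I would instead use the fact that the opposite of a Zinbiel-induced structure still satisfies the computation in Proposition \ref{pro:diass-ass}.

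As a fallback that avoids finite-dimensionality, one can expand $(\Delta\otimes\id)\Delta(d\otimes b)$ and $(\id\otimes\Delta)\Delta(d\otimes b)$ into six terms each. This mirrors the proof of Proposition \ref{pro:diass-ass}: apply the Zinbiel coalgebra identity to rewrite the terms involving $(\nu\otimes\id)\nu$, then use \eqref{codi} to match the $D$-factors.
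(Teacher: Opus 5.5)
Your proposal is correct, but it argues differently from the paper. The paper proves the statement by a direct Sweedler-notation computation. It expands $(\id\otimes\Delta)\Delta(d\otimes b)$ into four $\bullet$-products and uses the Zinbiel coalgebra identity to split the $B$-factors. It then uses coassociativity of $\theta_{\dashv},\theta_{\vdash}$ and the relations \eqref{codi} to rewrite the $D$-factors, and regroups everything into $(\Delta\otimes\id)\Delta(d\otimes b)$. In effect it redoes, on the coalgebra side, the computation in the proof of Proposition \ref{pro:diass-ass}.

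You instead observe that $\Delta^{\ast}$ is exactly the product of Proposition \ref{pro:diass-ass} on $D^{\ast}\otimes B^{\ast}$, so coassociativity follows from an associativity result already proved. The bookkeeping you were worried about involves no twist. Pairing the three identities in \eqref{codi} with $\eta_{1}\otimes\eta_{2}\otimes\eta_{3}$ gives exactly the three mixed axioms of Definition \ref{de:dialg} for $\dashv:=\theta_{\dashv}^{\ast}$ and $\vdash:=\theta_{\vdash}^{\ast}$. So no opposite products are needed, and your formula for $\Delta^{\ast}$ is right.

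Your route is shorter and shows why the result holds: it is the formal dual of Proposition \ref{pro:diass-ass}. It does not even need finite-dimensionality. The dual of any coalgebra is an algebra, and tensor products of functionals from $D^{\ast}$ and $B^{\ast}$ separate points of $(D\otimes B)^{\otimes 3}$, so your fallback is unnecessary. The paper's explicit computation is the version the authors reuse for the completed setting of Proposition \ref{pro:zco-coass}. There $\theta_{\dashv},\theta_{\vdash}$ take values in $D\,\hat{\otimes}\,D$, and a dualization argument would be more delicate.
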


\begin{proof}
For any $\sum_{l}b'_{l}\otimes b''_{l}\otimes b'''_{l}\in B\otimes B\otimes B$ and
$\sum_{k}d'_{k}\otimes d''_{k}\otimes d'''_{k}\in D\otimes D\otimes D$, we denote
$$
\Big(\sum_{k}d'_{k}\otimes d''_{k}\otimes d'''_{k}\Big)
\bullet\Big(\sum_{l}b'_{l}\otimes b''_{l}\otimes b'''_{l}\Big)
=\sum_{l, k}(d'_{k}\otimes b'_{l})\otimes
(d''_{k}\otimes b''_{l})\otimes(d'''_{k}\otimes b'''_{l}).
$$
Then, by using the above notations, since $(B, \nu)$ is a Zinbiel coalgebra
and $(D, \theta_{\dashv}, \theta_{\vdash})$ is a diassociative coalgebra,
for any $d\otimes b\in D\otimes B$, we have:
\begin{align*}
&\;(\id\otimes\Delta)(\Delta(d\otimes b))\\
=&\;(\id\otimes\theta_{\vdash})(\theta_{\vdash}(d))\bullet(\id\otimes\nu)(\nu(b))
+(\id\otimes\theta_{\dashv})(\theta_{\dashv}(d))\bullet(\id\otimes\tau)
((\tau\otimes\id)((\id\otimes\tau)((\nu\otimes\id)(\nu(b)))))\\[-1mm]
&\;+(\id\otimes\theta_{\dashv})(\theta_{\vdash}(d))
\bullet(\id\otimes\tau)((\id\otimes\nu)(\nu(b)))
+(\id\otimes\theta_{\vdash})(\theta_{\dashv}(d))\bullet(\tau\otimes\id)
((\id\otimes\tau)((\nu\otimes\id)(\nu(b))))
\end{align*}
\begin{align*}
=&\;(\id\otimes\theta_{\vdash})(\theta_{\vdash}(d))\bullet(\nu\otimes\id)(\nu(b))
+(\id\otimes\theta_{\dashv})(\theta_{\dashv}(d))\bullet(\id\otimes\tau)
((\tau\otimes\id)((\id\otimes\tau)((\nu\otimes\id)(\nu(b)))))\\[-1mm]
&\;+(\id\otimes\theta_{\vdash})(\theta_{\vdash}(d))\bullet(\tau\otimes\id)
((\nu\otimes\id)(\nu(b)))+(\id\otimes\theta_{\dashv})(\theta_{\vdash}(d))
\bullet(\id\otimes\tau)((\tau\otimes\id)((\nu\otimes\id)(\nu(b))))\\[-1mm]
&\;+(\id\otimes\theta_{\dashv})(\theta_{\vdash}(d))
\bullet(\id\otimes\tau)((\nu\otimes\id)(\nu(b)))
+(\id\otimes\theta_{\vdash})(\theta_{\dashv}(d))\bullet(\tau\otimes\id)
((\id\otimes\tau)((\nu\otimes\id)(\nu(b))))\\
=&\;(\theta_{\vdash}\otimes\id)(\theta_{\vdash}(d))\bullet(\nu\otimes\id)(\nu(b))
+(\theta_{\dashv}\otimes\id)(\theta_{\dashv}(d))\bullet(\tau\otimes\id)
((\id\otimes\tau)((\tau\otimes\id)((\nu\otimes\id)(\nu(b)))))\\
&\;+(\theta_{\dashv}\otimes\id)(\theta_{\dashv}(d))\bullet(\tau\otimes\id)
((\id\otimes\tau)((\nu\otimes\id)(\nu(b))))
+(\theta_{\vdash}\otimes\id)(\theta_{\dashv}(d))\bullet(\id\otimes\tau)
((\nu\otimes\id)(\nu(b)))\\[-1mm]
&\;+(\theta_{\dashv}\otimes\id)(\theta_{\vdash}(d))
\bullet(\tau\otimes\id)((\nu\otimes\id)(\nu(b)))
+(\theta_{\vdash}\otimes\id)(\theta_{\dashv}(d))\bullet(\id\otimes\tau)
((\tau\otimes\id)((\nu\otimes\id)(\nu(b))))\\
=&\;(\theta_{\vdash}\otimes\id)(\theta_{\vdash}(d))\bullet(\nu\otimes\id)(\nu(b))
+(\theta_{\dashv}\otimes\id)(\theta_{\dashv}(d))\bullet(\tau\otimes\id)
((\id\otimes\tau)((\tau\otimes\id)((\id\otimes\nu)(\nu(b)))))\\
&\;+(\theta_{\dashv}\otimes\id)(\theta_{\vdash}(d))
\bullet(\tau\otimes\id)((\nu\otimes\id)(\nu(b)))
+(\theta_{\vdash}\otimes\id)(\theta_{\dashv}(d))\bullet(\id\otimes\tau)
((\tau\otimes\id)((\id\otimes\nu)(\nu(b))))\\
=&\;(\Delta\otimes\id)(\Delta(d\otimes b)).
\end{align*}
Thus, $(D\otimes B, \Delta)$ is a coassociative coalgebra.
\end{proof}

Recall that a bilinear form $\varpi(-,-)$ on a Zinbiel algebra $(B, \diamond)$
is called {\bf invariant} if
$$
\varpi(b_{1}\diamond b_{2},\; b_{3})=\varpi(b_{2},\; b_{1}\diamond b_{3}+b_{3}\diamond b_{1}),
$$
for any $b_{1}, b_{2}, b_{3}\in B$. A Zinbiel algebra $(B, \diamond)$ with a nondegenerate
skew-symmetric invariant bilinear form $\varpi(-, -)$ is called a {\bf quadratic Zinbiel
algebra} and denoted by $(B, \diamond, \varpi)$. The bilinear form $\varpi(-,-)$ can
naturally expand to the tensor product $B\otimes B\otimes\cdots\otimes B$, i.e.,
$$
\varpi(-,-):\qquad (\underbrace{B\otimes\cdots\otimes B}_{\mbox{\tiny $k$-fold}})\otimes
(\underbrace{B\otimes\cdots\otimes B}_{\mbox{\tiny $k$-fold}})\longrightarrow\Bbbk,
$$
$\varpi(b_{1}\otimes b_{2}\otimes\cdots\otimes b_{k},\ \ b'_{1}\otimes b'_{2}
\otimes\cdots\otimes b'_{k})=\prod_{i=1}^{k}\varpi(b_{i}, b'_{i})$
for any $b_{1}, b_{2},\cdots, b_{k}, b'_{1}, b'_{2},\cdots, b'_{k}\in B$.
Then $\varpi(-,-)$ on $B\otimes B\otimes\cdots\otimes B$ is also a
nondegenerate bilinear form.

\begin{lem}[\cite{HL1}]\label{lem:qZ-dual}
Let $(B, \diamond, \varpi)$ be a quadratic Zinbiel algebra. Define a linear map
$\nu_{\varpi}: B\rightarrow B\otimes B$ by $\varpi(\nu_{\varpi}(b_{1}),\;
b_{2}\otimes b_{3})=\varpi(b_{1},\; b_{2}\diamond b_{3})$, for any
$b_{1}, b_{2}, b_{3}\in B$. Then $(B, \nu_{\varpi})$ is a Zinbiel coalgebra.
\end{lem}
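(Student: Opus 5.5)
The plan is to dualize the Zinbiel identity through the nondegenerate form $\varpi$ extended to $B\otimes B\otimes B$. Since $\varpi$ is nondegenerate on $B^{\otimes 3}$, it suffices to show that for all $b_{1},b_{2},b_{3},b_{4}\in B$
$$
\varpi\big((\id\otimes\nu_{\varpi})\nu_{\varpi}(b_{1}),\; b_{2}\otimes b_{3}\otimes b_{4}\big)
=\varpi\big((\nu_{\varpi}\otimes\id)\nu_{\varpi}(b_{1})+(\tau\otimes\id)(\nu_{\varpi}\otimes\id)\nu_{\varpi}(b_{1}),\; b_{2}\otimes b_{3}\otimes b_{4}\big).
$$

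First I record how $\nu_{\varpi}$ transposes. Writing $\nu_{\varpi}(b_{1})=\sum b_{1(1)}\otimes b_{1(2)}$, the defining relation and the product form of $\varpi$ on tensors give
$$
\varpi\big((\id\otimes\nu_{\varpi})\nu_{\varpi}(b_{1}),\, b_{2}\otimes b_{3}\otimes b_{4}\big)
=\sum\varpi(b_{1(1)},b_{2})\,\varpi(b_{1(2)},b_{3}\diamond b_{4})
=\varpi(b_{1},\, b_{2}\diamond(b_{3}\diamond b_{4})).
$$
The second equality uses $\varpi(\nu_{\varpi}(b_{1}), b_{2}\otimes c)=\varpi(b_{1},b_{2}\diamond c)$ with $c=b_{3}\diamond b_{4}$, after collecting the sum by bilinearity.

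Similarly,
$$
\varpi\big((\nu_{\varpi}\otimes\id)\nu_{\varpi}(b_{1}),\, b_{2}\otimes b_{3}\otimes b_{4}\big)=\varpi(b_{1},(b_{2}\diamond b_{3})\diamond b_{4}).
$$
For the twisted term, $\varpi((\tau\otimes\id)X,\, b_{2}\otimes b_{3}\otimes b_{4})=\varpi(X,\, b_{3}\otimes b_{2}\otimes b_{4})$, so it gives $\varpi(b_{1},(b_{3}\diamond b_{2})\diamond b_{4})$.

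The required identity is then exactly $\varpi$ applied to $b_{1}$ and the Zinbiel identity
$$
b_{2}\diamond(b_{3}\diamond b_{4})=(b_{2}\diamond b_{3})\diamond b_{4}+(b_{3}\diamond b_{2})\diamond b_{4}.
$$
So the two sides agree on all $b_{2}\otimes b_{3}\otimes b_{4}$, and nondegeneracy finishes the proof.

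The only delicate point is bookkeeping the transpose of $\tau$ and the order of tensor slots, that is, checking that pairing with $(\tau\otimes\id)X$ really swaps the first two test vectors. Invariance and skew-symmetry of $\varpi$ are not needed for this coalgebra statement, since only nondegeneracy and the definition of $\nu_{\varpi}$ enter.
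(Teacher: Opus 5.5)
Your proof is correct. Pairing the three terms with $b_{2}\otimes b_{3}\otimes b_{4}$ reduces the Zinbiel coalgebra identity exactly to $\varpi(b_{1},-)$ applied to the Zinbiel identity, and you handle the transpose of $\tau\otimes\id$ correctly. The paper only cites this lemma, but your argument is the same dualization-plus-nondegeneracy argument it uses for the analogous Lemma~\ref{lem:comd-dual}, and you are right that only nondegeneracy of $\varpi$ (not invariance or skew-symmetry) is needed.
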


\begin{ex}\label{ex:qu-zib}
Let $(B, \diamond)$ be a $4$-dimensional Zinbiel algebra, where the vector space
$B={\rm span}_{\Bbbk}\{e_{1}, e_{2}, e_{3}, e_{4}\}$, the products are given by
$e_{1}\diamond e_{1}=e_{2}$, $e_{4}\diamond e_{4}=e_{3}$, $e_{1}\diamond e_{4}
=2e_{3}-e_{2}$ and $e_{4}\diamond e_{1}=2e_{2}-e_{3}$. If we define a bilinear form
$\varpi(-,-)$ on $(B, \diamond)$ by $\varpi(e_{3}, e_{1})=\varpi(e_{4}, e_{2})=1
=-\varpi(e_{1}, e_{3})=-\varpi(e_{2}, e_{4})$ and all others are zero, then
$(B, \diamond, \varpi)$ is a quadratic Zinbiel algebra. Then we get a Zinbiel coalgebra
$(B, \nu_{\varpi})$, where $\nu_{\varpi}(e_{1})=-e_{2}\otimes e_{2}-e_{2}\otimes e_{3}
+2e_{3}\otimes e_{2}$, $\nu_{\varpi}(e_{4})=e_{3}\otimes e_{3}-2e_{2}\otimes e_{3}
+e_{3}\otimes e_{2}$ and $\nu_{\varpi}(e_{2})=\nu_{\varpi}(e_{3})=0$.
\end{ex}

\begin{defi}[\cite{HLLZ,Lu}]\label{def:bidi}
A \textbf{diassociative bialgebra} is a quintuple $(D, \dashv, \vdash, \theta_{\dashv},
\theta_{\vdash})$, where $(D, \dashv, \vdash)$ is a diassociative algebra, $(D,
\theta_{\dashv}, \theta_{\vdash})$ is a diassociative coalgebra, and satisfying the
following compatibility conditions:
\begin{align}
&\qquad\qquad (\id\otimes\fl_{\dashv}(d_{1}))(\theta_{\dashv}(d_{2}))
=(\fr_{\vdash}(d_{2})\otimes\id)(\theta_{\vdash}(d_{1})),         \label{bidi1}\\
&\qquad\qquad (\fl_{\dashv}(d_{1})\otimes\id)(\theta_{\vdash}(d_{2}))
=(\id\otimes\fl_{\dashv}(d_{2}))(\tau(\theta_{\vdash}(d_{1}))),    \label{bidi2}\\
&\qquad\qquad (\fr_{\vdash}(d_{1})\otimes\id)(\tau(\theta_{\dashv}(d_{2})))
=(\id\otimes\fr_{\vdash}(d_{2}))(\theta_{\dashv}(d_{1})),          \label{bidi3}\\
& \theta_{\vdash}(d_{1}\vdash d_{2})
=(\id\otimes\fl_{\vdash}(d_{1}))(\theta_{\vdash}(d_{2}))
-((\fr_{\vdash}-\fr_{\dashv})(d_{2})\otimes\id)
((\theta_{\vdash}-\theta_{\dashv})(d_{1})),                         \label{bidi4}\\
&\quad \theta_{\vdash}(d_{1}\dashv d_{2})=(\id\otimes\fl_{\dashv}(d_{1}))
((\theta_{\vdash}-\theta_{\dashv})(d_{2}))
+(\fr_{\dashv}(d_{2})\otimes\id)(\theta_{\vdash}(d_{1})),          \label{bidi5}\\
&\quad \theta_{\dashv}(d_{1}\vdash d_{2})=(\id\otimes(\fl_{\vdash}-\fl_{\dashv})(d_{1}))
(\theta_{\dashv}(d_{2}))+(\fr_{\vdash}(d_{2})\otimes\id)
(\theta_{\dashv}(d_{1})),                                          \label{bidi6}\\
& \theta_{\dashv}(d_{1}\dashv d_{2})=(\fr_{\dashv}(d_{2})\otimes\id)
(\theta_{\dashv}(d_{1}))-(\id\otimes(\fl_{\vdash}-\fl_{\dashv})(d_{1}))
((\theta_{\vdash}-\theta_{\dashv})(d_{2})),                       \label{bidi7}\\
& ((\fl_{\vdash}-\fl_{\dashv})(d_{1})\otimes\id)(\theta_{\vdash}(d_{2}))
+(\id\otimes\fl_{\dashv}(d_{2}))(\tau(\theta_{\dashv}(d_{1})))    \label{bidi8}\\[-1mm]
&\qquad\qquad\quad=((\fr_{\vdash}-\fr_{\dashv})(d_{2})\otimes\id)
(\tau((\theta_{\vdash}-\theta_{\dashv})(d_{1})))
+(\id\otimes\fr_{\dashv}(d_{1}))(\theta_{\vdash}(d_{2})),               \nonumber\\
& (\fr_{\dashv}(d_{1})\otimes\id)(\tau(\theta_{\dashv}(d_{2})))+(\id\otimes
\fr_{\vdash}(d_{2}))((\theta_{\vdash}-\theta_{\dashv})(d_{1}))        \label{bidi9}\\[-1mm]
&\qquad\qquad\quad=((\fl_{\vdash}-\fl_{\dashv})(d_{2})\otimes\id)
((\theta_{\vdash}-\theta_{\dashv})(d_{1}))
+(\id\otimes\fl_{\vdash}(d_{1}))(\tau(\theta_{\dashv}(d_{2}))),    \nonumber
\end{align}
for any $d_{1}, d_{2}\in D$.
\end{defi}

Recall that an {\bf antisymmetric infinitesimal bialgebra}, or simply an ASI bialgebra
is a triple $(A, \cdot, \Delta)$ consisting of a vector space $A$ and linear maps
$\cdot: A\otimes A\rightarrow A$ and $\Delta: A\rightarrow A\otimes A$ such that
$(A, \cdot)$ is an associative algebra, $(A, \Delta)$ is a coassociative coalgebra
and for any $a_{1}, a_{2}\in A$,
\begin{align*}
&\qquad\quad\Delta(a_{1}\cdot a_{2})=(\fr_{A}(a_{2})\otimes\id)(\Delta(a_{1}))
+(\id\otimes\fl_{A}(a_{1}))(\Delta(a_{2})), \\
&\big(\fl_{A}(a_{1})\otimes\id-\id\otimes\fr_{A}(a_{1})\big)(\Delta(a_{2}))
=\tau\big(\big(\id\otimes\fr_{A}(a_{2})-\fl_{A}(a_{2})\otimes\id\big)(\Delta(a_{1}))\big).
\end{align*}

\begin{thm}\label{thm:dias-asbia}
Let $(D, \dashv, \vdash, \theta_{\dashv}, \theta_{\vdash})$ be a diassociative bialgebra,
$(B, \diamond, \varpi)$ be a quadratic Zinbiel algebra and $(D\otimes B, \cdot)$ be the
induced associative algebra from $(D, \dashv, \vdash)$ and $(B, \diamond)$. Define a
linear map $\Delta: D\otimes B\rightarrow(D\otimes B)\otimes(D\otimes B)$ by
\begin{align}
\Delta(d\otimes b)&=\theta_{\vdash}(d)\bullet\nu_{\varpi}(b)
+\theta_{\dashv}(d)\bullet\tau(\nu_{\varpi}(b))        \label{thes-ass}\\
&:=\sum_{(b)}\sum_{(d)}(d_{(1)}\otimes b_{(1)})\otimes(d_{(2)}\otimes b_{(2)})
+\sum_{(b)}\sum_{[d]}(d_{[1]}\otimes b_{(2)})\otimes(d_{[2]}\otimes b_{(1)}), \nonumber
\end{align}
for any $d\in D$ and $b\in B$, where $\theta_{\vdash}(d)=\sum_{(d)}
d_{(1)}\otimes d_{(2)}$, $\theta_{\dashv}(d)=\sum_{[d]}d_{[1]}\otimes d_{[2]}$ and
$\nu_{\varpi}(b)=\sum_{(b)}b_{(1)}\otimes b_{(2)}$ in the Sweedler notation. Then
$(D\otimes B, \cdot, \Delta)$ is an ASI bialgebra, which is called the {\bf ASI
bialgebra induced from $(D, \dashv, \vdash, \theta_{\dashv}, \theta_{\vdash})$
by $(B, \diamond, \varpi)$}.
\end{thm}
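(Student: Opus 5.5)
By Proposition \ref{pro:diass-ass}, $(D\otimes B,\cdot)$ is an associative algebra. By Lemma \ref{lem:qZ-dual}, $(B,\nu_{\varpi})$ is a Zinbiel coalgebra, so Proposition \ref{pro:dco-coas} applied to $(D,\theta_{\dashv},\theta_{\vdash})$ and $(B,\nu_{\varpi})$ shows that $(D\otimes B,\Delta)$ is a coassociative coalgebra. It therefore remains to verify the two ASI compatibility conditions. It suffices to check them on pure tensors $a_{1}=d_{1}\otimes b_{1}$ and $a_{2}=d_{2}\otimes b_{2}$.

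The main device is to translate everything on the $B$-side into identities for $\nu_{\varpi}$ using the nondegenerate extended form $\varpi$ on $B\otimes B$. Pairing both sides with an arbitrary $b_{3}\otimes b_{4}$, the definition of $\nu_{\varpi}$ gives $\varpi(\nu_{\varpi}(b),b_{3}\otimes b_{4})=\varpi(b,b_{3}\diamond b_{4})$. Skew-symmetry and invariance then let us move a product $b_{1}\diamond-$ or $-\diamond b_{1}$ across the form. Concretely, I would first record the ``Zinbiel analogue'' identities describing $\nu_{\varpi}(b_{1}\diamond b_{2})$ and $\nu_{\varpi}(b_{2}\diamond b_{1})$ in terms of $(\fl_{\diamond}(b_{1})\otimes\id)\nu_{\varpi}(b_{2})$, $(\id\otimes\fl_{\diamond}(b_{1}))\nu_{\varpi}(b_{2})$, $(\fr_{\diamond}(b_{1})\otimes \id)\nu_{\varpi}(b_2)$, their $\tau$-twists, and the symmetric product $b_{1}\diamond b_{2}+b_{2}\diamond b_{1}$. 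These are the identities that make $(B,\diamond,\nu_{\varpi})$ a Zinbiel bialgebra, and they follow from invariance and the Zinbiel identity.

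Next I would expand $\Delta(a_{1}\cdot a_{2})$. It equals
\[
\theta_{\vdash}(d_{1}\vdash d_{2})\bullet\nu_{\varpi}(b_{1}\diamond b_{2})
+\theta_{\dashv}(d_{1}\vdash d_{2})\bullet\tau\nu_{\varpi}(b_{1}\diamond b_{2})
+\theta_{\vdash}(d_{1}\dashv d_{2})\bullet\nu_{\varpi}(b_{2}\diamond b_{1})
+\theta_{\dashv}(d_{1}\dashv d_{2})\bullet\tau\nu_{\varpi}(b_{2}\diamond b_{1}).
\]
I would substitute \eqref{bidi4}--\eqref{bidi7} for the four $\theta$-terms and the $B$-side identities above for the $\nu_{\varpi}$-terms. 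On the other side, I would expand $(\fr(a_{2})\otimes\id)\Delta(a_{1})+(\id\otimes\fl(a_{1}))\Delta(a_{2})$ using the formula for $\cdot$; each of its two summands splits into $\vdash/\dashv$ pieces. After grouping terms according to which $D$-tensor appears (for example $(\id\otimes\fl_{\vdash}(d_{1}))\theta_{\vdash}(d_{2})$ or $(\fr_{\dashv}(d_{2})\otimes\id)\theta_{\dashv}(d_{1})$), one must match the $B$-coefficients. This matching is exactly where the $B$-identities are used. The second ASI condition is handled the same way: expand both sides into eight types of terms, then use \eqref{bidi1}--\eqref{bidi3}, \eqref{bidi8} and \eqref{bidi9} together with the $\tau$-symmetry properties of $\nu_{\varpi}$ obtained from skew-symmetry of $\varpi$.

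The main obstacle is the bookkeeping in the first compatibility. The terms with differences $(\theta_{\vdash}-\theta_{\dashv})$ and $(\fr_{\vdash}-\fr_{\dashv})$ in \eqref{bidi4}--\eqref{bidi7} must cancel against mixed $B$-coefficients coming from the Zinbiel identity, since $b_{1}\diamond b_{2}+b_{2}\diamond b_{1}$ is commutative. To control this, I would first verify everything after pairing with $\varpi$ against $(d'\otimes b_{3})\otimes(d''\otimes b_{4})$-type test elements. Equivalently, I would dualize the $B$-factor completely, so that each $B$-coefficient becomes a scalar $\varpi(b_{1},\,\text{(product of }b_{2},b_{3},b_{4}))$. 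These scalars can then be compared directly using invariance and the Zinbiel identity. That reduces the check to a finite list of scalar identities in $B$, each paired with one of the defining axioms of the diassociative bialgebra.
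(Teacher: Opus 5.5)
Your outer structure matches the paper: coassociativity from Lemma~\ref{lem:qZ-dual} and Proposition~\ref{pro:dco-coas}, then the two compatibility conditions checked after pairing the $B$-factor against $\varpi$. Your final paragraph, if carried out, is essentially the paper's argument. But the $B$-side mechanism you build the proof around is false.

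First, $(B,\diamond,\nu_{\varpi})$ is not a Zinbiel bialgebra in general. Pair \eqref{zbialg1} for $\nu_{\varpi}$ with $e\otimes f$ and use invariance. The left-hand side gives $\varpi\big(b_{1},(b_{2}\diamond e+e\diamond b_{2})\diamond f\big)$, and each of the two right-hand terms gives this same scalar. So \eqref{zbialg1} would force $B\diamond(B\diamond B)=0$, which fails for general quadratic Zinbiel algebras.

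More seriously, no identity of the kind you describe exists for $\nu_{\varpi}(b_{1}\diamond b_{2})$ itself. Use the paper's elements $\Phi_{k}=\Phi^{b_{1},b_{2}}_{k}$, e.g. $\varpi(\Phi_{1},e\otimes f)=\varpi(b_{1},(e\diamond f)\diamond b_{2})$ and $\varpi(\Phi_{2},e\otimes f)=\varpi(b_{1},(f\diamond e)\diamond b_{2})$. Then $\nu_{\varpi}(b_{1}\diamond b_{2})=-\Phi_{1}$ and $\tau\nu_{\varpi}(b_{1}\diamond b_{2})=-\Phi_{2}$. By contrast, the eight $B$-tensors on the right-hand side of the first condition contain $\Phi_{1},\Phi_{2}$ only through $\Phi_{1}+\Phi_{2}$. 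The $B$-side relations that do hold are invariance (Frobenius-type) expansions, not bialgebra-type identities. All the cocycle-type input has to come from $D$.

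Consequently your plan of substituting on both sides, grouping by $D$-tensors and matching $B$-coefficients cannot be executed. Grouping by $D$-tensors is unreliable anyway, since those tensors are linearly dependent through the axioms. What works is the reverse grouping, which is what the paper does. Expand all twelve $B$-tensors in $\Phi_{1},\dots,\Phi_{5}$: the four coming from $\nu_{\varpi}(b_{1}\diamond b_{2})$, $\nu_{\varpi}(b_{2}\diamond b_{1})$ and their twists, plus the eight action terms. Then show that the $D$-coefficient of each $\Phi_{k}$ vanishes.

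Doing this also shows your allocation of axioms is incomplete. After \eqref{bidi4}--\eqref{bidi7}, the first condition still leaves
$\big((\fr_{\vdash}(d_{2})\otimes\id)\theta_{\vdash}(d_{1})-(\id\otimes\fl_{\dashv}(d_{1}))\theta_{\dashv}(d_{2})\big)\bullet(\Phi_{1}+\Phi_{4})$,
which vanishes only by \eqref{bidi1}. So \eqref{bidi1} is already needed for the first compatibility condition, not just the second.
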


\begin{proof}
First, by Lemma \ref{lem:qZ-dual} and Proposition \ref{pro:dco-coas}, we get that
$(D\otimes B, \Delta)$ is a coassociative coalgebra. Second, for any $b, b'\in B$
and $d, d'\in D$, we have
\begin{align*}
\Delta((d\otimes b)\cdot(d'\otimes b'))
&=\Delta((d\vdash d')\otimes(b\diamond b')+(d\dashv d')\otimes(b'\diamond b))\\
&=\theta_{\vdash}(d\vdash d')\bullet\nu_{\varpi}(b\diamond b')
+\theta_{\dashv}(d\vdash d')\bullet\tau(\nu_{\varpi}(b\diamond b'))\\[-1mm]
&\quad+\theta_{\vdash}(d\dashv d')\bullet\nu_{\varpi}(b'\diamond b)
+\theta_{\dashv}(d\dashv d')\bullet\tau(\nu_{\varpi}(b'\diamond b)),
\end{align*}
and
\begin{align*}
&\; (\fr_{D\otimes B}(d'\otimes b')\otimes\id)(\Delta(d\otimes b))
+(\id\otimes\fl_{D\otimes B}(d\otimes b))(\Delta(d'\otimes b')) \\
=&\; (\fr_{D\otimes B}(d'\otimes b')\otimes\id)\Big(\sum_{(b)}\sum_{(d)}
(d_{(1)}\otimes b_{(1)})\otimes(d_{(2)}\otimes b_{(2)})
+\sum_{(b)}\sum_{[d]}((d_{[1]}\otimes b_{(2)})\otimes(d_{[2]}\otimes b_{(1)})\Big)\\[-2mm]
&\; +(\id\otimes\fl_{D\otimes B}(d\otimes b))\Big(\sum_{(b')}\sum_{(d')}
(d'_{(1)}\otimes b'_{(1)})\otimes(d'_{(2)}\otimes b'_{(2)})
+\sum_{(b')}\sum_{[d']}((d'_{[1]}\otimes b'_{(2)})\otimes(d'_{[2]}\otimes b'_{(1)})\Big)\\
=&\; \sum_{(b)}\sum_{(d)}\Big(((d_{(1)}\vdash d')\otimes d_{(2)})\bullet
((b_{(1)}\diamond b')\otimes b_{(2)})+((d_{(1)}\dashv d')\otimes d_{(2)})\bullet
((b'\diamond b_{(1)})\otimes b_{(2)})\Big)\\[-2mm]
&\; +\sum_{(b)}\sum_{[d]}\Big(((d_{[1]}\vdash d')\otimes d_{[2]})\bullet
((b_{(2)}\diamond b')\otimes b_{(1)})+((d_{[1]}\dashv d')\otimes d_{[2]})\bullet
((b'\diamond b_{(2)})\otimes b_{(1)})\Big)\\[-2mm]
&\; +\sum_{(b')}\sum_{(d')}\Big((d'_{(1)}\otimes(d\vdash d'_{(2)}))\bullet
(b'_{(1)}\otimes(b\diamond b'_{(2)}))+(d'_{(1)}\otimes(d\dashv d'_{(2)}))\bullet
(b'_{(1)}\otimes(b'_{(2)}\diamond b))\Big)\\[-2mm]
&\; +\sum_{(b')}\sum_{[d']}\Big((d'_{[1]}\otimes(d\vdash d'_{[2]}))\bullet
(b'_{(2)}\otimes(b\diamond b'_{(1)}))+(d'_{[1]}\otimes(d\dashv d'_{[2]}))\bullet
(b'_{(2)}\otimes(b'_{(1)}\diamond b))\Big).
\end{align*}
For any $b, b'\in B$, we denote $\Phi^{b, b'}_{1}, \Phi^{b, b'}_{2}, \Phi^{b, b'}_{3},
\Phi^{b, b'}_{4}, \Phi^{b, b'}_{5}\in B\otimes B$ such that
\begin{align*}
&\varpi(\Phi^{b, b'}_{1},\; e\otimes f)=\varpi(b,\; (e\diamond f)\diamond b'),\qquad\quad
\varpi(\Phi^{b, b'}_{2},\; e\otimes f)=\varpi(b,\; (f\diamond e)\diamond b'),\\
&\varpi(\Phi^{b, b'}_{3},\; e\otimes f)=\varpi(b,\; (b'\diamond e)\diamond f),\qquad\quad
\varpi(\Phi^{b, b'}_{4},\; e\otimes f)=\varpi(b,\; (e\diamond b')\diamond f),\\
&\varpi(\Phi^{b, b'}_{5},\; e\otimes f)=\varpi(b,\; (b'\diamond f)\diamond e
+(f\diamond b')\diamond e),
\end{align*}
for any $e, f\in B$. Note that $\varpi(\nu_{\varpi}(b\diamond b'),\; e\otimes f)
=\varpi(b\diamond b',\; e\diamond f)=-\varpi(b,\; (e\diamond f)\diamond b')$. We get
$\nu_{\varpi}(b\diamond b')=-\Phi^{b, b'}_{1}$. Similarly, we have
$\tau(\nu_{\varpi}(b\diamond b'))=-\Phi^{b, b'}_{2}$, $\nu_{\varpi}(b'\diamond b)
=\Phi^{b, b'}_{1}+\Phi^{b, b'}_{3}+\Phi^{b, b'}_{4}$, $\tau(\nu_{\varpi}(b'\diamond b)
=\Phi^{b, b'}_{2}+\Phi^{b, b'}_{5}$, $(b_{(1)}\diamond b')\otimes b_{(2)}
=-\Phi^{b, b'}_{4}$, $(b'\diamond b_{(1)})\otimes b_{(2)}=\Phi^{b, b'}_{3}+\Phi^{b, b'}_{4}$,
$(b_{(2)}\diamond b')\otimes b_{(1)}=-\Phi^{b, b'}_{1}-\Phi^{b, b'}_{2}$,
$(b'\diamond b_{(2)})\otimes b_{(1)}=\Phi^{b, b'}_{1}+\Phi^{b, b'}_{2}+\Phi^{b, b'}_{5}
=-b'_{(1)}\otimes(b\diamond b'_{(2)})$, $b'_{(1)}\otimes(b'_{(2)}\diamond b)
=\Phi^{b, b'}_{1}+\Phi^{b, b'}_{2}+\Phi^{b, b'}_{3}+\Phi^{b, b'}_{4}+\Phi^{b, b'}_{5}$,
$b'_{(2)}\otimes(b\diamond b'_{(1)})=\Phi^{b, b'}_{5}$
and $b'_{(2)}\otimes(b'_{(1)}\diamond b)=-\Phi^{b, b'}_{3}-\Phi^{b, b'}_{5}$. Then we obtain
\begin{align*}
&\; \Delta((d\otimes b)\cdot(d'\otimes b'))-(\fr_{D\otimes B}(d'\otimes b')\otimes\id)
(\Delta(d\otimes b))-(\id\otimes\fl_{D\otimes B}(d\otimes b))(\Delta(d'\otimes b'))\\
%=&\;\Big(\theta_{\vdash}(d\dashv d')-\theta_{\vdash}(d\vdash d')
%+(d_{[1]}\vdash d')\otimes d_{[2]}-(d_{[1]}\dashv d')\otimes d_{[2]}
%+(d'_{(1)}\otimes(d\vdash d'_{(2)})-d'_{(1)}\otimes(d\dashv d'_{(2)})
%\Big)\bullet\Phi^{b, b'}_{1}\\
%&\; +\Big(\theta_{\dashv}(d\dashv d')-\theta_{\dashv}(d\vdash d')
%+((d_{[1]}\vdash d')\otimes d_{[2]}-(d_{[1]}\dashv d')\otimes d_{[2]}
%+(d'_{(1)}\otimes(d\vdash d'_{(2)})-d'_{(1)}\otimes(d\dashv d'_{(2)})
%\Big)\bullet\Phi^{b, b'}_{2}\\
%&\; +\Big(\theta_{\vdash}(d\dashv d')-(d_{(1)}\dashv d')\otimes d_{(2)}
%-d'_{(1)}\otimes(d\dashv d'_{(2)})+d'_{[1]}\otimes(d\dashv d'_{[2]})
%\Big)\bullet\Phi^{b, b'}_{3}\\
%&\; +\Big(\theta_{\vdash}(d\dashv d')+(d_{[1]}\vdash d')\otimes d_{[2]}
%-(d_{(1)}\dashv d')\otimes d_{(2)}-d'_{(1)}\otimes(d\dashv d'_{(2)})
%\Big)\bullet\Phi^{b, b'}_{4}\\
%&\; +\Big(\theta_{\dashv}(d\dashv d')-(d_{[1]}\dashv d')\otimes d_{[2]}
%+(d'_{(1)}\otimes(d\vdash d'_{(2)})-d'_{(1)}\otimes(d\dashv d'_{(2)})
%-(d'_{[1]}\otimes(d\vdash d'_{[2]})+d'_{[1]}\otimes(d\dashv d'_{[2]})
%\Big)\bullet\Phi^{b, b'}_{5}\\
%
=&\;\Big(\theta_{\vdash}(d\dashv d')-\theta_{\vdash}(d\vdash d')
+(\fr_{\vdash}(d')\otimes\id)(\theta_{\dashv}(d))
-(\fr_{\dashv}(d')\otimes\id)(\theta_{\dashv}(d))\\[-2mm]
&\qquad\qquad\qquad\qquad\qquad +(\id\otimes\fl_{\vdash}(d))(\theta_{\vdash}(d'))
-(\id\otimes\fl_{\dashv}(d))(\theta_{\vdash}(d'))\Big)\bullet\Phi^{b, b'}_{1}\\[-1mm]
&\;+\Big(\theta_{\dashv}(d\dashv d')-\theta_{\dashv}(d\vdash d')
+(\fr_{\vdash}(d')\otimes\id)(\theta_{\dashv}(d))
-(\fr_{\dashv}(d')\otimes\id)(\theta_{\dashv}(d))\\[-2mm]
&\qquad\qquad\qquad\qquad\qquad +(\id\otimes\fl_{\vdash}(d))(\theta_{\vdash}(d'))
-(\id\otimes\fl_{\dashv}(d))(\theta_{\vdash}(d'))\Big)\bullet\Phi^{b, b'}_{2}
\end{align*}
\begin{align*}
&\;+\Big(\theta_{\vdash}(d\dashv d')-(\fr_{\dashv}(d')\otimes\id)(\theta_{\vdash}(d))
-(\id\otimes\fl_{\dashv}(d))(\theta_{\vdash}(d'))
+(\id\otimes\fl_{\dashv}(d))(\theta_{\dashv}(d'))\Big)\bullet\Phi^{b, b'}_{3}\\[-1mm]
&\;+\Big(\theta_{\vdash}(d\dashv d')+(\fr_{\vdash}(d')\otimes\id)(\theta_{\dashv}(d))
-(\fr_{\dashv}(d')\otimes\id)(\theta_{\vdash}(d))
-(\id\otimes\fl_{\dashv}(d))(\theta_{\vdash}(d'))\Big)\bullet\Phi^{b, b'}_{4}\\[-1mm]
&\;+\Big(\theta_{\dashv}(d\dashv d')-(\fr_{\dashv}(d')\otimes\id)(\theta_{\dashv}(d))
+(\id\otimes\fl_{\vdash}(d))(\theta_{\vdash}(d'))
-(\id\otimes\fl_{\dashv}(d))(\theta_{\vdash}(d'))\\[-2mm]
&\qquad\qquad\qquad\qquad\qquad -(\id\otimes\fl_{\vdash}(d))(\theta_{\dashv}(d'))
+(\id\otimes\fl_{\dashv}(d))(\theta_{\dashv}(d'))\Big)\bullet\Phi^{b, b'}_{5}.
\end{align*}
Since $(D, \dashv, \vdash, \theta_{\dashv}, \theta_{\vdash})$ is a diassociative bialgebra,
by Eqs. \eqref{bidi4} and \eqref{bidi5}, we get that the term for $\Phi^{b, b'}_{1}$
in the above equation is zero. By Eqs. \eqref{bidi6} and \eqref{bidi7}, we get that
the term for $\Phi^{b, b'}_{2}$ in the above equation is zero. By Eq. \eqref{bidi5},
we get that the term for $\Phi^{b, b'}_{3}$ in the above equation is zero. By Eqs.
\eqref{bidi1} and \eqref{bidi5}, we get that the term for $\Phi^{b, b'}_{2}$ in the
above equation is zero. By Eq. \eqref{bidi7}, we get that the term for $\Phi^{b, b'}_{5}$
in the above equation is zero. Thus, $\Delta((d\otimes b)\cdot(d'\otimes b'))
-(\fr_{D\otimes B}(d'\otimes b')\otimes\id)(\Delta(d\otimes b))-(\id\otimes\fl_{D\otimes B}
(d\otimes b))(\Delta(d'\otimes b'))=0$. Similarly, we also have $\big(\fl_{D\otimes B}
(d\otimes b)\otimes\id-\id\otimes\fr_{D\otimes B}(d\otimes b)\big)(\Delta(d'\otimes b'))
-\tau\big(\big(\id\otimes\fr_{D\otimes B}(d'\otimes b')-\fl_{D\otimes B}(d'\otimes b')
\otimes\id\big)(\Delta(d\otimes b))=0$.
Thus, $(D\otimes B, \cdot, \Delta)$ is an ASI bialgebra.
\end{proof}

\begin{ex}\label{ex:ind-diabi}
We consider $4$-dimensional diassociative bialgebra $(D, \dashv, \vdash, \theta_{\dashv},
\theta_{\vdash})$, where vector space $D={\rm span}_{\Bbbk}\{x_{1}, x_{2}, x_{3}, x_{4}\}$
and the nonzero products and coporducts are given by
$$
x_{2}\dashv x_{2}=x_{1},\qquad x_{3}\vdash x_{2}=-x_{4},\qquad
\theta_{\vdash}(x_{2})=-x_{1}\otimes x_{4},\qquad \theta_{\dashv}(x_{3})=x_{4}\otimes x_{4}.
$$
Let $(B={\rm span}_{\Bbbk}\{x_{1}, x_{2}, x_{3}, x_{4}\}, \diamond, \varpi)$ be the
$4$-dimensional quadratic Zinbiel algebra given in Example \ref{ex:qu-zib}. By Theorem
\ref{thm:dias-asbia}, we get a $16$-dimensional ASI bialgebra $(D\otimes B, \cdot, \Delta)$,
where the nonzero products and coproducts are given by
\begin{align*}
& (x_{2}\otimes e_{1})\cdot(x_{2}\otimes e_{1})=x_{1}\otimes e_{1},\qquad\quad
(x_{2}\otimes e_{1})\cdot(x_{2}\otimes e_{4})=2x_{1}\otimes e_{2}-x_{1}\otimes e_{3},\\
& (x_{2}\otimes e_{4})\cdot(x_{2}\otimes e_{4})=x_{1}\otimes e_{3},\qquad\quad
(x_{2}\otimes e_{4})\cdot(x_{2}\otimes e_{1})=2x_{1}\otimes e_{3}-x_{1}\otimes e_{2},\\
& (x_{3}\otimes e_{1})\cdot(x_{2}\otimes e_{1})=-x_{4}\otimes e_{2},\qquad\quad
(x_{3}\otimes e_{1})\cdot(x_{2}\otimes e_{4})=x_{4}\otimes e_{2}-2x_{4}\otimes e_{3},\\
& (x_{3}\otimes e_{4})\cdot(x_{2}\otimes e_{4})=-x_{4}\otimes e_{3},\qquad\quad
(x_{3}\otimes e_{4})\cdot(x_{2}\otimes e_{4})=x_{4}\otimes e_{3}-2x_{4}\otimes e_{2},\\
& \Delta(x_{2}\otimes e_{1})=(x_{1}\otimes e_{2})\otimes(x_{4}\otimes e_{2})
+(x_{1}\otimes e_{2})\otimes(x_{4}\otimes e_{3})
-2(x_{1}\otimes e_{3})\otimes(x_{4}\otimes e_{2}),\\
& \Delta(x_{2}\otimes e_{4})=2(x_{1}\otimes e_{2})\otimes(x_{4}\otimes e_{3})
-(x_{1}\otimes e_{3})\otimes(x_{4}\otimes e_{3})
-(x_{1}\otimes e_{3})\otimes(x_{4}\otimes e_{2}),\\
& \Delta(x_{3}\otimes e_{1})=2(x_{4}\otimes e_{3})\otimes(x_{4}\otimes e_{2})
-(x_{4}\otimes e_{2})\otimes(x_{4}\otimes e_{2})
-(x_{4}\otimes e_{2})\otimes(x_{4}\otimes e_{3}),\\
& \Delta(x_{3}\otimes e_{4})=(x_{4}\otimes e_{3})\otimes(x_{4}\otimes e_{3})
+(x_{4}\otimes e_{3})\otimes(x_{4}\otimes e_{2})
-2(x_{4}\otimes e_{2})\otimes(x_{4}\otimes e_{3}).
\end{align*}
\end{ex}

Recall that an ASI bialgebra $(A, \cdot, \Delta)$ is called {\bf coboundary}
if there exists an element $r\in A\otimes A$ such that $\Delta=\Delta_{r}$, where
\begin{align}
\Delta_{r}(a)=(\id\otimes\fl_{A}(a)-\fr_{A}(a)\otimes\id)(r), \label{ass-cobo}
\end{align}
for any $a\in A$. Let $(A, \cdot)$ be an associative algebra. An element
$r=\sum_{i}x_{i}\otimes y_{i}\in A\otimes A$ is said to be {\bf skew-symmetric}
if $r=-\tau(r)$, and it is said to be {\bf ass-invariant}
if $\Delta_{r}(a)=0$ for all $a\in A$. The equation
$$
\mathbf{A}_{r}:=r_{12}\cdot r_{13}+r_{13}\cdot r_{23}-r_{23}\cdot r_{12}=0
$$
is called the {\bf associative Yang-Baxter equation} (or $\AYBE$) in $(A, \cdot)$,
where $r_{12}\cdot r_{13}=\sum_{i,j}(x_{i}\cdot x_{j})\otimes y_{i}\otimes y_{j}$,
$r_{13}\cdot r_{23}=\sum_{i,j}x_{i}\otimes x_{j}\otimes(y_{i}\cdot y_{j})$ and
$r_{23}\cdot r_{12}=\sum_{i,j}x_{j}\otimes(x_{i}\cdot y_{j})\otimes y_{i}$.

\begin{pro}[\cite{SW}]\label{pro:quasass-bia}
Let $(A, \cdot)$ be an associative algebra, $r\in A\otimes A$ and $\Delta_{r}:
A\rightarrow A\otimes A$ be the linear map defined by Eq. \eqref{ass-cobo}.
\begin{enumerate}\itemsep=0pt
\item[$(i)$] If $r$ is a skew-symmetric solution of the $\AYBE$ in $(A, \cdot)$, then
     $(A, \cdot, \Delta_{r})$ is an ASI bialgebra, which is called a {\bf triangular ASI
     bialgebra} associated with $r$.
\item[$(ii)$] If $r$ is a solution of the $\AYBE$ in $(A, \cdot)$ and $r+\tau(r)$ is
     ass-invariant, then $(A, \cdot, \Delta_{r})$ is an ASI bialgebra,
     which is called a {\bf quasi-triangular ASI bialgebra} associated with $r$.
\end{enumerate}
\end{pro}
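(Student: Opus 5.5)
The plan is to reduce both parts to the standard characterization of coboundary ASI bialgebras. For $r\in A\otimes A$, $\Delta_r$ satisfies the first compatibility condition (the derivation-type identity $\Delta(a_1\cdot a_2)=(\fr_A(a_2)\otimes\id)\Delta(a_1)+(\id\otimes\fl_A(a_1))\Delta(a_2)$) identically. This follows by expanding $\Delta_r(a_1\cdot a_2)=\sum_i x_i\otimes a_1 a_2 y_i-\sum_i x_i a_1a_2\otimes y_i$ and cancelling the cross terms $\pm x_ia_2\otimes a_1y_i$ using only associativity.

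The second compatibility condition, after substituting $\Delta_r$, reduces to the statement that $r+\tau(r)$ satisfies a "balanced" invariance:
$$
(\fl_A(a_1)\otimes\id-\id\otimes\fr_A(a_1))(\id\otimes\fl_A(a_2)-\fr_A(a_2)\otimes\id)(r+\tau(r))=0
$$
up to sign conventions. Ass-invariance of $r+\tau(r)$ kills this, and so does skew-symmetry.

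It then remains to check coassociativity. I would compute
$$
(\Delta_r\otimes\id)\Delta_r(a)-(\id\otimes\Delta_r)\Delta_r(a)
$$
directly, writing $r=\sum_i x_i\otimes y_i$. The goal is to express it as
$$
(\id\otimes\id\otimes\fl_A(a))\mathbf{A}_r-(\fr_A(a)\otimes\id\otimes\id)\mathbf{A}_r
$$
plus correction terms involving $r+\tau(r)$. The natural guess is a term of the form $\sum_j (\ldots)\otimes \Delta$-type expressions applied to $r+\tau(r)$, for instance $\sum_j x_j\otimes \Delta_{r+\tau(r)}(\ldots)$, together with a symmetric counterpart. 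With this identity in hand:

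In case $(i)$, $\mathbf{A}_r=0$ and $r+\tau(r)=0$, so both the compatibility defect and the coassociator vanish.

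In case $(ii)$, $\mathbf{A}_r=0$ and the corrections vanish because $\Delta_{r+\tau(r)}\equiv0$ by ass-invariance.

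The main obstacle is the bookkeeping in the coassociativity identity. One must correctly arrange the nine-ish terms of each iterated coproduct into $\fl/\fr$-actions of $\mathbf{A}_r$, leaving residual terms that are genuinely of the form $\Delta_{r+\tau(r)}$ evaluated at something. I would first try grouping by the position of $a$ (left, middle, right tensor factor). Terms with $a$ in the middle cancel by associativity, while those at the ends assemble into $\fr_A(a)$ and $\fl_A(a)$ acting on $r_{12}r_{13}+r_{13}r_{23}-r_{23}r_{12}$. The leftover mismatch is exactly where $\tau(r)$ enters.

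Since this result is attributed to \cite{SW}, one could alternatively cite it directly; the plan above is the self-contained verification.
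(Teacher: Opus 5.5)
Your plan is correct and follows the standard argument. The paper gives no proof of its own and only quotes the result from \cite{SW}, where it is deduced from Bai's criterion for coboundary ASI bialgebras. That criterion says that the derivation identity holds for every $\Delta_r$, and that the defect of the second compatibility condition is exactly $(\fl_A(a_1)\otimes\id-\id\otimes\fr_A(a_1))(\id\otimes\fl_A(a_2)-\fr_A(a_2)\otimes\id)(r+\tau(r))$, as you found.

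The one thing to correct is what you call the main obstacle: the coassociator has no $r+\tau(r)$ correction terms at all. The two sums with $a$ in the middle cancel after swapping $i\leftrightarrow j$. The remaining six terms then give exactly
\[
(\Delta_r\otimes\id)\Delta_r(a)-(\id\otimes\Delta_r)\Delta_r(a)=\big(\fr_A(a)\otimes\id\otimes\id-\id\otimes\id\otimes\fl_A(a)\big)(\mathbf{A}_r).
\]
Note that the overall sign here is opposite to the one you wrote. So $\tau(r)$ enters only through the second compatibility condition, and coassociativity follows from $\mathbf{A}_r=0$ alone in both $(i)$ and $(ii)$.
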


A quasi-triangular ASI bialgebra $(A, \cdot, \Delta_{r})$ is called a {\bf factorizable
ASI bialgebra} if $\mathcal{I}=r^{\sharp}+\tau(r)^{\sharp}: A^{\ast}\rightarrow A$
is an isomorphism of vector spaces.
Let $(D, \dashv, \vdash)$ be a diassociative algebra. We define two linear maps
$\mathcal{E}, \mathcal{F}: D\rightarrow\gl(D\otimes D)$ by
\begin{align}
\mathcal{E}(d)&:=(\fr_{\vdash}-\fr_{\dashv})(d)\otimes\id
+\id\otimes\fl_{\dashv}(d),                  \label{cobdi1}\\
\mathcal{F}(d)&:=\id\otimes(\fl_{\vdash}-\fl_{\dashv})(d)
-\fr_{\vdash}(d)\otimes\id,                  \label{cobdi2}
\end{align}
for any $d\in D$. An element $r\in D\otimes D$ is called {\bf symmetric} if $r=\tau(r)$.
If there exists an element $r\in D\otimes D$ such that $(D, \dashv, \vdash, \theta_{\dashv,r},
\theta_{\vdash,r})$ is a diassociative bialgebra, where $\theta_{\dashv,r}, \theta_{\vdash,r}:
D\rightarrow D\otimes D$ are given by
\begin{align}
\theta_{\dashv,r}(d)=\mathcal{F}(d)(r), \qquad\qquad
\theta_{\vdash,r}(d)=-\mathcal{E}(d)(r),    \label{cobdi}
\end{align}
for any $d\in D$, then $(D, \dashv, \vdash, \theta_{\dashv,r}, \theta_{\vdash,r})$ is called a
{\bf coboundary diassociative bialgebra}. Let $(D, \dashv, \vdash)$
be a diassociative algebra and $r=\sum_{i}x_{i}\otimes y_{i}\in D\otimes D$. The equation
$$
\mathbf{D}_{r}=r_{13}\vdash r_{23}-r_{23}\dashv r_{12}
-r_{12}\vdash r_{13}+r_{12}\dashv r_{13}=0
$$
is called the {\bf diassociative Yang-Baxter equation} (or $\DAYBE$) in the diassociative
algebra $(D, \dashv, \vdash)$, where $r_{13}\vdash r_{23}=\sum_{i,j}x_{i}\otimes x_{j}
\otimes(y_{i}\vdash y_{j})$, $r_{23}\dashv r_{12}=\sum_{i,j}x_{j}\otimes(x_{i}\dashv
y_{j})\otimes y_{i}$, $r_{12}\vdash r_{13}=\sum_{i,j}(x_{i}\vdash x_{j})\otimes y_{i}
\otimes y_{j}$ and $r_{12}\dashv r_{13}=\sum_{i,j}(x_{i}\dashv x_{j})\otimes y_{i}
\otimes y_{j}$.

\begin{pro}[\cite{Lu,HLLZ}]\label{pro:tri-di}
Let $(D, \dashv, \vdash)$ be a diassociative algebra, $r\in D\otimes D$ be a symmetric
solution of the $\DAYBE$ in $(D, \dashv, \vdash)$ and $\theta_{\dashv,r}, \theta_{\vdash,r}:
D\rightarrow D\otimes D$ be the linear maps defined by Eq. \eqref{cobdi}. Then
$(D, \dashv, \vdash, \theta_{\dashv,r}, \theta_{\vdash,r})$ is a diassociative bialgebra,
which is called a {\bf triangular diassociative bialgebra} associated with $r$.
\end{pro}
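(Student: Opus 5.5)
The plan is to verify directly that $(D, \dashv, \vdash, \theta_{\dashv,r}, \theta_{\vdash,r})$ satisfies every axiom of Definition \ref{def:bidi}. There are three groups of conditions:
\begin{enumerate}
\item[$(a)$] $(D, \theta_{\dashv,r}, \theta_{\vdash,r})$ must be a diassociative coalgebra, i.e. the coassociativity of each coproduct together with the three identities \eqref{codi};
\item[$(b)$] the cocycle-type conditions \eqref{bidi4}--\eqref{bidi7}, which express $\theta_{\vdash,r}$ and $\theta_{\dashv,r}$ on products;
\item[$(c)$] the ``twisted'' compatibility conditions \eqref{bidi1}--\eqref{bidi3}, \eqref{bidi8} and \eqref{bidi9}.
\end{enumerate}

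Group $(b)$ is handled first, and should hold for \emph{any} $r\in D\otimes D$. Write $r=\sum_i x_i\otimes y_i$. Then
\[
\theta_{\vdash,r}(d)=-\sum_i\big(x_i\vdash d-x_i\dashv d\big)\otimes y_i-\sum_i x_i\otimes (d\dashv y_i),
\qquad
\theta_{\dashv,r}(d)=\sum_i x_i\otimes\big(d\vdash y_i-d\dashv y_i\big)-\sum_i (x_i\vdash d)\otimes y_i.
\]
Substitute these into both sides of each of \eqref{bidi4}--\eqref{bidi7}. The terms reorganize using only the five diassociative axioms of Definition \ref{de:dialg}, for instance $(x\vdash d_1)\dashv d_2=x\vdash(d_1\dashv d_2)$ and $(x\dashv d_1)\vdash d_2=(x\vdash d_1)\vdash d_2$. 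In other words, $\mathcal{E}$ and $\mathcal{F}$ are the actions arising from the regular bimodule, and the coboundary maps behave like $1$-cocycles. This step is mechanical.

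Groups $(a)$ and $(c)$ are where $r=\tau(r)$ and $\mathbf{D}_r=0$ enter. For $(c)$, I would compute the difference of the two sides of each identity. It should reduce to an expression of the form $(\text{operator built from }\fl,\fr)(r-\tau(r))$, which vanishes by symmetry. Possibly it also contains a term of the form $(\fl_{\dashv}(d)\otimes\id-\id\otimes\fr_{\vdash}(d))(r+\tau(r))$. I expect such terms to cancel after using both symmetry and the diassociative axioms, since the invariance requirement on $r+\tau(r)$ only appears in the quasi-triangular version.

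For $(a)$, I would follow the standard argument for coboundary ASI and Leibniz bialgebras. The plan is to show that each of the five coassociativity-type expressions equals a sum of terms of the shape $(\text{action of }d\text{ on one tensor slot})(\mathbf{D}_r)$ or its permutations $\tau_{12}\mathbf{D}_r$, $\tau_{13}\mathbf{D}_r$, plus terms involving $r-\tau(r)$. For example, I would aim for an identity like
\[
(\id\otimes\theta_{\vdash,r})\theta_{\vdash,r}(d)-(\theta_{\vdash,r}\otimes\id)\theta_{\vdash,r}(d)
=\big(\id\otimes\id\otimes\fl_{\dashv}(d)-(\fr_{\vdash}-\fr_{\dashv})(d)\otimes\id\otimes\id\big)\mathbf{D}_r,
\]
and similar identities for the remaining four. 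With a symmetric $r$ satisfying the $\DAYBE$, all these terms vanish.

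The main obstacle is step $(a)$: finding the correct permutation of $\mathbf{D}_r$ for each of the five identities in \eqref{codi}, and checking that the leftover terms really do assemble into expressions killed by symmetry. I would carry out that bookkeeping by expanding $r_{12}$, $r_{13}$, $r_{23}$ as double sums over $i,j$, and by dualizing where convenient. Concretely, this means checking the equivalent statement that $\theta_{\dashv,r}^{\ast}$ and $\theta_{\vdash,r}^{\ast}$ define a diassociative algebra on $D^{\ast}$ via $r^{\sharp}$, using that $r^{\sharp}$ is an $\mathcal{O}$-operator for the coregular bimodule when $r$ is a symmetric solution.
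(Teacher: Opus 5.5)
The paper does not prove this proposition; it quotes it from the literature. So I checked your plan directly.

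\textbf{Groups (b) and (c) are fine.}
\begin{enumerate}
\item[$(i)$] \eqref{bidi4}--\eqref{bidi7} hold for every $r$.
\item[$(ii)$] \eqref{bidi1} holds identically.
\item[$(iii)$] Once the diassociative axioms cancel the same-slot pieces, each of \eqref{bidi2}, \eqref{bidi3}, \eqref{bidi8}, \eqref{bidi9} is a sum of terms $(\phi\otimes\psi)(r-\tau(r))$. For instance, \eqref{bidi2} is exactly $(\fl_{\dashv}(d_1)\otimes\fl_{\dashv}(d_2))(r-\tau(r))=0$.
\end{enumerate}
No $r+\tau(r)$ terms arise, so your hedge there is unnecessary.

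\textbf{The explicit formula you give for (a) is false.} For symmetric $r$ the correct identity is
\[
(\id\otimes\theta_{\vdash,r})\theta_{\vdash,r}(d)-(\theta_{\vdash,r}\otimes\id)\theta_{\vdash,r}(d)
=(\id\otimes\id\otimes\fl_{\dashv}(d))(\mathbf{D}_r)-\big((\fr_{\vdash}-\fr_{\dashv})(d)\otimes\id\otimes\id\big)\big(\sigma(\mathbf{D}_r)\big),
\]
where $\sigma(a\otimes b\otimes c)=c\otimes a\otimes b$. This is a $3$-cycle, not $\tau_{12}$ or $\tau_{13}$.

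Your version already fails in a small example. Take $D=\mathrm{span}\{e,f\}$ with $x\dashv y=\epsilon(y)x$ and $x\vdash y=\epsilon(x)y$, where $\epsilon$ is the linear form with $\epsilon(e)=1$, $\epsilon(f)=0$; this is a diassociative algebra. Let $r=e\otimes f+f\otimes e$ and $d=f$. Then $\theta_{\vdash,r}(f)=-2f\otimes f$, so the left side is $0$. But $\mathbf{D}_r=f\otimes e\otimes f-f\otimes f\otimes e$, which makes your right side equal to $-f\otimes f\otimes f$.

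The conclusion is unaffected, since both terms vanish when $\mathbf{D}_r=0$. But this permutation bookkeeping is exactly what you postpone for the other four identities in (a), and it is where errors creep in.

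\textbf{How to finish (a).} The efficient route is the dual argument you mention at the end; it should be made explicit. Use $r=\tau(r)$ to rewrite $\sum_i\langle\xi_2,y_i\rangle x_i$ as $r^{\sharp}(\xi_2)$. One then gets
\[
\langle\xi_1\otimes\xi_2,\ \theta_{\star,r}(d)\rangle=-\big\langle\kl_{\star}(r^{\sharp}(\xi_1))\xi_2+\kr_{\star}(r^{\sharp}(\xi_2))\xi_1,\ d\big\rangle,\qquad \star\in\{\dashv,\vdash\},
\]
where $(\kl_{\dashv},\kr_{\dashv},\kl_{\vdash},\kr_{\vdash})=(\fr_{\vdash}^{\ast}-\fr_{\dashv}^{\ast},\,-\fl_{\vdash}^{\ast},\,-\fr_{\dashv}^{\ast},\,\fl_{\dashv}^{\ast}-\fl_{\vdash}^{\ast})$ is the coregular bimodule.
\begin{enumerate}
\item[$(i)$] By Proposition \ref{pro:o-dia}, $r^{\sharp}$ is an $\mathcal{O}$-operator for this bimodule.
\item[$(ii)$] That means exactly that its graph $\{(r^{\sharp}(\xi),\xi)\mid \xi\in D^{\ast}\}$ is closed under both products of the semidirect product $D\ltimes D^{\ast}$ of Proposition \ref{pro:bimodule}.
\item[$(iii)$] Transporting along the injective map $\xi\mapsto(r^{\sharp}(\xi),\xi)$, the two bracketed products make $D^{\ast}$ a diassociative algebra.
\item[$(iv)$] Hence so do their negatives, which are $\theta_{\dashv,r}^{\ast}$ and $\theta_{\vdash,r}^{\ast}$.
\end{enumerate}
Dualizing gives both coassociativity conditions and \eqref{codi} at once, with no case-by-case expansion.
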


Triangular ASI bialgebras and triangular diassociative bialgebras are related to the
solutions of the Yang-Baxter equation. Next, we consider the relation between
the solutions of the $\DAYBE$ in a diassociative algebra and the $\AYBE$ in the induced
associative algebra. Let $(B, \diamond, \varpi)$ be a quadratic Zinbiel algebra and
$\{e_{1}, e_{2},\cdots, e_{n}\}$ be a basis of $B$. Since $\varpi(-,-)$ is skew-symmetric
nondegenerate, we get a basis $\{f_{1}, f_{2},\cdots, f_{n}\}$ of $B$, which is called the
dual basis of $\{e_{1}, e_{2},\cdots, e_{n}\}$ with respect to $\varpi(-,-)$, by
$\varpi(f_{i}, e_{j})=\delta_{ij}$, where $\delta_{ij}$ is the Kronecker delta.

\begin{pro}\label{pro:DAYBE-AYBE}
Let $(D, \vdash, \dashv)$ be a diassociative algebra, $(B, \diamond, \varpi)$ be a
quadratic Zinbiel algebra and $(D\otimes B, \cdot)$ be the induced associative algebra.
Suppose that $r=\sum_{i}x_{i}\otimes y_{i}\in D\otimes D$ is a symmetric solution of
the $\DAYBE$ in $(D, \vdash, \dashv)$. Then
\begin{align}
\widetilde{r}=\sum_{i,j}(x_{i}\otimes e_{j})\otimes(y_{i}\otimes f_{j})
\in(D\otimes B)\otimes(D\otimes B)  \label{r-max}
\end{align}
is a skew-symmetric solution of the $\AYBE$ in $(D\otimes B, \cdot)$, where $\{e_{1}, e_{2},
\cdots, e_{n}\}$ is a basis of $B$ and $\{f_{1}, f_{2},\cdots, f_{n}\}$ is the dual basis
of $\{e_{1}, e_{2},\cdots, e_{n}\}$ with respect to $\varpi(-,-)$.
\end{pro}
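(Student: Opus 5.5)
The plan is to treat $\widetilde{r}$ as $r\bullet\kappa$, where $\kappa=\sum_{j}e_{j}\otimes f_{j}\in B\otimes B$ is the canonical element of the form $\varpi$. The statement then splits into a $D$-part and a $B$-part.

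For skew-symmetry, I first check that $\kappa$ is independent of the choice of basis. It is characterized by $\varpi(\kappa, u\otimes v)=\sum_{j}\varpi(e_{j},u)\varpi(f_{j},v)$. Since $v=\sum_{j}\varpi(f_{j},v)e_{j}$, one gets $\sum_{j}\varpi(e_{j},u)\varpi(f_{j},v)=\varpi(v,u)$. Skew-symmetry of $\varpi$ then gives $\tau(\kappa)=-\kappa$. Because $r=\tau(r)$, we obtain $\tau(\widetilde{r})=\tau(r)\bullet\tau(\kappa)=-\widetilde{r}$.

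For the $\AYBE$, I will expand each of the three terms using the product of Proposition \ref{pro:diass-ass}. For instance,
\[
\widetilde{r}_{12}\cdot\widetilde{r}_{13}=\sum_{i,k,j,l}\big((x_{i}\vdash x_{k})\otimes y_{i}\otimes y_{k}\big)\bullet\big((e_{j}\diamond e_{l})\otimes f_{j}\otimes f_{l}\big)+\big((x_{i}\dashv x_{k})\otimes y_{i}\otimes y_{k}\big)\bullet\big((e_{l}\diamond e_{j})\otimes f_{j}\otimes f_{l}\big).
\]
Each resulting $B\otimes B\otimes B$ factor is a fixed trilinear tensor built from $\kappa$ and $\diamond$. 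I will identify it by pairing with $u\otimes v\otimes w$ via $\varpi$. For example, $\varpi\big(\sum(e_{j}\diamond e_{l})\otimes f_{j}\otimes f_{l},\,u\otimes v\otimes w\big)=\varpi(w\diamond v,u)$ up to sign. Invariance $\varpi(b_{1}\diamond b_{2},b_{3})=\varpi(b_{2},b_{1}\diamond b_{3}+b_{3}\diamond b_{1})$ and skew-symmetry will let me rewrite all these functionals in terms of a small basis of trilinear functionals. Natural candidates are $\varpi(u,v\diamond w)$, $\varpi(u,w\diamond v)$ and their permutations. The Zinbiel identity reduces $\varpi(u,(v\diamond w)\diamond\cdot)$-type expressions, in the same spirit as the $\Phi^{b,b'}_{k}$ bookkeeping in the proof of Theorem \ref{thm:dias-asbia}.

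Collecting terms, $\mathbf{A}_{\widetilde{r}}$ should become a sum $\sum_{k}T_{k}\bullet\Psi_{k}$, where the $\Psi_{k}$ are these independent $B$-tensors and the $T_{k}\in D^{\otimes3}$ are built from $r_{12}\vdash r_{13}$, $r_{12}\dashv r_{13}$, $r_{13}\vdash r_{23}$, $r_{13}\dashv r_{23}$, $r_{23}\vdash r_{12}$ and $r_{23}\dashv r_{12}$. I then need each $T_{k}$ to vanish. The hope is that each $T_{k}$ equals $\pm\mathbf{D}_{r}$ or a permutation $\sigma(\mathbf{D}_{r})$ of the tensor legs, using $r=\tau(r)$ to transform terms such as $r_{13}\dashv r_{23}$ into the forms appearing in $\mathbf{D}_{r}$. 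Alternatively, a $T_{k}$ may be a combination that vanishes by the diassociative axioms together with symmetry, e.g. $(x\dashv y)\vdash z=(x\vdash y)\vdash z$.

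The main obstacle is this last matching step. I must choose the $B$-basis so that the coefficients come out exactly as permuted copies of $\mathbf{D}_{r}$, and I must track signs from the skew form. I would first try grouping by the position of the ``outer'' element in the Zinbiel product, mirroring the five $\Phi$'s used before. If that fails, I would fall back on a direct check: pair the whole expression with arbitrary $u\otimes v\otimes w$, and use the symmetric form of $\mathbf{D}_{r}$ obtained by applying $\tau_{12}$ and $\tau_{23}$.
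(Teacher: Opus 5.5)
Your proposal is correct and is essentially the paper's proof: expand $\mathbf{A}_{\widetilde{r}}$ into six terms of the form $R\bullet\Psi$, use nondegeneracy, invariance and skew-symmetry of $\varpi$ to express all six $B$-tensors through just $\Psi_{1}=\sum_{p,q}(e_{p}\diamond e_{q})\otimes f_{p}\otimes f_{q}$ and $\Psi_{2}=\sum_{p,q}(e_{q}\diamond e_{p})\otimes f_{p}\otimes f_{q}$, and observe that both $D$-coefficients vanish because $\mathbf{D}_{r}=0$ and $r=\tau(r)$, with skew-symmetry coming from $\tau(\kappa)=-\kappa$. When you carry it out, note three things: the Zinbiel identity is never used, only $\varpi(a\diamond b,c)=\varpi(c\diamond b,a)$, which follows from invariance; your sample tensor $\sum(e_{j}\diamond e_{l})\otimes f_{j}\otimes f_{l}$ pairs to $\varpi(v\diamond w,u)$ rather than $\varpi(w\diamond v,u)$; and the two coefficients are, up to sign, cyclic leg-permutations of $\mathbf{D}_{r}$ rather than literally $\mathbf{D}_{r}$, which is exactly the outcome you anticipated.
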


\begin{proof}
First, by direct calculation, we have
\begin{align*}
&\; \widetilde{r}_{12}\cdot\widetilde{r}_{13}+\widetilde{r}_{13}\cdot\widetilde{r}_{23}
-\widetilde{r}_{23}\cdot\widetilde{r}_{12}\\
=&\;\sum_{i,j}\sum_{p,q}\Big(\big((x_{i}\vdash x_{j})\otimes y_{i}\otimes y_{j}\big)
\bullet\big((e_{p}\diamond e_{q})\otimes f_{p}\otimes f_{q}\big)
+\big((x_{i}\dashv x_{j})\otimes y_{i}\otimes y_{j}\big)
\bullet\big((e_{q}\diamond e_{p})\otimes f_{p}\otimes f_{q}\big)\\[-4mm]
&\qquad\qquad+\big(x_{i}\otimes x_{j}\otimes(y_{i}\vdash y_{j})\big)\bullet
\big(e_{p}\otimes e_{q}\otimes(f_{p}\diamond f_{q})\big)
+\big(x_{i}\otimes x_{j}\otimes(y_{i}\dashv y_{j})\big)\bullet
\big(e_{p}\otimes e_{q}\otimes(f_{q}\diamond f_{p})\big)\\[-1mm]
&\qquad\qquad-\big(x_{i}\otimes(x_{j}\vdash y_{i})\otimes y_{j}\big)
\bullet\big(e_{p}\otimes(e_{q}\diamond f_{p})\otimes f_{q}\big)
-\big(x_{i}\otimes(x_{j}\dashv y_{i})\otimes y_{j}\big)
\bullet\big(e_{p}\otimes(f_{p}\diamond e_{q})\otimes f_{q}\big)\Big).
\end{align*}
Moreover, for given $s, u, v\in\{1, 2,\cdots, n\}$, we have
\begin{align*}
\varpi\Big(e_{s}\otimes e_{u}\otimes e_{v},\;
\sum_{p,q}(e_{p}\diamond e_{q})\otimes f_{p}\otimes f_{q}\Big)
&=\varpi(e_{s},\; e_{u}\diamond e_{v}),\\[-2mm]
\varpi\Big(e_{s}\otimes e_{u}\otimes e_{v},\;
\sum_{p,q}(e_{q}\diamond e_{p})\otimes f_{p}\otimes f_{q}\Big)
&=\varpi(e_{s},\; e_{v}\diamond e_{u}),\\[-2mm]
\varpi\Big(e_{s}\otimes e_{u}\otimes e_{v},\;
\sum_{p,q} e_{p}\otimes(e_{q}\diamond f_{p})\otimes f_{q}\Big)
&=\varpi(e_{s},\; e_{u}\diamond e_{v}+e_{v}\diamond e_{u}).
\end{align*}
By the nondegeneracy of $\varpi(-,-)$, we get
$$
\sum_{p,q}e_{p}\otimes(e_{q}\diamond f_{p})\otimes f_{q}
=\sum_{p,q}\big((e_{q}\diamond e_{p})\otimes f_{p}\otimes f_{q}
+(e_{p}\diamond e_{q})\otimes f_{p}\otimes f_{q}\big).
$$
Similarly, we also have $\sum_{p,q}e_{p}\otimes e_{q}\otimes(f_{p}\diamond f_{q})
=\sum_{p,q}(e_{q}\diamond e_{p})\otimes f_{p}\otimes f_{q}$, $\sum_{p,q}e_{p}
\otimes e_{q}\otimes(f_{q}\diamond f_{p})=-\sum_{p,q}e_{p}\otimes(e_{q}\diamond f_{p})
\otimes f_{q}$ and $\sum_{p,q} e_{p}\otimes(f_{p}\diamond e_{q})\otimes f_{q}=
-\sum_{p,q}(e_{p}\diamond e_{q})\otimes f_{p}\otimes f_{q}$.
%\begin{align*}
%\varpi\Big(e_{s}\otimes e_{u}\otimes e_{v},\;
%\sum_{p,q}(e_{p}\diamond e_{q})\otimes f_{p}\otimes f_{q}\Big)
%&=\varpi(e_{s},\; e_{u}\diamond e_{v}),\\[-2mm]
%\varpi\Big(e_{s}\otimes e_{u}\otimes e_{v},\;
%\sum_{p,q}(e_{q}\diamond e_{p})\otimes f_{p}\otimes f_{q}\Big)
%&=\varpi(e_{s},\; e_{v}\diamond e_{u}),\\[-2mm]
%\varpi\Big(e_{s}\otimes e_{u}\otimes e_{v},\;
%\sum_{p,q}e_{p}\otimes e_{q}\otimes(f_{p}\diamond f_{q})\Big)
%&=\varpi(e_{s},\; e_{v}\cdot e_{u}),\\[-2mm]
%\varpi\Big(e_{s}\otimes e_{u}\otimes e_{v},\;
%\sum_{p,q}e_{p}\otimes e_{q}\otimes(f_{q}\diamond f_{p})\Big)
%&=-\varpi(e_{s},\; e_{u}\diamond e_{v}+e_{v}\cdot e_{u}),\\[-2mm]
%\varpi\Big(e_{s}\otimes e_{u}\otimes e_{v},\;
%\sum_{p,q} e_{p}\otimes(e_{q}\diamond f_{p})\otimes f_{q}\Big)
%&=\varpi(e_{s},\; e_{u}\diamond e_{v}+e_{v}\cdot e_{u}),\\[-2mm]
%\varpi\Big(e_{s}\otimes e_{u}\otimes e_{v},\;
%\sum_{p,q} e_{p}\otimes(f_{p}\diamond e_{q})\otimes f_{q}\Big)
%&=-\varpi(e_{s},\; e_{u}\diamond e_{v}).
%\end{align*}
Thus, we obtain
\begin{align*}
&\; \widetilde{r}_{12}\cdot\widetilde{r}_{13}+\widetilde{r}_{13}\cdot\widetilde{r}_{23}
-\widetilde{r}_{23}\cdot\widetilde{r}_{12}\\
=&\;\sum_{i,j}\sum_{p,q}\Big((x_{i}\vdash x_{j})\otimes y_{i}\otimes y_{j}
-x_{i}\otimes x_{j}\otimes(y_{i}\dashv y_{j})
-x_{i}\otimes(x_{j}\vdash y_{i})\otimes y_{j}\qquad
\end{align*}
\begin{align*}
&\qquad\qquad\qquad\qquad\qquad\qquad+x_{i}\otimes(x_{j}\dashv y_{i})\otimes y_{j}
\Big)\bullet\big((e_{p}\diamond e_{q})\otimes f_{p}\otimes f_{q}\big)\\[-2mm]
&\quad+\Big((x_{i}\dashv x_{j})\otimes y_{i}\otimes y_{j}
+x_{i}\otimes x_{j}\otimes(y_{i}\vdash y_{j})
-x_{i}\otimes x_{j}\otimes(y_{i}\dashv y_{j})\\[-2mm]
&\qquad\qquad\qquad\qquad\qquad\qquad-x_{i}\otimes(x_{j}\vdash y_{i})\otimes y_{j}
\Big)\bullet\big((e_{q}\diamond e_{p})\otimes f_{p}\otimes f_{q}\big).
\end{align*}
Since $r$ is symmetric, we have
\begin{align*}
\mathbf{D}_{r}&=\sum_{i,j}\Big((x_{i}\vdash x_{j})\otimes y_{i}\otimes y_{j}
-x_{i}\otimes x_{j}\otimes(y_{i}\dashv y_{j})-x_{i}\otimes(x_{j}\vdash y_{i})\otimes y_{j}
+x_{i}\otimes(x_{j}\dashv y_{i})\otimes y_{j}\Big)\\[-2mm]
&=\sum_{i,j}\Big((x_{i}\dashv x_{j})\otimes y_{i}\otimes y_{j}
+x_{i}\otimes x_{j}\otimes(y_{i}\vdash y_{j})-x_{i}\otimes x_{j}\otimes(y_{i}\dashv y_{j})
-x_{i}\otimes(x_{j}\vdash y_{i})\otimes y_{j}\Big).
\end{align*}
Thus, we get that $\widetilde{r}_{12}\cdot\widetilde{r}_{13}+\widetilde{r}_{13}\cdot
\widetilde{r}_{23}-\widetilde{r}_{23}\cdot\widetilde{r}_{12}=0$ if $\mathbf{D}_{r}=0$.
Finally, for any $s, t\in\{1, 2,\cdots, n\}$, we have
$$
\varpi\Big(e_{s}\otimes e_{t},\; \sum_{j}e_{j}\otimes f_{j}\Big)
=\varpi(e_{s}, e_{t})=-\omega(e_{t}, e_{s})
=-\varpi\Big(e_{s}\otimes e_{t},\; \sum_{j}f_{j}\otimes e_{j}\Big).
$$
The nondegeneracy of $\varpi(-,-)$ yields that $\sum_{j}e_{j}\otimes f_{j}
=-\sum_{j}f_{j}\otimes e_{j}$. Hence $\widetilde{r}$ is skew-symmetric if
$r$ is symmetric. That is, $\widetilde{r}$ is skew-symmetric solution
of the $\AYBE$ in $(D\otimes B, \cdot)$.
\end{proof}

We now can give another main conclusion of this section.

\begin{thm}\label{thm:indu-triASI}
Let $(D, \dashv, \vdash, \theta_{\dashv}, \theta_{\vdash})$ be a diassociative bialgebra,
$(B, \diamond, \varpi)$ be a quadratic Zinbiel algebra and $(D\otimes B, \cdot, \Delta)$
be the induced ASI bialgebra from $(D, \dashv, \vdash, \theta_{\dashv}, \theta_{\vdash})$
by $(B, \diamond, \varpi)$. If $r\in D\otimes D$ is a symmetric solution of the $\DAYBE$
in $(D, \dashv, \vdash)$ and $\theta_{\dashv}=\theta_{\dashv,r}$, $\theta_{\vdash}=
\theta_{\vdash,r}$ are defined by Eq. \eqref{cobdi}, then $(D\otimes B, \cdot, \Delta)
=(D\otimes B, \cdot, \Delta_{\widetilde{r}})$ as ASI bialgebras, where
$\Delta_{\widetilde{r}}$ is defined by Eq. \eqref{ass-cobo} and $\widetilde{r}$ is
defined by Eq. \eqref{r-max}. That is, the induced ASI bialgebra $(D\otimes B, \cdot,
\Delta)$ is triangular if $(D, \dashv, \vdash, \theta_{\dashv}, \theta_{\vdash})$
is triangular.
\end{thm}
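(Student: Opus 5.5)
The plan is to verify directly that $\Delta(d\otimes b)=\Delta_{\widetilde{r}}(d\otimes b)$ for all $d\in D$, $b\in B$. Once this equality holds, Proposition \ref{pro:DAYBE-AYBE} shows that $\widetilde{r}$ is a skew-symmetric solution of the $\AYBE$. Proposition \ref{pro:quasass-bia}$(i)$ then gives triangularity, and Theorem \ref{thm:dias-asbia} already guarantees that both sides are ASI bialgebras. So everything reduces to one identity in $(D\otimes B)^{\otimes 2}$, which I will check by pairing against $e\otimes f$ in the $B$-factors via the nondegenerate form $\varpi$, exactly as in the proof of Theorem \ref{thm:dias-asbia}.

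\textbf{Step 1: expand the right-hand side.} Write $r=\sum_i x_i\otimes y_i$. By Eq. \eqref{ass-cobo} and the product of Proposition \ref{pro:diass-ass},
\begin{align*}
\Delta_{\widetilde r}(d\otimes b)=\sum_{i,j}\Big(&x_i\otimes(d\vdash y_i)\bullet e_j\otimes(b\diamond f_j)+x_i\otimes(d\dashv y_i)\bullet e_j\otimes(f_j\diamond b)\\
&-(x_i\vdash d)\otimes y_i\bullet(e_j\diamond b)\otimes f_j-(x_i\dashv d)\otimes y_i\bullet(b\diamond e_j)\otimes f_j\Big).
\end{align*}

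\textbf{Step 2: rewrite the $B$-parts through $\nu_{\varpi}(b)$.} Pair each of the four $B\otimes B$ tensors with $e_s\otimes e_t$ using $\varpi(f_j,e_k)=\delta_{jk}$, skew-symmetry, and invariance $\varpi(b_1\diamond b_2,b_3)=\varpi(b_2,b_1\diamond b_3+b_3\diamond b_1)$. Nondegeneracy then expresses each of them as a combination of $\nu_{\varpi}(b)$ and $\tau(\nu_{\varpi}(b))$. I expect:
\begin{itemize}
\item $\sum_j(e_j\diamond b)\otimes f_j$ is, up to sign, $\tau\nu_{\varpi}(b)$ or $\nu_{\varpi}(b)$ plus a $\tau$-term;
\item $\sum_j (b\diamond e_j)\otimes f_j$, whose $\varpi$-pairing involves $e_s\diamond e_t+e_t\diamond e_s$, gives $\nu_{\varpi}(b)+\tau\nu_{\varpi}(b)$ up to sign;
\item the two remaining tensors follow similarly.
\end{itemize}
Collecting terms, $\Delta_{\widetilde r}(d\otimes b)=X(d)\bullet\nu_{\varpi}(b)+Y(d)\bullet\tau\nu_{\varpi}(b)$, where $X(d),Y(d)\in D\otimes D$ are signed combinations of $(\id\otimes\fl_{\vdash}(d))r$, $(\id\otimes\fl_{\dashv}(d))r$, $(\fr_{\vdash}(d)\otimes\id)r$ and $(\fr_{\dashv}(d)\otimes\id)r$.

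\textbf{Step 3: compare with Eq. \eqref{cobdi}.} By Eqs. \eqref{cobdi1}, \eqref{cobdi2} and \eqref{cobdi},
$$\theta_{\vdash,r}(d)=-\big((\fr_{\vdash}-\fr_{\dashv})(d)\otimes\id+\id\otimes\fl_{\dashv}(d)\big)r,\qquad \theta_{\dashv,r}(d)=\big(\id\otimes(\fl_{\vdash}-\fl_{\dashv})(d)-\fr_{\vdash}(d)\otimes\id\big)r.$$
It remains to check $X(d)=\theta_{\vdash,r}(d)$ and $Y(d)=\theta_{\dashv,r}(d)$. Symmetry of $r$ may be needed here to move a term between the two tensor slots; if so, I would use $\tau$ together with $r=\tau(r)$ to swap the $D$-factors simultaneously with the corresponding $B$-factors.

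\textbf{Main obstacle.} The difficulty is Step 2: getting the signs and the placement of $\tau$ right when converting the dual-basis sums into $\nu_{\varpi}(b)$. A single misplaced $\tau$ would exchange the roles of $\theta_{\vdash}$ and $\theta_{\dashv}$. I would first verify the four identities on the $4$-dimensional Example \ref{ex:qu-zib}, then do the general pairing computation.
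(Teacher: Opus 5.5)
\textbf{Gap: Step 2 is only guessed, and the one concrete guess is wrong.} Your plan matches the paper's: expand $\Delta_{\widetilde r}(d\otimes b)$, convert the four dual-basis tensors in $B\otimes B$ into $\nu_{\varpi}(b)$ and $\tau\nu_{\varpi}(b)$, and match coefficients with $\theta_{\vdash,r}(d)$ and $\theta_{\dashv,r}(d)$. But Step 2 carries the entire content of the proof, and you leave it unresolved.

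Pairing $\sum_j(b\diamond e_j)\otimes f_j$ with $e_s\otimes e_t$ gives $\varpi(b\diamond e_t,e_s)=-\varpi(e_s,b\diamond e_t)$. Invariance would symmetrize $b$ with $e_s$, not $e_s$ with $e_t$. In fact, invariance and skew-symmetry give $\varpi(a,b\diamond c)=\varpi(b,a\diamond c)$, since both equal $-\varpi(c,a\diamond b+b\diamond a)$. So the pairing is $-\varpi(b,e_s\diamond e_t)$, and $\sum_j(b\diamond e_j)\otimes f_j=-\nu_{\varpi}(b)$, not $\pm(\id\otimes\id+\tau)\nu_{\varpi}(b)$. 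With your guess, the $\tau\nu_{\varpi}(b)$-coefficient in Step 3 picks up an extra $\pm(\fr_{\dashv}(d)\otimes\id)(r)$ and no longer equals $\theta_{\dashv,r}(d)$. This discrepancy is nonzero in general, and using $r=\tau(r)$ does not remove it.

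\textbf{The correct conversions.} The same method, using $\varpi(f_j,e_t)=\delta_{jt}$ and $\sum_j\varpi(e_j,e_s)f_j=-e_s$, gives
\begin{align*}
&\sum_{j}e_{j}\otimes(b\diamond f_{j})=\tau\nu_{\varpi}(b), &&\sum_{j}e_{j}\otimes(f_{j}\diamond b)=-(\id\otimes\id+\tau)\nu_{\varpi}(b),\\
&\sum_{j}(e_{j}\diamond b)\otimes f_{j}=(\id\otimes\id+\tau)\nu_{\varpi}(b), &&\sum_{j}(b\diamond e_{j})\otimes f_{j}=-\nu_{\varpi}(b).
\end{align*}
Insert these into your Step 1. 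The coefficient of $\nu_{\varpi}(b)$ is $\big((\fr_{\dashv}-\fr_{\vdash})(d)\otimes\id-\id\otimes\fl_{\dashv}(d)\big)(r)=\theta_{\vdash,r}(d)$. The coefficient of $\tau\nu_{\varpi}(b)$ is $\big(\id\otimes(\fl_{\vdash}-\fl_{\dashv})(d)-\fr_{\vdash}(d)\otimes\id\big)(r)=\theta_{\dashv,r}(d)$. So $\Delta=\Delta_{\widetilde r}$ holds for every $r\in D\otimes D$, with no symmetry needed. Symmetry of $r$ and the $\DAYBE$ enter only through Propositions \ref{pro:tri-di}, \ref{pro:DAYBE-AYBE} and \ref{pro:quasass-bia}, as in your closing argument.

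Do not copy the conversions from the paper's proof. The identities displayed there, for instance $\nu_{\varpi}(b)=\sum_j e_j\otimes(f_j\diamond b)$, already fail in Example \ref{ex:qu-zib} with $b=e_1$. There $\sum_j e_j\otimes(f_j\diamond e_1)=2e_2\otimes e_2-e_2\otimes e_3-e_3\otimes e_2\neq\nu_{\varpi}(e_1)$. The corrected identities above are what make the coefficient match exact.
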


\begin{proof}
Let $r=\sum_{i}x_{i}\otimes y_{i}$ be a symmetric solution of the $\DAYBE$
in $(D, \dashv, \vdash)$. First, for any $d\in D$ and $b\in B$, by direct calculation, we have
\begin{align*}
\Delta(d\otimes b)&=\sum_{i}\sum_{(b)}\Big(((x_{i}\dashv d)\otimes y_{i})\bullet
(b_{(1)}\otimes b_{(2)})-((x_{i}\vdash d)\otimes y_{i})\bullet(b_{(1)}\otimes b_{(2)})\\[-5mm]
&\qquad\qquad\quad -(x_{i}\otimes(d\dashv y_{i}))\bullet(b_{(1)}\otimes b_{(2)})
+(x_{i}\otimes(d\vdash y_{i}))\bullet(b_{(2)}\otimes b_{(1)})\\[-2mm]
&\qquad\qquad\quad -(x_{i}\otimes(d\dashv y_{i}))\bullet(b_{(2)}\otimes b_{(1)})
-((x_{i}\vdash d)\otimes y_{i})\bullet(b_{(2)}\otimes b_{(1)})\Big),
\end{align*}
%\begin{align}
%\Delta(d\otimes b)&=\theta_{\vdash}(d)\bullet\nu_{\varpi}(b)
%+\theta_{\dashv}(d)\bullet\tau(\nu_{\varpi}(b))        \label{thes-ass}\\
%&:=\sum_{(b)}\sum_{(d)}(d_{(1)}\otimes b_{(1)})\otimes(d_{(2)}\otimes b_{(2)})
%+\sum_{(b)}\sum_{[d]}(d_{[1]}\otimes b_{(2)})\otimes(d_{[2]}\otimes b_{(1)}), \nonumber
%\end{align}
where $\theta_{\dashv,r}(d)=\big(\id\otimes(\fl_{\vdash}-\fl_{\dashv})(d)
-\fr_{\vdash}(d)\otimes\id\big)(r)=\sum_{i}\big(x_{i}\otimes(d\vdash y_{i})
-x_{i}\otimes(d\dashv y_{i})-(x_{i}\vdash d)\otimes y_{i}\big)$, $\theta_{\vdash,r}(d)=
\big((\fr_{\dashv}-\fr_{\vdash})(d)\otimes\id-\id\otimes\fl_{\dashv}(d)\big)(r)
=\sum_{i}\big((x_{i}\dashv d)\otimes y_{i}-(x_{i}\vdash d)\otimes y_{i}
-x_{i}\otimes(d\dashv y_{i})\big)$ and $\nu_{\varpi}(b)=\sum_{(b)}b_{(1)}
\otimes b_{(2)}$. Suppose that $\{e_{1}, e_{2},\cdots, e_{n}\}$ is a basis
of $B$ and $\{f_{1}, f_{2},\cdots, f_{n}\}$ is the dual basis of $\{e_{1}, e_{2},
\cdots, e_{n}\}$ with respect to $\varpi(-,-)$, i.e., $\varpi(f_{j}, e_{i})=\delta_{ij}$,
where $\delta_{ij}$ is the Kronecker delta. Then
\begin{align*}
\Delta_{\widetilde{r}}(d\otimes b)&=(\id\otimes\fl_{D\otimes B}(d\otimes b)
-\fr_{D\otimes B}(d\otimes b)\otimes\id)(\widetilde{r})\\
&=\sum_{i, j}\Big((x_{i}\otimes(d\vdash y_{i}))\bullet(e_{j}\otimes(b\diamond f_{j}))
+(x_{i}\otimes(d\dashv y_{i}))\bullet(e_{j}\otimes(f_{j}\diamond b))\\[-5mm]
&\qquad\quad-((x_{i}\vdash d)\otimes y_{i})\bullet((e_{j}\diamond b)\otimes f_{j})
-((x_{i}\dashv d)\otimes y_{i})\bullet((b\diamond e_{j})\otimes f_{j})\Big),
\end{align*}
where $\widetilde{r}=\sum_{i, j}(x_{i}\otimes e_{j})\otimes(y_{i}\otimes f_{j})$.
For two basis elements $e_{s}, e_{t}\in B$, since
$$
\varpi\Big(e_{s}\otimes e_{t},\; \sum_{(b)}b_{(1)}\otimes b_{(2)}\Big)
=\varpi(e_{s},\; b\diamond e_{t}-e_{t}\diamond b)
=\varpi\Big(e_{s}\otimes e_{t},\; \sum_{j}e_{j}\otimes(f_{j}\diamond b)\Big),
$$
and $\varpi(-,-)$ is nondegenerate, we get $\sum_{(b)}b_{(1)}\otimes b_{(2)}
=\sum_{j}e_{j}\otimes(f_{j}\diamond b)$. Similarly, we also have
$\sum_{(b)}b_{(2)}\otimes b_{(1)}=-\sum_{j}(e_{j}\diamond b)\otimes f_{j}$ and
$\sum_{j}(b\diamond e_{j})\otimes f_{j}=\sum_{j}e_{j}\otimes (b\diamond f_{j})
=\sum_{(b)}\big(b_{(1)}\otimes b_{(2)}-b_{(2)}\otimes b_{(1)}\big)$.
%\begin{align*}
%\varpi(e_{s}\otimes e_{t},\; \sum_{(b)}b_{(1)}\otimes b_{(2)})
%&=\varpi(b,\; e_{s}\diamond e_{t})=\varpi(e_{s},\; b\diamond e_{t}),\\
%\varpi(e_{s}\otimes e_{t},\; \sum_{(b)}b_{(2)}\otimes b_{(1)})
%&=\varpi(b,\; e_{t}\cdot e_{s})=-\varpi(e_{s},\; e_{t}\diamond b+b\diamond e_{t}),\\
%%
%\varpi(e_{s}\otimes e_{t},\; \sum_{j}e_{j}\otimes(b\diamond f_{j}))
%&=-\sum_{j}\varpi(e_{s}, e_{j})\varpi(b\diamond f_{j},\; e_{t})
%=-\sum_{j}\varpi(e_{s}, e_{t}\diamond b+b\diamond e_{t}),\\
%\varpi(e_{s}\otimes e_{t},\; \sum_{j}e_{j}\otimes(f_{j}\diamond b))
%&=-\sum_{j}\varpi(e_{s}, e_{j})\varpi(f_{j}\diamond b,\; e_{t})
%=\sum_{j}\varpi(e_{s},\; e_{t}\diamond b),\\
%\varpi(e_{s}\otimes e_{t},\; \sum_{j}(e_{j}\diamond b)\otimes f_{j})
%&=\sum_{j}\varpi(e_{j}\diamond b,\; e_{s})\varpi(f_{j}, e_{t})
%=\varpi(e_{s}\diamond b,\; e_{t})=-\varpi(e_{s},\; e_{t}\diamond b),\\
%\varpi(e_{s}\otimes e_{t},\; \sum_{j}(b\diamond e_{j})\otimes f_{j})
%&=\sum_{j}\varpi(b\diamond e_{j},\; e_{s})\varpi(f_{j}, e_{t})
%=\varpi(e_{t},\; b\diamond e_{s}+e_{s}\diamond b)
%=-\varpi(e_{s},\; b\diamond e_{t}),\\
%\end{align*}
Thus, for any $d\in D$ and $b\in B$,
\begin{align*}
\Delta(d\otimes b)-\Delta_{\widetilde{r}}(d\otimes b)
=&\; \sum_{i}\sum_{(b)}\Big(\Big((x_{i}\dashv d)\otimes y_{i}-(x_{i}\vdash d)\otimes y_{i}
-x_{i}\otimes(d\dashv y_{i})-x_{i}\otimes(d\vdash y_{i})\\[-5mm]
&\qquad\quad +x_{i}\otimes(d\dashv y_{i})+(x_{i}\vdash d)\otimes y_{i}
+x_{i}\otimes(d\vdash y_{i})-(x_{i}\dashv d)\otimes y_{i}\Big)
\bullet(b_{(1)}\otimes b_{(2)})\\[-2mm]
&\qquad\quad -\Big(x_{i}\otimes(d\vdash y_{i})-x_{i}\otimes(d\dashv y_{i})
-(x_{i}\vdash d)\otimes y_{i}-x_{i}\otimes(d\vdash y_{i})\\[-2mm]
&\qquad\quad +x_{i}\otimes(d\dashv y_{i})+(x_{i}\vdash d)\otimes y_{i}
\Big)\bullet(b_{(1)}\otimes b_{(2)}+b_{(2)}\otimes b_{(1)})\Big)\\
=&\; 0.
\end{align*}
That is, $\Delta=\Delta_{\widetilde{r}}$, and so that $(D\otimes B, \cdot, \Delta)$ is a
triangular ASI bialgebra.
\end{proof}

Let $(D, \dashv, \vdash)$ be a diassociative algebra and $r\in D\otimes D$ is a symmetric
solution of the $\DAYBE$ in $(D, \dashv, \vdash)$. Theorem \ref{thm:indu-triASI} also
gives the following commutative diagram:
$$
\xymatrix@C=3cm@R=0.5cm{
\txt{$r$ \\ {\tiny a symmetric solution of}\\ {\tiny the $\DAYBE$ in $(D, \dashv, \vdash)$}}
\ar[d]_{{\rm Pro.}~\ref{pro:DAYBE-AYBE}}\ar[r]^{{\rm Pro.}~\ref{pro:tri-di}} &
\txt{$(D, \dashv, \vdash, \theta_{\dashv,r}, \theta_{\vdash,r})$ \\
{\tiny a triangular diassociative bialgebra}}
\ar[d]_{{\rm Thm.}~\ref{thm:dias-asbia}}^{{\rm Thm.}~\ref{thm:indu-triASI}} \\
\txt{$\widetilde{r}$ \\ {\tiny a skew-summetric solution }\\ {\tiny of the $\AYBE$
in $(D\otimes B, \cdot)$}}
\ar[r]^{{\rm Pro.}~\ref{pro:quasass-bia}\qquad} &
\txt{$(D\otimes B, \cdot, \Delta)=(D\otimes B, \cdot, \Delta_{\widetilde{r}})$ \\
{\tiny a triangular ASI bialgebra}}}
$$

The $\mathcal{O}$-operator is considered to be the operator form of the solution
of the Yang-Baxter equation. We now consider the relationship between the
$\mathcal{O}$-operators of a diassociative algebra and the induced associative algebra.
Let $(A, \cdot)$ be an associative algebra and $(V, \kl, \kr)$ be a bimodule over
$(A, \cdot)$. Recall that a linear map $T: V\rightarrow A$ is called an {\bf
$\mathcal{O}$-operator of $(A, \cdot)$ associated to $(V, \kl, \kr)$} if for
any $v_{1}, v_{2}\in V$,
$$
T(v_{1})\cdot T(v_{2})=T\big(\kl(T(v_{1}))(v_{2})+\kr(T(v_{2}))(v_{1})\big).
$$

\begin{pro}[\cite{Bai}]\label{pro:o-ass}
Let $(A, \cdot)$ be an associative algebra and $r\in A\otimes A$ be skew-symmetric.
Then $r$ is a solution of the $\AYBE$ in $(A, \cdot)$ if and only if
$r^{\sharp}: A^{\ast}\rightarrow A$ is an $\mathcal{O}$-operator of $(A, \cdot)$
associated to the coregular module $(A^{\ast}, -\fr_{A}^{\ast}, -\fl_{A}^{\ast})$.
\end{pro}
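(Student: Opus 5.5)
The plan is to translate the $\mathcal{O}$-operator identity for $r^{\sharp}$ into a tensor identity in $A\otimes A\otimes A$ by pairing with $\xi_{1}\otimes\xi_{2}\otimes\xi_{3}$. Since $A$ is finite-dimensional, a tensor in $A\otimes A\otimes A$ vanishes iff all such pairings vanish. The expected outcome is that the difference of the two sides of the $\mathcal{O}$-operator identity, paired with $\xi_{3}$, equals $\langle\xi_{1}\otimes\xi_{2}\otimes\xi_{3},\ \mathbf{A}_{r}\rangle$.

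Write $r=\sum_{i}x_{i}\otimes y_{i}$, so that $r^{\sharp}(\xi)=\sum_{i}\langle\xi,x_{i}\rangle y_{i}$. With the convention $(ii)$ fixed in the introduction, the coregular actions are
$$\langle -\fr_{A}^{\ast}(a)\xi,\ v\rangle=\langle\xi,\ v\cdot a\rangle,\qquad \langle -\fl_{A}^{\ast}(a)\xi,\ v\rangle=\langle\xi,\ a\cdot v\rangle.$$
I will compute the three terms of
$$r^{\sharp}(\xi_{1})\cdot r^{\sharp}(\xi_{2})-r^{\sharp}\big(-\fr_{A}^{\ast}(r^{\sharp}(\xi_{1}))\xi_{2}-\fl_{A}^{\ast}(r^{\sharp}(\xi_{2}))\xi_{1}\big),$$
each paired with an arbitrary $\xi_{3}$, using $\langle\xi_{3},r^{\sharp}(\eta)\rangle=\langle\eta\otimes\xi_{3},r\rangle$.

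The computation goes as follows.
\begin{enumerate}
\item[(a)] The product term is $\sum_{i,j}\langle\xi_{1},x_{i}\rangle\langle\xi_{2},x_{j}\rangle\langle\xi_{3},y_{i}\cdot y_{j}\rangle=\langle\xi_{1}\otimes\xi_{2}\otimes\xi_{3},\ r_{13}\cdot r_{23}\rangle$.
\item[(b)] The $\fr^{\ast}$-term gives $\sum_{i,j}\langle\xi_{1},x_{i}\rangle\langle\xi_{2},x_{j}\cdot y_{i}\rangle\langle\xi_{3},y_{j}\rangle$. After swapping the labels $i,j$, this is exactly the pairing with $r_{23}\cdot r_{12}$.
\item[(c)] The $\fl^{\ast}$-term gives $\sum_{i,j}\langle\xi_{2},x_{i}\rangle\langle\xi_{1},y_{i}\cdot x_{j}\rangle\langle\xi_{3},y_{j}\rangle$, i.e.\ the pairing with $\sum_{i,j}(y_{i}\cdot x_{j})\otimes x_{i}\otimes y_{j}$.
\end{enumerate}
Only term (c) uses skew-symmetry. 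Applying $\sum_{i}y_{i}\otimes x_{i}=-\sum_{i}x_{i}\otimes y_{i}$ in the first $r$ factor turns it into $-r_{12}\cdot r_{13}$.

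Combining (a), (b) and (c), the pairing of the difference with $\xi_{3}$ becomes
$$\langle\xi_{1}\otimes\xi_{2}\otimes\xi_{3},\ r_{13}\cdot r_{23}-r_{23}\cdot r_{12}+r_{12}\cdot r_{13}\rangle=\langle\xi_{1}\otimes\xi_{2}\otimes\xi_{3},\ \mathbf{A}_{r}\rangle.$$
Hence $r^{\sharp}$ is an $\mathcal{O}$-operator iff this vanishes for all $\xi_{1},\xi_{2},\xi_{3}$, iff $\mathbf{A}_{r}=0$, and both directions follow at once.

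The main obstacle is purely bookkeeping. One must track the sign conventions of $\mu^{\ast}$ so that the coregular actions have exactly the form stated above. One must also match the index placement in term (b) with the paper's definition $r_{23}\cdot r_{12}=\sum_{i,j}x_{j}\otimes(x_{i}\cdot y_{j})\otimes y_{i}$. If a sign goes wrong in either place, I will re-derive it from the definition of $r^{\sharp}$ in $(iii)$, since the argument itself is insensitive to relabeling.
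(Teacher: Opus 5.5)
Your argument is correct. The three pairings (a)--(c) are computed accurately under convention $(ii)$, and skew-symmetry is needed only to rewrite term (c) as $-r_{12}\cdot r_{13}$. The defect of the $\mathcal{O}$-operator identity, paired with $\xi_{3}$, is then exactly $\langle\xi_{1}\otimes\xi_{2}\otimes\xi_{3},\ \mathbf{A}_{r}\rangle$, and finite-dimensionality gives both directions.

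The paper gives no proof of its own here; it quotes the result from Bai. Your computation is essentially the standard pairing argument for this statement.
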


The $\mathcal{O}$-operator of a diassociative algebra was considered in \cite{HLLZ,Lu}.
Recall that an {\bf $\mathcal{O}$-operator of a diassociative algebra $(D, \dashv, \vdash)$
associated to a bimodule $(V, \kl_{\dashv}, \kr_{\dashv}, \kl_{\vdash}, \kr_{\vdash})$}
is a linear map $T: V\rightarrow D$ such that
\begin{align*}
T(v_{1})\dashv T(v_{2})&=T\big(\kl_{\dashv}(T(v_{1}))(v_{2})
+\kr_{\dashv}(T(v_{2}))(v_{1})\big),\\
T(v_{1})\vdash T(v_{2})&=T\big(\kl_{\vdash}(T(v_{1}))(v_{2})
+\kr_{\vdash}(T(v_{2}))(v_{1})\big),
\end{align*}
for any $v_{1}, v_{2}\in V$.

\begin{pro}[\cite{HLLZ,Lu}]\label{pro:o-dia}
Let $(D, \dashv, \vdash)$ be a diassociative algebra and $r\in D\otimes D$ be symmetric.
Then $r$ is a solution of the $\DAYBE$ in $(D, \dashv, \vdash)$ if and only if
$r^{\sharp}$ is an $\mathcal{O}$-operator of $(D, \dashv, \vdash)$ associated to the
coregular bimodule $(D^{\ast}, \fr_{\vdash}^{\ast}-\fr_{\dashv}^{\ast},
-\fl_{\vdash}^{\ast}, -\fr_{\dashv}^{\ast}, \fl_{\dashv}^{\ast}-\fl_{\vdash}^{\ast})$.
\end{pro}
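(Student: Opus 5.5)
The statement is Proposition \ref{pro:o-dia}: for symmetric $r$, $\mathbf{D}_{r}=0$ if and only if $r^{\sharp}$ is an $\mathcal{O}$-operator of $(D, \dashv, \vdash)$ associated to the coregular bimodule. I would prove it by pairing both sides with an arbitrary triple $\xi_{1}\otimes\xi_{2}\otimes\xi_{3}\in D^{\ast}\otimes D^{\ast}\otimes D^{\ast}$. Write $r=\sum_{i}x_{i}\otimes y_{i}$. Symmetry gives $r^{\sharp}(\xi)=\sum_{i}\langle\xi, x_{i}\rangle y_{i}=\sum_{i}\langle\xi, y_{i}\rangle x_{i}$, and this lets me move freely between the two legs of $r$.

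First I would compute the two $\mathcal{O}$-operator defects, paired with $\xi_{3}$:
\begin{align*}
P_{\dashv}(\xi_{1},\xi_{2},\xi_{3})&:=\big\langle\xi_{3},\; r^{\sharp}(\xi_{1})\dashv r^{\sharp}(\xi_{2})
-r^{\sharp}\big((\fr_{\vdash}^{\ast}-\fr_{\dashv}^{\ast})(r^{\sharp}(\xi_{1}))(\xi_{2})
-\fl_{\vdash}^{\ast}(r^{\sharp}(\xi_{2}))(\xi_{1})\big)\big\rangle,\\
P_{\vdash}(\xi_{1},\xi_{2},\xi_{3})&:=\big\langle\xi_{3},\; r^{\sharp}(\xi_{1})\vdash r^{\sharp}(\xi_{2})
-r^{\sharp}\big(-\fr_{\dashv}^{\ast}(r^{\sharp}(\xi_{1}))(\xi_{2})
+(\fl_{\dashv}^{\ast}-\fl_{\vdash}^{\ast})(r^{\sharp}(\xi_{2}))(\xi_{1})\big)\big\rangle.
\end{align*}
To evaluate the terms I use $\langle\xi_{3}, r^{\sharp}(\eta)\rangle=\langle\eta, r^{\sharp}(\xi_{3})\rangle$, which is symmetry again, together with the sign convention $\langle\mu^{\ast}(a)\xi, v\rangle=-\langle\xi,\mu(a)v\rangle$. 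After this, every term becomes a pairing of $\xi_{1}\otimes\xi_{2}\otimes\xi_{3}$, in some order of slots, with one of the four cubic expressions in $\mathbf{D}_{r}$, namely $r_{13}\vdash r_{23}$, $r_{23}\dashv r_{12}$, $r_{12}\vdash r_{13}$ or $r_{12}\dashv r_{13}$, and symmetry lets me rewrite each as an honest pairing.

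The goal is to show that $P_{\vdash}$ equals $\pm\langle\xi_{1}\otimes\xi_{2}\otimes\xi_{3},\sigma\mathbf{D}_{r}\rangle$ for some fixed permutation $\sigma$ of the tensor factors. The term $r^{\sharp}(\xi_{1})\vdash r^{\sharp}(\xi_{2})$ pairs as $r_{13}\vdash r_{23}$. The remaining terms should produce the other three terms of $\mathbf{D}_{r}$ with the correct signs. The specific combination $\fl_{\dashv}^{\ast}-\fl_{\vdash}^{\ast}$ is what should supply the pair $-r_{12}\vdash r_{13}+r_{12}\dashv r_{13}$.

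Next I would treat $P_{\dashv}$. I expect it to be a different permutation of the same tensor $\mathbf{D}_{r}$, possibly only after using symmetry of $r$ to swap roles such as $x_{i}\dashv x_{j}$ versus $y_{i}\dashv y_{j}$. This is the same trick used in the proof of Proposition \ref{pro:DAYBE-AYBE}, where $\mathbf{D}_{r}$ was rewritten in two forms. If $P_{\dashv}$ does not match directly, I would check whether it equals a permutation of $\mathbf{D}_{r}$ plus a combination of the $\tau$-images guaranteed by $r=\tau(r)$. Since $\mathbf{D}_{r}$ is a single equation while the $\mathcal{O}$-operator condition consists of two equations, both defects must reduce to permuted copies of $\mathbf{D}_{r}$.

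Once both identifications hold, the equivalence follows from nondegeneracy of the pairing. If $\mathbf{D}_{r}=0$, both defects vanish for all $\xi$'s. Conversely, vanishing of $P_{\vdash}$ for all $\xi$'s already forces $\sigma\mathbf{D}_{r}=0$, and hence $\mathbf{D}_{r}=0$.

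The main obstacle is the sign and leg bookkeeping in $P_{\dashv}$, where $r^{\sharp}(\xi_{1})$ appears inside the argument of $r^{\sharp}$. I would handle it by writing every term explicitly as $\sum_{i,j}$ of products of three scalars $\langle\xi_{a},\cdot\rangle$ and only then matching terms.
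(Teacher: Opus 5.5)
Your proposal is correct. The paper gives no proof of this statement; it quotes it from \cite{HLLZ,Lu}. Your direct pairing argument is the standard proof, and it goes through.

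The only soft spot is the $P_{\dashv}$ step. The sentence ``both defects must reduce to permuted copies of $\mathbf{D}_{r}$'' infers the identity from the truth of the proposition, which is circular. The computation itself closes it cleanly. Write $R_{k}:=r^{\sharp}(\xi_{k})$ and use $\langle\xi_{3}, r^{\sharp}(\eta)\rangle=\langle\eta, R_{3}\rangle$, which follows from symmetry of $r$. Then
\begin{align*}
P_{\vdash}(\xi_{1},\xi_{2},\xi_{3})&=\langle\xi_{3}, R_{1}\vdash R_{2}\rangle-\langle\xi_{2}, R_{3}\dashv R_{1}\rangle+\langle\xi_{1}, R_{2}\dashv R_{3}\rangle-\langle\xi_{1}, R_{2}\vdash R_{3}\rangle=\langle\xi_{1}\otimes\xi_{2}\otimes\xi_{3},\; \mathbf{D}_{r}\rangle,\\
P_{\dashv}(\xi_{1},\xi_{2},\xi_{3})&=\langle\xi_{3}, R_{1}\dashv R_{2}\rangle+\langle\xi_{2}, R_{3}\vdash R_{1}\rangle-\langle\xi_{2}, R_{3}\dashv R_{1}\rangle-\langle\xi_{1}, R_{2}\vdash R_{3}\rangle.
\end{align*}
Comparing the two lines term by term gives
\[
P_{\dashv}(\xi_{1},\xi_{2},\xi_{3})=-P_{\vdash}(\xi_{2},\xi_{3},\xi_{1}).
\]
Tensorially, the $\dashv$-defect is
\[
r_{13}\dashv r_{23}+r_{23}\vdash r_{12}-r_{23}\dashv r_{12}-r_{12}\vdash r_{13}=-\sigma(\mathbf{D}_{r}),\qquad \sigma(a_{1}\otimes a_{2}\otimes a_{3})=a_{3}\otimes a_{1}\otimes a_{2}.
\]
This $\sigma(\mathbf{D}_{r})$ is exactly the first rewritten form of $\mathbf{D}_{r}$ displayed in the proof of Proposition \ref{pro:DAYBE-AYBE}, so your analogy there is apt. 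No additional $\tau$-correction is needed beyond using $r=\tau(r)$ to write each term as a pairing.

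Nondegeneracy of the pairing then gives both directions exactly as you describe. Replace the circular sentence with the identity above and the proof is complete.
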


For the $\mathcal{O}$-operators of a diassociative algebra and $\mathcal{O}$-operators of
the induced associative algebra, we have following conclusion.

\begin{pro}\label{pro:o-dia-ass}
Let $(D, \dashv, \vdash)$ be a diassociative algebra, $(B, \diamond)$ be a
Zinbiel algebra and $(D\otimes B, \cdot)$ be the induced associative algebra from
$(D, \dashv, \vdash)$ and $(B, \diamond)$. If $r$ is a symmetric solution
of the $\DAYBE$ in $(D, \dashv, \vdash)$, then we have the following commutative diagram:
$$
\xymatrix@C=3cm@R=0.5cm{
\txt{$r$ \\ {\tiny a symmetric solution} \\ {\tiny of the $\DAYBE$ in $(D, \dashv, \vdash)$}}
\ar[d]_-{{\rm Pro.}~\ref{pro:DAYBE-AYBE}}\ar[r]^-{{\rm Pro.}~\ref{pro:o-dia}} &
\txt{$r^{\sharp}$\\ {\tiny an $\mathcal{O}$-operator of $(D, \dashv, \vdash)$} \\
{\tiny associated to $(D^{\ast}, \fr_{\vdash}^{\ast}-\fr_{\dashv}^{\ast},
-\fl_{\vdash}^{\ast}, -\fr_{\dashv}^{\ast}, \fl_{\dashv}^{\ast}-\fl_{\vdash}^{\ast})$}}
\ar[d]^-{\mbox{$-\otimes\kappa^{\sharp}$}} \\
\txt{$\widetilde{r}$ \\ {\tiny a skew-symmetric solution} \\ {\tiny of the $\AYBE$ in
$(D\otimes B, \cdot)$}} \ar[r]^-{{\rm Pro.}~\ref{pro:o-ass}}
& \txt{$\widetilde{r}^{\sharp}=r^{\sharp}\otimes\kappa^{\sharp}$ \\
{\tiny an $\mathcal{O}$-operator of $(A\otimes B, [-,-])$ } \\
{\tiny associated to $((D\otimes B)^{\ast}, -\fr^{\ast}_{D\otimes B},
-\fl^{\ast}_{D\otimes B})$}}}
$$
where $\kappa:=\sum_{j}e_{j}\otimes f_{j}\in B\otimes B$, $\{e_{1}, e_{2},\cdots, e_{n}\}$
is a basis of $B$ and $\{f_{1}, f_{2},\cdots, f_{n}\}$ is the dual basis of $\{e_{1}, e_{2},
\cdots, e_{n}\}$ with respect to $\varpi(-,-)$.
\end{pro}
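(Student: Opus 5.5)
The diagram has four arrows. The top arrow is Proposition \ref{pro:o-dia}, the left arrow is Proposition \ref{pro:DAYBE-AYBE}, and the bottom arrow is Proposition \ref{pro:o-ass}, applicable because $\widetilde{r}$ is a skew-symmetric solution of the $\AYBE$. So the real content is two things: the identity $\widetilde{r}^{\sharp}=r^{\sharp}\otimes\kappa^{\sharp}$ for $\widetilde r=\sum_{i,j}(x_i\otimes e_j)\otimes(y_i\otimes f_j)$, and the fact that $r^{\sharp}\otimes\kappa^{\sharp}$ is an $\mathcal{O}$-operator of $(D\otimes B,\cdot)$ associated to the coregular bimodule. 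Implicitly $(B,\diamond)$ carries the quadratic form $\varpi$ used to define $\kappa$ and $\widetilde r$. Throughout I identify $(D\otimes B)^{\ast}=D^{\ast}\otimes B^{\ast}$ via $\langle\xi\otimes\eta,\,d\otimes b\rangle=\langle\xi,d\rangle\langle\eta,b\rangle$.

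\emph{Step 1: the identity $\widetilde{r}^{\sharp}=r^{\sharp}\otimes\kappa^{\sharp}$.} Take $\xi_1\otimes\eta_1,\ \xi_2\otimes\eta_2\in D^{\ast}\otimes B^{\ast}$. By the definition of $(\cdot)^{\sharp}$,
$$
\langle\xi_2\otimes\eta_2,\ \widetilde r^{\sharp}(\xi_1\otimes\eta_1)\rangle
=\sum_{i,j}\langle\xi_1,x_i\rangle\langle\eta_1,e_j\rangle\langle\xi_2,y_i\rangle\langle\eta_2,f_j\rangle .
$$
This factors as $\langle\xi_1\otimes\xi_2,r\rangle\langle\eta_1\otimes\eta_2,\kappa\rangle=\langle\xi_2,r^{\sharp}(\xi_1)\rangle\langle\eta_2,\kappa^{\sharp}(\eta_1)\rangle$. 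Since pure tensors span both sides, the identity holds.

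\emph{Step 2: commutativity itself.} Once $\widetilde r^{\sharp}=r^{\sharp}\otimes\kappa^{\sharp}$, the square commutes by composition. Proposition \ref{pro:DAYBE-AYBE} makes $\widetilde r$ a skew-symmetric $\AYBE$ solution, so Proposition \ref{pro:o-ass} makes $\widetilde r^{\sharp}$ an $\mathcal{O}$-operator associated to $((D\otimes B)^{\ast},-\fr^{\ast}_{D\otimes B},-\fl^{\ast}_{D\otimes B})$. By Step 1 this operator coincides with the image of $r^{\sharp}$ under the right-hand map $-\otimes\kappa^{\sharp}$. Thus the right arrow is well defined on $\mathcal{O}$-operators of the form $r^{\sharp}$ with $r$ symmetric, and both paths give the same operator.

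\emph{Step 3: a direct check (optional).} I would also verify the $\mathcal{O}$-operator identity directly, to show that $-\otimes\kappa^{\sharp}$ sends $\mathcal{O}$-operators to $\mathcal{O}$-operators independently of the Yang--Baxter route. Set $T=r^{\sharp}$ and $K=\kappa^{\sharp}$. Expand $(T\otimes K)(u_1)\cdot(T\otimes K)(u_2)$ using the product of Proposition \ref{pro:diass-ass}. This gives a $\vdash$-term tensored with $K(\eta_1)\diamond K(\eta_2)$, plus a $\dashv$-term tensored with $K(\eta_2)\diamond K(\eta_1)$.

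Next express $-\fr^{\ast}_{D\otimes B}$ and $-\fl^{\ast}_{D\otimes B}$ on $D^{\ast}\otimes B^{\ast}$ through $\fl^{\ast}_{\vdash},\fr^{\ast}_{\vdash},\fl^{\ast}_{\dashv},\fr^{\ast}_{\dashv}$ and the dual Zinbiel actions. Use invariance of $\varpi$, via $\varpi(f_i,e_j)=\delta_{ij}$, to move $K$ across the Zinbiel products; the relations to use are those for $\sum_j e_j\otimes(b\diamond f_j)$ and similar sums derived in the proofs of Proposition \ref{pro:DAYBE-AYBE} and Theorem \ref{thm:indu-triASI}. Then regroup by the $B$-factors $K(\eta_1)\diamond K(\eta_2)$ and $K(\eta_2)\diamond K(\eta_1)$, and apply the two $\mathcal{O}$-operator identities of Proposition \ref{pro:o-dia} for the coregular diassociative bimodule.

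The main obstacle is this bookkeeping: matching the four coregular components against the two Zinbiel orderings with the correct signs. I expect it to mirror the cancellation pattern in the proof of Proposition \ref{pro:DAYBE-AYBE}. Since Step 2 already establishes the diagram, I would present Steps 1--2 as the proof and treat Step 3 as a remark.
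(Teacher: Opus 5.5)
Your Steps 1--2 are correct and are the paper's argument: the paper also reduces commutativity to the identity $\widetilde{r}^{\sharp}=r^{\sharp}\otimes\kappa^{\sharp}$, proves it by factoring the pairing of $\widetilde{r}$ against a pure tensor into an $r$-factor and a $\kappa$-factor, and takes the $\mathcal{O}$-operator property from Propositions~\ref{pro:DAYBE-AYBE} and~\ref{pro:o-ass}. Your optional Step 3 is not in the paper and is not needed, though it does go through, and without any symmetry of $r$: using $\kappa^{\sharp}(\varpi(x,-))=x$ and the invariance of $\varpi$, the components along $K(\eta_{1})\diamond K(\eta_{2})$ and $K(\eta_{2})\diamond K(\eta_{1})$ reduce exactly to the $\vdash$- and $\dashv$-identities, so $T\otimes\kappa^{\sharp}$ is an $\mathcal{O}$-operator for every $\mathcal{O}$-operator $T$ associated to the coregular bimodule.
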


\begin{proof}
Through calculation, direct verification can confirm that $\kappa^{\sharp}: B^{\ast}
\rightarrow B$, $\langle\kappa^{\sharp}(\xi_{1}),\; \xi_{2}\rangle=\langle\kappa,\;
\xi_{1}\otimes\xi_{2}\rangle$ for any $\xi_{1}, \xi_{2}\in B^{\ast}$ is a linear
isomorphism, and $\langle\kappa^{\sharp}(\xi_{1}),\; \xi_{2}\rangle=\langle\kappa,\;
\xi_{1}\otimes\xi_{2}\rangle=-\langle\kappa,\; \xi_{2}\otimes\xi_{1}\rangle
=\langle\kappa^{\sharp}(\xi_{2}),\; \xi_{1}\rangle$ since
$\sum_{j}e_{j}\otimes f_{j}=-\sum_{j}f_{j}\otimes e_{j}$.
Therefore, for any $\xi_{1}, \xi_{2}\in B^{\ast}$ and $\eta_{1}, \eta_{2}\in D^{\ast}$,
\begin{align*}
\langle\widetilde{r}^{\sharp}(\eta_{1}\otimes\xi_{1}),\; \eta_{2}\otimes\xi_{2}\rangle
&=\sum_{i,j}\langle(\eta_{1}\otimes\xi_{1})\otimes(\eta_{2}\otimes\xi_{2}),\ \
(x_{i}\otimes e_{j})\otimes(y_{i}\otimes f_{j})\rangle\\[-2mm]
&=\Big(\sum_{j}\langle\eta_{1}, x_{i}\rangle\langle\eta_{2}, y_{i}\rangle\Big)
\Big(\sum_{i}\langle\xi_{1}, e_{j}\rangle\langle\xi_{2}, f_{j}\rangle\Big)\\[-2mm]
&=\langle r^{\sharp}(\eta_{1}),\; \eta_{2}\rangle
\langle\kappa^{\sharp}(\xi_{1}),\; \xi_{2}\rangle\\
&=\langle r^{\sharp}(\eta_{1})\otimes\kappa^{\sharp}(\xi_{1}),\;
\eta_{2}\otimes\xi_{2}\rangle.
\end{align*}
That is $\widetilde{r}^{\sharp}=r^{\sharp}\otimes\kappa^{\sharp}$.
Thus, we obtain the commutative diagram.
\end{proof}

\begin{ex}\label{ex:ind-triASI}
We consider the $4$-dimensional diassociative algebra $(D={\rm span}_{\Bbbk}\{x_{1}, x_{2},
x_{3}, x_{4}\}, \dashv, \vdash)$ given in Example \ref{ex:ind-diabi}. Then $r=x_{1}
\otimes x_{3}+x_{3}\otimes x_{1}+x_{2}\otimes x_{4}+x_{4}\otimes x_{2}$ is a
symmetric solution of the $\DAYBE$ in $(D, \dashv, \vdash)$, and the diassociative
bialgebra $(D, \dashv, \vdash, \theta_{\dashv,r}, \theta_{\vdash,r})$ is exactly the
$4$-dimensional diassociative bialgebra considered in Example \ref{ex:ind-diabi}.
Let $(B={\rm span}_{\Bbbk}\{e_{1}, e_{2}, e_{3}, e_{4}\}, \diamond, \varpi)$ be the
$4$-dimensional quadratic Zinbiel algebra given in Example \ref{ex:qu-zib}. Then
we obtain a $16$-dimensional associative algebra $(D\otimes B, \cdot)$ and we get that
\begin{align*}
\widetilde{r}&=(x_{1}\otimes e_{1})\otimes(x_{3}\otimes e_{3})
+(x_{1}\otimes e_{2})\otimes(x_{3}\otimes e_{4})
-(x_{1}\otimes e_{3})\otimes(x_{3}\otimes e_{1})
-(x_{1}\otimes e_{4})\otimes(x_{3}\otimes e_{2})\\
&\quad+(x_{3}\otimes e_{1})\otimes(x_{1}\otimes e_{3})
+(x_{3}\otimes e_{2})\otimes(x_{1}\otimes e_{4})
-(x_{3}\otimes e_{3})\otimes(x_{1}\otimes e_{1})
-(x_{3}\otimes e_{4})\otimes(x_{1}\otimes e_{2})\\
&\quad+(x_{2}\otimes e_{1})\otimes(x_{4}\otimes e_{3})
+(x_{2}\otimes e_{2})\otimes(x_{4}\otimes e_{4})
-(x_{2}\otimes e_{3})\otimes(x_{4}\otimes e_{1})
-(x_{2}\otimes e_{4})\otimes(x_{4}\otimes e_{2})\\
&\quad+(x_{4}\otimes e_{1})\otimes(x_{2}\otimes e_{3})
+(x_{4}\otimes e_{2})\otimes(x_{2}\otimes e_{4})
-(x_{4}\otimes e_{3})\otimes(x_{2}\otimes e_{1})
-(x_{4}\otimes e_{4})\otimes(x_{2}\otimes e_{2})
\end{align*}
is a skew-symmetric solution of the $\AYBE$ in $(D\otimes B, \cdot)$.
By Proposition \ref{pro:quasass-bia}, $\widetilde{r}$ induces a coproduct
$\Delta_{\widetilde{r}}$ on $D\otimes B$ such that $(D\otimes B, \cdot,
\Delta_{\widetilde{r}})$ is a triangular ASI bialgebra. By direct calculation,
it can be concluded that this triangular ASI bialgebra is exactly the
$16$-dimensional ASI bialgebra given in Example \ref{ex:ind-diabi}.
\end{ex}

%%%%%%%%%%%%%%%%%%%%%%%%%%%%%%%%%%%%%%%%%%%%%%%%%%%%%%%%%%%%%%%%%%%%%%%%%%%%%%%%%
%    section  4   From Zinbiel bialgebras to ASI
%%%%%%%%%%%%%%%%%%%%%%%%%%%%%%%%%%%%%%%%%%%%%%%%%%%%%%%%%%%%%%%%%%%%%%%%%%%%%%%%%%%%%%
\section{Infinite-dimensional ASI bialgebras via affinization of Zinbiel bialgebras}
\label{sec:zinb-ASI}
In this section, we recall the notion of a quadratic $\bz$-graded diassociative algebra, as
a $\bz$-graded diassociative algebra equipped with an invariant bilinear form. We show that
the tensor product of a finite-dimensional Zinbiel bialgebra and a quadratic
$\bz$-graded diassociative algebra can be naturally endowed with a completed ASI bialgebra.
The converse of this result also holds when the quadratic $\bz$-graded diassociative algebra
is special, giving the desired characterization of the Zinbiel bialgebra
that its affinization is a completed ASI bialgebra.

%%%%%%%%%%%%%%%%%%%%%%%%%%%%%%%%%%%%%%%%%%%%%%%%%%%%%%%%%%%%%%%%%%%%%%%%%%%%%%%%
\subsection{Affinization of Zinbiel algebras and Zinbiel coalgebras}\label{subsec:aff}
To provide the affinization characterization of Zinbiel algebra, we first recall
the notion of $\bz$-graded diassociative algebras.

\begin{defi}\label{def:zgrad-alg}
A {\bf $\bz$-graded associative algebra} (resp. {\bf $\bz$-graded diassociative algebra})
is an associative algebra $(A, \cdot)$ (resp. a diassociative algebra $(D, \dashv,
\vdash)$) with a linear decomposition $A=\oplus_{i\in\bz}A_{i}$ (resp. $D=\oplus_{i\in\bz}
D_{i}$) such that each $A_{i}$ (resp. $D_{i}$) is finite-dimensional and $A_{i}\cdot A_{j}
\subseteq A_{i+j}$ (resp. $D_{i}\dashv D_{j}\subseteq D_{i+j}\supseteq D_{i}\vdash D_{j}$)
for all $i, j\in\bz$.
\end{defi}

\begin{ex}[\cite{Lu}]\label{ex:grdia}
Let $D=\{f_{1}\partial_{1}+f_{2}\partial_{2}\mid f_{1}, f_{2}\in\Bbbk[\kt^{\pm}]\}$ and
define binary operations $\dashv, \vdash: D\otimes D\rightarrow D$ by
\begin{align*}
\kt^{i}\partial_{m}\dashv\kt^{j}\partial_{n}:=\kt^{i+j+\delta_{1,n}}\partial_{m},\qquad\qquad
\kt^{i}\partial_{m}\vdash\kt^{j}\partial_{n}:=\kt^{i+j+\delta_{1,m}}\partial_{n},
\end{align*}
for any $i, j\in\bz$ and $m, n\in\{1, 2\}$. Then $(D, \dashv, \vdash)$ is a $\bz$-graded
diassociative algebra with the linear decomposition $D=\oplus_{i\in\bz}D_{i}$, where
$$
D_{i}=\Big\{\sum_{k=1}^{2}f_{k}\partial_{k}\mid f_{k}\text{ is a homogeneous
polynomial with }\deg(f_{k})=i-1,\; k=1,2\Big\},
$$
for all $i\in\bz$.
\end{ex}

We now extend the conclusion of Proposition \ref{pro:diass-ass} to the $\bz$-graded algebras.

\begin{pro}\label{pro:aff-Zalg}
Let $(B, \diamond)$ be a finite-dimensional Zinbiel algebra and $(D=\oplus_{i\in\bz}D_{i},
\dashv, \vdash)$ be a $\bz$-graded diassociative algebra.
Define a binary operation on $D\otimes B$ by
$$
(d_{1}\otimes b_{1})\cdot(d_{2}\otimes b_{2})=(d_{1}\vdash d_{2})\otimes(b_{1}\diamond b_{2})
+(d_{1}\dashv d_{2})\otimes(b_{2}\diamond b_{1}),
$$
for any $d_{1}, d_{2}\in D$ and $b_{1}, b_{2}\in B$. Then $(D\otimes B, \cdot)$ is a
$\bz$-graded associative algebra, which is called an {\bf affine associative algebra}
from $(B, \diamond)$ by $(D=\oplus_{i\in\bz}D_{i}, \dashv, \vdash)$. Moreover, if
$(D=\oplus_{i\in\bz}D_{i}, \dashv, \vdash)$ is the $\bz$-graded diassociative algebra
given in Example \ref{ex:grdia}, then $(D\otimes B, \cdot)$ is a $\bz$-graded associative
algebra if and only if $(B, \diamond)$ is a Zinbiel algebra.
\end{pro}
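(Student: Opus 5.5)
The first assertion is handled by Proposition \ref{pro:diass-ass}. That proof uses only the diassociative axioms of $(D, \dashv, \vdash)$ and the Zinbiel identity of $(B, \diamond)$, computed on pure tensors. It never uses finite-dimensionality of $D$, so associativity of $\cdot$ on $D\otimes B$ carries over verbatim. For the grading I set $(D\otimes B)_{i}:=D_{i}\otimes B$. Each piece is finite-dimensional because $D_{i}$ and $B$ are. Both summands of $(d_{1}\otimes b_{1})\cdot(d_{2}\otimes b_{2})$ have $D$-component in $D_{i}\dashv D_{j}$ or $D_{i}\vdash D_{j}$, hence in $D_{i+j}$. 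So $(D\otimes B)_{i}\cdot(D\otimes B)_{j}\subseteq(D\otimes B)_{i+j}$, which gives the first part.

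For the converse with $D$ as in Example \ref{ex:grdia}, I assume $\cdot$ is associative and recover the Zinbiel identity by testing on well-chosen triples. The idea is to pick $d_{1}, d_{2}, d_{3}$ so that one product, say $\vdash$, is ``switched off'' in the final coordinate. In Example \ref{ex:grdia}, $\kt^{i}\partial_{m}\vdash\kt^{j}\partial_{n}$ carries $\partial_{n}$, while $\kt^{i}\partial_{m}\dashv\kt^{j}\partial_{n}$ carries $\partial_{m}$. I therefore take $d_{1}=\partial_{2}$ and $d_{2}=d_{3}=\partial_{1}$. Then:
\begin{itemize}
\item every $\vdash$-contribution lands in the $\partial_{1}$-component;
\item every contribution in which $d_{1}$ appears on the left of a $\dashv$ lands in the $\partial_{2}$-component.
\end{itemize}

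I then expand both $((d_{1}\otimes b_{1})\cdot(d_{2}\otimes b_{2}))\cdot(d_{3}\otimes b_{3})$ and $(d_{1}\otimes b_{1})\cdot((d_{2}\otimes b_{2})\cdot(d_{3}\otimes b_{3}))$ into their four terms, as in the displayed computation in the proof of Proposition \ref{pro:diass-ass}, but without yet applying the Zinbiel identity. Next I compare the coefficient of $\kt^{k}\partial_{2}$ for the appropriate power $k$. On the left, the $\partial_{2}$-terms come from $(d_{1}\dashv d_{2})\dashv d_{3}$, with $B$-part $b_{3}\diamond(b_{2}\diamond b_{1})$. On the right, they come from $d_{1}\dashv(d_{2}\vdash d_{3})$ and $d_{1}\dashv(d_{2}\dashv d_{3})$, with $B$-parts $(b_{2}\diamond b_{3})\diamond b_{1}$ and $(b_{3}\diamond b_{2})\diamond b_{1}$. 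I must check that the $\kt$-exponents agree, since the exponents are shifted by $\delta_{1,n}$ or $\delta_{1,m}$. If they do, equating $\partial_{2}$-components in $D\otimes B$ yields
$$b_{3}\diamond(b_{2}\diamond b_{1})=(b_{3}\diamond b_{2})\diamond b_{1}+(b_{2}\diamond b_{3})\diamond b_{1},$$
which is exactly the Zinbiel identity after relabelling.

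The main obstacle is bookkeeping the powers of $\kt$ so that distinct terms really lie in linearly independent components $\kt^{k}\partial_{m}\otimes B$. If the exponents from different paths do not match, or the separation fails, I will try other assignments: $d_{1}=\partial_{1}$ with $d_{2}, d_{3}\in\{\partial_{1},\partial_{2}\}$, or shifted powers $\kt^{i}\partial_{m}$. Among these I will pick a triple that isolates one $\partial_{m}$-component containing exactly the three Zinbiel terms. Linear independence of the $\kt^{k}\partial_{m}$ then lets me cancel the $D$-factor and conclude that $(B,\diamond)$ is Zinbiel.
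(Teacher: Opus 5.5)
Your proposal is correct and is essentially the paper's argument: the paper tests associativity on the triple $(\partial_{1},\partial_{1},\partial_{2})$ and compares the $\kt^{2}\partial_{2}$-coefficient, where $\vdash$ carries the right factor's index. Your mirror-image triple $(\partial_{2},\partial_{1},\partial_{1})$, where $\dashv$ carries the left factor's index, works equally well: the single $\partial_{2}$-term on the left and the two on the right all carry $\kt^{2}$, giving $b_{3}\diamond(b_{2}\diamond b_{1})=(b_{3}\diamond b_{2})\diamond b_{1}+(b_{2}\diamond b_{3})\diamond b_{1}$, so none of your fallback choices are needed.
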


\begin{proof}
First, by Proposition \ref{pro:diass-ass}, $(D\otimes B, \cdot)$ is an associative algebra.
Since $(D=\oplus_{i\in\bz}D_{i}, \dashv, \vdash)$ is $\bz$-graded, $(D\otimes B, \cdot)$
is a $\bz$-graded associative algebra. If $(D=\oplus_{i\in\bz}D_{i}, \dashv, \vdash)$ is the
$\bz$-graded diassociative algebra given in Example \ref{ex:grdia}, then we have

\begin{align*}
\Big((\partial_{1}\otimes b_{1})\cdot(\partial_{1}\otimes b_{2})\Big)
\cdot(\partial_{2}\otimes b_{3})&=\Big(\kt\partial_{1}\otimes(b_{1}\diamond b_{2}))
+\kt\partial_{1}\otimes(b_{2}\diamond b_{1})\Big)\cdot(\partial_{2}\otimes b_{3})\\
&=\kt^{2}\partial_{2}\otimes((b_{1}\diamond b_{2})\diamond b_{3})
+\kt^{2}\partial_{2}\otimes((b_{2}\diamond b_{1})\diamond b_{3})\\[-1mm]
&\quad+\kt\partial_{1}\otimes(b_{3}\diamond(b_{1}\diamond b_{2}))
+\kt\partial_{1}\otimes(b_{3}\diamond(b_{2}\diamond b_{1})),\\
(\partial_{1}\otimes b_{1})\cdot\Big((\partial_{1}\otimes b_{2})
\cdot(\partial_{2}\otimes b_{3})\Big)&=(\partial_{1}\otimes b_{1})\cdot
\Big(\kt\partial_{2}\otimes(b_{2}\diamond b_{3})
+\partial_{1}\otimes(b_{3}\diamond b_{2})\Big)\\
&=\kt^{2}\partial_{2}\otimes(b_{1}\diamond(b_{2}\diamond b_{3}))
+\kt\partial_{1}\otimes(b_{1}\diamond(b_{3}\diamond b_{2}))\\[-1mm]
&\quad+\kt\partial_{1}\otimes((b_{2}\diamond b_{3})\diamond b_{1})
+\kt\partial_{1}\otimes((b_{3}\diamond b_{2})\diamond b_{1}).
\end{align*}
Comparing the coefficients of $\kt^{2}\partial_{2}$ in equation $((\partial_{1}\otimes b_{1})
\cdot(\partial_{1}\otimes b_{2}))\cdot(\partial_{2}\otimes b_{3})=(\partial_{1}\otimes b_{1})
\cdot((\partial_{1}\otimes b_{2})\cdot(\partial_{2}\otimes b_{3}))$, we get $b_{1}\diamond
(b_{2}\diamond b_{3})=(b_{1}\diamond b_{2})\diamond b_{3}
+(b_{2}\diamond b_{1})\diamond b_{3}$. Thus, $(B, \diamond)$ is a Zinbiel algebra.
\end{proof}

Recall that a {\bf Zinbiel coalgebra} is a pair $(B, \nu)$, where $B$ is a vector space
and $\nu: B\rightarrow B\otimes B$ is linear maps such that $(\id\otimes\nu)\nu=
(\nu\otimes\id)\nu+(\tau\otimes\id)(\nu\otimes\id)\nu$. To carry out the Zinbiel
coalgebra affinization, we need to extend the codomain of the coproduct $\nu$ to allow
infinite sums. Let $U=\oplus_{i\in\bz}U_{i}$ and $V=\oplus_{j\in\bz}V_{j}$ be
$\bz$-graded vector spaces. We call the {\bf completed tensor product} of $U$ and
$V$ to be the vector space
$$
U\,\hat{\otimes}\,V:=\prod_{i,j\in\bz}U_{i}\otimes V_{j}.
$$
If $U$ and $V$ are finite-dimensional, then $U\,\hat{\otimes}\,V$ is just the usual
tensor product $U\otimes V$. In general, an element in $U\,\hat{\otimes}\,V$ is an
infinite formal sum $\sum_{i,j\in\bz}X_{ij} $ with $X_{ij}\in U_{i}\otimes V_{j}$.
So $X_{ij}=\sum_{\alpha} u_{i, \alpha}\otimes v_{j, \alpha}$ for pure tensors
$u_{i, \alpha}\otimes v_{j, \alpha}\in U_{i}\otimes V_{j}$ with $\alpha$ in a finite
index set. Thus a general term of $U\,\hat{\otimes}\,V$ is a possibly infinite sum
$\sum_{i,j,\alpha}u_{i\alpha}\otimes v_{j\alpha}$, where $i, j\in\bz$ and $\alpha$ is
in a finite index set (which might depend on $i, j$). With these notations, for linear
maps $f: U\rightarrow U'$ and $g: V\rightarrow V'$, define
$$
f\,\hat{\otimes}\,g: U\,\hat{\otimes}\,V\rightarrow U'\,\hat{\otimes}\,V',
\qquad \sum_{i,j,\alpha}u_{i,\alpha}\otimes v_{j, \alpha}\mapsto
\sum_{i,j,\alpha} f(u_{i, \alpha})\otimes g(v_{j, \alpha}).
$$
Also the twist map $\tau$ has its completion $\hat{\tau}: V\,\hat{\otimes}\,V\rightarrow
V\,\hat{\otimes}\,V$, $\sum_{i,j,\alpha}u_{i, \alpha}\otimes v_{j, \alpha}\mapsto
\sum_{i,j,\alpha}v_{j, \alpha}\otimes u_{i, \alpha}$.
Moreover, we define a (completed) coproduct to be a linear map
$\nu: V\rightarrow V\,\hat{\otimes}\,V$, $\nu(x):=\sum_{i, j, \alpha}
x_{1, i, \alpha}\otimes x_{2, j, \alpha}$. Then we have the well-defined map
$$
(\nu\,\hat{\otimes}\,\id)(\nu(x))=(\nu\,\hat{\otimes}\,\id)
\Big(\sum_{i,j,\alpha}x_{1, i, \alpha}\otimes x_{2, j, \alpha}\Big)
:=\sum_{i,j,\alpha}\nu(x_{1, i, \alpha})\otimes x_{2, j, \alpha}
\in V\,\hat{\otimes}\,V\,\hat{\otimes}\,V.
$$

\begin{defi}\label{def:ali-coa}
\begin{enumerate}
\item[$(i)$] A {\bf completed coassociative coalgebra} is a pair $(A, \Delta)$ where
    $A=\oplus_{i\in\bz}A_{i}$ is a $\bz$-graded vector space and
    $\Delta: A\rightarrow A\,\hat{\otimes}\, A$ is a linear map satisfying
    $(\Delta\,\hat{\otimes}\,\id)\Delta=(\id\,\hat{\otimes}\,\Delta)\Delta$.
\item[$(ii)$] A {\bf completed diassociative coalgebra} is a triple $(D, \theta_{\dashv},
    \theta_{\vdash})$, where $D=\oplus_{i\in\bz}D_{i}$ is a $\bz$-graded vector space and
    $\theta_{\dashv}, \theta_{\vdash}: D\rightarrow D\,\hat{\otimes}\,D$ are a linear maps
    satisfying $(\theta_{\dashv}\,\hat{\otimes}\,\id)\theta_{\dashv}
    =(\id\,\hat{\otimes}\,\theta_{\vdash})\theta_{\dashv}$,
    $(\theta_{\vdash}\,\hat{\otimes}\,\id)\theta_{\dashv}=
    (\id\,\hat{\otimes}\,\theta_{\dashv})\theta_{\vdash}$ and
    $(\theta_{\dashv}\,\hat{\otimes}\,\id)\theta_{\vdash}
    =(\theta_{\vdash}\,\hat{\otimes}\,\id)\theta_{\vdash}$.
\end{enumerate}
\end{defi}

\begin{ex}\label{ex:grdiaco}
Consider the $\bz$-graded vector space $D=\{f_{1}\partial_{1}+f_{2}\partial_{2}\mid
f_{1}, f_{2}\in\Bbbk[\kt^{\pm}]\}=\oplus_{i\in\bz}D_{i}$ given in Example \ref{ex:grdia}.
Define two linear maps $\theta_{\dashv}, \theta_{\vdash}: D\rightarrow D\,\hat{\otimes}\,D$ by
\begin{align*}
\theta_{\dashv}(\kt^{i}\partial_{1})&=\sum_{j\in\bz}
\Big(\kt^{-j}\partial_{1}\otimes\kt^{i+j}\partial_{1}
-\kt^{-j}\partial_{1}\otimes\kt^{i+j+1}\partial_{2}\Big), \qquad
\theta_{\dashv}(\kt^{i}\partial_{2})=\sum_{j\in\bz}
\Big(\kt^{-j}\partial_{2}\otimes\kt^{i+j}\partial_{1}
-\kt^{-j}\partial_{2}\otimes\kt^{i+j+1}\partial_{2}\Big), \\[-2mm]
\theta_{\vdash}(\kt^{i}\partial_{1})&=\sum_{j\in\bz}
\Big(\kt^{-j}\partial_{1}\otimes\kt^{i+j}\partial_{1}
-\kt^{-j}\partial_{2}\otimes\kt^{i+j+1}\partial_{1}\Big), \qquad
\theta_{\vdash}(\kt^{i}\partial_{2})=\sum_{j\in\bz}
\Big(\kt^{-j}\partial_{1}\otimes\kt^{i+j}\partial_{2}
-\kt^{-j}\partial_{2}\otimes\kt^{i+j+1}\partial_{2}\Big),
\end{align*}
for any $i\in\bz$. Then $(D=\oplus_{i\in\bz}D_{i}, \theta_{\dashv}, \theta_{\vdash})$
is a completed diassociative coalgebra.
\end{ex}

Now, we consider the dual version of the Zinbiel algebra affinization,
for Zinbiel coalgebras. We give the dual version of Proposition \ref{pro:aff-Zalg}.

\begin{pro}\label{pro:zco-coass}
Let $(B, \nu)$ be a finite-dimensional Zinbiel coalgebra and $(D=\oplus_{i\in\bz}D_{i},
\theta_{\dashv}, \theta_{\vdash})$ be a completed diassociative coalgebra. Define a linear
map $\Delta: D\otimes B\rightarrow(D\otimes B)\,\hat{\otimes}\,(D\otimes B)$ by
\begin{align*}
\Delta(d\otimes b)&=\theta_{\vdash}(d)\bullet\nu(b)
+\theta_{\dashv}(d)\bullet\tau(\nu(b))\\
&:=\sum_{(b)}\sum_{(i,j,\alpha)}(d_{(1,i,\alpha)}\otimes b_{(1)})
\otimes(d_{(2,j,\alpha)}\otimes b_{(2)})+\sum_{(b)}\sum_{[i,j,\alpha]}(
(d_{[1,i,\alpha]}\otimes b_{(2)})\otimes(d_{[2,j,\alpha]}\otimes b_{(1)}),
\end{align*}
for any $d\in D$ and $b\in B$, where $\theta_{\vdash}(d)=\sum_{(i,j,\alpha)}
d_{(1,i,\alpha)}\otimes d_{(2,j,\alpha)}$, $\theta_{\dashv}(d)=\sum_{[i,j,\alpha]}
d_{[1,i,\alpha]}\otimes d_{[2,j,\alpha]}$ and $\nu(b)=\sum_{(b)}b_{(1)}\otimes b_{(2)}$
in the Sweedler notation. Then $(D\otimes B, \Delta)$ is a completed coassociative coalgebra.
Moreover, if $(D=\oplus_{i\in\bz}D_{i}, \theta_{\dashv}, \theta_{\vdash})$ is the completed
diassociative coalgebra given in Example \ref{ex:grdiaco}, then $(D\otimes B, \Delta)$
is a completed coassociative coalgebra if and only if $(B, \nu)$ is a Zinbiel coalgebra.
\end{pro}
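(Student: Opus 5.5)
The first claim should follow by transcribing the proof of Proposition \ref{pro:dco-coas} into the completed setting. Write $\theta_{\vdash}(d)$ and $\theta_{\dashv}(d)$ as formal sums $\sum_{i,j,\alpha}$ over homogeneous pieces. Since $B$ is finite-dimensional, $\nu(b)$ is a finite sum. Hence $\Delta(d\otimes b)$ is a well-defined element of $\prod_{i,j}(D_{i}\otimes B)\otimes(D_{j}\otimes B)=(D\otimes B)\,\hat{\otimes}\,(D\otimes B)$, where $D\otimes B$ is graded by $(D\otimes B)_{i}=D_{i}\otimes B$. Each operation used in that proof acts componentwise on the finite index sets: $\hat{\otimes}$ of maps, the twists $\tau$ on $B$-factors, and the $\bullet$-pairing of a $D^{\otimes 3}$-formal sum with a finite element of $B^{\otimes 3}$. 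So the identical chain of equalities goes through. On the $B$-side it uses the Zinbiel coalgebra identity $(\id\otimes\nu)\nu=(\nu\otimes\id)\nu+(\tau\otimes\id)(\nu\otimes\id)\nu$. On the $D$-side it uses the three completed diassociative coalgebra axioms of Definition \ref{def:ali-coa}. The output is $(\id\,\hat{\otimes}\,\Delta)\Delta=(\Delta\,\hat{\otimes}\,\id)\Delta$. The only point needing care is that regrouping infinite sums is legitimate. This holds because every fixed multidegree component $D_{i}\otimes D_{j}\otimes D_{k}$ receives only finitely many contributions, so all identities can be checked degreewise.

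For the converse with the $D$ of Example \ref{ex:grdiaco}, I would mimic the proof of Proposition \ref{pro:aff-Zalg} dually. Assume $(D\otimes B,\Delta)$ is coassociative and evaluate both sides on a suitable test element such as $\kt^{i}\partial_{2}\otimes b$ or $\kt^{i}\partial_{1}\otimes b$. Then compare the coefficient of one fixed basis tensor in $D^{\otimes 3}$, chosen so that only the $\theta_{\vdash}$-$\theta_{\vdash}$ terms and the corresponding $\theta_{\vdash}$-$\theta_{\dashv}$ term survive. The natural target is the analogue of the $\kt^{2}\partial_{2}$ coefficient in Proposition \ref{pro:aff-Zalg}.

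Concretely, I would compute $(\theta_{\vdash}\,\hat{\otimes}\,\id)\theta_{\vdash}$, $(\id\,\hat{\otimes}\,\theta_{\vdash})\theta_{\vdash}$, $(\theta_{\dashv}\,\hat{\otimes}\,\id)\theta_{\vdash}$, $(\id\,\hat{\otimes}\,\theta_{\dashv})\theta_{\vdash}$, and so on, on $\kt^{i}\partial_{1}$ from the explicit formulas. Then I would isolate a coefficient, for example that of $\kt^{-j}\partial_{1}\otimes\kt^{-k}\partial_{2}\otimes\kt^{i+j+k+1}\partial_{2}$ or a similar pattern. In this coefficient the $B^{\otimes 3}$-parts should read $(\id\otimes\nu)\nu(b)$ on one side and $(\nu\otimes\id)\nu(b)+(\tau\otimes\id)(\nu\otimes\id)\nu(b)$ on the other. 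The mixed-partial pattern is the point: the $\theta_{\dashv}$-terms carry the $\partial$-index of the left tensor factor unchanged, while $\theta_{\vdash}$ shifts it, so mixed patterns discriminate between the summands. Equality of the two sides for all $b$ then yields exactly the Zinbiel coalgebra identity. Together with the first part, this gives the equivalence.

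The main obstacle is this bookkeeping: choosing a single basis tensor in $D\,\hat{\otimes}\,D\,\hat{\otimes}\,D$ whose coefficient produces the Zinbiel coidentity cleanly. Extra terms with $\partial$-pattern coincidences could otherwise mix in, and one must check that the minus signs and degree shifts ($+1$ on $\partial_{2}$ or on the second factor) align. I would first tabulate the four two-step coproducts of $\kt^{i}\partial_{1}$ by $\partial$-pattern and pick the pattern that appears in the fewest of them. As a cross-check, one could dualize: the graded dual of this coalgebra is essentially the diassociative algebra of Example \ref{ex:grdia}, so the coefficient should mirror the one used in Proposition \ref{pro:aff-Zalg}.
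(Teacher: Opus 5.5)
Your route is the paper's. The first part is the completed transcription of Proposition~\ref{pro:dco-coas}, and the converse compares one coefficient in $(\id\,\hat{\otimes}\,\Delta)\Delta(\partial_{1}\otimes b)=(\Delta\,\hat{\otimes}\,\id)\Delta(\partial_{1}\otimes b)$. What is missing is the choice of that coefficient, which is the entire content of the converse. You leave it open, and the concrete tensor you offer does not behave as you predict.

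The bookkeeping is short if you record only $\partial$-indices. Both coproducts preserve the degree $\deg(\kt^{a}\partial_{1})=a$, $\deg(\kt^{a}\partial_{2})=a-1$. So in a triple tensor two $\kt$-exponents are free, the third is forced, and every admissible choice carries the same coefficient. With $u=\partial_{1}-\partial_{2}$, Example~\ref{ex:grdiaco} then reads $\theta_{\dashv}(\partial_{m})=\partial_{m}\otimes u$ and $\theta_{\vdash}(\partial_{m})=u\otimes\partial_{m}$. The eight two-step composites applied to $\partial_{1}$ fall into three groups:
\begin{itemize}
\item $(\id\,\hat{\otimes}\,\theta_{\vdash})\theta_{\vdash}$, $(\theta_{\vdash}\,\hat{\otimes}\,\id)\theta_{\vdash}$ and $(\theta_{\dashv}\,\hat{\otimes}\,\id)\theta_{\vdash}$ give $u\otimes u\otimes\partial_{1}$;
\item $(\id\,\hat{\otimes}\,\theta_{\dashv})\theta_{\vdash}$ and $(\theta_{\vdash}\,\hat{\otimes}\,\id)\theta_{\dashv}$ give $u\otimes\partial_{1}\otimes u$;
\item $(\id\,\hat{\otimes}\,\theta_{\vdash})\theta_{\dashv}$, $(\id\,\hat{\otimes}\,\theta_{\dashv})\theta_{\dashv}$ and $(\theta_{\dashv}\,\hat{\otimes}\,\id)\theta_{\dashv}$ give $\partial_{1}\otimes u\otimes u$.
\end{itemize}
These three tensors are separated, each with coefficient $+1$, by the patterns $\partial_{2}\otimes\partial_{2}\otimes\partial_{1}$, $\partial_{2}\otimes\partial_{1}\otimes\partial_{2}$ and $\partial_{1}\otimes\partial_{2}\otimes\partial_{2}$. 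So coassociativity on $\partial_{1}\otimes b$ is equivalent to three identities:
\begin{itemize}
\item[(a)] $(\id\otimes\nu)\nu=(\nu\otimes\id)\nu+(\tau\otimes\id)(\nu\otimes\id)\nu$;
\item[(b)] $(\id\otimes\tau)(\id\otimes\nu)\nu=(\nu\otimes\id)\tau\nu$;
\item[(c)] $(\id\otimes\nu)\tau\nu+(\id\otimes\tau)(\id\otimes\nu)\tau\nu=(\tau\otimes\id)(\nu\otimes\id)\tau\nu$.
\end{itemize}

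The paper reads off the pattern $\partial_{2}\otimes\partial_{2}\otimes\partial_{1}$, concretely say $\partial_{2}\otimes\partial_{2}\otimes\kt^{2}\partial_{1}$. This gives (a) verbatim, and it is exactly the coefficient your verbal criterion describes.

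Your sample $\kt^{-j}\partial_{1}\otimes\kt^{-k}\partial_{2}\otimes\kt^{i+j+k+1}\partial_{2}$ has coefficient $0$ in every term, because the last exponent must be $i+j+k+2$. Once corrected, it isolates the third group, not the $\theta_{\vdash}$--$\theta_{\vdash}$ terms. It then yields (c), which is the Zinbiel coidentity only after reversing the three tensor factors. That is usable, but it is not the identity you announce.

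Your fallback of picking the pattern met by the fewest terms is unsafe. It selects $\partial_{2}\otimes\partial_{1}\otimes\partial_{2}$ and yields only (b), which is strictly weaker. For example, (b) holds for $\nu(x)=x\otimes x$ on a one-dimensional space, which is not a Zinbiel coalgebra.

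A minor point on the first part: degreewise finiteness is not implied by the definition of a completed coalgebra, and you do not need it. Since $\nu(b)$ is a finite sum, the infinite sums in the composites of $\Delta$ are finite combinations of the $\theta$-composites tensored with fixed elements of $B^{\otimes 3}$. The hypotheses already treat those $\theta$-composites as well defined.
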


\begin{proof}
First, since $(B, \nu)$ is a Zinbiel coalgebra and $(D, \theta_{\dashv}, \theta_{\vdash})$
is a completed diassociative coalgebra, similar to the proof Proposition \ref{pro:dco-coas},
we can obtain that $(D\otimes B, \Delta)$ is a completed coassociative coalgebra.
Second, suppose $(D\otimes B, \Delta)$ is a completed coassociative coalgebra, i.e.,
\begin{align*}
%&\;(\id\,\hat{\otimes}\,\Delta)(\Delta(d\otimes b))\\
%=&\;
&(\id\,\hat{\otimes}\,\theta_{\vdash})(\theta_{\vdash}(d))\bullet(\id\otimes\nu)(\nu(b))
+(\id\,\hat{\otimes}\,\theta_{\vdash})(\theta_{\dashv}(d))
\bullet(\id\otimes\nu)((\tau(\nu(b)))\\[-1mm]
&\quad +(\id\,\hat{\otimes}\,\theta_{\dashv})(\theta_{\vdash}(d))\bullet(\id\otimes\tau)
((\id\otimes\nu)(\nu(b)))
+(\id\,\hat{\otimes}\,\theta_{\dashv})(\theta_{\dashv}(d))\bullet(\id\otimes\tau)
((\id\otimes\nu)(\tau(\nu(b))))\\
=& (\theta_{\vdash}\,\hat{\otimes}\,\id)(\theta_{\vdash}(d))\bullet(\nu\otimes\id)(\nu(b))
+(\theta_{\vdash}\,\hat{\otimes}\,\id)(\theta_{\dashv}(d))
\bullet(\nu\otimes\id)(\tau(\nu(b)))\\[-1mm]
&\quad +(\theta_{\dashv}\,\hat{\otimes}\,\id)(\theta_{\vdash}(d))\bullet(\tau\otimes\id)
((\nu\otimes\id)(\nu(b)))
+(\theta_{\dashv}\,\hat{\otimes}\,\id)(\theta_{\dashv}(d))\bullet(\tau\otimes\id)
((\nu\otimes\id)(\tau(\nu(b)))),
%=&\;(\Delta\,\hat{\otimes}\,\id)(\Delta(d\otimes b))\\
\end{align*}
for any $d\in D$ and $b\in B$. Let $(D=\oplus_{i\in\bz}D_{i}, \theta_{\dashv},
\theta_{\vdash})$ be the completed diassociative coalgebra given in Example
\ref{ex:grdiaco} and $d=\partial_{1}$. Direct calculation shows that only the
following three terms in the above equation contain $\partial_{2}\otimes\partial_{2}
\otimes\partial_{1}$:
\begin{align*}
(\id\,\hat{\otimes}\,\theta_{\vdash})(\theta_{\vdash}(\partial_{1}))
&=\sum_{i,j}\Big(\kt^{-i}\partial_{1}\otimes\kt^{-j}\partial_{1}\otimes\kt^{i+j}\partial_{1}
-\kt^{-i}\partial_{1}\otimes\kt^{-j}\partial_{2}\otimes\kt^{i+j+1}\partial_{1}\\[-5mm]
&\qquad\qquad -\kt^{-i}\partial_{1}\otimes\kt^{-j}\partial_{2}\otimes\kt^{i+j+1}\partial_{1}
+\kt^{-i}\partial_{2}\otimes\kt^{-j}\partial_{2}\otimes\kt^{i+j+2}\partial_{1}\Big),\\[-1mm]
(\theta_{\vdash}\,\hat{\otimes}\,\id)(\theta_{\vdash}(\partial_{1}))
&=\sum_{i,j}\Big(\kt^{-i}\partial_{1}\otimes\kt^{i-j}\partial_{1}\otimes\kt^{j}\partial_{1}
-\kt^{-i}\partial_{2}\otimes\kt^{i-j+1}\partial_{1}\otimes\kt^{j}\partial_{1}\\[-5mm]
&\qquad\qquad -\kt^{-i}\partial_{1}\otimes\kt^{i-j}\partial_{2}\otimes\kt^{j+1}\partial_{1}
+\kt^{-i}\partial_{2}\otimes\kt^{i-j+1}\partial_{2}\otimes\kt^{j+1}\partial_{1}\Big),\\[-1mm]
(\theta_{\dashv}\,\hat{\otimes}\,\id)(\theta_{\vdash}(\partial_{1}))
&=\sum_{i,j}\Big(\kt^{-i}\partial_{1}\otimes\kt^{i-j}\partial_{1}\otimes\kt^{j}\partial_{1}
-\kt^{-i}\partial_{1}\otimes\kt^{i-j+1}\partial_{2}\otimes\kt^{j}\partial_{1}\\[-5mm]
&\qquad\qquad -\kt^{-i}\partial_{2}\otimes\kt^{i-j}\partial_{1}\otimes\kt^{j+1}\partial_{1}
+\kt^{-i}\partial_{2}\otimes\kt^{i-j+1}\partial_{2}\otimes\kt^{j+1}\partial_{1}\Big).
\end{align*}
Comparing the coefficient of $\partial_{2}\otimes\partial_{2}\otimes\partial_{1}$
in equation $(\id\,\hat{\otimes}\,\Delta)(\Delta(\partial_{1}\otimes b))
=(\Delta\,\hat{\otimes}\,\id)(\Delta(\partial_{1}\otimes b))$, we get
$(\id\otimes\nu)\nu=(\nu\otimes\id)\nu+(\tau\otimes\id)(\nu\otimes\id)\nu$. Thus,
$(B, \nu)$ is a Zinbiel coalgebra.
\end{proof}

%%%%%%%%%%%%%%%%%%%%%%%%%%%%%%%%%%%%%%%%%%%%%%%%%%%%%%%%%%%%%%%%%%%%%%%%%%%%%%%%
\subsection{Completed ASI bialgebras from Zinbiel bialgebras}\label{subsec:affbia}
In this subsection, we will construct a completed ASI bialgebra by affinization of
Zinbiel bialgebra. First, we extend the notion of quadratic diassociative algebra to
the case of $\bz$-graded.

\begin{defi}\label{def:quad}
A bilinear form $\omega(-, -)$ on a $\bz$-graded diassociative algebra
$(D=\oplus_{i\in\bz}D_{i}, \dashv, \vdash)$ called {\bf graded}, if there exists
some $m\in\bz$ such that $\omega(D_{i}, D_{j})=0$ when $i+j+m\neq0$.
A {\bf quadratic $\bz$-graded diassociative algebra}, denoted by $(D=\oplus_{i\in\bz}D_{i},
\dashv, \vdash, \omega)$, is a $\bz$-graded diassociative algebra together with
a skew-symmetric invariant nondegenerate graded bilinear form.
\end{defi}

In particular, if $D=D_{0}$ is finite-dimensional, then the quadratic $\bz$-graded
diassociative algebra $(D=D_{0}, \dashv$, $\vdash, \omega)$ is exactly  called a
quadratic diassociative algebra.

\begin{ex}\label{ex:qu-dia}
$(i)$ {\rm (\cite{Lu})} Let $(D=\oplus_{i\in\bz}D_{i}, \dashv, \vdash)$ be the $\bz$-graded
diassociative algebra given in Example \ref{ex:grdia}, where $D=\{f_{1}\partial_{1}
+f_{2}\partial_{2}\mid f_{1}, f_{2}\in\Bbbk[\kt^{\pm}]\}$. Define a skew-symmetric
bilinear form $\omega(-,-)$ on $(D=\oplus_{i\in\bz}D_{i}, \dashv, \vdash)$ by
$$
\omega(\kt^{i}\partial_{1},\; \kt^{j}\partial_{2})=\delta_{i+j,0},\qquad\qquad
\omega(\kt^{i}\partial_{1},\; \kt^{j}\partial_{1})
=\omega(\kt^{i}\partial_{2},\; \kt^{j}\partial_{2})=0,
$$
for any $i, j\in\bz$. Then $(D=\oplus_{i\in\bz}D_{i}, \dashv, \vdash, \omega)$ is a
quadratic $\bz$-graded diassociative algebra.
Moreover, $\{\kt^{-i}\partial_{2},\; \kt^{-i}\partial_{1}\mid i\in\bz\}$ is the dual basis
of $\{\kt^{i}\partial_{1},\; \kt^{i}\partial_{1}\mid i\in\bz\}$ with respect to
$\omega(-,-)$, consisting of homogeneous elements.

$(ii)$ Let $(D, \dashv, \vdash)$ be the $4$-dimensional diassociative algebra
with a basis $\{x_{1}, x_{2}, x_{3}, x_{4}\}$ whose the nonzero products are given by
\begin{align*}
& x_{1}\vdash x_{2}=x_{1},\qquad x_{2}\vdash x_{2}=x_{2},\qquad
x_{2}\vdash x_{3}=x_{3},\qquad x_{2}\vdash x_{4}=x_{4},\\
& x_{1}\dashv x_{2}=x_{1},\qquad x_{2}\dashv x_{2}=x_{2},\qquad
x_{3}\dashv x_{1}=x_{4},\qquad x_{4}\dashv x_{2}=x_{4}.
\end{align*}
If we define a skew-symmetric bilinear form $\omega(-,-)$ on $(D, \dashv, \vdash)$
by $\omega(x_{3}, x_{1})=\omega(x_{4}, x_{2})=1$, then $(D, \dashv, \vdash, \omega)$
is a quadratic diassociative algebra.
\end{ex}

Let $(D=\oplus_{i\in\bz}D_{i}, \dashv, \vdash, \omega)$ be a quadratic $\bz$-graded
diassociative algebra. The bilinear form $\omega(-,-)$ induces bilinear forms
$$
(\underbrace{D\,\hat{\otimes}\,D\,\hat{\otimes}\,\cdots\,\hat{\otimes}\,
D}_{n\text{-fold}})\otimes(\underbrace{D\otimes D\otimes\cdots
\otimes D}_{n\text{-fold}})\longrightarrow\Bbbk,
$$
for all $n\geq2$, which are denoted by $\hat{\omega}(-,-)$, are defined by
$$
\hat{\omega}\Big(\sum_{i_{1},\cdots,i_{n},\alpha} x_{1, i_{1}, \alpha}
\otimes\cdots\otimes x_{n, i_{n}, \alpha},\ \ y_{1}\otimes\cdots\otimes y_{n}\Big)
=\sum_{i_{1},\cdots,i_{n},\alpha}\prod_{j=1}^{n}\omega(x_{j, i_{j}, \alpha},\; y_{j}).
$$
Then, one can check that $\hat{\omega}(-,-)$ is {\bf left nondegenerate}, i.e., if
$$
\hat{\omega}\Big(\sum_{i_{1}, \cdots, i_{n},\alpha}x_{1, i_{1}, \alpha}
\otimes\cdots\otimes x_{n, i_{n}, \alpha},\ \ y_{1}\otimes\cdots\otimes y_{n}\Big)
=\hat{\omega}\Big(\sum_{i_{1},\cdots,i_{n},\alpha} z_{1, i_{1}, \alpha}
\otimes\cdots\otimes z_{n, i_{n}, \alpha},\ \ y_{1}\otimes\cdots\otimes y_{n}\Big),
$$
for all homogeneous elements $y_{1}, y_{2},\cdots, y_{n}\in D$, then
$$
\sum_{i_{1},\cdots,i_{n},\alpha} x_{1, i_{1}, \alpha}\otimes\cdots\otimes x_{n, i_{n}, \alpha}
=\sum_{i_{1},\cdots,i_{n},\alpha} z_{1, i_{1}, \alpha}
\otimes\cdots\otimes z_{n, i_{n}, \alpha}.
$$

\begin{lem}\label{lem:comd-dual}
Let $(D=\oplus_{i\in\bz}D_{i}, \dashv, \vdash, \omega)$ be a quadratic $\bz$-graded
diassociative algebra. Define two linear maps $\theta_{\dashv,\omega},
\theta_{\vdash,\omega}: D\rightarrow D\,\hat{\otimes}\,D$ by
$$
\hat{\omega}(\theta_{\dashv,\omega}(d_{1}),\; d_{2}\otimes d_{3})
=\omega(d_{1},\; d_{2}\dashv d_{3})\qquad\quad \mbox{and}\qquad\quad
\hat{\omega}(\theta_{\vdash,\omega}(d_{1}),\; d_{2}\otimes d_{3})
=\omega(d_{1},\; d_{2}\vdash d_{3})
$$
for any $d_{1}, d_{2}, d_{3}\in D$. Then $(D, \theta_{\dashv,\omega},
\theta_{\vdash,\omega})$ is a completed diassociative coalgebra.
\end{lem}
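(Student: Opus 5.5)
The plan is to dualize: the coassociativity-type identities for $\theta_{\dashv,\omega}$ and $\theta_{\vdash,\omega}$ should be read off from the diassociative axioms of $(D, \dashv, \vdash)$ by pairing with $\hat{\omega}$. I begin by checking that the two coproducts are well defined. Because $\omega$ is graded and nondegenerate, for homogeneous $d_{1}\in D_{k}$ the functional $d_{2}\otimes d_{3}\mapsto\omega(d_{1}, d_{2}\dashv d_{3})$ is nonzero only on $D_{i}\otimes D_{j}$ with $i+j=-k-m$. On each such finite-dimensional piece the graded nondegeneracy of $\omega$ gives a unique element of $D_{-i-m}\otimes D_{-j-m}$ representing the functional. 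Summing over all $(i,j)$ gives an element of $D\,\hat{\otimes}\,D$. Uniqueness then follows from the left nondegeneracy of $\hat{\omega}$ recorded just before the lemma. The same argument applies to $\theta_{\vdash,\omega}$.

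Next I pair the iterated coproducts with $\hat{\omega}$ and move to the algebra side. For $d_{1},d_{2},d_{3},d_{4}$ homogeneous, applying the definition twice gives
\begin{align*}
\hat{\omega}\big((\theta_{\dashv,\omega}\,\hat{\otimes}\,\id)\theta_{\dashv,\omega}(d_{1}),\; d_{2}\otimes d_{3}\otimes d_{4}\big)
&=\omega\big(d_{1},\; (d_{2}\dashv d_{3})\dashv d_{4}\big),\\
\hat{\omega}\big((\id\,\hat{\otimes}\,\theta_{\vdash,\omega})\theta_{\dashv,\omega}(d_{1}),\; d_{2}\otimes d_{3}\otimes d_{4}\big)
&=\omega\big(d_{1},\; d_{2}\dashv(d_{3}\vdash d_{4})\big).
\end{align*}
In the same way, each composite $(\theta_{\star}\,\hat{\otimes}\,\id)\theta_{\circ}$ pairs to $\omega(d_{1}, (d_{2}\star d_{3})\circ d_{4})$, and each composite $(\id\,\hat{\otimes}\,\theta_{\star})\theta_{\circ}$ pairs to $\omega(d_{1}, d_{2}\circ(d_{3}\star d_{4}))$.

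With these formulas, every required identity becomes a diassociative axiom. Coassociativity of $\theta_{\dashv,\omega}$ and $\theta_{\vdash,\omega}$ comes from associativity of $\dashv$ and $\vdash$. The three compatibility identities of Definition \ref{def:ali-coa} $(ii)$ come from the three mixed axioms of Definition \ref{de:dialg}:
\begin{align*}
(d_{2}\dashv d_{3})\dashv d_{4}&=d_{2}\dashv(d_{3}\vdash d_{4}),\\
(d_{2}\vdash d_{3})\dashv d_{4}&=d_{2}\vdash(d_{3}\dashv d_{4}),\\
(d_{2}\dashv d_{3})\vdash d_{4}&=(d_{2}\vdash d_{3})\vdash d_{4}.
\end{align*}
In each case both sides agree against all homogeneous $d_{2}\otimes d_{3}\otimes d_{4}$, so left nondegeneracy of $\hat{\omega}$ gives the identity in $D\,\hat{\otimes}\,D\,\hat{\otimes}\,D$. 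Invariance of $\omega$ is not used in this step; only nondegeneracy and gradedness matter.

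The main obstacle is the bookkeeping in the completed setting. I have to confirm that $(\theta\,\hat{\otimes}\,\id)\theta(d_{1})$ is an honest element of the triple completed tensor product. I also have to confirm that pairing it with $d_{2}\otimes d_{3}\otimes d_{4}$ really equals the iterated pairing. Gradedness handles both points. For fixed homogeneous arguments, only the finitely many components in the relevant degrees contribute, so the infinite formal sums reduce to finite sums. The componentwise definition of $\theta\,\hat{\otimes}\,\id$ then lets me apply the defining relation term by term. Once this is justified, the verification above is purely formal.
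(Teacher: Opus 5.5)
Your proposal is correct and follows the paper's own proof: pair the iterated coproducts with $\hat{\omega}$ against $d_{2}\otimes d_{3}\otimes d_{4}$, reduce each required identity to the corresponding diassociative axiom, and conclude by left nondegeneracy of $\hat{\omega}$. Your extra checks fill in details the paper takes for granted: the existence of $\theta_{\dashv,\omega}(d_{1})$ via degreewise perfect pairings $D_{-i-m}\times D_{i}$ (this needs two-sided nondegeneracy, which you get from skew-symmetry), and the fact that each graded component of the iterated coproducts involves only finitely many terms.
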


\begin{proof}
For any $d_{1}, d_{2}, d_{3}, d_{4}\in D$, we have
\begin{align*}
\hat{\omega}((\theta_{\dashv,\omega}\,\hat{\otimes}\,\id)
(\theta_{\dashv,\omega}(d_{1})),\; d_{2}\otimes d_{3}\otimes d_{4})
&=\omega(d_{1},\; (d_{2}\dashv d_{3})\dashv d_{4})\\
&=\omega(d_{1},\; d_{2}\dashv(d_{3}\vdash d_{4}))
=\hat{\omega}((\id\,\hat{\otimes}\,\theta_{\vdash,\omega})
(\theta_{\dashv,\omega}(d_{1})),\; d_{2}\otimes d_{3}\otimes d_{4}).
\end{align*}
That is $(\theta_{\dashv,\omega}\,\hat{\otimes}\,\id)\theta_{\dashv,\omega}
=(\id\,\hat{\otimes}\,\theta_{\vdash,\omega})\theta_{\dashv,\omega}$ since
$\hat{\omega}(-,-)$ is left nondegenerate. Similarly,
$(\theta_{\vdash}\,\hat{\otimes}\,\id)\theta_{\dashv}=(\id\,\hat{\otimes}\,\theta_{\dashv})
\theta_{\vdash}$ and $(\theta_{\dashv}\,\hat{\otimes}\,\id)\theta_{\vdash}
=(\theta_{\vdash}\,\hat{\otimes}\,\id)\theta_{\vdash}$. Thus, $(D, \theta_{\dashv,\omega},
\theta_{\vdash,\omega})$ is a completed diassociative coalgebra.
\end{proof}

\begin{ex}\label{ex:ind-codia}
Consider the quadratic $\bz$-graded diassociative algebra $(D=\oplus_{i\in\bz}D_{i}, \dashv,
\vdash, \omega)$ given in Example \ref{ex:grdia}. Then the induced completed
diassociative coalgebra $(D=\oplus_{i\in\bz}D_{i}, \theta_{\dashv,\omega},
\theta_{\vdash,\omega})$ is precisely the completed diassociative coalgebra
 given in Example \ref{ex:grdiaco}.
\end{ex}

Next, we extend the Zinbiel algebra affinization and the Zinbiel coalgebra
affinization to Zinbiel bialgebras. We first present the notions of Zinbiel bialgebra
and completed ASI bialgebra. Let $(B, \diamond)$ be a Zinbiel algebra. Recall that
a {\bf bimodule} over $(B, \diamond)$ is a triple $(V, \bar{\kl}, \bar{\kr})$,
where $V$ is a vector space, $\bar{\kl}, \bar{\kr}: B\rightarrow\gl(V)$ are linear maps such
that the following equalities hold for all $b_{1}, b_{2}\in B$,
\begin{align*}
&\qquad\qquad \bar{\kl}_{B}(b_{2})\bar{\kl}_{B}(b_{1})
=\bar{\kl}_{B}(b_{2}\diamond b_{1})+\bar{\kl}_{B}(b_{1}\diamond b_{2}),\\
&\bar{\kr}_{B}(b_{1}\diamond b_{2})=\bar{\kr}_{B}(b_{2})\bar{\kr}_{B}(b_{1})
+\bar{\kr}_{B}(b_{2})\bar{\kl}_{B}(b_{1})=\bar{\kl}_{B}(b_{1})\bar{\kr}_{B}(b_{2}).
\end{align*}
Define the left multiplication map $\bar{\fl}_{B}: B\rightarrow\gl(B)$ and
the right multiplication map $\bar{\fr}_{B}: B\rightarrow\gl(B)$ by
$\bar{\fl}_{B}(b_{1})(b_{2})=b_{1}\diamond b_{2}=\bar{\fr}_{B}(b_{2})(b_{1})$
for any $b_{1}, b_{2}\in B$. Then $(B, \bar{\fl}_{B}, \bar{\fr}_{B})$ is a bimodule
over $(B, \diamond)$, which is called the {\bf regular bimodule}.

\begin{defi}[\cite{Wan}]\label{def:Zinbiel-bi}
Let $(B, \diamond)$ be a Zinbiel algebra and $\nu: B\rightarrow B\otimes B$ be a linear map.
If $(B, \nu)$ is a Zinbiel coalgebra and for any $b_{1}, b_{2}\in B$,
\begin{align}
&\quad \nu(b_{1}\diamond b_{2})+\nu(b_{2}\diamond b_{1})
=(\id\otimes(\bar{\fl}_{B}+\bar{\fr}_{B})(b_{2}))(\nu(b_{1}))
+(\bar{\fl}_{B}(b_{1})\otimes\id)(\nu(b_{2})),                 \label{zbialg1}\\
& \nu(b_{1}\diamond b_{2})+\tau(\nu(b_{1}\diamond b_{2}))
=(\id\otimes\bar{\fr}_{B}(b_{2}))(\nu(b_{1}))
+(\bar{\fl}_{B}(b_{1})\otimes\id)(\nu(b_{2})+\tau(\nu(b_{2}))),  \label{zbialg2}
\end{align}
then $(B, \diamond, \nu)$ is called a {\bf Zinbiel bialgebra}.
\end{defi}

\begin{ex}\label{ex:bialg}
Let $B=\Bbbk\{e_{1}, e_{2}\}$ be a $2$-dimensional Zinbiel algebra with nonzero products:
$e_{1}\diamond e_{1}=e_{2}$. By direct computations, we can obtain that a linear map
$\nu: B\rightarrow B\otimes B$ such that $(B, \diamond, \nu)$ is a Zinbiel bialgebra
if and only if $\nu(e_{1})=ke_{2}\otimes e_{2}$ for some $k\in\Bbbk$.
\end{ex}

\begin{defi}[\cite{Hou1}]\label{def:CASIbia}
A {\bf completed ASI bialgebra} is a triple $(A, \cdot, \Delta)$ consisting of a
vector space $A$ and linear maps $\cdot: A\otimes A\rightarrow A$ and
$\Delta: A\rightarrow A\otimes A$ such that
\begin{enumerate}\itemsep=0pt
\item[$(i)$] $(A, \cdot)$ is a $\bz$-graded associative algebra;
\item[$(ii)$] $(A, \Delta)$ is a completed coassociative coalgebra;
\item[$(iii)$] for any $a_{1}, a_{2}\in A$,
\begin{align}
&\qquad\qquad\Delta(a_{1}\cdot a_{2})=(\fr_{A}(a_{2})\,\hat{\otimes}\,\id)
(\Delta(a_{1}))+(\id\,\hat{\otimes}\,\fl_{A}(a_{1}))(\Delta(a_{2})),   \label{CASI1} \\
&\big(\fl_{A}(a_{1})\,\hat{\otimes}\,\id-\id\,\hat{\otimes}\,\fr_{A}(a_{1})\big)
(\Delta(a_{2}))=\hat{\tau}\big(\big(\id\,\hat{\otimes}\,\fr_{A}(a_{2})
-\fl_{A}(a_{2})\,\hat{\otimes}\,\id\big)(\Delta(a_{1}))\big).          \label{CASI2}
\end{align}
\end{enumerate}
\end{defi}

Now we can give an affinization of finite-dimensional Zinbiel bialgebras.

\begin{thm}\label{thm:den-perm-ass}
Let $(B, \diamond, \nu)$ be a finite-dimensional Zinbiel bialgebra, $(D=\oplus_{i\in\bz}
D_{i}, \dashv, \vdash, \omega)$ be a quadratic $\bz$-graded diassociative algebra and
$(D\otimes B, \cdot)$ be the induced $\bz$-graded associative algebra from $(B, \diamond)$
by $(D=\oplus_{i\in\bz}D_{i}, \dashv, \vdash)$. Define a linear
map $\Delta: D\otimes B\rightarrow(D\otimes B)\,\hat{\otimes}\,(D\otimes B)$ by
\begin{align*}
\Delta(d\otimes b)&=\theta_{\vdash,\omega}(d)\bullet\nu(b)
+\theta_{\dashv,\omega}(d)\bullet\tau(\nu(b))\\
&=\sum_{(b)}\sum_{(i,j,\alpha)}(d_{(1,i,\alpha)}\otimes b_{(1)})
\otimes(d_{(2,j,\alpha)}\otimes b_{(2)})+\sum_{(b)}\sum_{[i,j,\alpha]}(
(d_{[1,i,\alpha]}\otimes b_{(2)})\otimes(d_{[2,j,\alpha]}\otimes b_{(1)}),
\end{align*}
for any $d\in D$ and $b\in B$, where $\theta_{\vdash,\omega}(d)=\sum_{(i,j,\alpha)}
d_{(1,i,\alpha)}\otimes d_{(2,j,\alpha)}$, $\theta_{\dashv,\omega}(d)=\sum_{[i,j,\alpha]}
d_{[1,i,\alpha]}\otimes d_{[2,j,\alpha]}$ and $\nu(b)=\sum_{(b)}b_{(1)}\otimes b_{(2)}$
in the Sweedler notation.  Then $(D\otimes B, \cdot, \Delta)$ is a completed ASI bialgebra,
which is called the {\bf completed ASI bialgebra induced from $(B, \diamond, \nu)$ by
$(D=\oplus_{i\in\bz}D_{i}, \dashv, \vdash, \omega)$}.

Moreover, if $(D=\oplus_{i\in\bz}D_{i}, \dashv, \vdash, \omega)$ is the quadratic
$\bz$-graded diassociative algebra given in Example \ref{ex:grdiaco}, then $(D\otimes B,
\cdot, \Delta)$ is a completed ASI bialgebra if and only if $(B, \diamond, \nu)$ is
a Zinbiel bialgebra.
\end{thm}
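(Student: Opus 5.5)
We establish the three conditions of Definition \ref{def:CASIbia} in order, then the converse. Condition $(i)$ is Proposition \ref{pro:aff-Zalg}. For $(ii)$, Lemma \ref{lem:comd-dual} makes $(D,\theta_{\dashv,\omega},\theta_{\vdash,\omega})$ a completed diassociative coalgebra, and since $(B,\nu)$ is a Zinbiel coalgebra, Proposition \ref{pro:zco-coass} gives that $(D\otimes B,\Delta)$ is a completed coassociative coalgebra.

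The substance is $(iii)$, i.e. Eqs. \eqref{CASI1} and \eqref{CASI2}. Rather than expanding Sweedler sums directly, we dualize in the $D$-factor. Since $\hat{\omega}$ is left nondegenerate on $D\,\hat{\otimes}\,D$, both sides of each identity, which lie in $(D\otimes B)\,\hat{\otimes}\,(D\otimes B)$, are determined by pairing the $D$-components against arbitrary homogeneous $d_{3}\otimes d_{4}$. Pairing leaves elements of $B\otimes B$ whose coefficients are trilinear expressions of the form $\omega(\,\cdot\,, d_{3}\star d_{4})$ with $\star\in\{\dashv,\vdash\}$. We then use invariance of $\omega$ to move all diassociative products onto a fixed pattern, such as $\omega(d,\, d'\star(d_{3}\star' d_{4}))$ or $\omega(d,\,(d_{3}\star d')\star' d_{4})$. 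The diassociative axioms reduce the words in $d,d',d_{3},d_{4}$ to a small set of normal forms: associativity of $\dashv$ and $\vdash$, together with the three mixed axioms, identifies all bracketings of a three-letter word up to the position of the ``middle'' letter. Collecting the $B\otimes B$ coefficient of each independent normal form gives identities in $B$, and these should be exactly Eqs. \eqref{zbialg1}, \eqref{zbialg2}, or consequences of them combined with the Zinbiel identity. This is the Zinbiel-side analogue of the proof of Theorem \ref{thm:dias-asbia}, where the roles of $D$ and $B$ are swapped: there the coefficients of $\Phi_{k}^{b,b'}$ were killed by Eqs. \eqref{bidi1}--\eqref{bidi9}. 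Here the coefficients of the diassociative normal forms are to be killed by \eqref{zbialg1} and \eqref{zbialg2}. The main obstacle is bookkeeping: making sure the normal forms really are independent enough and that each resulting coefficient matches a Zinbiel bialgebra axiom, especially for \eqref{CASI2}, which involves $\hat{\tau}$ and mixes $\nu$ with $\tau\nu$.

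For the converse, take $D$ from Example \ref{ex:qu-dia}$(i)$, the Laurent polynomial algebra of Example \ref{ex:grdia}, whose induced coalgebra is that of Example \ref{ex:grdiaco} by Example \ref{ex:ind-codia}. Given that $(D\otimes B,\cdot,\Delta)$ is a completed ASI bialgebra, Proposition \ref{pro:aff-Zalg} yields that $(B,\diamond)$ is Zinbiel, and Proposition \ref{pro:zco-coass} yields that $(B,\nu)$ is a Zinbiel coalgebra. For the compatibilities we evaluate Eq. \eqref{CASI1} on specific pairs such as $a_{1}=\partial_{1}\otimes b_{1}$, $a_{2}=\partial_{1}\otimes b_{2}$ (and $\partial_{2}$ variants). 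We compute $\theta_{\vdash,\omega}$ and $\theta_{\dashv,\omega}$ explicitly from Example \ref{ex:grdiaco}, and extract the coefficient of a suitable basis tensor such as $\kt^{-j}\partial_{1}\otimes\kt^{j+2}\partial_{2}$ for a fixed $j$. These graded components separate cleanly because multiplication only shifts $\kt$-degree by $0$ or $1$ and changes the $\partial$-index in a controlled way. The aim is to recover \eqref{zbialg1} from one coefficient and \eqref{zbialg2} from another, using \eqref{CASI2} if needed. Selecting these coefficients is the secondary hard point; we would first try the components with the highest $\kt$-power, since fewest terms contribute there.
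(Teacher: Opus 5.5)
Your proposal follows essentially the same route as the paper. Coassociativity comes from Lemma \ref{lem:comd-dual} and Proposition \ref{pro:zco-coass}. The compatibility conditions are handled by using left nondegeneracy and invariance of $\hat{\omega}$ to reduce the $D$-side to three diassociative normal forms; for \eqref{CASI1} their $B\otimes B$ coefficients are exactly an instance of \eqref{zbialg1}, the $\tau$-image of \eqref{zbialg1}, and the $\tau$-image of \eqref{zbialg2}, while for \eqref{CASI2} they are $(\id-\tau)$-antisymmetrizations of these, so the Zinbiel identity is not needed. The converse is obtained by extracting coefficients of \eqref{CASI1} at the pairs $(\partial_{1}\otimes b,\partial_{1}\otimes b')$ and $(\partial_{1}\otimes b,\partial_{2}\otimes b')$ in the Laurent example, so \eqref{CASI2} is not needed there either.
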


\begin{proof}
By Proposition \ref{pro:zco-coass} and Lemma \ref{lem:comd-dual}, we get
$(D\otimes B, \Delta)$ is a completed coassociative coalgebra. Thus, we only
need to show that Eqs. \eqref{CASI1} and \eqref{CASI2} hold.
For any $d, d'\in D$ and $b, b'\in B$, we have
\begin{align*}
&\;\Delta((d\otimes b)\cdot(d'\otimes b'))
-(\fr_{D\otimes B}(d'\otimes b')\,\hat{\otimes}\,\id)(\Delta(d\otimes b))
-(\id\,\hat{\otimes}\,\fl_{D\otimes B}(d\otimes b))(\Delta(d'\otimes b'))\\
=&\;\Delta((d\vdash d')\otimes(b\diamond b')+(d\dashv d')\otimes(b'\diamond b))
-(\fr_{D\otimes B}(d'\otimes b')\,\hat{\otimes}\,\id)
\big(\theta_{\vdash,\omega}(d)\bullet\nu(b)
+\theta_{\dashv,\omega}(d)\bullet\tau(\nu(b))\big)\\[-1mm]
&\quad-(\id\,\hat{\otimes}\,\fl_{D\otimes B}(d\otimes b))\big(\theta_{\vdash,\omega}(d')
\bullet\nu(b')+\theta_{\dashv,\omega}(d')\bullet\tau(\nu(b'))\big)\\
=&\;\theta_{\vdash,\omega}(d\vdash d')\bullet\nu(b\diamond b')
+\theta_{\dashv,\omega}(d\vdash d')\bullet\tau(\nu(b\diamond b'))
+\theta_{\vdash,\omega}(d\dashv d')\bullet\nu(b'\diamond b)
+\theta_{\dashv,\omega}(d\dashv d')\bullet\tau(\nu(b'\diamond b))\\
&\;-\sum_{(b)}\sum_{(i,j,\alpha)}\Big(
\big((d_{(1,i,\alpha)}\vdash d')\otimes d_{(2,j,\alpha)}\big)\bullet
\big((b_{(1)}\diamond b')\otimes b_{(2)}\big)
+\big((d_{(1,i,\alpha)}\dashv d')\otimes d_{(2,j,\alpha)}\big)\bullet
\big((b'\diamond b_{(1)})\otimes b_{(2)}\big)\Big)\\[-2mm]
&\;-\sum_{(b)}\sum_{[i,j,\alpha]}\Big(
\big((d_{[1,i,\alpha]}\vdash d')\otimes d_{[2,j,\alpha]}\big)\bullet
\big((b_{(2)}\diamond b')\otimes b_{(1)}\big)
+\big((d_{[1,i,\alpha]}\dashv d')\otimes d_{[2,j,\alpha]}\big)\bullet
\big((b'\diamond b_{(2)})\otimes b_{(1)}\big)\Big)\\[-2mm]
&\;-\sum_{(b')}\sum_{(i,j,\alpha)}\Big(
\big(d'_{(1,i,\alpha)}\otimes(d\vdash d'_{(2,j,\alpha)})\big)\bullet
\big(b'_{(1)}\otimes(b\diamond b'_{(2)})\big)
+\big(d'_{(1,i,\alpha)}\otimes(d\dashv d'_{(2,j,\alpha)})\big)\bullet
\big(b'_{(1)}\otimes(b'_{(2)}\diamond b)\big)\Big)\\[-2mm]
&\;-\sum_{(b')}\sum_{[i,j,\alpha]}\Big(
\big(d'_{[1,i,\alpha]}\otimes(d\vdash d'_{[2,j,\alpha]})\big)\bullet
\big(b'_{(2)}\otimes(b\diamond b'_{(1)})\big)
+\big(d'_{[1,i,\alpha]})\otimes(d\dashv d'_{[2,j,\alpha]})\big)\bullet
\big(b'_{(2)}\otimes(b'_{(1)}\diamond b)\big)\Big).
\end{align*}
Note that
$$
\hat{\omega}(\theta_{\vdash,\omega}(d\dashv d'),\; e\otimes f)
=\omega(d,\; (d'\vdash e)\vdash f)=\hat{\omega}\Big(\sum_{(i,j,\alpha)}
(d_{(1,i,\alpha)}\dashv d')\otimes d_{(2,j,\alpha)},\; e\otimes f\Big)
$$
for any $b, b', e, f\in B$ and $\hat{\omega}(-,-)$ is left nondegenerate.
We get $\theta_{\vdash,\omega}(d\dashv d')=\sum_{(i,j,\alpha)}(d_{(1,i,\alpha)}\dashv d')
\otimes d_{(2,j,\alpha)}$. Similarly, $\theta_{\vdash,\omega}(d\dashv d')=
\sum_{(i,j,\alpha)}d'_{(1,i,\alpha)}\otimes(d\dashv d'_{(2,j,\alpha)})$,
$\theta_{\dashv,\omega}(d\dashv d')=\sum_{[i,j,\alpha]}(d_{[1,i,\alpha]}\dashv d')
\otimes d_{[2,j,\alpha]}$, $\theta_{\vdash,\omega}(d\vdash d')=\sum_{(i,j,\alpha)}
d'_{(1,i,\alpha)}\otimes(d\vdash d'_{(2,j,\alpha)})$ and $\theta_{\dashv,\omega}(d\vdash d')
=\sum_{[i,j,\alpha]}(d_{[1,i,\alpha]}\vdash d')\otimes d_{[2,j,\alpha]}=
\sum_{[i,j,\alpha]}d'_{[1,i,\alpha]}\otimes(d\vdash d'_{[2,j,\alpha]})$.
If $(B, \diamond, \nu)$ is a Zinbiel bialgebra, i.e., Eqs. \ref{zbialg1} and
\ref{zbialg2} hold, then we obtain
\begin{align*}
&\;\Delta((d\otimes b)\cdot(d'\otimes b'))
-(\fr_{D\otimes B}(d'\otimes b')\,\hat{\otimes}\,\id)(\Delta(d\otimes b))
-(\id\,\hat{\otimes}\,\fl_{D\otimes B}(d\otimes b))(\Delta(d'\otimes b'))\\
=&\;\theta_{\vdash,\omega}(d\dashv d')\bullet
\Big(\nu(b\diamond b')+\nu(b'\diamond b)-(b'\diamond b_{(1)})\otimes b_{(2)}
-b'_{(1)}\otimes(b\diamond b'_{(2)})-b'_{(1)}\otimes(b'_{(2)}\diamond b)\Big)\\[-1mm]
&\ \ +\theta_{\dashv,\omega}(d\dashv d')\bullet
\Big(\tau(\nu(b\diamond b'))+\tau(\nu(b'\diamond b))-(b_{(2)}\diamond b')\otimes b_{(1)}
-(b'\diamond b_{(2)})\otimes b_{(1)}-b'_{(2)}\otimes(b\diamond b'_{(1)})\Big)\\[-1mm]
&\ \ +\Big(\theta_{\vdash,\omega}(d\dashv d')-\theta_{\vdash,\omega}(d\vdash d')\Big)\bullet
\Big(\nu(b\diamond b')+\tau(\nu(b\diamond b'))-(b_{(2)}\diamond b')\otimes b_{(1)}\\[-2mm]
&\qquad\qquad\qquad\qquad\qquad\qquad\qquad\qquad\qquad
-b'_{(1)}\otimes(b\diamond b'_{(2)})-b'_{(2)}\otimes(b\diamond b'_{(1)})\Big)\\
=&\; 0.
\end{align*}
Similarly, we also have $\big(\fl_{D\otimes B}(d\otimes b)\,\hat{\otimes}\,\id
-\id\,\hat{\otimes}\,\fr_{D\otimes B}(d\otimes b)\big)(\Delta(d'\otimes b'))
=\tau\big(\big(\id\,\hat{\otimes}\,\fr_{D\otimes B}(d'\otimes b')
-\fl_{D\otimes B}(d'\otimes b')\,\hat{\otimes}\,\id\big)(\Delta(d\otimes b))\big)$,
for any $d, d'\in D$ and $b, b'\in B$. Thus, $(D\otimes B, \cdot, \Delta)$ is a
completed ASI bialgebra.

Conversely, if $(D=\oplus_{i\in\bz}D_{i}, \dashv, \vdash, \omega)$ is the quadratic
$\bz$-graded diassociative algebra given in Example \ref{ex:grdiaco} and
$(D\otimes B, \cdot, \Delta)$ is a completed ASI bialgebra, then $(B, \diamond)$ is a
Zinbiel algebra and $(B, \nu)$ is a Zinbiel coalgebra by Propositions \ref{pro:aff-Zalg}
and \ref{pro:zco-coass} respectively. Now we only need to prove that $(B, \diamond, \nu)$
is a Zinbiel bialgebra. Since $(D\otimes B, \ast, \Delta)$ is a completed ASI
bialgebra, we have
\begin{align*}
0=&\;\Delta((\partial_{1}\otimes b)\cdot(\partial_{1}\otimes b'))
-(\fr_{D\otimes B}(\partial_{1}\otimes b')\,\hat{\otimes}\,\id)(\Delta(\partial_{1}\otimes b))
-(\id\,\hat{\otimes}\,\fl_{D\otimes B}(\partial_{1}\otimes b))
(\Delta(\partial_{1}\otimes b'))\\
%=&\;\Delta(\kt\partial_{1}\otimes(b\diamond b')+\kt\partial_{1}\otimes(b'\diamond b))
%-(\fr_{D\otimes B}(\partial_{1}\otimes b')\,\hat{\otimes}\,\id)
%\big(\theta_{\vdash,\omega}(\partial_{1})\bullet\nu(b)
%+\theta_{\dashv,\omega}(\partial_{1})\bullet\tau(\nu(b))\big)\\[-1mm]
%&\quad-(\id\,\hat{\otimes}\,\fl_{D\otimes B}(\partial_{1}\otimes b))
%\big(\theta_{\vdash,\omega}(\partial_{1})
%\bullet\nu(b')+\theta_{\dashv,\omega}(\partial_{1})\bullet\tau(\nu(b'))\big)\\
%
=&\;\Big(\sum_{j\in\bz}\big(\kt^{-j}\partial_{1}\otimes\kt^{j+1}\partial_{1}
-\kt^{-j}\partial_{2}\otimes\kt^{j+2}\partial_{1}\big)\Big)
\bullet\big(\nu(b\diamond b')+\nu(b'\diamond b)\big)\\[-2mm]
&\ \ +\Big(\sum_{j\in\bz}\big(\kt^{-j}\partial_{1}\otimes\kt^{j+1}\partial_{1}
-\kt^{-j}\partial_{1}\otimes\kt^{j+2}\partial_{2}\big)\Big)
\bullet\big(\tau(\nu(b\diamond b'))+\tau(\nu(b'\diamond b))\big)\\[-2mm]
&\ \ -\Big(\sum_{j\in\bz}\big(\kt^{-j+1}\partial_{1}\otimes\kt^{j}\partial_{1}
-\kt^{-j}\partial_{1}\otimes\kt^{j+1}\partial_{1}\big)\Big)
\bullet\big((\bar{\fr}_{B}(b')\otimes\id)(\nu(b))\big)\\[-2mm]
&\ \ -\Big(\sum_{j\in\bz}\big(\kt^{-j+1}\partial_{1}\otimes\kt^{j}\partial_{1}
-\kt^{-j+1}\partial_{2}\otimes\kt^{j+1}\partial_{1}\big)\Big)
\bullet\big((\bar{\fl}_{B}(b')\otimes\id)(\nu(b))\big)\\[-2mm]
&\ \ -\Big(\sum_{j\in\bz}\big(\kt^{-j+1}\partial_{1}\otimes\kt^{j}\partial_{1}
-\kt^{-j+1}\partial_{1}\otimes\kt^{j+1}\partial_{2}\big)\Big)
\bullet\big(((\bar{\fl}_{B}+\bar{\fr}_{B})(b')\otimes\id)(\tau(\nu(b)))\big)\\[-2mm]
&\ \ -\Big(\sum_{j\in\bz}\big(\kt^{-j}\partial_{1}\otimes\kt^{j+1}\partial_{1}
-\kt^{-j}\partial_{2}\otimes\kt^{j+2}\partial_{1}\big)\Big)
\bullet\big((\id\otimes(\bar{\fl}_{B}+\bar{\fr}_{B})(b))(\nu(b'))\big)\\[-2mm]
&\ \ -\Big(\sum_{j\in\bz}\big(\kt^{-j}\partial_{1}\otimes\kt^{j+1}\partial_{1}
-\kt^{-j}\partial_{1}\otimes\kt^{j+1}\partial_{1}\big)\Big)
\bullet\big((\id\otimes\bar{\fl}_{B}(b))(\nu(b'))\big)\\[-2mm]
&\ \ -\Big(\sum_{j\in\bz}\big(\kt^{-j}\partial_{1}\otimes\kt^{j+1}\partial_{1}
-\kt^{-j}\partial_{1}\otimes\kt^{j+2}\partial_{2}\big)\Big)
\bullet\big((\id\otimes\bar{\fr}_{B}(b))(\nu(b'))\big).
\end{align*}
Comparing the coefficients of $\partial_{2}\otimes\partial_{1}$ in the equation above,
we get Eq. \eqref{zbialg1} holds.
Similarly, comparing the coefficients of $\partial_{1}\otimes\partial_{1}$ and
in the equation $\Delta((\partial_{1}\otimes d)\cdot(\partial_{2}\otimes d'))
-(\fr_{D\otimes B}(\partial_{2}\otimes d')\,\hat{\otimes}\,\id)(\Delta(\partial_{1}
\otimes d))-(\id\,\hat{\otimes}\,\fl_{D\otimes B}(\partial_{1}\otimes d))
(\Delta(\partial_{2}\otimes d'))=0$, we get Eq. \eqref{zbialg2} holds.
Thus, $(B, \diamond, \nu)$ is a Zinbiel bialgebra in this case.
\end{proof}

We have constructed an infinite-dimensional ASI bialgebra using the affinization
of a Zinbiel bialgebra. Now let us return to finite-dimensional ASI bialgebras.
First, for a special case of Theorem \ref{thm:den-perm-ass}, where $(D=D_{0}, \dashv$,
$\vdash, \omega)$ is a finite-dimensional quadratic diassociative algebra, we have:

\begin{cor}\label{cor:indassbia}
Let $(B, \diamond, \nu)$ be a Zinbiel bialgebra, $(D, \dashv, \vdash, \omega)$ be a
quadratic diassociative algebra and $(D\otimes B, \cdot)$ be the induced associative
algebra from $(B, \diamond)$ and $(D, \dashv, \vdash)$. Define a linear map
$\Delta: D\otimes B\rightarrow(D\otimes B)\otimes(D\otimes B)$ by
\begin{align}
\Delta(d\otimes b)&=\theta_{\vdash,\omega}(d)\bullet\nu(b)
+\theta_{\dashv,\omega}(d)\bullet\tau(\nu(b))   \label{ind-coass}\\
&:=\sum_{(d)}\sum_{(b)}(d_{(1)}\otimes b_{(1)})\otimes(d_{(2)}\otimes b_{(2)})
+\sum_{[d]}\sum_{(b)}(d_{[1]}\otimes b_{(2)})\otimes(d_{[2]}\otimes b_{(1)}), \nonumber
\end{align}
for any $d\in D$ and $b\in B$, where $\theta_{\vdash,\omega}(d)=\sum_{(d)}d_{(1)}
\otimes d_{(2)}$, $\theta_{\dashv,\omega}(d)=\sum_{[d]}d_{[1]}\otimes d_{[2]}$ and
$\nu(b)=\sum_{(b)}b_{(1)}\otimes b_{(2)}$ in the Sweedler notation.
Then $(D\otimes B, \cdot, \Delta)$ is an ASI bialgebra, which is called the {\bf
ASI bialgebra induced from $(B, \diamond, \nu)$ by $(D, \dashv, \vdash, \omega)$}.
\end{cor}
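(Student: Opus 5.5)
The plan is to derive Corollary \ref{cor:indassbia} as the special case of Theorem \ref{thm:den-perm-ass} in which the grading is trivial. Take the $\bz$-grading of $D$ given by $D=D_{0}$ and $D_{i}=0$ for $i\neq0$.

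First, check that $(D=D_{0}, \dashv, \vdash, \omega)$ is a quadratic $\bz$-graded diassociative algebra in the sense of Definition \ref{def:quad}.
\begin{enumerate}
\item[$(a)$] $D_{0}$ is finite-dimensional, and $D_{i}\dashv D_{j}\subseteq D_{i+j}\supseteq D_{i}\vdash D_{j}$ holds trivially, since the only nonzero products lie in $D_{0}\dashv D_{0}$ and $D_{0}\vdash D_{0}$.
\item[$(b)$] The form $\omega$ is graded with $m=0$: we need $\omega(D_{i},D_{j})=0$ whenever $i+j\neq0$, and this holds because one of $D_{i},D_{j}$ is then $0$.
\item[$(c)$] Skew-symmetry, nondegeneracy and invariance of $\omega$ are part of the hypothesis that $(D,\dashv,\vdash,\omega)$ is quadratic.
\end{enumerate}

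Second, observe that with this grading all completed objects collapse to ordinary ones.
\begin{enumerate}
\item[$(a)$] Since $D$ and $B$ are finite-dimensional, $(D\otimes B)\,\hat{\otimes}\,(D\otimes B)=(D\otimes B)\otimes(D\otimes B)$, as remarked after the definition of the completed tensor product. Likewise $\hat{\tau}=\tau$ and $f\,\hat{\otimes}\,g=f\otimes g$.
\item[$(b)$] The form $\hat{\omega}$ is just the extension of $\omega$ to tensor powers. Its left nondegeneracy is ordinary nondegeneracy.
\item[$(c)$] The maps $\theta_{\dashv,\omega},\theta_{\vdash,\omega}$ of Lemma \ref{lem:comd-dual} are therefore genuine maps $D\to D\otimes D$, and they agree with the Sweedler-notation maps appearing in Eq. \eqref{ind-coass}.
\end{enumerate}
Consequently the coproduct $\Delta$ of Eq. \eqref{ind-coass} coincides with the one in Theorem \ref{thm:den-perm-ass}.

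Third, apply Theorem \ref{thm:den-perm-ass}: $(D\otimes B,\cdot,\Delta)$ is a completed ASI bialgebra. Now compare Definition \ref{def:CASIbia} term by term with the definition of an ASI bialgebra.
\begin{enumerate}
\item[$(a)$] A $\bz$-graded associative algebra concentrated in degree $0$ is an associative algebra.
\item[$(b)$] A completed coassociative coalgebra whose completed tensor products are ordinary is a coassociative coalgebra.
\item[$(c)$] Eqs. \eqref{CASI1} and \eqref{CASI2} become exactly the two compatibility conditions of an ASI bialgebra.
\end{enumerate}
Hence $(D\otimes B,\cdot,\Delta)$ is an ASI bialgebra. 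The associativity of $(D\otimes B,\cdot)$ can also be taken directly from Proposition \ref{pro:diass-ass}.

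The only point needing care, and the main possible obstacle, is verifying that nothing in the proof of Theorem \ref{thm:den-perm-ass} uses genuinely infinite-dimensional features. Its proof uses only Lemma \ref{lem:comd-dual}, Proposition \ref{pro:zco-coass}, the left nondegeneracy of $\hat{\omega}$ and invariance of $\omega$, and all of these specialize verbatim to the finite case. If one prefers a self-contained check, the same computation can be rerun directly:
\begin{enumerate}
\item[$(a)$] Rewrite $\theta_{\vdash,\omega}(d\dashv d')$, $\theta_{\dashv,\omega}(d\vdash d')$, etc. via invariance, as in the proof of Theorem \ref{thm:den-perm-ass}.
\item[$(b)$] Group the terms of the first compatibility condition by $\theta_{\vdash,\omega}(d\dashv d')$, $\theta_{\dashv,\omega}(d\dashv d')$ and $\theta_{\vdash,\omega}(d\dashv d')-\theta_{\vdash,\omega}(d\vdash d')$.
\item[$(c)$] Each bracket vanishes by Eqs. \eqref{zbialg1} and \eqref{zbialg2}.
\item[$(d)$] The second condition is handled analogously.
\end{enumerate}
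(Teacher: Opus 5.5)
Your proposal is correct and follows the paper's own route: the paper obtains this corollary as the special case $D=D_{0}$ of Theorem~\ref{thm:den-perm-ass}, where the completed tensor products, $\hat{\omega}$ and $\hat{\tau}$ reduce to their ordinary finite-dimensional versions. Your checks fill in details the paper leaves implicit, namely that the trivial grading makes $(D,\dashv,\vdash,\omega)$ a quadratic $\bz$-graded diassociative algebra (graded with $m=0$) and that a completed ASI bialgebra concentrated in degree $0$ is an ordinary ASI bialgebra.
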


%\begin{ex}\label{ex:D-ind-ASI}
%Let $(D=\Bbbk\{e_{1}, e_{2}\}, \prec, \succ, \theta_{\prec}, \theta_{\succ})$ be a
%dendriform $\md$-bialgebra, where the nonzero products and coproducts are given by
%$e_{1}\succ e_{1}=e_{1}$, $e_{2}\prec e_{1}=e_{2}$, $\theta_{\succ}(e_{1})=
%-e_{1}\otimes e_{1}$ and $\theta_{\succ}(e_{2})=-e_{1}\otimes e_{2}$. Let
%$(B=\Bbbk\{x_{1}, x_{2}\}, \cdot, \omega)$ be the quadratic perm algebra given in
%Example \ref{ex:prelie-indlie}. Then by Corollary \ref{cor:indassbia}, we obtain an
%ASI bialgebra $(D\otimes B, \ast, \Delta)$, where the nonzero products and coproducts
%are given by
%\begin{align*}
%&\;y_{2}y_{1}=y_{1},\qquad\qquad\; y_{2}y_{2}=y_{2},\qquad\qquad\;
%y_{3}y_{2}=y_{3},\qquad\qquad\;\; y_{4}y_{2}=y_{4},\\
%&\Delta(y_{1})=y_{1}\otimes y_{1},\qquad \Delta(y_{2})=y_{1}\otimes y_{2},\qquad
%\Delta(y_{3})=y_{1}\otimes y_{3},\qquad \Delta(y_{4})=y_{1}\otimes y_{4},
%\end{align*}
%$y_{1}:=e_{1}\otimes x_{1}$, $y_{2}:=e_{1}\otimes x_{2}$, $y_{3}:=e_{2}\otimes x_{1}$
%and $y_{4}:=e_{2}\otimes x_{2}$.
%\end{ex}

%%%%%%%%%%%%%%%%%%%%%%%%%%%%%%%%%%%%%%%%%%%%%%%%%%%%%%%%%%%%%%%%%%%%%%%%%%%%%%%%
\subsection{Quasi-triangular ASI bialgebras from quasi-triangular Zinbiel bialgebras}
\label{subsec:triASI}
In this subsection, we show that the ASI bialgebras induced from a quasi-triangular
(resp. triangular, factorizable) Zinbiel bialgebras is also quasi-triangular (resp.
triangular, factorizable). Recall that a Zinbiel bialgebra $(B, \diamond, \nu)$
is called {\bf coboundary} if there exists an element $r\in B\otimes B$ such
that the coproduct $\nu$ is given by
\begin{align}
\nu(b)=\nu_{r}(b)=\big(\bar{\fl}_{B}(b)
-\id\otimes(\bar{\fl}_{B}+\bar{\fr}_{B})(b)\otimes\id\big)(r),   \label{coboZ}
\end{align}
for any $b\in B$.

\begin{defi}[\cite{Wan}]\label{def:ZYBE}
Let $(B, \diamond)$ be a Zinbiel algebra and $r\in B\otimes B$. The equation
$$
\mathbf{Z}_{r}:=r_{13}\diamond r_{21}+r_{21}\diamond r_{13}+r_{12}\diamond r_{23}
+r_{23}\diamond r_{12}-r_{13}\diamond r_{23}-r_{23}\diamond r_{13}
-r_{13}\diamond r_{12}-r_{23}\diamond r_{21}=0
$$
is called the (classical) {\bf Zinbiel Yang-Baxter equation ($\ZYBE$)} in $(B, \diamond)$,
where $r_{13}\diamond r_{21}=\sum_{i,j}(x_{i}\diamond y_{j})\otimes x_{j}\otimes y_{i}$,
$r_{21}\diamond r_{13}=\sum_{i,j}(y_{i}\diamond x_{j})\otimes x_{i}\otimes y_{j}$,
$r_{12}\diamond r_{23}=\sum_{i,j}x_{i}\otimes(y_{i}\diamond x_{j})\otimes y_{j}$,
$r_{23}\diamond r_{12}=\sum_{i,j}x_{j}\otimes(x_{i}\diamond y_{j})\otimes y_{i}$,
$r_{13}\diamond r_{23}=\sum_{i,j}x_{i}\otimes x_{j}\otimes(y_{i}\diamond y_{j})$,
$r_{23}\diamond r_{13}=\sum_{i,j}x_{j}\otimes x_{i}\otimes(y_{i}\diamond y_{j})$,
$r_{13}\diamond r_{12}=\sum_{i,j}(x_{i}\diamond x_{j})\otimes y_{j}\otimes y_{i}$,
$r_{23}\diamond r_{21}=\sum_{i,j}y_{j}\otimes(x_{i}\diamond x_{j})\otimes y_{i}$.
\end{defi}

Let $(B, \diamond)$ be a Zinbiel algebra. An element $r\in B\otimes B$ is called {\bf
Zinb-invariant} if for any $b\in B$,
$$
\big(\bar{\fl}_{B}(b)\otimes\id-\id\otimes(\bar{\fl}_{B}+\bar{\fr}_{B})(b)\big)(r)=0.
$$
%Recall that an element $r\in B\otimes B$ is called {\bf symmetric}, if $r=\tau(r)$.
Clearly, $r$ is Zinb-invariant if $r$ is symmetric.

\begin{pro}[\cite{Wan}]\label{pro:qtr-zib}
Let $(B, \diamond)$ be a Zinbiel algebra, $r\in B\otimes B$ and $\nu_{r}: B\rightarrow
B\otimes B$ be a linear map defined by Eq. \eqref{coboZ}.
\begin{itemize}
\item[$(i)$] If $r$ is a solution of $\ZYBE$ in $(B, \diamond)$ and $r-\tau(r)$ is
     Zinb-invariant, then $(B, \diamond, \nu_{r})$ is a Zinbiel bialgebra, which is
     called a {\bf quasi-triangular Zinbiel bialgebra} associated with $r$.
\item[$(ii)$] If $r$ is a symmetric solution of $\ZYBE$ in $(B, \diamond)$, then
     $(B, \diamond, \nu_{r})$ is a Zinbiel bialgebra, which is called a {\bf triangular
     Zinbiel bialgebra} associated with $r$.
\end{itemize}
\end{pro}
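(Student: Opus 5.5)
The plan is to verify directly the three requirements of Definition \ref{def:Zinbiel-bi} for $\nu=\nu_{r}$: the compatibility conditions \eqref{zbialg1} and \eqref{zbialg2}, and the co-Zinbiel identity. Part $(ii)$ will then follow from part $(i)$. Indeed, if $r=\tau(r)$ then $r-\tau(r)=0$, which is trivially Zinb-invariant. So everything reduces to part $(i)$.

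Write $r=\sum_{i}x_{i}\otimes y_{i}$, so that
$$
\nu_{r}(b)=\sum_{i}\big((b\diamond x_{i})\otimes y_{i}-x_{i}\otimes(b\diamond y_{i}+y_{i}\diamond b)\big).
$$
It is convenient to split $r=s+a$ with $s=\frac{1}{2}(r+\tau(r))$ and $a=\frac{1}{2}(r-\tau(r))$. By hypothesis $a$ is Zinb-invariant, so $\nu_{r}=\nu_{s}$. I will use this freely to trade $r$ for $\tau(r)$ in intermediate expressions.

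\emph{Step 1: the compatibility conditions.} I expand both sides of \eqref{zbialg1} and \eqref{zbialg2} with the formula above, for arbitrary $b_{1},b_{2}$. I then reduce all terms to expressions linear in $r$ using only the Zinbiel identity and its consequence
$$
(b_{1}\diamond b_{2}+b_{2}\diamond b_{1})\diamond b_{3}=b_{1}\diamond(b_{2}\diamond b_{3}),
$$
that is, $\bar{\fl}_{B}$ of a symmetrized product equals a composition of left multiplications. I expect \eqref{zbialg1} to hold for every $r$, since it expresses that $\nu_{r}$ behaves as a ``1-coboundary''. For \eqref{zbialg2} I expect a leftover term of the form
$$
(\bar{\fl}_{B}(b_{1})\otimes\id)\big((\bar{\fl}_{B}(b_{2})\otimes\id-\id\otimes(\bar{\fl}_{B}+\bar{\fr}_{B})(b_{2}))(r-\tau(r))\big)
$$
or a similar expression. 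Such a term vanishes by the Zinb-invariance of $r-\tau(r)$. In this step I will be careful to track which terms carry $\tau$.

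\emph{Step 2: the co-Zinbiel identity.} I compute
$$
(\id\otimes\nu_{r})\nu_{r}(b)-(\nu_{r}\otimes\id)\nu_{r}(b)-(\tau\otimes\id)(\nu_{r}\otimes\id)\nu_{r}(b)
$$
as a sum of terms quadratic in $r$. The goal is to rewrite it in the form
$$
\mathcal{P}(b)\,\mathbf{Z}_{r}+\mathcal{Q}(b)\big(\text{Zinb-invariance defect of }r-\tau(r)\big),
$$
where $\mathcal{P}(b)$ is a combination of operators of the type
$$
\bar{\fl}_{B}(b)\otimes\id\otimes\id,\qquad \id\otimes\id\otimes(\bar{\fl}_{B}+\bar{\fr}_{B})(b),
$$
possibly composed with permutations of tensor factors. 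The natural way to organize this is to group the quadratic terms according to which tensor slot $b$ acts on. In each group the Zinbiel identity turns a product like $b\diamond(x_{i}\diamond x_{j})$ into terms matching the eight summands $r_{13}\diamond r_{21}$, \dots, $r_{23}\diamond r_{21}$ of $\mathbf{Z}_{r}$. Under the hypothesis $\mathbf{Z}_{r}=0$, together with the Zinb-invariance of $r-\tau(r)$ (used to swap $x_{i}$ and $y_{i}$ where the $\mathbf{Z}_{r}$ terms require it), the expression vanishes.

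The main obstacle is Step 2. The bookkeeping of roughly eighteen quadratic terms, with three slot permutations, and their matching to the eight components of $\mathbf{Z}_{r}$ is delicate. My first move will be to test the case where $r$ is symmetric, in which the invariance terms disappear and only $\mathbf{Z}_{r}$ should remain. I will then add the antisymmetric correction $a$ and check that every extra term is a multiple of $(\bar{\fl}_{B}(c)\otimes\id-\id\otimes(\bar{\fl}_{B}+\bar{\fr}_{B})(c))(a)$ for suitable $c$.

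As a fallback, I would dualize. Via $r^{\sharp}$, the ZYBE becomes an operator identity, and the co-Zinbiel identity for $\nu_{r}$ becomes the Zinbiel identity for the dual product
$$
\xi_{1}\circ\xi_{2}=\nu_{r}^{\ast}(\xi_{1}\otimes\xi_{2})
$$
on $B^{\ast}$, which can be checked with $\mathcal{O}$-operator-style manipulations.
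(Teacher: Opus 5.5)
The paper does not prove this proposition; it quotes it from \cite{Wan}, so there is no in-paper argument to compare with. Your direct verification therefore has to stand on its own, and it does: the plan is correct, and the computations come out in the shape you predict. Your reduction of $(ii)$ to $(i)$ through $r-\tau(r)=0$ is the right one. The paper's side remark that a symmetric $r$ is itself Zinb-invariant is false in general, and you do not need it.

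Concretely, put $L=\bar{\fl}_{B}$, $M=\bar{\fl}_{B}+\bar{\fr}_{B}$ and $\sigma=r-\tau(r)$, so that $\nu_{r}(b)=(L(b)\otimes\id-\id\otimes M(b))(r)$. Everything follows from two ingredients. The first is the Zinbiel identity, read as three operator identities:
\[
L(b_{1})L(b_{2})=L(b_{1}\diamond b_{2}+b_{2}\diamond b_{1}),\qquad
L(b_{1})\bar{\fr}_{B}(b_{3})=\bar{\fr}_{B}(b_{3})M(b_{1}),\qquad
\bar{\fr}_{B}(b_{2}\diamond b_{3})=\bar{\fr}_{B}(b_{3})M(b_{2}).
\]
The second is invariance in the form $(L(c)\otimes\id)(\sigma)=(\id\otimes M(c))(\sigma)$.

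\textbf{Compatibility conditions.} Both sides of \eqref{zbialg1} reduce to $(L(b_{1})L(b_{2})\otimes\id-\id\otimes M(b_{1})M(b_{2}))(r)$, so it holds for every $r$, as you expected. For \eqref{zbialg2}, the left side minus the right side equals
\[
\big(L(b_{1}\diamond b_{2})\otimes\id-\id\otimes L(b_{1}\diamond b_{2})-L(b_{1})L(b_{2})\otimes\id+L(b_{1})\otimes L(b_{2})\big)(\sigma).
\]
This vanishes once you apply invariance with $c=b_{1}\diamond b_{2}$, $c=b_{2}$ and $c=b_{1}$, together with the third operator identity.

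\textbf{Co-Zinbiel identity.} Define the defect $\mathcal{C}(b)=(\id\otimes\nu_{r})\nu_{r}(b)-(\nu_{r}\otimes\id)\nu_{r}(b)-(\tau\otimes\id)(\nu_{r}\otimes\id)\nu_{r}(b)$. For every $r=\sum_{i}x_{i}\otimes y_{i}$ one finds
\[
\mathcal{C}(b)=\big(L(b)\otimes\id\otimes\id-\id\otimes\id\otimes M(b)\big)(\mathbf{Z}_{r})
+\sum_{i}\big(\Theta_{x_{i},b}(\sigma)\big)\otimes y_{i},
\]
where $\Theta_{x,b}=L(x\diamond b)\otimes\id-\id\otimes L(x\diamond b)+\id\otimes L(b)L(x)-L(b)\otimes L(x)$. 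Invariance with $c=x\diamond b$ and $c=b$ turns $\Theta_{x,b}(\sigma)$ into $\big(\id\otimes(\bar{\fr}_{B}(b)M(x)+L(b)L(x)-L(x)M(b))\big)(\sigma)$. That operator is identically zero by the Zinbiel identity. So your $\mathcal{P}(b)$ needs no slot permutations. The terms in which $b$ acts on the third slot assemble exactly to $-(\id\otimes\id\otimes M(b))(\mathbf{Z}_{r})$. The remaining terms give $(L(b)\otimes\id\otimes\id)(\mathbf{Z}_{r})$ plus the $\Theta$-term.

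\textbf{Two cautions.} First, invariance of $r-\tau(r)$ does not let you swap $x_{i}$ and $y_{i}$ inside $\mathbf{Z}_{r}$; it enters only through the $\Theta$-term. If you compute with $\nu_{s}$ (which equals $\nu_{r}$), you get $\mathcal{P}(b)\mathbf{Z}_{s}$, whereas the hypothesis is $\mathbf{Z}_{r}=0$. Bridging the two is exactly the vanishing of the $\Theta$-term, so do the computation for general $r$.

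Second, your dual fallback only covers $(ii)$. The transposed product is
\[
\langle\xi\circ\eta,b\rangle=\langle\xi,\,b\diamond\tau(r)^{\sharp}(\eta)\rangle-\langle\eta,\,b\diamond r^{\sharp}(\xi)+r^{\sharp}(\xi)\diamond b\rangle,
\]
which mixes $r^{\sharp}$ and $\tau(r)^{\sharp}$. The $\mathcal{O}$-operator characterization in Proposition \ref{pro:o-Zinb} is only available for symmetric $r$.
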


\begin{ex}\label{ex:ZYBE}
Let $B=\Bbbk\{e_{1}, e_{2}\}$ be a $2$-dimensional Zinbiel algebra with nonzero products:
$e_{1}\diamond e_{1}=e_{2}$. Consider the symmetric element $r=e_{1}\otimes e_{2}
+e_{2}\otimes e_{1}\in A\otimes A$. It is easy to see that $r$ is a
symmetric solution of the $\ZYBE$ in Zinbiel algebra $(B, \diamond)$.
Therefore, the Eq. (\ref{coboZ}) give a Zinbiel bialgebra structure on $(B, \diamond)$ by
$\nu(e_{1})=e_{2}\otimes e_{2}$ and $\nu(e_{2})=0$.
\end{ex}

Triangular ASI bialgebra (resp. triangular Zinbiel bialgebra) is a special
type of ASI bialgebra (resp. Zinbiel bialgebra) that is related to the
solution of the Yang-Baxter equation. Next, we consider the relation between the
solutions of the $\ZYBE$ in a Zinbiel algebra and the solutions of the $\AYBE$
in the induced associative algebra.

\begin{pro}\label{pro:ZYBE-AYBE}
Let $(B, \diamond)$ be a Zinbiel algebra, $(D, \dashv, \vdash, \omega)$ be a quadratic
diassociative algebra and $(D\otimes B, \cdot)$ be the induced associative algebra.
Suppose that $r=\sum_{i}x_{i}\otimes y_{i}\in B\otimes B$ is a solution
of the $\ZYBE$ in $(B, \diamond)$. If $r-\tau(r)$ is Zinb-invariant, then
\begin{align}
\widehat{r}:=\sum_{i, j}(e_{j}\otimes x_{i})\otimes(f_{j}\otimes y_{i})
\in(D\otimes B)\otimes(D\otimes B)  \label{assr-max}
\end{align}
is a solution of the $\AYBE$ in $(D\otimes B, \cdot)$ and $\widehat{r}+\tau(\widehat{r})$
is ass-invariant, where $\{e_{1}, e_{2},\cdots, e_{n}\}$ is a basis of $D$ and $\{f_{1},
f_{2},\cdots, f_{n}\}$ is the dual basis of $\{e_{1}, e_{2},\cdots, e_{n}\}$ with
respect to $\omega(-,-)$.

In particular, $\widehat{r}$ is a skew-symmetric solution of the $\AYBE$ in
$(D\otimes B, \cdot)$ if $r$ is a symmetric solution of the $\ZYBE$ in $(B, \diamond)$.
\end{pro}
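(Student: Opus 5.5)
We follow the strategy of Proposition \ref{pro:DAYBE-AYBE}, with the roles of the two tensor factors exchanged: the diassociative factor now carries the Casimir-type element $\Omega:=\sum_{j}e_{j}\otimes f_{j}\in D\otimes D$, and the Zinbiel factor carries $r$. Write $\widehat{r}=\Omega\bullet r$.

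\textbf{Step 1: expand the AYBE.} We expand $\mathbf{A}_{\widehat{r}}=\widehat{r}_{12}\cdot\widehat{r}_{13}+\widehat{r}_{13}\cdot\widehat{r}_{23}-\widehat{r}_{23}\cdot\widehat{r}_{12}$ using the product of Proposition \ref{pro:diass-ass}. Each of the three terms splits into a $\vdash$-part and a $\dashv$-part, giving six terms. Each term has the form $X\bullet Y$, where $X\in D^{\otimes3}$ is one of
\[
\sum_{p,q}(e_{p}\vdash e_{q})\otimes f_{p}\otimes f_{q},\qquad
\sum_{p,q}e_{p}\otimes e_{q}\otimes(f_{p}\dashv f_{q}),\qquad
\sum_{p,q}e_{p}\otimes(e_{q}\vdash f_{p})\otimes f_{q},
\]
or a similar tensor, and $Y$ is a Zinbiel term such as $\sum_{i,j}(x_{i}\diamond x_{j})\otimes y_{i}\otimes y_{j}$.

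\textbf{Step 2: reduce the $D$-tensors to a basis.} We pair each $X$ with $e_{s}\otimes e_{u}\otimes e_{v}$ under $\omega$, exactly as in the proof of Proposition \ref{pro:DAYBE-AYBE}. This uses skew-symmetry and nondegeneracy of $\omega$, together with the two invariance identities $\omega(d_{1}\dashv d_{2},d_{3})=\omega(d_{1},d_{2}\vdash d_{3})$ and $\omega(d_{1}\vdash d_{2},d_{3})=\omega(d_{1},d_{2}\vdash d_{3}-d_{2}\dashv d_{3})$. The goal is to express all six $X$'s as linear combinations of just two basic tensors,
\[
P:=\sum_{p,q}(e_{p}\vdash e_{q})\otimes f_{p}\otimes f_{q},\qquad
Q:=\sum_{p,q}(e_{p}\dashv e_{q})\otimes f_{p}\otimes f_{q},
\]
which correspond to the functionals $\omega(e_{s},e_{u}\vdash e_{v})$ and $\omega(e_{s},e_{u}\dashv e_{v})$. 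Some $X$'s may need a third functional; in that case we use the diassociative axioms and invariance to rewrite it in terms of these two, or we keep three independent basic tensors.

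Collecting coefficients then gives
\[
\mathbf{A}_{\widehat{r}}=P\bullet Z_{1}(r)+Q\bullet Z_{2}(r),
\]
where $Z_{1}(r),Z_{2}(r)\in B^{\otimes3}$ are obtained from $r_{13}\diamond r_{21}$, $r_{12}\diamond r_{23}$, and the other terms in the definition of $\mathbf{Z}_{r}$, after suitable permutations of tensor factors.

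\textbf{Step 3: show $Z_{1}(r)=Z_{2}(r)=0$.} This is the main obstacle. Only $\mathbf{Z}_{r}=0$ is assumed, so we expect each $Z_{k}(r)$ to equal $\mathbf{Z}_{r}$ (possibly permuted) plus correction terms. The corrections should be of the form $(\bar{\fl}_{B}(x)\otimes\id-\id\otimes(\bar{\fl}_{B}+\bar{\fr}_{B})(x))(r-\tau(r))$ placed in two of the three legs, which vanish by Zinb-invariance of $r-\tau(r)$. The strategy is to write $r=\tfrac12(r+\tau(r))+\tfrac12(r-\tau(r))$ and substitute, then check, term by term, which terms involving the antisymmetric part are killed by invariance. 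To make this bookkeeping tractable, we first treat the triangular case, where $r$ is symmetric and the identification with $\mathbf{Z}_{r}$ should be immediate. After that we track the extra invariance terms in the quasi-triangular case.

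\textbf{Step 4: ass-invariance of $\widehat{r}+\tau(\widehat{r})$.} Since $\tau(\Omega)=-\Omega$ by skew-symmetry of $\omega$, we have
\[
\widehat{r}+\tau(\widehat{r})=\Omega\bullet(r-\tau(r)).
\]
We compute $\Delta_{\widehat{r}+\tau(\widehat{r})}(d\otimes b)$ using Eq. \eqref{ass-cobo}. As in the proof of Theorem \ref{thm:indu-triASI}, we use invariance of $\omega$ to move the $D$-actions: $(\id\otimes\fl_{\vdash}(d))\Omega$ and $(\fr_{\dashv}(d)\otimes\id)\Omega$ are related to each other, and similarly for the other pairs. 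The expression then becomes a sum of terms $\Omega'\bullet\big(\bar{\fl}_{B}(b)\otimes\id-\id\otimes(\bar{\fl}_{B}+\bar{\fr}_{B})(b)\big)(r-\tau(r))$, each of which vanishes by Zinb-invariance.

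\textbf{The symmetric case.} If $r$ is symmetric, then $\tau(\widehat{r})=\tau(\Omega)\bullet\tau(r)=-\Omega\bullet r=-\widehat{r}$, so $\widehat{r}$ is skew-symmetric. Moreover $r-\tau(r)=0$ is trivially Zinb-invariant, so the general statement applies and $\widehat{r}$ is a skew-symmetric solution of the $\AYBE$.
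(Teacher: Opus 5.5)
Your plan is essentially the paper's proof. You expand $\mathbf{A}_{\widehat{r}}$ into six terms $X\bullet Y$, use the $\omega$-pairing to rewrite every $D$-tensor through $P$ and $Q$ (no third functional is needed, since the other four are $-P$, $Q$, $Q-P$, $Q-P$), identify the two $B$-coefficients with $\mathbf{Z}_{r}$ and $(\id\otimes\tau)\mathbf{Z}_{r}$ up to correction terms, and treat ass-invariance and the symmetric case the same way, with $\widehat{r}+\tau(\widehat{r})=\Omega\bullet(r-\tau(r))$ just a tidier packaging of the paper's eight-term computation. When you write out Steps 3 and 4, note that the correction terms, e.g.\ $\sum_{i}\big[(\id\otimes\bar{\fl}_{B}(b)-\bar{\fl}_{B}(b)\otimes\id-\bar{\fr}_{B}(b)\otimes\id)(r-\tau(r))\big]_{b=x_{i}}\otimes y_{i}$, vanish by the $\tau$-transform of the Zinb-invariance condition (valid because $r-\tau(r)$ is antisymmetric) rather than by the condition verbatim, and no splitting of $r$ into symmetric and antisymmetric parts is needed.
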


\begin{proof}
First, by direct calculation, we have
\begin{align*}
&\; \widehat{r}_{12}\cdot\widehat{r}_{13}+\widehat{r}_{13}\cdot\widehat{r}_{23}
-\widehat{r}_{23}\cdot\widehat{r}_{12}\\
%=&\;\sum_{i,j}\sum_{p,q}\Big(((e_{p}\otimes x_{i})\cdot(e_{q}\otimes x_{j}))
%\otimes(f_{p}\otimes y_{i})\otimes(f_{q}\otimes y_{j})+(e_{p}\otimes x_{i},)\otimes
%(e_{q}\otimes x_{j})\otimes((f_{p}\otimes y_{i})\cdot(f_{q}\otimes y_{j}))\\[-5mm]
%&\qquad\qquad-(e_{q}\otimes x_{j})\otimes((e_{p}\otimes x_{i})
%\cdot(f_{q}\otimes y_{j}))\otimes(f_{p}\otimes y_{i})\Big)\\
%=&\;\sum_{i,j}\sum_{p,q}\Big(((e_{p}\vdash e_{q})\otimes(x_{i}\diamond x_{j}))
%\otimes(f_{p}\otimes y_{i})\otimes(f_{q}\otimes y_{j})
%+((e_{p}\dashv e_{q})\otimes(x_{j}\diamond x_{i}))\otimes(f_{p}\otimes y_{i})
%\otimes(f_{q}\otimes y_{j})\\[-4mm]
%&\qquad\qquad+(e_{p}\otimes x_{i})\otimes(e_{q}\otimes x_{j})\otimes
%((f_{p}\vdash f_{q})\otimes(y_{i}\diamond y_{j}))
%+(e_{p}\otimes x_{i})\otimes(e_{q}\otimes x_{j})\otimes
%((f_{p}\dashv f_{q})\otimes(y_{j}\diamond y_{i}))\\[-1mm]
%&\qquad\qquad-(e_{q}\otimes x_{j})\otimes((e_{p}\vdash f_{q})\otimes
%(x_{i}\diamond y_{j}))\otimes(f_{p}\otimes y_{i})
%-(e_{q}\otimes x_{j})\otimes((e_{p}\dashv f_{q})\otimes
%(y_{j}\diamond x_{i}))\otimes(f_{p}\otimes y_{i})\Big).\\
=&\;\sum_{i,j}\sum_{p,q}\Big(\big((e_{p}\vdash e_{q})\otimes f_{p}\otimes f_{q}\big)
\bullet\big((x_{i}\diamond x_{j})\otimes y_{i}\otimes y_{j}\big)
+\big((e_{p}\dashv e_{q})\otimes f_{p}\otimes f_{p}\big)\bullet
\big((x_{j}\diamond x_{i})\otimes y_{i}\otimes y_{j}\big)\\[-4mm]
&\qquad\qquad+\big(e_{p}\otimes e_{q}\otimes(f_{p}\vdash f_{q})\big)
\bullet\big(x_{i}\otimes x_{j}\otimes(y_{i}\diamond y_{j})\big)
+\big(e_{p}\otimes e_{q}\otimes(f_{p}\dashv f_{p})\big)
\bullet\big(x_{i}\otimes x_{j}\otimes(y_{j}\diamond y_{i})\big)\\[-1mm]
&\qquad\qquad-\big(e_{q}\otimes(e_{p}\vdash f_{q})\otimes f_{p}\big)
\bullet\big(x_{j}\otimes(x_{i}\diamond y_{j})\otimes y_{i}\big)
-\big(e_{q}\otimes(e_{p}\dashv f_{q})\otimes f_{p}\big)
\bullet\big(x_{j}\otimes(y_{j}\diamond x_{i})\otimes y_{i}\big)\Big).
\end{align*}
%\begin{align*}
%& \hat{\omega}\Big(\sum_{p,q}(e_{p}\vdash e_{q})\otimes f_{p}\otimes f_{q},\ \
%e_{s}\otimes e_{u}\otimes e_{v}\Big)
%%=\omega(e_{u},\; e_{v}\vdash e_{s}-e_{v}\dashv e_{s}),
%=-\omega(e_{s},\; e_{u}\vdash e_{v}),\\
%& \hat{\omega}\Big(\sum_{p,q}(e_{p}\dashv e_{q})\otimes f_{p}\otimes f_{q},\ \
%e_{s}\otimes e_{u}\otimes e_{v}\Big)
%%=\omega(e_{u},\; e_{v}\vdash e_{s}),
%=-\omega(e_{s},\; e_{u}\dashv e_{v}),\\
%& \hat{\omega}\Big(\sum_{p,q}e_{p}\otimes e_{q}\otimes(f_{p}\vdash f_{q}),\ \
%e_{s}\otimes e_{u}\otimes e_{v}\Big)
%%=\omega(e_{u},\; e_{v}\dashv e_{s}),
%=\omega(e_{s},\; e_{u}\vdash e_{v}-e_{u}\dashv e_{v}),\\
%& \hat{\omega}\Big(\sum_{p,q}e_{p}\otimes e_{q}\otimes(f_{p}\dashv f_{q}),\ \
%e_{s}\otimes e_{u}\otimes e_{v}\Big)
%%=\omega(e_{u},\;e_{v}\dashv e_{s}-e_{v}\vdash e_{s}),
%=\omega(e_{s},\; e_{u}\vdash e_{v}),\\
%& \hat{\omega}\Big(\sum_{p,q}e_{q}\otimes(e_{p}\vdash f_{q})\otimes f_{p},\ \
%e_{s}\otimes e_{u}\otimes e_{v}\Big)
%%=\omega(e_{u},\; e_{v}\vdash e_{s}),
%=-\omega(e_{s},\; e_{u}\dashv e_{v}),\\
%& \hat{\omega}\Big(\sum_{p,q}e_{q}\otimes(e_{p}\dashv f_{q})\otimes f_{p},\ \
%e_{s}\otimes e_{u}\otimes e_{v}\Big)
%%=\omega(e_{u},\; e_{v}\dashv e_{s})
%=\omega(e_{s},\; e_{u}\vdash e_{v}-e_{u}\dashv e_{v}).\\
%\end{align*}
Moreover, for given $s, u, v\in\{1, 2,\cdots, n\}$, we have
$$
\hat{\omega}\Big(\sum_{p,q}(e_{p}\vdash e_{q})\otimes f_{p}\otimes f_{q},\ \
e_{s}\otimes e_{u}\otimes e_{v}\Big)
=-\omega(e_{s},\; e_{u}\vdash e_{v})
=-\hat{\omega}\Big(\sum_{p,q}e_{p}\otimes e_{q}\otimes(f_{p}\dashv f_{q}),\ \
e_{s}\otimes e_{u}\otimes e_{v}\Big).
$$
By the nondegeneracy of $\hat{\omega}(-,-)$, we get $\sum_{p,q}(e_{p}\vdash e_{q})\otimes f_{p}
\otimes f_{q}=-\sum_{p,q}e_{p}\otimes e_{q}\otimes(f_{p}\dashv f_{q})$. Similarly, we also
have $\sum_{p,q}(e_{p}\dashv e_{q})\otimes f_{p}\otimes f_{q}=\sum_{p,q}e_{q}\otimes
(e_{p}\vdash f_{q})\otimes f_{p}$ and $\sum_{p,q}e_{p}\otimes e_{q}\otimes(f_{p}\vdash
f_{q})=\sum_{p,q}e_{q}\otimes(e_{p}\dashv f_{q})\otimes f_{p}=\sum_{p,q}e_{p}\otimes e_{q}
\otimes(f_{p}\dashv f_{q})+\sum_{p,q}e_{q}\otimes(e_{p}\vdash f_{q})\otimes f_{p}$.
Thus, we obtain
\begin{align*}
&\; \widehat{r}_{12}\cdot\widehat{r}_{13}+\widehat{r}_{13}\cdot\widehat{r}_{23}
-\widehat{r}_{23}\cdot\widehat{r}_{12}\\
=&\;\sum_{i,j}\sum_{p,q}\Big(\big((e_{p}\vdash e_{q})\otimes f_{p}\otimes f_{q}\big)\bullet
\big((x_{i}\diamond x_{j})\otimes y_{i}\otimes y_{j}
-x_{i}\otimes x_{j}\otimes(y_{i}\diamond y_{j})\\[-5mm]
&\qquad\qquad\qquad\qquad\qquad\qquad\qquad\qquad
-x_{i}\otimes x_{j}\otimes(y_{j}\diamond y_{i})
+x_{j}\otimes(y_{j}\diamond x_{i})\otimes y_{i}\big)\\[-1mm]
&\qquad\qquad+\big((e_{p}\dashv e_{q})\otimes f_{p}\otimes f_{q}\big)\bullet
\big((x_{j}\diamond x_{i})\otimes y_{i}\otimes y_{j}
+x_{i}\otimes x_{j}\otimes(y_{i}\diamond y_{i})\\[-2mm]
&\qquad\qquad\qquad\qquad\qquad\qquad\qquad\qquad
-x_{j}\otimes(x_{i}\diamond y_{j})\otimes y_{i}
+x_{j}\otimes(y_{j}\diamond x_{i})\otimes y_{i}\big)\Big).
\end{align*}
Since $r-\tau(r)$ is Zinb-invariant, we can simplify $\mathbf{Z}_{r}$ into the
following forms:
\begin{align*}
\mathbf{Z}_{r}&=\sum_{i,j}\big((x_{i}\diamond x_{j})\otimes y_{i}\otimes y_{j}
-x_{i}\otimes x_{j}\otimes(y_{i}\diamond y_{j})
-x_{i}\otimes x_{j}\otimes(y_{j}\diamond y_{i})
+x_{j}\otimes(y_{j}\diamond x_{i})\otimes y_{i}\big)\\[-2mm]
&=\sum_{i,j}\big(y_{i}\otimes(x_{i}\diamond x_{j})\otimes y_{j}
-y_{i}\otimes x_{j}\otimes (y_{j}\diamond x_{i})
-y_{i}\otimes x_{j}\otimes (x_{i}\diamond y_{j})
+(y_{j}\diamond y_{i})\otimes x_{j}\otimes x_{i}\big).
\end{align*}
Therefore, we obtain
\begin{align*}
&\; \widehat{r}_{12}\cdot\widehat{r}_{13}+\widehat{r}_{13}\cdot\widehat{r}_{23}
-\widehat{r}_{23}\cdot\widehat{r}_{12}\\[-1mm]
=&\; \Big(\sum_{p,q}(e_{p}\vdash e_{q})\otimes f_{p}
\otimes f_{q}\Big)\bullet\mathbf{Z}_{r}+\Big(\sum_{p,q}(e_{p}\dashv e_{q})\otimes f_{p}
\otimes f_{q}\Big)\bullet(\tau\otimes\id)((\id\otimes\tau)(\mathbf{Z}_{r})).
\end{align*}
That is, $\widehat{r}$ is a solution of the $\AYBE$ in $(D\otimes B, \cdot)$ if $r$
is solution of the $\ZYBE$ in $(B, \diamond)$ and $r-\tau(r)$ is Zinb-invariant.

Second, we show that $\widehat{r}+\tau(\widehat{r})$ is ass-invariant in this case.
For any $b\in B$, $e_{p}\in D$, $p=1, 2,\cdots, n$, we have
\begin{align*}
&\;\big(\id\otimes\fl_{D\otimes B}(e_{p}\otimes b)-\fr_{D\otimes B}(e_{p}\otimes b)
\otimes\id\big)(\widehat{r}+\tau(\widehat{r}))\\
=&\;\sum_{i,j}\Big(
\big(e_{j}\otimes(e_{p}\vdash f_{j})\big)\bullet\big(x_{i}\otimes(b\diamond y_{i})\big)
+\big(e_{j}\otimes(e_{p}\dashv f_{j})\big)\bullet\big(x_{i}\otimes(y_{i}\diamond b)\big)
-\big((e_{j}\vdash e_{p})\otimes f_{j}\big)\bullet\big((x_{i}\diamond b)\otimes y_{i}\big)
\\[-4mm]
&\qquad
-\big((e_{j}\dashv e_{p})\otimes f_{j}\big)\bullet\big((b\diamond x_{i})\otimes y_{i}\big)
+\big(f_{j}\otimes(e_{p}\vdash e_{j})\big)\bullet\big(y_{i}\otimes(b\diamond x_{i})\big)
+\big(f_{j}\otimes(e_{p}\dashv e_{j})\big)\bullet\big(y_{i}\otimes(x_{i}\diamond b)\big)
\\[-1mm]
&\qquad
-\big((f_{j}\vdash e_{p})\otimes e_{j}\big)\bullet\big((y_{i}\diamond b)\otimes x_{i}\big)
-\big((f_{j}\dashv e_{p})\otimes e_{j}\big)\bullet\big((b\diamond y_{i})\otimes x_{i}\big)
\Big).
\end{align*}
For any $s, t\in\{1, 2,\cdots, n\}$, since
$$
\hat{\omega}\Big(\sum_{j}e_{j}\otimes(e_{p}\dashv f_{j}),\; e_{s}\otimes e_{t}\Big)
=-\omega(e_{p},\; e_{s}\vdash e_{t})
=-\hat{\omega}\Big(\sum_{j}f_{j}\otimes(e_{p}\dashv e_{j}),\; e_{s}\otimes e_{t}\Big)
$$
and $\hat{\omega}(-,-)$ is nondegenerate, we get $\sum_{j}e_{j}\otimes(e_{p}\dashv f_{j})
=-\sum_{j}f_{j}\otimes(e_{p}\dashv e_{j})$.
%\begin{align*}
%&\omega\Big(\sum_{j}e_{j}\otimes(e_{p}\vdash f_{j}),\; e_{s}\otimes e_{t}\Big)
%%=-\omega(e_{s},\; e_{t}\dashv e_{p})
%=\omega(e_{p},\; e_{s}\dashv e_{t}-e_{s}\vdash e_{t}),\\[-2mm]
%&\omega\Big(\sum_{j}e_{j}\otimes(e_{p}\dashv f_{j}),\; e_{s}\otimes e_{t}\Big)
%%=\omega(e_{s},\; e_{t}\vdash e_{p}-e_{t}\dashv e_{p})
%=-\omega(e_{p},\; e_{s}\vdash e_{t}),\\[-2mm]
%&\omega\Big(\sum_{j}(e_{j}\vdash e_{p})\otimes f_{j},\; e_{s}\otimes e_{t}\Big)
%%=-\omega(e_{s},\; e_{t}\vdash e_{p})
%=\omega(e_{p},\; e_{s}\dashv e_{t}),\\[-2mm]
%&\omega\Big(\sum_{j}(e_{j}\dashv e_{p})\otimes f_{j},\; e_{s}\otimes e_{t}\Big)
%%=-\omega(e_{s},\; e_{t}\dashv e_{p})
%=\omega(e_{p},\; e_{s}\dashv e_{t}-e_{s}\vdash e_{t}),\\[-2mm]
%%
%&\omega\Big(\sum_{j}f_{j}\otimes(e_{p}\vdash e_{j}),\; e_{s}\otimes e_{t}\Big)
%%=\omega(e_{s},\; e_{t}\dashv e_{p})
%=\omega(e_{p},\; e_{s}\vdash e_{t}-e_{s}\dashv e_{t}),\\[-2mm]
%&\omega\Big(\sum_{j}f_{j}\otimes(e_{p}\dashv e_{j}),\; e_{s}\otimes e_{t}\Big)
%%=\omega(e_{s},\; e_{t}\dashv e_{p}-e_{t}\vdash e_{p})
%=\omega(e_{p},\; e_{s}\vdash e_{t}),\\[-2mm]
%&\omega\Big(\sum_{j}(f_{j}\vdash e_{p})\otimes e_{j},\; e_{s}\otimes e_{t}\Big)
%%=\omega(e_{s},\; e_{t}\vdash e_{p})
%=-\omega(e_{p},\; e_{s}\dashv e_{t}),\\[-2mm]
%&\omega\Big(\sum_{j}(f_{j}\dashv e_{p})\otimes e_{j},\; e_{s}\otimes e_{t}\Big)
%%=\omega(e_{s},\; e_{t}\dashv e_{p})
%=\omega(e_{p},\; e_{s}\vdash e_{t}-e_{s}\dashv e_{t}).
%\end{align*}
Similarly, we also have $\sum_{j}(e_{j}\vdash e_{p})\otimes f_{j}=-\sum_{j}(f_{j}\vdash
e_{p})\otimes e_{j}$ and $\sum_{j}e_{j}\otimes(e_{p}\vdash f_{j})=\sum_{j}(e_{j}\dashv
e_{p})\otimes f_{j}=-\sum_{j}f_{j}\otimes(e_{p}\vdash e_{j})=-\sum_{j}(f_{j}\dashv e_{p})
\otimes e_{j}=\sum_{j}e_{j}\otimes(e_{p}\dashv f_{j})+\sum_{j}(e_{j}\vdash e_{p})\otimes
f_{j}$.
Therefore, we obtain
\begin{align*}
&\; \big(\id\otimes\fl_{D\otimes B}(e_{p}\otimes b)-\fr_{D\otimes B}(e_{p}\otimes b)
\otimes\id\big)(\widehat{r}+\tau(\widehat{r}))\\
=&\;\sum_{i,j}\Big(\big(e_{j}\otimes(e_{p}\dashv f_{j})\big)\bullet
\big(x_{i}\otimes(b\diamond y_{i})+x_{i}\otimes(y_{i}\diamond b)
-(b\diamond x_{i})\otimes y_{i}\\[-4mm]
&\qquad\qquad\qquad\qquad\qquad\qquad
-y_{i}\otimes(b\diamond x_{i})-y_{i}\otimes(x_{i}\diamond b)
+(b\diamond y_{i})\otimes x_{i}\big)\\
&\qquad+\big((e_{j}\vdash e_{p})\otimes f_{j}\big)\bullet
\big((b\diamond y_{i})\otimes x_{i}+(y_{i}\diamond b)\otimes x_{i}
-y_{i}\otimes(b\diamond x_{i})\\[-1mm]
&\qquad\qquad\qquad\qquad\qquad\qquad
-(b\diamond x_{i})\otimes y_{i}-(x_{i}\diamond b)\otimes y_{i}
+x_{i}\otimes(b\diamond y_{i})\big)\Big)\\
=&\; 0,
\end{align*}
since $r-\tau(r)$ is Zinb-invariant.
%\begin{align*}
%&\;\big(\id\otimes(\bar{\fl}_{B}+\bar{\fr}_{B})(b)-\bar{\fl}_{B}(b)\otimes\id\big)(r-\tau(r))\\
%=&\;\sum_{i}x_{i}\otimes(b\diamond y_{i})+x_{i}\otimes(y_{i}\diamond b)
%-(b\diamond x_{i})\otimes y_{i}-y_{i}\otimes(b\diamond x_{i})-y_{i}\otimes(x_{i}\diamond b)
%+(b\diamond y_{i})\otimes x_{i}=0
%\end{align*}
Thus, we get that $\widehat{r}+\tau(\widehat{r})$ is ass-invariant if
$r-\tau(r)$ is Zinb-invariant.

Finally, if $r$ is symmetric, then $r-\tau(r)=0$ is Zinb-invariant, and so that
$\widehat{r}$ is a solution of the $\AYBE$ in $(D\otimes B, \cdot)$.
Moreover, for any $s, t\in\{1, 2,\cdots, n\}$, we have
$$
\hat{\omega}\Big(\sum_{j}e_{j}\otimes f_{j},\; e_{s}\otimes e_{t}\Big)
=\omega(e_{t}, e_{s})
=-\hat{\omega}\Big(\sum_{j}f_{j}\otimes e_{j},\; e_{s}\otimes e_{t}\Big).
$$
The nondegeneracy of $\hat{\omega}(-,-)$ yields that $\sum_{j}e_{j}\otimes f_{j}
=-\sum_{j}f_{j}\otimes e_{j}$. Since $r$ is symmetric, we get $\widehat{r}$
is skew-symmetric. Hence, $\widehat{r}$ is a skew-symmetric solution of the
$\AYBE$ in $(D\otimes B, \cdot)$. The proof is finished.
\end{proof}

Now, by using the close relationship between solutions of the $\ZYBE$ in a
Zinbiel algebra and solutions of the $\AYBE$ in the induced associative algebra given above,
we can another main conclusion of this section.

\begin{thm}\label{thm:indu-qutriass}
Let $(B, \diamond, \nu)$ be a Zinbiel bialgebra, $(D, \dashv, \vdash, \omega)$
be a quadratic diassociative algebra and $(D\otimes B, \cdot)$ be the induced associative
algebra. If $r\in B\otimes B$, $r-\tau(r)$ is Zinb-invariant and $\nu=\nu_{r}$ is defined
by Eq. \eqref{coboZ}, then $(D\otimes B, \cdot, \Delta)=(D\otimes B, \cdot,
\Delta_{\widehat{r}})$ as ASI bialgebras, where $\Delta$ and $\Delta_{\widehat{r}}$
are defined by Eqs. \eqref{ind-coass} and \eqref{ass-cobo} respectively and
$\widehat{r}$ is defined by Eq. \eqref{assr-max}. Therefore, we have
\begin{enumerate}\itemsep=0pt
\item[$(i)$] $(D\otimes B, \cdot, \Delta)$ is quasi-triangular if
     $(B, \diamond, \nu)$ is quasi-triangular;
\item[$(ii)$] $(D\otimes B, \cdot, \Delta)$ is triangular if
     $(B, \diamond, \nu)$ is triangular;
\item[$(iii)$] $(D\otimes B, \cdot, \Delta)$ is factorizable if
     $(B, \diamond, \nu)$ is factorizable.
\end{enumerate}
\end{thm}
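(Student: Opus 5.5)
The plan is to prove the identity $\Delta=\Delta_{\widehat{r}}$ directly; items $(i)$–$(iii)$ then follow from Proposition \ref{pro:ZYBE-AYBE} and a short computation for the factorizable case. Write $r=\sum_{i}x_{i}\otimes y_{i}$. Fix dual bases $\{e_{p}\}$, $\{f_{p}\}$ of $D$ with respect to $\omega$, and expand $\theta_{\vdash,\omega}(d)$, $\theta_{\dashv,\omega}(d)$ in terms of them. By the nondegeneracy of $\hat\omega$, exactly as in the proof of Theorem \ref{thm:den-perm-ass}, we get
\[
\theta_{\vdash,\omega}(d)=\sum_{j}(e_{j}\dashv d)\otimes f_{j}=\sum_{j}e_{j}\otimes(d\vdash f_{j})-\sum_{j}e_{j}\otimes (d\dashv f_{j}),
\]
and a similar pair of expressions for $\theta_{\dashv,\omega}(d)$. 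These identities can be read off from the list of relations established inside the proof of Proposition \ref{pro:ZYBE-AYBE}, e.g. $\sum_{j}e_{j}\otimes(e_{p}\vdash f_{j})=\sum_{j}(e_{j}\dashv e_{p})\otimes f_{j}$. They are exactly the two "$D$-tensors" that appear when we expand
\[
\Delta_{\widehat r}(d\otimes b)=\big(\id\otimes\fl_{D\otimes B}(d\otimes b)-\fr_{D\otimes B}(d\otimes b)\otimes\id\big)(\widehat r),
\]
which gives four terms:
\[
\sum_{i,j}\Big(\big(e_{j}\otimes(d\vdash f_{j})\big)\bullet\big(x_{i}\otimes(b\diamond y_{i})\big)
+\big(e_{j}\otimes(d\dashv f_{j})\big)\bullet\big(x_{i}\otimes(y_{i}\diamond b)\big)
-\big((e_{j}\vdash d)\otimes f_{j}\big)\bullet\big((x_{i}\diamond b)\otimes y_{i}\big)
-\big((e_{j}\dashv d)\otimes f_{j}\big)\bullet\big((b\diamond x_{i})\otimes y_{i}\big)\Big).
\]

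The key step is to rewrite all four $D$-factors in terms of just the two "independent" tensors $P(d)=\sum_{j}e_{j}\otimes(d\dashv f_{j})$ and $Q(d)=\sum_{j}(e_{j}\vdash d)\otimes f_{j}$. This uses the relations from Proposition \ref{pro:ZYBE-AYBE} with $e_{p}$ replaced by a general $d$. In the same way we express $\theta_{\vdash,\omega}(d)=P(d)+Q(d)$ and write $\theta_{\dashv,\omega}(d)$ as a combination of $\tau P(d)$ and $\tau Q(d)$. Next we expand $\nu_{r}(b)=\sum_{i}\big((b\diamond x_{i})\otimes y_{i}-x_{i}\otimes(b\diamond y_{i})-x_{i}\otimes(y_{i}\diamond b)\big)$ and likewise $\tau(\nu_{r}(b))$. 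The difference $\Delta-\Delta_{\widehat r}$ then becomes $P(d)\bullet X(b)+Q(d)\bullet Y(b)$, with $X(b),Y(b)\in B\otimes B$ explicit. We will check that $X(b)$ and $Y(b)$ are, up to sign and $\tau$, exactly $\big(\bar{\fl}_{B}(b)\otimes\id-\id\otimes(\bar{\fl}_{B}+\bar{\fr}_{B})(b)\big)(r-\tau(r))$, which vanishes by the Zinb-invariance hypothesis. This mirrors the ass-invariance computation at the end of Proposition \ref{pro:ZYBE-AYBE}. The main obstacle I expect is this bookkeeping: converting the $\tau$-twisted term $\theta_{\dashv,\omega}(d)\bullet\tau(\nu_r(b))$ into the $P,Q$ basis with the correct signs. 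The reliable route is to pair everything against $e_s\otimes e_t$ via $\hat\omega$ and use invariance and skew-symmetry of $\omega$ term by term.

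With $\Delta=\Delta_{\widehat r}$ established, the three items follow. For $(i)$, if $r$ solves the $\ZYBE$ and $r-\tau(r)$ is Zinb-invariant, then Proposition \ref{pro:ZYBE-AYBE} shows that $\widehat r$ solves the $\AYBE$ and $\widehat r+\tau(\widehat r)$ is ass-invariant, so $(D\otimes B,\cdot,\Delta_{\widehat r})$ is quasi-triangular by Proposition \ref{pro:quasass-bia}. For $(ii)$, the last part of Proposition \ref{pro:ZYBE-AYBE} applies, since $\widehat r$ is skew-symmetric. For $(iii)$, set $\kappa=\sum_{j}e_{j}\otimes f_{j}$ and use $\sum_{j}e_{j}\otimes f_{j}=-\sum_{j}f_{j}\otimes e_{j}$. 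As in Proposition \ref{pro:o-dia-ass}, this gives $\widehat r^{\sharp}=\kappa^{\sharp}\otimes r^{\sharp}$ and $\tau(\widehat r)^{\sharp}=-\kappa^{\sharp}\otimes\tau(r)^{\sharp}$. Hence $\widehat r^{\sharp}+\tau(\widehat r)^{\sharp}=\kappa^{\sharp}\otimes(r^{\sharp}-\tau(r)^{\sharp})$, which is an isomorphism precisely when $r^{\sharp}-\tau(r)^{\sharp}$ is one, because $\kappa^{\sharp}$ is invertible by nondegeneracy of $\omega$. This is exactly the factorizability condition for the Zinbiel bialgebra, assuming that notion is defined, as in \cite{Wan}, via $r-\tau(r)$; that sign convention is what I would verify first.
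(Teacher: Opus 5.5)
Your strategy is the paper's. You expand $\Delta_{\widehat{r}}(d\otimes b)$, convert the four dual-basis tensors in $D\otimes D$ into $\theta_{\vdash,\omega}(d)$ and $\theta_{\dashv,\omega}(d)$, and reduce the discrepancy to Zinb-invariance of $r-\tau(r)$. Your treatment of $(i)$--$(iii)$, via Proposition \ref{pro:ZYBE-AYBE} together with $\widehat{r}^{\sharp}=\hat{\kappa}^{\sharp}\otimes r^{\sharp}$ and $\tau(\widehat{r})^{\sharp}=-\hat{\kappa}^{\sharp}\otimes\tau(r)^{\sharp}$, is also what the paper does.

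However, the identifications you state in the crucial step are wrong. Take $\omega(f_j,e_i)=\delta_{ij}$, pair against $e_s\otimes e_t$, and use invariance with skew-symmetry (which give, e.g., $\omega(z,x\vdash y)=-\omega(y,z\dashv x)$). This yields
\[
\theta_{\vdash,\omega}(d)=-P(d),\qquad \theta_{\dashv,\omega}(d)=Q(d),\qquad \sum_j(e_j\dashv d)\otimes f_j=\sum_j e_j\otimes(d\vdash f_j)=P(d)+Q(d)=\theta_{\dashv,\omega}(d)-\theta_{\vdash,\omega}(d).
\]
So $\theta_{\vdash,\omega}(d)$ is neither $\sum_j(e_j\dashv d)\otimes f_j$ nor $P(d)+Q(d)$. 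Your display is even inconsistent with the relation $\sum_j e_j\otimes(d\vdash f_j)=\sum_j(e_j\dashv d)\otimes f_j$ that you cite, since together they force $P(d)=0$. Likewise, $\theta_{\dashv,\omega}(d)$ is simply $Q(d)$, not a combination of $\tau P(d)$ and $\tau Q(d)$. Those pair to $-\omega(d,e_t\vdash e_s)$ and $\omega(d,e_t\dashv e_s)$, which are in general unrelated to $\omega(d,e_s\dashv e_t)$. The relations inside the proof of Proposition \ref{pro:ZYBE-AYBE} only relate the dual-basis sums to one another. Identifying them with $\theta_{\vdash,\omega}$ and $\theta_{\dashv,\omega}$ requires the pairing computation itself.

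This error breaks the argument as written. The coefficient of $P(d)$ in $\Delta_{\widehat{r}}(d\otimes b)$ is $-\nu_r(b)$. Your $\theta_{\vdash,\omega}(d)=P(d)+Q(d)$ would make the first summand of $\Delta$ contribute $+\nu_r(b)$ there, a mismatch that no hypothesis on $r$ removes. Your anticipated obstacle is also misplaced: in $\theta_{\dashv,\omega}(d)\bullet\tau(\nu_r(b))$ the twist acts only on the $B$-factors, so nothing on the $D$-side needs rewriting.

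With the correct identifications, the computation closes in exactly the shape you predicted. First, $\Delta(d\otimes b)=-P(d)\bullet\nu_r(b)+Q(d)\bullet\tau(\nu_r(b))$. Second, the $P(d)$-coefficients agree identically, so $X(b)=0$ with no hypothesis needed. Third, the remaining coefficient is
\[
Y(b)=\tau\Big(\big(\bar{\fl}_{B}(b)\otimes\id-\id\otimes(\bar{\fl}_{B}+\bar{\fr}_{B})(b)\big)(r-\tau(r))\Big),
\]
which vanishes by Zinb-invariance. This is precisely the paper's computation, written with $P(d),Q(d)$ in place of $\theta_{\vdash,\omega}(d),\theta_{\dashv,\omega}(d)$.
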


\begin{proof}
Let $r=\sum_{i}x_{i}\otimes y_{i}$. For any $b\in B$ and $d\in D$, we have
\begin{align*}
\Delta(d\otimes b)&=\theta_{\vdash,\omega}(d)\bullet\nu_{r}(b)
+\theta_{\dashv,\omega}(d)\bullet\tau(\nu_{r}(b)) \\
&=\sum_{i}\Big(\theta_{\vdash,\omega}(d)\bullet\big((b\diamond x_{i})\otimes y_{i}
-x_{i}\otimes(b\diamond y_{i})-x_{i}\otimes(y_{i}\diamond b)\big)\\[-4mm]
&\qquad\quad+\theta_{\dashv,\omega}(d)\bullet\big(y_{i}\otimes(b\diamond x_{i})
-(b\diamond y_{i})\otimes x_{i}-(y_{i}\diamond b)\otimes x_{i}\big)\Big)
\end{align*}
and
\begin{align*}
\Delta_{\widehat{r}}(d\otimes b)
&=(\id\otimes\fl_{A}(d\otimes b)-\fr_{A}(d\otimes b)\otimes\id)\Big(
\sum_{i,j}(e_{j}\otimes x_{i})\otimes(f_{j}\otimes y_{i})\Big)\\[-2mm]
%&=\sum_{i,j}\Big((e_{j}\otimes x_{i})\otimes((d\vdash f_{j})\otimes(b\diamond y_{i}))
%+(e_{j}\otimes x_{i})\otimes((d\dashv f_{j})\otimes(y_{i}\diamond b))\\[-5mm]
%&\qquad\quad-((e_{j}\vdash d)\otimes(x_{i}\diamond b))\otimes(f_{j}\otimes y_{i})
%-((e_{j}\dashv d)\otimes(b\diamond x_{i}))\otimes(f_{j}\otimes y_{i})\Big),\\
&=\sum_{i,j}\Big(\big(e_{j}\otimes(d\vdash f_{j})\big)\bullet\big(x_{i}\otimes
(b\diamond y_{i})\big)+\big(e_{j}\otimes(d\dashv f_{j})\big)\bullet
\big(x_{i}\otimes(y_{i}\diamond b)\big)\\[-5mm]
&\qquad\quad-\big((e_{j}\vdash d)\otimes f_{j}\big)\bullet\big((x_{i}\diamond b)
\otimes y_{i}\big)-\big((e_{j}\dashv d)\otimes f_{j}\big)\bullet
\big((b\diamond x_{i})\otimes y_{i}\big)\Big),
\end{align*}
where $\widehat{r}=\sum_{i, j}(e_{j}\otimes x_{i})\otimes(f_{j}\otimes y_{i})$,
$\{e_{1}, e_{2},\cdots, e_{n}\}$ is a basis of $B$ and $\{f_{1}, f_{2},\cdots, f_{n}\}$
is the dual basis of $\{e_{1}, e_{2},\cdots, e_{n}\}$ with respect to $\omega(-,-)$.
For any $p, q\in\{1,2,\cdots,n\}$, since
$$
\hat{\omega}(\theta_{\dashv,\omega}(d),\; e_{p}\otimes e_{q})
=\omega(d,\; e_{p}\dashv e_{q})
=\hat{\omega}\Big(\sum_{j}(e_{j}\vdash d)\otimes f_{j},\; e_{p}\otimes e_{q}\Big)
$$
and $\omega(-,-)$ is nondegenerate, we get $\theta_{\dashv,\omega}(d)=\sum_{j}(e_{j}
\vdash d)\otimes f_{j}$. Similarly, $\theta_{\vdash,\omega}(d)=-\sum_{j}e_{j}\otimes
(d\dashv f_{j})$ and $\sum_{j}e_{j}\otimes(d\vdash f_{j})=\sum_{j}(e_{j}\dashv d)\otimes
f_{j}=\theta_{\dashv,\omega}(d)-\theta_{\vdash,\omega}(d)$.
%\begin{align*}
%&\hat{\omega}(\theta_{\vdash,\omega}(d),\; e_{p}\otimes e_{q})
%=\omega(d,\; e_{p}\vdash e_{q}),\\
%&\hat{\omega}(\theta_{\dashv,\omega}(d),\; e_{p}\otimes e_{q})
%=\omega(d,\; e_{p}\dashv e_{q}),\\
%&\hat{\omega}(e_{j}\otimes(d\vdash f_{j}),\; e_{p}\otimes e_{q})
%=\omega(d,\; e_{p}\dashv e_{q}-e_{p}\vdash e_{q}),\\
%&\hat{\omega}(e_{j}\otimes(d\dashv f_{j}),\; e_{p}\otimes e_{q})
%=-\omega(d,\; e_{p}\vdash e_{q}),\\
%&\hat{\omega}((e_{j}\vdash d)\otimes f_{j},\; e_{p}\otimes e_{q})
%=\omega(d,\; e_{p}\dashv e_{q}),\\
%&\hat{\omega}((e_{j}\dashv d)\otimes f_{j},\; e_{p}\otimes e_{q})
%=\omega(d,\; e_{p}\dashv e_{q}-e_{p}\vdash e_{q}).
%\end{align*}
Thus, we obtain
\begin{align*}
\Delta_{\widehat{r}}(d\otimes b)
&=\sum_{i}\Big(\theta_{\vdash,\omega}(d)\bullet\big((b\diamond x_{i})\otimes y_{i}
-x_{i}\otimes(b\diamond y_{i})-x_{i}\otimes(y_{i}\diamond b)\big)\\[-4mm]
&\qquad\quad+\theta_{\dashv,\omega}(d)\bullet\big(x_{i}\otimes(b\diamond y_{i})
-(b\diamond x_{i})\otimes y_{i}-(x_{i}\diamond b)\otimes y_{i}\big)\Big)\\
&=\Delta(d\otimes b),
\end{align*}
since $r-\tau(r)$ is Zinb-invariant. Therefore, we get that $(D\otimes B, \cdot, \Delta)
=(D\otimes B, \cdot, \Delta_{\widehat{r}})$ as ASI bialgebras.

Second, $(i)$ and $(ii)$ follow from Proposition \ref{pro:qtr-zib}.
Finally, if $(B, \diamond, \nu_{r})$ is factorizable, i.e., $r$ is a solution of
$\ZYBE$ in $(B, \diamond)$, $r-\tau(r)$ is Zinb-invariant and the map $\mathcal{I}=
r^{\sharp}-\tau(r)^{\sharp}: B^{\ast}\rightarrow B$ is an isomorphism of vector spaces.
We need show that $\widehat{\mathcal{I}}=\widehat{r}^{\sharp}+\tau(\widehat{r})^{\sharp}:
(D\otimes B)^{\ast}\rightarrow D\otimes B$ is an isomorphism of vector spaces. Denote
$\hat{\kappa}:=\sum_{j}e_{j}\otimes f_{j}\in D\otimes D$. Define $\hat{\kappa}^{\sharp}:
D^{\ast}\rightarrow D$ by $\langle\hat{\kappa}^{\sharp}(\eta_{1}),\; \eta_{2}\rangle
=\langle\eta_{1}\otimes\eta_{2},\; \hat{\kappa}\rangle$ for any $\eta_{1}, \eta_{2}
\in D^{\ast}$. Then, $\hat{\kappa}^{\sharp}$ is a linear isomorphism and
$\langle\hat{\kappa}^{\sharp}(\eta_{1}),\; \eta_{2}\rangle=
-\langle\hat{\kappa}^{\sharp}(\eta_{2}),\; \eta_{1}\rangle$.
Therefore, for any $\eta_{1}, \eta_{2}\in D^{\ast}$ and $\xi_{1}, \xi_{2}\in B^{\ast}$,
\begin{align*}
\langle\tau(\widehat{r})^{\sharp}(\eta_{1}\otimes\xi_{1}),\; \eta_{2}\otimes\xi_{2}\rangle
&=\sum_{i,j}\langle(\eta_{1}\otimes\xi_{1})\otimes(\eta_{2}\otimes\xi_{2}),\ \
(f_{j}\otimes y_{i})\otimes(e_{j}\otimes x_{i})\rangle\\[-2mm]
&=\Big(\sum_{j}\langle\eta_{1}, f_{j}\rangle\langle\eta_{2}, e_{j}\rangle\Big)
\Big(\sum_{i}\langle\xi_{1}, y_{i}\rangle\langle\xi_{2}, x_{i}\rangle\Big)\\[-2mm]
&=-\langle\hat{\kappa}^{\sharp}(\eta_{1}),\; \eta_{2}\rangle
\langle\tau(r)^{\sharp}(\xi_{1}),\; \xi_{2}\rangle\\
&=-\langle\hat{\kappa}^{\sharp}(\eta_{1})\otimes\tau(r)^{\sharp}(\xi_{1}),\;
\xi_{2}\otimes\eta_{2}\rangle.
\end{align*}
That is, $\tau(\widehat{r})^{\sharp}=-\hat{\kappa}^{\sharp}\otimes\tau(r)^{\sharp}$,
Similarly, we have $\widehat{r}^{\sharp}=\hat{\kappa}^{\sharp}\otimes r^{\sharp}$. Thus,
$\widehat{\mathcal{I}}=\widehat{r}^{\sharp}+\tau(\widehat{r})^{\sharp}=
\hat{\kappa}^{\sharp}\otimes\mathcal{I}$ is an isomorphism of vector spaces.
The proof is completed.
\end{proof}

Let $(B, \diamond)$ be a Zinbiel algebra and $(V, \bar{\kl}, \bar{\kr})$ be a bimodule over
$(B, \diamond)$. Recall that a linear map $T: V\rightarrow B$ is called an {\bf
$\mathcal{O}$-operator of $(B, \diamond)$ associated to $(V, \bar{\kl}, \bar{\kr})$} if for
any $v_{1}, v_{2}\in V$,
$$
T(v_{1})\diamond T(v_{2})=T\big(\bar{\kl}(T(v_{1}))(v_{2})+\bar{\kr}(T(v_{2}))(v_{1})\big).
$$

\begin{pro}[\cite{Wan}]\label{pro:o-Zinb}
Let $(B, \diamond)$ be a Zinbiel algebra, $r\in B\otimes B$ be symmetric. Then $r$ is a
solution of the $\ZYBE$ in $(B, \diamond)$ if and only if $r^{\sharp}: B^{\ast}
\rightarrow B$ is an $\mathcal{O}$-operator of $(B, \diamond)$ associated to the coregular
bimodule $(B^{\ast}, -\bar{\fl}_{B}^{\ast}-\bar{\fr}_{B}^{\ast}, \bar{\fr}_{B}^{\ast})$.
\end{pro}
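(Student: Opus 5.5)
The plan is the standard Kupershmidt-type dictionary between a tensor and the map it induces. I pair the cubic expression $\mathbf{Z}_{r}$ with $\xi_{1}\otimes\xi_{2}\otimes\xi_{3}\in B^{\ast}\otimes B^{\ast}\otimes B^{\ast}$ and compare with the $\mathcal{O}$-operator identity paired with a third covector. Write $r=\sum_{i}x_{i}\otimes y_{i}$ with $r=\tau(r)$. Then $r^{\sharp}(\xi)=\sum_{i}\langle\xi,x_{i}\rangle y_{i}=\sum_{i}\langle\xi,y_{i}\rangle x_{i}$, so I can move $r^{\sharp}$ to either tensor leg freely. By the conventions fixed in the introduction, the coregular actions are
$$
\langle\bar{\fl}_{B}^{\ast}(b)\xi,\; c\rangle=-\langle\xi,\; b\diamond c\rangle,\qquad
\langle\bar{\fr}_{B}^{\ast}(b)\xi,\; c\rangle=-\langle\xi,\; c\diamond b\rangle .
$$
The $\mathcal{O}$-operator condition to match is
$$
r^{\sharp}(\xi_{1})\diamond r^{\sharp}(\xi_{2})
=r^{\sharp}\big((-\bar{\fl}_{B}^{\ast}-\bar{\fr}_{B}^{\ast})(r^{\sharp}(\xi_{1}))\xi_{2}
+\bar{\fr}_{B}^{\ast}(r^{\sharp}(\xi_{2}))\xi_{1}\big).
$$

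\textbf{Step 1.} Pair both sides of the $\mathcal{O}$-operator condition with $\xi_{3}$. The left side gives $\sum_{i,j}\langle\xi_{1},x_{i}\rangle\langle\xi_{2},x_{j}\rangle\langle\xi_{3},y_{i}\diamond y_{j}\rangle$, which is $\langle\xi_{1}\otimes\xi_{2}\otimes\xi_{3},\, r_{13}\diamond r_{23}\rangle$. For the right side, symmetry of $r$ lets me transfer $r^{\sharp}$ onto $\xi_{3}$, and then I apply the dual-action formulas. This produces three terms:
\begin{align*}
&\langle\xi_{2},\; r^{\sharp}(\xi_{1})\diamond r^{\sharp}(\xi_{3})\rangle,\\
&\langle\xi_{2},\; r^{\sharp}(\xi_{3})\diamond r^{\sharp}(\xi_{1})\rangle,\\
-&\langle\xi_{1},\; r^{\sharp}(\xi_{3})\diamond r^{\sharp}(\xi_{2})\rangle .
\end{align*}
Each of these is a pairing of $\xi_{1}\otimes\xi_{2}\otimes\xi_{3}$ with one of the eight tensors listed in Definition \ref{def:ZYBE}, namely $r_{12}\diamond r_{23}$, $r_{23}\diamond r_{12}$, $r_{23}\diamond r_{13}$, or their relabelings under the symmetry $x\leftrightarrow y$.

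\textbf{Step 2.} Use $r=\tau(r)$ to collapse the eight terms of $\mathbf{Z}_{r}$ pairwise. Under symmetry, $r_{21}=r_{12}$, so
$$
r_{13}\diamond r_{21}=r_{13}\diamond r_{12},\qquad r_{23}\diamond r_{21}=r_{23}\diamond r_{12}.
$$
These cancel against the corresponding negative terms. One should then be left with
$$
\mathbf{Z}_{r}=r_{21}\diamond r_{13}+r_{12}\diamond r_{23}-r_{13}\diamond r_{23}-r_{23}\diamond r_{13},
$$
up to relabeling; the exact surviving set is to be checked. This reduced form should match, term by term, the difference of the two sides found in Step 1, possibly after applying a permutation $\sigma$ of tensor factors to $\mathbf{Z}_{r}$ (I expect $\sigma=\tau\otimes\id$ or the cyclic one).

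\textbf{Step 3.} Conclude by nondegeneracy of the pairing. The pairing $\langle\xi_{1}\otimes\xi_{2}\otimes\xi_{3},\sigma(\mathbf{Z}_{r})\rangle$ equals $\langle\xi_{3},\,\text{(LHS}-\text{RHS)}\rangle$ for all $\xi$'s. Hence $\mathbf{Z}_{r}=0$ if and only if the $\mathcal{O}$-operator identity holds.

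The main obstacle is the sign and index bookkeeping in Steps 1–2. Three things must be matched: the minus signs built into the dual-action convention $\mu^{\ast}$, the combination $-\bar{\fl}^{\ast}-\bar{\fr}^{\ast}$ versus $+\bar{\fr}^{\ast}$, and the exact permutation relating the pairing to $\mathbf{Z}_{r}$. If the four-term reduced form does not line up directly, I would first try pairing with $\xi_{1}$ in the distinguished slot instead of $\xi_{3}$, i.e.\ applying $\tau\otimes\id$ or a cyclic permutation to $\mathbf{Z}_{r}$, before suspecting a convention mismatch.
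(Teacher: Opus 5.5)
Your argument is correct, and it is the standard Kupershmidt-type pairing argument. The paper gives no proof of its own here; it simply quotes \cite{Wan}. The bookkeeping you left open does close, but two of your identifications need correcting.

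Put $T=r^{\sharp}$, so $\langle\xi,T\eta\rangle=\langle\eta,T\xi\rangle$ by symmetry of $r$.

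\textbf{The third term in Step 1.} Your three right-hand terms in Step 1 are computed correctly. However, the third one, $-\langle\xi_{1},T\xi_{3}\diamond T\xi_{2}\rangle$, is the pairing of $\xi_{1}\otimes\xi_{2}\otimes\xi_{3}$ with $-r_{13}\diamond r_{12}$, not with $r_{23}\diamond r_{13}$: its product sits in the first slot.

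\textbf{The permutation.} Your reduced form of $\mathbf{Z}_{r}$ in Step 2 is correct. Pairing it term by term gives
\[
\langle\xi_{1}\otimes\xi_{2}\otimes\xi_{3},\;\mathbf{Z}_{r}\rangle
=\langle\xi_{1},\,T\xi_{2}\diamond T\xi_{3}\rangle+\langle\xi_{2},\,T\xi_{1}\diamond T\xi_{3}\rangle
-\langle\xi_{3},\,T\xi_{1}\diamond T\xi_{2}+T\xi_{2}\diamond T\xi_{1}\rangle .
\]
Step 1 gives
\[
\langle\xi_{3},\mathrm{LHS}-\mathrm{RHS}\rangle=\langle\xi_{3},T\xi_{1}\diamond T\xi_{2}\rangle-\langle\xi_{2},T\xi_{1}\diamond T\xi_{3}+T\xi_{3}\diamond T\xi_{1}\rangle+\langle\xi_{1},T\xi_{3}\diamond T\xi_{2}\rangle .
\]
This is the first display with $\xi_{2}$ and $\xi_{3}$ interchanged. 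Hence $\langle\xi_{3},\mathrm{LHS}-\mathrm{RHS}\rangle=\langle\xi_{1}\otimes\xi_{3}\otimes\xi_{2},\mathbf{Z}_{r}\rangle$, i.e.\ $\sigma=\id\otimes\tau$.

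The first display is symmetric in $\xi_{1},\xi_{2}$, since for symmetric $r$ the element $\mathbf{Z}_{r}$ is $(\tau\otimes\id)$-invariant. So the cyclic placement $\xi_{3}\otimes\xi_{1}\otimes\xi_{2}$ also works. Your first guess, $\tau\otimes\id$ alone, does not.

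With this identity, your Step 3 (nondegeneracy of the pairing, with $\xi_{1},\xi_{2},\xi_{3}$ arbitrary) finishes the proof.
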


Let $(B, \diamond)$ be a Zinbiel algebra, $r\in B\otimes B$ and $(D, \dashv, \vdash,
\omega)$ be a quadratic diassociative algebra. Then we have the following commutative diagram:

$$
\xymatrix@C=1.4cm@R=0.5cm{
\txt{$(B, \diamond, \nu_{r})$ \\ {\tiny a triangular Zinbiel bialgebra}}
\ar[d]^{{\rm Cro.}~\ref{cor:indassbia}}_{{\rm Thm.}~\ref{thm:indu-qutriass}} &
\txt{$r$ \\ {\tiny a symmetric solution} \\ {\tiny of the $\ZYBE$ in $(B, \diamond)$}}
\ar[d]_-{{\rm Pro.}~\ref{pro:ZYBE-AYBE}}\ar[r]^-{{\rm Pro.}~\ref{pro:o-Zinb}}
\ar[l]_-{{\rm Pro.}~\ref{pro:qtr-zib}}    &
\txt{$r^{\sharp}$\\ {\tiny an $\mathcal{O}$-operator of $(B, \diamond)$} \\
{\tiny associated to coregular bimodule}}
\ar[d]^-{\mbox{$\hat{\kappa}^{\sharp}\otimes-$}} \\
\txt{$(D\otimes B, \cdot, \Delta_{\widehat{r}})=(D\otimes B, \cdot, \Delta)$ \\
{\tiny a triangular ASI bialgebra}}   &
\txt{$\widehat{r}$ \\ {\tiny a skew-symmetric solution} \\ {\tiny of the $\AYBE$ in
$(D\otimes B, \cdot)$}} \ar[r]^-{{\rm Pro.}~\ref{pro:o-ass}}
\ar[l]_-{{\rm Pro.}~\ref{pro:quasass-bia}}   &
\txt{$\widehat{r}^{\sharp}=\hat{\kappa}^{\sharp}\otimes r^{\sharp}$ \\
{\tiny an $\mathcal{O}$-operator of $(D\otimes B, \cdot)$ } \\
{\tiny associated to coregular bimodule}}}
$$
More precisely, from proof of Theorem \ref{thm:indu-qutriass}, we can conclude that the
square on the left and the square on the right are commutative.

%\begin{ex}\label{ex:ZYBE}
%Let $A=\Bbbk\{e_{1}, e_{2}\}$ be a $2$-dimensional Zinbiel algebra with non-zero products:
%$e_{1}\cdot e_{1}=e_{2}$. Consider the symmetric element $r=e_{1}\otimes e_{2}
%+e_{2}\otimes e_{1}\in A\otimes A$. It is easy to see that $r$ is a
%symmetric solution of the $\ZYBE$ in Zinbiel algebra $(A, \cdot)$.
%Therefore, the Eq. (\ref{cobou}) give a Zinbiel bialgebra structure on $(A, \cdot)$ by
%$\Delta(e_{1})=e_{2}\otimes e_{2}$ and $\Delta(e_{2})=0$.
%\end{ex}

%%%%%%%%%%%%%%%%%%%%%%%%%%%%%%%%%%%%%%%%%%%%%%%%%%%%%%%%%%%%%%%%%%%%%%%%%%%%%%%%%
%    section  5   Some constructions of Lie bialgebras
%%%%%%%%%%%%%%%%%%%%%%%%%%%%%%%%%%%%%%%%%%%%%%%%%%%%%%%%%%%%%%%%%%%%%%%%%%%%%%%%%%%%%%
\section{Some constructions of Lie bialgebras} \label{sec:diass-lie}
In this section, we consider the relationship between diassociative bialgebras,
ASI bialgebras, Leibniz bialgebras and Lie bialgebras, and show that the commutative
diagram in Proposition \ref{pro:comm-diag} is correct at the level of bialgebras.
Moreover, we can construct a Lie bialgebra form a Zinbiel bialgebra and the constructed
Lie bialgebra is quasi-triangular (resp. triangular, factorizable) if the original
Zinbiel bialgebra is quasi-triangular (resp. triangular, factorizable).

%%%%%%%%%%%%%%%%%%%%%%%%%%%%%%%%%%%%%%%%%%%%%%%%%%%%%%%%%%%%%%%%%%%%%%%%%%%%%%%%
\subsection{Constructing Lie bialgebra from Zinbiel bialgebras}\label{subsec:Z-L}
First, let us review some basic facts of Lie bialgebras.
Let $(\g, [-,-])$ be a Lie algebra. A {\bf representation} of $(\g, [-,-])$ is a pair
$(V, \rho)$, where $V$ is a vector space and $\rho: A\rightarrow \gl(V)$ is linear map
such that $\rho([g_{1}, g_{2}])=\rho(g_{1})\rho(g_{2})-\rho(g_{2})\rho(g_{1})$ for any
$g_{1}, g_{2}\in\g$. In particular, if we define $\ad_{\g}: \g\rightarrow\gl(\g)$ by
$\ad_{\g}(g_{1})(g_{2})=[g_{1}, g_{2}]$, then $(\g, \ad_{\g})$ is a representation of
$(\g, [-,-])$, which is called the {\bf regular representation}. The {\bf coregular
representation} of $(\g, [-,-])$ is given by $(\g^{\ast}, \ad_{\g}^{\ast})$.
Recall that {\bf Lie coalgebra} $(\g, \delta)$ is a vector space $\g$ with a linear map
$\delta: \g\rightarrow\g\otimes\g$ such that
$$
\tau\delta=-\delta \qquad\quad \mbox{and}\qquad\quad
(\id\otimes\delta)\delta-(\tau\otimes\id)(\id\otimes\delta)\delta=(\delta\otimes\id)\delta.
$$
A {\bf Lie bialgebra} is a triple $(\g, [-,-], \delta)$ such that
$(\g, [-,-])$ is a Lie algebra, $(\g, \delta)$ is a Lie coalgebra, and the following
compatibility condition holds:
$$
\delta([g_{1}, g_{2}])=(\ad_{\g}(g_{1})\otimes\id+\id\otimes\ad_{\g}(g_{1}))
(\delta(g_{2}))-(\ad_{\g}(g_{2})\otimes\id+\id\otimes\ad_{\g}(g_{2}))(\delta(g_{1})).
$$
A Lie bialgebra $(\g, [-,-], \delta)$ is called {\bf coboundary} if there exists an element
$r\in\g\otimes\g$ such that $\delta=\delta_{r}$, where
\begin{align}
\delta_{r}(g)=(\id\otimes\ad_{\g}(g)+\ad_{\g}(g)\otimes\id)(r), \label{lie-cobo}
\end{align}
for any $g\in\g$. Let $(\g, [-,-])$ be a Lie algebra. An element
$r=\sum_{i}x_{i}\otimes y_{i}\in\g\otimes\g$ is said to be {\bf Lie-invariant}
if $(\id\otimes\ad_{\g}(g)+\ad_{\g}(g)\otimes\id)(r)=0$. The equation
$$
\mathbf{C}_{r}:=[r_{12}, r_{13}]+[r_{13}, r_{23}]+[r_{12}, r_{23}]=0
$$
is called the {\bf classical Yang-Baxter equation} (or $\CYBE$) in $(\g, [-,-])$,
where $[r_{12}, r_{13}]=\sum_{i,j}[x_{i}, x_{j}]\otimes y_{i}\otimes y_{j}$,
$[r_{13}, r_{23}]=\sum_{i,j}x_{i}\otimes x_{j}\otimes[y_{i}, y_{j}]$ and
$[r_{12}, r_{23}]=\sum_{i,j}x_{i}\otimes[y_{i}, x_{j}]\otimes y_{j}$.

\begin{pro}[\cite{RS,LS}]\label{pro:splie-bia}
Let $(\g, [-,-])$ be a Lie algebra, $r\in\g\otimes\g$ and $\delta_{r}:
\g\rightarrow\g\otimes\g$ be the linear map defined by Eq. \eqref{lie-cobo}.
\begin{enumerate}
\item[$(i)$] If $r$ is a solution of the $\CYBE$ in $(\g, [-,-])$ and $r+\tau(r)$ is
     Lie-invariant, then $(\g, [-,-], \delta_{r})$ is a Lie bialgebra,
     which is called a {\bf quasi-triangular Lie bialgebra} associated with $r$.
\item[$(ii)$] If $r$ is a skew-symmetric solution of the $\CYBE$ in $(\g, [-,-])$, then
     $(\g, [-,-], \delta_{r})$ is a Lie bialgebra, which is called a {\bf triangular Lie
     bialgebra} associated with $r$.
\item[$(iii)$] If $(\g, [-,-], \delta_{r})$ is a quasi-triangular Lie bialgebra and
     $\mathcal{I}=r^{\sharp}+\tau(r)^{\sharp}: \g^{\ast}\rightarrow\g$ is an isomorphism
     of vector spaces, then $(\g, [-,-], \delta_{r})$ is called a {\bf factorizable Lie
     bialgebra} associated with $r$.
\end{enumerate}
\end{pro}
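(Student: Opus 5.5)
The statement collects the standard results of \cite{RS,LS}. Part $(iii)$ is only a definition, part $(ii)$ is the special case of part $(i)$ with $r+\tau(r)=0$ (which is trivially Lie-invariant), so the real content is part $(i)$. I would prove $(i)$ by checking the three defining properties of a Lie bialgebra for $\delta_{r}$ in order: compatibility, skew-symmetry, co-Jacobi. Throughout I write
$$
\ad^{(2)}(g):=\ad_{\g}(g)\otimes\id+\id\otimes\ad_{\g}(g),
$$
so that $\delta_{r}(g)=\ad^{(2)}(g)(r)$.

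\emph{Compatibility.} This holds for any $r$, with no use of the hypotheses. Since $\ad_{\g}$ is a representation, $\ad^{(2)}$ is a representation of $\g$ on $\g\otimes\g$. Hence
$$
\delta_{r}([g_{1},g_{2}])=\ad^{(2)}(g_{1})\ad^{(2)}(g_{2})(r)-\ad^{(2)}(g_{2})\ad^{(2)}(g_{1})(r),
$$
which is exactly the required identity: $\delta_{r}$ is a $1$-coboundary, hence a $1$-cocycle.

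\emph{Skew-symmetry.} Since $\tau$ commutes with $\ad^{(2)}(g)$,
$$
\delta_{r}(g)+\tau(\delta_{r}(g))=\ad^{(2)}(g)\big(r+\tau(r)\big)=0,
$$
by the Lie-invariance of $r+\tau(r)$.

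\emph{Co-Jacobi identity.} Set
$$
J(g)=(\id\otimes\delta_{r})\delta_{r}(g)-(\tau\otimes\id)(\id\otimes\delta_{r})\delta_{r}(g)-(\delta_{r}\otimes\id)\delta_{r}(g)
$$
and $\ad^{(3)}(g)=\ad_{\g}(g)\otimes\id\otimes\id+\id\otimes\ad_{\g}(g)\otimes\id+\id\otimes\id\otimes\ad_{\g}(g)$. The plan is to write $r=\sum_{i}x_{i}\otimes y_{i}$, expand $J(g)$ completely, and regroup the terms using the Jacobi identity. The target is Drinfeld's identity
$$
J(g)=\pm\,\ad^{(3)}(g)(\mathbf{C}_{r})+(\text{terms linear in } \ad^{(2)}(h)(r+\tau(r)),\ h\in\g),
$$
where the correction terms are obtained by splitting $r=\Lambda+S$ into skew-symmetric and symmetric parts and rewriting every occurrence of $\tau(r)$ as $-r+(r+\tau(r))$. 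The correction terms vanish by the invariance hypothesis, and $\ad^{(3)}(g)(\mathbf{C}_{r})=0$ since $r$ solves the $\CYBE$, so $J=0$.

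\emph{Main obstacle.} The bookkeeping in the co-Jacobi expansion is the only real work. I would first do the skew-symmetric case, where the classical computation gives exactly $J(g)=\ad^{(3)}(g)(\mathbf{C}_{r})$. Then, for the general case, I would note two facts.
\begin{itemize}
\item[$\bullet$] An invariant symmetric $S$ contributes nothing to $\delta_{r}$, since $\delta_{S}(g)=\ad^{(2)}(g)(S)=0$, so $\delta_{r}=\delta_{\Lambda}$.
\item[$\bullet$] For invariant symmetric $S$, the identity $\mathbf{C}_{\Lambda+S}=\mathbf{C}_{\Lambda}+\mathbf{C}_{S}$ holds, and $\mathbf{C}_{S}$ is $\ad^{(3)}$-invariant.
\end{itemize}
These reduce the general case to the skew-symmetric one, because then $\ad^{(3)}(g)(\mathbf{C}_{\Lambda})=\ad^{(3)}(g)(\mathbf{C}_{r})=0$. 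Verifying these two facts is the step I expect to be the most delicate.
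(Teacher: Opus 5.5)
Your proposal is correct. The paper gives no proof of its own for this proposition; it quotes it from \cite{RS,LS}. What you propose is the standard argument:
\begin{itemize}
\item $\delta_{r}$ is a coboundary, so it satisfies the compatibility condition for every $r$;
\item invariance of $r+\tau(r)$ gives $\tau\delta_{r}=-\delta_{r}$;
\item the co-Jacobi identity reduces to the skew-symmetric case by splitting $r=\Lambda+S$, with $\Lambda=\frac{1}{2}(r-\tau(r))$ and $S=\frac{1}{2}(r+\tau(r))$.
\end{itemize}
Treating $(ii)$ as the special case $r+\tau(r)=0$ and $(iii)$ as a definition is also right.

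The two facts you flag as delicate are true, and each is a short check. Write $\Lambda=\sum_{k}a_{k}\otimes b_{k}$ and $S=\sum_{l}s_{l}\otimes t_{l}$.

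First, the six cross terms of $\mathbf{C}_{\Lambda+S}$ cancel in pairs.
\begin{itemize}
\item $[\Lambda_{12},S_{23}]+[\Lambda_{13},S_{23}]=0$ follows from invariance of $S$ alone.
\item $[S_{12},\Lambda_{13}]+[S_{12},\Lambda_{23}]=-\sum_{k}\big((\ad_{\g}(a_{k})\otimes\id+\id\otimes\ad_{\g}(a_{k}))(S)\big)\otimes b_{k}=0$, again by invariance of $S$ alone.
\item $[\Lambda_{12},S_{13}]+[S_{13},\Lambda_{23}]=0$ needs invariance of $S$ together with $\tau(\Lambda)=-\Lambda$. So skew-symmetry of $\Lambda$ is genuinely used here.
\end{itemize}

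Second, invariance of $S$ gives $[S_{12},S_{13}]=-[S_{12},S_{23}]=[S_{13},S_{23}]$, hence $\mathbf{C}_{S}=[S_{13},S_{23}]$. This is the image of the diagonally invariant element $S\otimes S\in\g^{\otimes 4}$ under the $\g$-equivariant map $x_{1}\otimes x_{2}\otimes x_{3}\otimes x_{4}\mapsto x_{1}\otimes x_{3}\otimes[x_{2},x_{4}]$. It is therefore $\ad^{(3)}$-invariant.

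One bookkeeping remark. The paper states co-Jacobi in Leibniz form, not cyclic form. For skew-symmetric $\delta_{r}$, your $J(g)$ equals $-(\id+\sigma+\sigma^{2})(\delta_{r}\otimes\id)\delta_{r}(g)$, where $\sigma(a\otimes b\otimes c)=c\otimes a\otimes b$. So the classical cyclic computation in the skew case applies directly, and the resulting sign is immaterial.
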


Given an associative algebra, we can get a Lie algebra by the commutator.
Considering the dual case, for a coassociative coalgebra $(A, \Delta)$,
we also have a Lie coalgebra structure on $A$ by setting a coproduct
$\delta=\Delta-\tau\Delta: A\rightarrow A\otimes A$.
Moreover, each ASI bialgebra induces a Lie bialgebra as following proposition.

\begin{pro}[\cite{Bai}]\label{pro:ASI-Liebia}
Let $(A, \cdot, \Delta)$ be an ASI bialgebra. Define $[-,-]: A\otimes A\rightarrow A$
by $[a_{1}, a_{2}]=a_{1}\cdot a_{2}-a_{2}\cdot a_{1}$ for any $a_{1}, a_{2}\in A$, and
$\delta=\Delta-\tau\Delta: A\rightarrow A\otimes A$. Then $(A, [-,-], \delta)$ is a Lie
bialgebra, which is called the {\bf Lie bialgebra induced from $(A, \cdot, \Delta)$}.
\end{pro}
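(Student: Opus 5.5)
The plan is to verify the three parts of a Lie bialgebra structure on $(A,[-,-],\delta)$ in turn, using only the ASI axioms for $(A,\cdot,\Delta)$. First, $[-,-]$ is the commutator of an associative product, so it is a Lie bracket; this is classical. Second, for the cobracket $\delta=\Delta-\tau\Delta$, skew-symmetry $\tau\delta=-\delta$ is immediate. For the co-Jacobi identity I would dualize. Since $A$ is finite-dimensional, $\Delta^{\ast}$ is an associative product on $A^{\ast}$ by coassociativity, and $\delta^{\ast}$ is its commutator. Hence $(A^{\ast},\delta^{\ast})$ is a Lie algebra, and transposing back gives that $(A,\delta)$ is a Lie coalgebra. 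Alternatively, one can expand $(\id\otimes\delta)\delta$ into the six terms $\sigma(\id\otimes\Delta)\Delta$ with permutations $\sigma\in S_{3}$ and cancel them using $(\Delta\otimes\id)\Delta=(\id\otimes\Delta)\Delta$.

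The main step is the compatibility condition
$$
\delta([a_{1},a_{2}])=(\ad(a_{1})\otimes\id+\id\otimes\ad(a_{1}))\delta(a_{2})-(\ad(a_{2})\otimes\id+\id\otimes\ad(a_{2}))\delta(a_{1}).
$$
I would write $\delta([a_{1},a_{2}])=\Delta(a_{1}a_{2})-\Delta(a_{2}a_{1})-\tau\Delta(a_{1}a_{2})+\tau\Delta(a_{2}a_{1})$. Each $\Delta(a_{i}a_{j})$ is then replaced by the first ASI axiom, giving
$$
\Delta(a_{1}a_{2})=(\fr(a_{2})\otimes\id)\Delta(a_{1})+(\id\otimes\fl(a_{1}))\Delta(a_{2}),
$$
and similarly for $\Delta(a_{2}a_{1})$. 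After this, the left side is a combination of operators of the form $\fl,\fr$ tensored with $\id$ (or twisted by $\tau$), applied to $\Delta(a_{1})$, $\Delta(a_{2})$, $\tau\Delta(a_{1})$ and $\tau\Delta(a_{2})$. The right side expands in the same way using $\ad=\fl-\fr$.

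Subtracting the two sides, the terms acting on $\Delta(a_{2})$ and on $\Delta(a_{1})$ do not cancel on the nose. The claim is that the leftover is exactly
$$
\big(\fl(a_{1})\otimes\id-\id\otimes\fr(a_{1})\big)\Delta(a_{2})-\tau\big(\id\otimes\fr(a_{2})-\fl(a_{2})\otimes\id\big)\Delta(a_{1}),
$$
together with its image under $\tau$ with $a_{1},a_{2}$ swapped. The second ASI axiom says precisely that this expression vanishes. So the hardest part is careful bookkeeping: tracking which of the roughly sixteen terms, with their $\tau$-twists, match each other. I would organize them by which of the four tensors they act on. I expect the residue to split into two copies of the second ASI identity, one as stated and one after applying $\tau$ and exchanging $a_{1}\leftrightarrow a_{2}$. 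I would verify the sign conventions on one block first before completing the rest.

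This is the standard result of \cite{Bai}, so an alternative route is through Manin triples. An ASI bialgebra is equivalent to a double construction of a Frobenius algebra $A\oplus A^{\ast}$ carrying a symmetric invariant form. Its commutator algebra is then a Lie algebra with the same form, which is invariant and now viewed for the Lie bracket, so we obtain a Manin triple of Lie algebras $(A\oplus A^{\ast},A,A^{\ast})$ and hence the Lie bialgebra. The direct computation above is more self-contained, so I would present that.
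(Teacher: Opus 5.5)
The paper gives no proof of this proposition; it quotes it from Bai. In Bai's framework it comes from the equivalence between ASI bialgebras and double constructions of Frobenius algebras. The commutator algebra of the Frobenius double $A\oplus A^{\ast}$, with the same symmetric invariant form, is a Manin triple of Lie algebras $(A\oplus A^{\ast},A,A^{\ast})$. This is essentially the route you sketch at the end.

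Your main argument is a direct verification instead, and it is correct. Put
\[
E(a_{1},a_{2}):=\big(\fl(a_{1})\otimes\id-\id\otimes\fr(a_{1})\big)\Delta(a_{2})-\tau\big(\big(\id\otimes\fr(a_{2})-\fl(a_{2})\otimes\id\big)\Delta(a_{1})\big).
\]
Substituting the first ASI axiom into $\Delta(a_{1}a_{2})$ and $\Delta(a_{2}a_{1})$ gives exactly
\[
\big(\ad(a_{1})\otimes\id+\id\otimes\ad(a_{1})\big)\delta(a_{2})-\big(\ad(a_{2})\otimes\id+\id\otimes\ad(a_{2})\big)\delta(a_{1})-\delta([a_{1},a_{2}])=(\id\otimes\id-\tau)E(a_{1},a_{2}),
\]
and this vanishes by the second axiom.

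One correction to your bookkeeping forecast. Since $\tau E(a_{1},a_{2})=E(a_{2},a_{1})$, applying both $\tau$ and the swap $a_{1}\leftrightarrow a_{2}$ just returns $E(a_{1},a_{2})$. The second copy is really $-\tau E(a_{1},a_{2})=-E(a_{2},a_{1})$, that is, either operation alone with a minus sign.

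The two routes buy different things. Your computation is self-contained and avoids dualization once co-Jacobi is checked by your direct expansion, so it would carry over to the completed setting of Section 4. It also shows that the compatibility uses only the antisymmetrized forms of the two ASI axioms. The Manin-triple route needs finite-dimensionality and Bai's equivalence theorem. In return it identifies the Drinfeld double of the induced Lie bialgebra with the commutator Lie algebra of the Frobenius double.
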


In \cite{Hou1}, we further analyzed this conclusion and found that it maintains many
characteristics of ASI bialgebra, and provide that some solutions of the $\AYBE$ in a
associative algebra are also solutions of the $\CYBE$ in the induced Lie algebra.

\begin{pro}[\cite{Hou1}]\label{pro:ass-Lie-YBE}
Let $(A, \cdot)$ be an associative algebra and $(A, [-,-])$ be the induced Lie algebra
from $(A, \cdot)$. Suppose $r=\sum_{i}x_{i}\otimes y_{i}\in A\otimes A$ is a solution of the
$\AYBE$ in $(A, \cdot)$. If $r+\tau(r)$ is ass-invariant, then $r$ is a solution of the
$\CYBE$ in $(A, [-,-])$ and $r+\tau(r)$ is Lie-invariant.

In particular, each skew-symmetric solution of the $\AYBE$ in $(A, \cdot)$ is also a
skew-symmetric solution of the $\CYBE$ in $(A, [-,-])$.
\end{pro}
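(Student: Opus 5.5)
The plan is to compute $\mathbf{C}_{r}$ directly in terms of products of $(A,\cdot)$, using $[a_{1},a_{2}]=a_{1}\cdot a_{2}-a_{2}\cdot a_{1}$, and to express it through $\mathbf{A}_{r}$ together with terms in which $r+\tau(r)$ acts through $\Delta_{r+\tau(r)}$. Write $s=r+\tau(r)$. Expanding, $[r_{12},r_{13}]=r_{12}\cdot r_{13}-r_{13}\cdot r_{12}$, with $r_{13}\cdot r_{12}:=\sum_{i,j}(x_{j}\cdot x_{i})\otimes y_{i}\otimes y_{j}$. The other two brackets expand in the same way into six associative-type terms $\sum x\otimes x\otimes(y\cdot y)$ and $\sum x\otimes (y\cdot x)\otimes y$, each with both orders. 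Three of these six terms are exactly the ones in $\mathbf{A}_{r}=r_{12}\cdot r_{13}+r_{13}\cdot r_{23}-r_{23}\cdot r_{12}$.

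Next I would handle the leftover terms: $-r_{13}\cdot r_{12}$, $-\sum x_{i}\otimes x_{j}\otimes (y_{j}\cdot y_{i})$, $+\sum x_{i}\otimes (y_{i}\cdot x_{j})\otimes y_{j}$, and $-\sum x_{i}\otimes(x_{j}\cdot y_{i})\otimes y_{j}$, where the last one is $r_{23}\cdot r_{12}$ up to relabelling. The idea is to rewrite $\mathbf{C}_{r}-\mathbf{A}_{r}$ as a combination of $\mathbf{A}_{r}$ composed with a permutation of tensor factors and of terms of the form $(\id\otimes\id\otimes\cdots)$ applied to $\Delta_{s}(x_{i})$ or $\Delta_{s}(y_{j})$. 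The concrete move is to substitute $r=s-\tau(r)$ in the second leg of one copy of $r$ in each leftover term. The leftover $-r_{13}\cdot r_{12}-\sum x_{i}\otimes x_{j}\otimes(y_{j}\cdot y_{i})+\sum x_{i}\otimes(y_{i}\cdot x_{j})\otimes y_{j}$ should then equal $-(\sigma\mathbf{A}_{r})$ for the permutation $\sigma$ reversing the factors $(1\,3)$, plus terms of the form $\sum_{j}(\fr_{A}(x_{j})\otimes\id-\id\otimes\fl_{A}(x_{j}))(s)\otimes y_{j}$ or similar, i.e.\ $\pm(\Delta_{s}(x_{j})\otimes y_{j})$ up to permutation. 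Indeed, reversing $\mathbf{A}_{\tau(r)}$ with $\tau(r)=s-r$ yields products of the shapes $x\cdot x$, $y\cdot y$ and $y\cdot x$ in mirrored positions, which is exactly what the Lie brackets need.

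The main obstacle is this bookkeeping identity. I expect it to read
$$\mathbf{C}_{r}=\mathbf{A}_{r}-\sigma_{13}(\mathbf{A}_{r})+(\text{terms linear in }\Delta_{s}(x_{i}),\ \Delta_{s}(y_{i})),$$
where $\sigma_{13}(a\otimes b\otimes c)=c\otimes b\otimes a$. I would first verify it in the case $s=0$, i.e.\ $\tau(r)=-r$: there $\sigma_{13}\mathbf{A}_{r}$ can be rewritten in terms of $r$ alone, and the identity becomes a routine matching of six terms. I would then reinsert $s$ and track the correction terms, checking that each is of the form $(\id\otimes\fl_{A}(a)-\fr_{A}(a)\otimes\id)(s)$ and hence vanishes by ass-invariance.

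For Lie-invariance I would note that $(\id\otimes\ad(a)+\ad(a)\otimes\id)(s)=(\id\otimes\fl_{A}(a)-\fr_{A}(a)\otimes\id)(s)-\big(\fl_{A}(a)\otimes\id-\id\otimes\fr_{A}(a)\big)(s)$. The first term is $0$ by ass-invariance. The second equals $-\tau$ applied to the first after using $\tau(s)=s$, because $\tau\circ(\id\otimes\fl_{A}(a)-\fr_{A}(a)\otimes\id)=(\fl_{A}(a)\otimes\id-\id\otimes\fr_{A}(a))\circ\tau$, so it also vanishes. The final statement follows by taking $s=0$: a skew-symmetric $r$ has $r+\tau(r)=0$, which is trivially ass-invariant, and skew-symmetry is the same notion in $A$ and in $(A,[-,-])$.
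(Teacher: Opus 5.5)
Your plan is correct and is the standard direct computation behind this result (the paper gives no proof of its own and defers to \cite{Hou1}). The bookkeeping also closes more cleanly than you anticipate. The identity $\mathbf{C}_{r}=\mathbf{A}_{r}-\sigma_{13}(\mathbf{A}_{\tau(r)})$ holds for every $r$ with no hypothesis. Ass-invariance of the symmetric element $s=r+\tau(r)$ then gives $\mathbf{A}_{\tau(r)}=\mathbf{A}_{s-r}=\mathbf{A}_{r}$: after invariance is used, $\mathbf{A}_{s}$ becomes $s_{12}\cdot s_{13}$ and the cross terms of $\mathbf{A}_{s-r}$ add up to $-s_{12}\cdot s_{13}$.

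The only slip is a sign in the Lie-invariance step. There $(\id\otimes\ad_{A}(a)+\ad_{A}(a)\otimes\id)(s)$ is the \emph{sum}, not the difference, of $(\id\otimes\fl_{A}(a)-\fr_{A}(a)\otimes\id)(s)$ and $(\fl_{A}(a)\otimes\id-\id\otimes\fr_{A}(a))(s)=\tau\big((\id\otimes\fl_{A}(a)-\fr_{A}(a)\otimes\id)(s)\big)$; this is harmless because both terms vanish.
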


Based on the conclusion of Proposition \ref{pro:ass-Lie-YBE} and the relationship
between the bialgebra structures and the solutions of the Yang-Baxter equation,
we obtain the following conclusion.

\begin{pro}[\cite{Hou1}]\label{pro:qtAss-qtLie}
Let $(A, \cdot)$ be an associative algebra and $r\in A\otimes A$.
Suppose $(A, \cdot, \Delta_{r})$ is an ASI bialgebra and $(A, [-,-], \delta)$ is the
induced Lie bialgebra from $(A, \ast, \Delta_{r})$, where $\Delta_{r}$ is given by
Eq. \eqref{ass-cobo}. If $r+\tau(r)$ is ass-invariant, then $(A, [-,-], \delta)=
(A, [-,-], \delta_{r})$ as Lie bialgebras, where $\delta_{r}$ is defined by Eq.
\eqref{lie-cobo}. Thus, we have $(A, [-,-], \delta)$ is quasi-triangular
(resp. triangular, factorizable) if $(A, \cdot, \Delta)$ is quasi-triangular
(resp. triangular, factorizable).
\end{pro}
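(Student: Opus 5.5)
The argument is a direct comparison of coproducts, followed by an appeal to Proposition \ref{pro:ass-Lie-YBE} and Proposition \ref{pro:splie-bia}. By Proposition \ref{pro:ASI-Liebia}, the induced coproduct is $\delta=\Delta_{r}-\tau\Delta_{r}$, and it must be shown to equal $\delta_{r}(a)=(\id\otimes\ad(a)+\ad(a)\otimes\id)(r)$, where $\ad(a)=\fl_{A}(a)-\fr_{A}(a)$. Expanding Eq. \eqref{ass-cobo} gives
\begin{align*}
\delta(a)=(\id\otimes\fl_{A}(a)-\fr_{A}(a)\otimes\id)(r)-(\fl_{A}(a)\otimes\id-\id\otimes\fr_{A}(a))(\tau(r)),
\end{align*}
where we use that $\tau$ intertwines $f\otimes g$ with $g\otimes f$. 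Subtracting $\delta_{r}(a)$ leaves
\begin{align*}
\delta(a)-\delta_{r}(a)=(\id\otimes\fr_{A}(a)-\fl_{A}(a)\otimes\id)(r+\tau(r)).
\end{align*}

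The key step is to show that this last expression vanishes when $r+\tau(r)$ is ass-invariant. Ass-invariance of $s:=r+\tau(r)$ means $(\id\otimes\fl_{A}(a)-\fr_{A}(a)\otimes\id)(s)=0$. Since $s$ is symmetric, applying $\tau$ to this identity gives $(\fl_{A}(a)\otimes\id-\id\otimes\fr_{A}(a))(s)=0$, which is exactly the vanishing of the remaining term. Hence $\delta=\delta_{r}$, so $(A,[-,-],\delta)=(A,[-,-],\delta_{r})$ as Lie bialgebras.

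For the consequences, suppose first that $(A,\cdot,\Delta_{r})$ is quasi-triangular, i.e.\ $r$ solves the $\AYBE$ and $r+\tau(r)$ is ass-invariant. Proposition \ref{pro:ass-Lie-YBE} then shows that $r$ solves the $\CYBE$ and that $r+\tau(r)$ is Lie-invariant. By Proposition \ref{pro:splie-bia}$(i)$, $(A,[-,-],\delta_{r})$ is quasi-triangular. The triangular case is the special case $r=-\tau(r)$, where $r+\tau(r)=0$ is trivially ass-invariant, and Proposition \ref{pro:splie-bia}$(ii)$ applies. For the factorizable case, the map $\mathcal{I}=r^{\sharp}+\tau(r)^{\sharp}$ is the same in the ASI and Lie settings, so it remains an isomorphism and Proposition \ref{pro:splie-bia}$(iii)$ applies.

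The only potentially delicate point is bookkeeping the twist in $\tau\Delta_{r}$, in particular ensuring that the symmetry of $r+\tau(r)$ converts the left ass-invariance condition into the needed form. I expect this to be a short check rather than a genuine obstacle.
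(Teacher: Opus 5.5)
Your proof is correct. The paper gives no proof of this proposition; it is quoted from \cite{Hou1}. Your direct comparison of coproducts is the natural argument, in the same style as the coproduct comparisons the paper does carry out in the proofs of Theorems \ref{thm:indu-triASI} and \ref{thm:indu-qutriass}.

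The two key steps both hold:
\begin{itemize}
\item The identity $\delta(a)-\delta_{r}(a)=(\id\otimes\fr_{A}(a)-\fl_{A}(a)\otimes\id)(r+\tau(r))$ checks out term by term.
\item Applying $\tau$ to the ass-invariance of the symmetric tensor $r+\tau(r)$ makes this difference vanish.
\end{itemize}

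One minor simplification: $\delta_{r+\tau(r)}(a)$ is exactly the difference of the two vanishing expressions you use. So the Lie-invariance of $r+\tau(r)$ follows immediately, and Proposition \ref{pro:ass-Lie-YBE} is only genuinely needed for the $\CYBE$ part.
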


Thus, using the conclusions presented in Section \ref{subsec:triASI}, we can propose a
method for constructing Lie bialgebras using Zinbiel bialgebras. We first provide the
construction of solutions of the $\CYBE$ in Lie algebras.

\begin{pro}\label{pro:Z-Lie-YBE}
Let $(B, \diamond)$ be a Zinbiel algebra and $(D, \dashv, \vdash)$ be a diassociative
algebra. If we define a bracket $[-,-]$ on $D\otimes B$ by
\begin{align}
[d_{1}\otimes b_{1},\; (d_{2}\otimes b_{2}]&=(d_{1}\vdash d_{2})\otimes(b_{1}\diamond b_{2})
+(d_{1}\dashv d_{2})\otimes(b_{2}\diamond b_{1}) \label{Z-lie}\\
&\qquad-(d_{2}\vdash d_{1})\otimes(b_{2}\diamond b_{1})
-(d_{2}\dashv d_{1})\otimes(b_{1}\diamond b_{2}),   \nonumber
\end{align}
for any $b_{1}, b_{2}\in B$ and $d_{1}, d_{2}\in D$, then $(D\otimes B, [-,-])$ is a Lie
algebra. Suppose $r\in B\otimes B$ is a solution of the $\ZYBE$ in $(B, \diamond)$ and
$r-\tau(r)$ is Zinb-invariant. Then $\widehat{r}$ is a solution of the
$\CYBE$ in $(D\otimes B, [-,-])$ and $\widehat{r}+\tau(\widehat{r})$ is Lie-invariant,
where $\widehat{r}$ is given by Eq. \eqref{assr-max}.

In particular, each symmetric solution of the $\ZYBE$ in $(B, \diamond)$ is also a
skew-symmetric solution of the $\CYBE$ in $(A, [-,-])$.
\end{pro}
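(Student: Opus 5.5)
The plan is to reduce everything to results already established, via the commutative diagram of Proposition \ref{pro:comm-diag}. First, the bracket in Eq. \eqref{Z-lie} is exactly the commutator of the induced associative product of Proposition \ref{pro:diass-ass}. Indeed,
$$
(d_{1}\otimes b_{1})\cdot(d_{2}\otimes b_{2})-(d_{2}\otimes b_{2})\cdot(d_{1}\otimes b_{1})
$$
expands term by term to the right-hand side of Eq. \eqref{Z-lie}. Since $(D\otimes B,\cdot)$ is associative by Proposition \ref{pro:diass-ass}, its commutator is a Lie bracket. So $(D\otimes B,[-,-])$ is a Lie algebra; it is the Lie algebra of Proposition \ref{pro:comm-diag}.

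Second, I use the dual basis of $D$. The element $\widehat{r}$ of Eq. \eqref{assr-max} requires a dual basis $\{f_{j}\}$ of $\{e_{j}\}$, so I assume $(D,\dashv,\vdash,\omega)$ is a quadratic diassociative algebra, as in Proposition \ref{pro:ZYBE-AYBE}; this assumption is implicit in the statement through the reference to Eq. \eqref{assr-max}. Proposition \ref{pro:ZYBE-AYBE} then gives the following: if $r$ solves the $\ZYBE$ and $r-\tau(r)$ is Zinb-invariant, then $\widehat{r}$ is a solution of the $\AYBE$ in $(D\otimes B,\cdot)$ and $\widehat{r}+\tau(\widehat{r})$ is ass-invariant.

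Third, I apply Proposition \ref{pro:ass-Lie-YBE} to the associative algebra $(D\otimes B,\cdot)$ and its commutator Lie algebra, which by the first step is exactly $(D\otimes B,[-,-])$. It yields that $\widehat{r}$ solves the $\CYBE$ in $(D\otimes B,[-,-])$ and that $\widehat{r}+\tau(\widehat{r})$ is Lie-invariant.

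For the final assertion, take $r$ symmetric. Then $r-\tau(r)=0$ is trivially Zinb-invariant, and the last part of Proposition \ref{pro:ZYBE-AYBE} shows that $\widehat{r}$ is a skew-symmetric solution of the $\AYBE$. The "in particular" clause of Proposition \ref{pro:ass-Lie-YBE} then makes it a skew-symmetric solution of the $\CYBE$.

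The only genuine check is the first step: the four terms of the commutator must match Eq. \eqref{Z-lie} with the correct orderings of $b_{1},b_{2}$. This is a one-line expansion and mirrors the computation of $[-,-]_{2}$ already carried out in the proof of Proposition \ref{pro:comm-diag}. Apart from that, the main point to state explicitly is that $D$ must carry the quadratic form $\omega$; without it, $\widehat{r}$ is undefined.
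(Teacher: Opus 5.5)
Your proposal is correct and is essentially the argument the paper intends. The paper states Proposition~\ref{pro:Z-Lie-YBE} without a separate proof, relying on three steps exactly as you do: the bracket \eqref{Z-lie} is the commutator of the induced associative product (Propositions~\ref{pro:diass-ass} and~\ref{pro:comm-diag}), then Proposition~\ref{pro:ZYBE-AYBE}, then Proposition~\ref{pro:ass-Lie-YBE}, as in its later commutative diagram. You are also right that $D$ must carry a quadratic form $\omega$ for $\widehat{r}$ to be defined; the statement leaves this hypothesis implicit.
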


Therefore, by Corollary \ref{cor:indassbia}, Theorem \ref{thm:indu-qutriass},
Propositions \ref{pro:ASI-Liebia}, \ref{pro:qtAss-qtLie} and \ref{pro:Z-Lie-YBE},
we can give a construction of Lie bialgebras from Zinbiel bialgebras.

\begin{thm}\label{thm:Z-Liebia}
Let $(B, \diamond, \nu)$ be a Zinbiel bialgebra, $(D, \dashv, \vdash, \omega)$ be a
quadratic diassociative algebra and $(D\otimes B, [-,-])$ is the induced Lie algebra, where
the bracket is given by Eq. \eqref{Z-lie}. We define a linear map $\delta:
D\otimes B\rightarrow(D\otimes B)\otimes(D\otimes B)$ by
$$
\delta(d\otimes b)=\theta_{\vdash,\omega}(d)\bullet\nu(b)
+\theta_{\dashv,\omega}(d)\bullet\tau(\nu(b))
-\tau(\theta_{\vdash,\omega}(d))\bullet\tau(\nu(b))
-\tau(\theta_{\dashv,\omega}(d))\bullet\nu(b),
$$
for any $d\in D$ and $b\in B$. Then $(D\otimes B, [-,-], \delta)$ is a Lie bialgebra.
In particular, we have $(D\otimes B, [-,-], \delta)$ is quasi-triangular
(resp. triangular, factorizable) if $(B, \diamond, \nu)$ is quasi-triangular
(resp. triangular, factorizable).
\end{thm}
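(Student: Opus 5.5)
The plan is to realize the Lie bialgebra as the one induced, via Proposition \ref{pro:ASI-Liebia}, from the ASI bialgebra of Corollary \ref{cor:indassbia}. The argument has two parts: first identify the bracket and cobracket with the ones in the statement, then transport the quasi-triangular, triangular and factorizable properties.

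\textbf{Step 1: the bracket.} By Corollary \ref{cor:indassbia}, $(D\otimes B,\cdot,\Delta)$ is an ASI bialgebra, where
\[
\Delta(d\otimes b)=\theta_{\vdash,\omega}(d)\bullet\nu(b)+\theta_{\dashv,\omega}(d)\bullet\tau(\nu(b)).
\]
Proposition \ref{pro:ASI-Liebia} then gives a Lie bialgebra with bracket $[x,y]=x\cdot y-y\cdot x$ and cobracket $\Delta-\tau\Delta$. Expanding the product of Proposition \ref{pro:diass-ass}, the commutator is
\[
(d_1\vdash d_2)\otimes(b_1\diamond b_2)+(d_1\dashv d_2)\otimes(b_2\diamond b_1)-(d_2\vdash d_1)\otimes(b_2\diamond b_1)-(d_2\dashv d_1)\otimes(b_1\diamond b_2).
\]
This is exactly Eq. \eqref{Z-lie}.

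\textbf{Step 2: the cobracket.} The one point needing care is how the twist acts on the $\bullet$-products. For $\alpha\in D\otimes D$ and $\beta\in B\otimes B$, the element $\alpha\bullet\beta$ places the first tensor factors of $\alpha$ and $\beta$ together in the first copy of $D\otimes B$. Hence the flip of $(D\otimes B)^{\otimes 2}$ satisfies
\[
\tau(\alpha\bullet\beta)=\tau(\alpha)\bullet\tau(\beta).
\]
Applying this term by term gives
\[
\tau\Delta(d\otimes b)=\tau(\theta_{\vdash,\omega}(d))\bullet\tau(\nu(b))+\tau(\theta_{\dashv,\omega}(d))\bullet\nu(b),
\]
using $\tau^{2}=\id$ in the second term. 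So $\Delta-\tau\Delta$ is precisely the $\delta$ of the statement, and $(D\otimes B,[-,-],\delta)$ is a Lie bialgebra.

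\textbf{Step 3: transporting the properties.} Suppose $(B,\diamond,\nu)$ is quasi-triangular, triangular or factorizable. Then $\nu=\nu_r$ with $r$ a solution of the $\ZYBE$ and $r-\tau(r)$ Zinb-invariant, with $r$ symmetric in the triangular case and $r^{\sharp}-\tau(r)^{\sharp}$ bijective in the factorizable case.
\begin{itemize}
\item Theorem \ref{thm:indu-qutriass} gives $\Delta=\Delta_{\widehat r}$. By Proposition \ref{pro:ZYBE-AYBE}, $\widehat r$ solves the $\AYBE$ and $\widehat r+\tau(\widehat r)$ is ass-invariant.
\item Proposition \ref{pro:qtAss-qtLie} then yields $\delta=\delta_{\widehat r}$. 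By Proposition \ref{pro:Z-Lie-YBE} (equivalently Proposition \ref{pro:ass-Lie-YBE}), $\widehat r$ solves the $\CYBE$ and $\widehat r+\tau(\widehat r)$ is Lie-invariant, so the Lie bialgebra is quasi-triangular.
\item In the triangular case, $\widehat r$ is skew-symmetric by Proposition \ref{pro:ZYBE-AYBE}, so the Lie bialgebra is triangular.
\item In the factorizable case, the proof of Theorem \ref{thm:indu-qutriass} already shows $\widehat r^{\sharp}+\tau(\widehat r)^{\sharp}=\hat\kappa^{\sharp}\otimes\mathcal I$, which is an isomorphism. The Lie-side map of Proposition \ref{pro:splie-bia}(iii) is this same map, so the Lie bialgebra is factorizable.
\end{itemize}

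The main obstacle is Step 2: checking that the twist acts factorwise on $\bullet$-products, so that $\tau\Delta$ has exactly the stated form with the correct placement of $\tau$'s. Everything else is a chain of earlier results.
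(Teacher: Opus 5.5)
Your proposal is correct and follows the paper's route: the paper proves the theorem by chaining Corollary~\ref{cor:indassbia} with Proposition~\ref{pro:ASI-Liebia}, identifying the bracket and the cobracket $\Delta-\tau\Delta$ as in your Steps 1--2, and then invoking Theorem~\ref{thm:indu-qutriass} and Propositions~\ref{pro:qtAss-qtLie} and~\ref{pro:Z-Lie-YBE} for the quasi-triangular, triangular and factorizable cases. Your check that $\tau(\alpha\bullet\beta)=\tau(\alpha)\bullet\tau(\beta)$ and your identification of the factorizability map as $\hat\kappa^{\sharp}\otimes\mathcal{I}$ spell out details that the paper leaves implicit.
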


Let $(\g, [-,-])$ be a Lie algebra and $(V, \rho)$ be a representation of $(\g, [-,-])$.
Recall that a linear map $T: V\rightarrow\g$ is called an {\bf $\mathcal{O}$-operator
of $(\g, [-,-])$ associated to $(V, \rho)$} if for any $v_{1}, v_{2}\in V$,
$$
[T(v_{1}),\; T(v_{2})]=T\big(\rho(T(v_{1}))(v_{2})-\rho(T(v_{2}))(v_{1})\big).
$$
The notion of $\mathcal{O}$-operator of Lie algebras was introduced by Kupershmidt in
\cite{Kup}, which is considered to be the operator form of solution of $\CYBE$ in
$(\g, [-,-])$.

\begin{pro}[\cite{Kup,CP}]\label{pro:o-lie}
Let $(\g, [-,-])$ be a Lie algebra, $r\in\g\otimes\g$ be skew-symmetric.
Then $r$ is a solution of the $\CYBE$ in $(\g, [-,-])$ if and only if $r^{\sharp}$
is an $\mathcal{O}$-operator of $(\g, [-,-])$ associated to the coregular
representation $(\g^{\ast}, \ad_{\g}^{\ast})$.
\end{pro}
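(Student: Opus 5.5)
The statement is Proposition \ref{pro:o-lie}, the Kupershmidt correspondence for Lie algebras. My plan is to prove it by pairing the CYBE tensor against an arbitrary triple $\xi_{1}\otimes\xi_{2}\otimes\xi_{3}\in\g^{\ast}\otimes\g^{\ast}\otimes\g^{\ast}$ and comparing with the $\mathcal{O}$-operator identity paired against $\xi_{3}$. Since $\g$ is finite-dimensional, $\mathbf{C}_{r}=0$ holds if and only if $\langle\xi_{1}\otimes\xi_{2}\otimes\xi_{3},\;\mathbf{C}_{r}\rangle=0$ for all $\xi_{i}$. Likewise, $T=r^{\sharp}$ is an $\mathcal{O}$-operator if and only if
$$
\big\langle [r^{\sharp}(\xi_{1}),r^{\sharp}(\xi_{2})]-r^{\sharp}\big(\ad_{\g}^{\ast}(r^{\sharp}(\xi_{1}))\xi_{2}-\ad_{\g}^{\ast}(r^{\sharp}(\xi_{2}))\xi_{1}\big),\ \xi_{3}\big\rangle=0
$$
for all $\xi_{i}$. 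So it suffices to express each of the three summands of $\mathbf{C}_{r}$, paired with $\xi_{1}\otimes\xi_{2}\otimes\xi_{3}$, in terms of $r^{\sharp}$.

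Write $r=\sum_i x_i\otimes y_i$. By the convention $\langle\xi_{2},r^{\sharp}(\xi_{1})\rangle=\langle\xi_{1}\otimes\xi_{2},r\rangle$ we have $r^{\sharp}(\xi)=\sum_i\langle\xi,x_i\rangle y_i$. Skew-symmetry then gives $r^{\sharp}(\xi)=-\sum_i\langle\xi,y_i\rangle x_i$, and hence $\langle\eta,r^{\sharp}(\xi)\rangle=-\langle\xi,r^{\sharp}(\eta)\rangle$. I would compute the three summands in turn.
\begin{itemize}
\item For $[r_{13},r_{23}]=\sum x_i\otimes x_j\otimes[y_i,y_j]$, the pairing is $\langle\xi_3,[r^{\sharp}(\xi_1),r^{\sharp}(\xi_2)]\rangle$.
\item For $[r_{12},r_{13}]=\sum[x_i,x_j]\otimes y_i\otimes y_j$, the pairing is $\langle\xi_1,[r^{\sharp}(\xi_2),r^{\sharp}(\xi_3)]\rangle$. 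Using the coadjoint convention $\langle\ad^{\ast}(g)\xi,h\rangle=-\langle\xi,[g,h]\rangle$ and the skew-symmetry of $r^{\sharp}$, this rewrites to $-\langle\xi_{3},r^{\sharp}(\ad_{\g}^{\ast}(r^{\sharp}(\xi_{2}))\xi_{1})\rangle$, up to a sign that I will track carefully.
\item For $[r_{12},r_{23}]=\sum x_i\otimes[y_i,x_j]\otimes y_j$, the pairing is $\langle\xi_2,[r^{\sharp}(\xi_1),\,\cdot\,]\rangle$ applied to $\sum_j\langle\xi_3,y_j\rangle x_j=-r^{\sharp}(\xi_3)$. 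By the same two manipulations this becomes a term of the form $\langle\xi_{3},r^{\sharp}(\ad_{\g}^{\ast}(r^{\sharp}(\xi_{1}))\xi_{2})\rangle$.
\end{itemize}

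Summing the three pairings gives
$$
\langle\xi_1\otimes\xi_2\otimes\xi_3,\mathbf{C}_r\rangle=\Big\langle [r^{\sharp}(\xi_{1}),r^{\sharp}(\xi_{2})]-r^{\sharp}\big(\ad_{\g}^{\ast}(r^{\sharp}(\xi_{1}))\xi_{2}-\ad_{\g}^{\ast}(r^{\sharp}(\xi_{2}))\xi_{1}\big),\ \xi_{3}\Big\rangle,
$$
possibly up to an overall sign. Since this holds for all $\xi_3$, $\mathbf{C}_r=0$ is equivalent to the $\mathcal{O}$-operator identity for $T=r^{\sharp}$ with $\rho=\ad_{\g}^{\ast}$.

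The main obstacle is purely bookkeeping of signs. Three sign conventions interact: the dual-action convention $\langle\mu^{\ast}(a)\xi,v\rangle=-\langle\xi,\mu(a)v\rangle$, the order of arguments in the definition of $r^{\sharp}$, and the sign picked up from skew-symmetry of $r$. I would first fix the two identities $r^{\sharp}(\xi)=\sum\langle\xi,x_i\rangle y_i=-\sum\langle\xi,y_i\rangle x_i$, and only then expand each term. As a sanity check on the signs, I would verify the result on a 2-dimensional nonabelian Lie algebra with $r=e_1\wedge e_2$.
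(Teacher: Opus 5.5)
The paper gives no proof of this proposition; it simply quotes the result from Kupershmidt and Chari--Pressley. Your pairing argument is the standard proof and is correct, so there is no alternative route in the paper to compare against.

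One sign needs fixing. With the paper's conventions, the $[r_{12},r_{13}]$ term pairs to $+\langle\xi_{3},\, r^{\sharp}(\ad_{\g}^{\ast}(r^{\sharp}(\xi_{2}))\xi_{1})\rangle$, not $-$:
\begin{itemize}
\item The two signs from $\sum_{i}\langle\xi_{2},y_{i}\rangle x_{i}=-r^{\sharp}(\xi_{2})$ and $\sum_{j}\langle\xi_{3},y_{j}\rangle x_{j}=-r^{\sharp}(\xi_{3})$ cancel.
\item The remaining two signs also cancel: one from $\langle\xi_{1},[g,h]\rangle=-\langle\ad_{\g}^{\ast}(g)\xi_{1},h\rangle$, and one from $\langle\eta,r^{\sharp}(\xi_{3})\rangle=-\langle\xi_{3},r^{\sharp}(\eta)\rangle$.
\end{itemize}

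The $[r_{12},r_{23}]$ term pairs to $-\langle\xi_{3},\, r^{\sharp}(\ad_{\g}^{\ast}(r^{\sharp}(\xi_{1}))\xi_{2})\rangle$. Here there are three minus signs: from $-r^{\sharp}(\xi_{3})$, from the coadjoint convention, and from the skew-symmetry of the pairing.

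With these signs, the three pairings sum to exactly your displayed identity, with overall sign $+1$, and both directions of the equivalence follow at once. With the minus sign you wrote in the second bullet, the terms would not assemble into the $\mathcal{O}$-operator defect. So resolve the hedge as above rather than leaving it ``up to a sign''.
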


For the $\mathcal{O}$-operator of an associative algebra and the $\mathcal{O}$-operator of
the induced Lie algebra, we have the following conclusion.

\begin{pro}[\cite{HL}]\label{pro:o-ass-lie}
Let $(A, \cdot)$ be an associative algebra and $(A, [-,-])$ be the induced Lie algebra.
If $T: A^{\ast}\rightarrow A$ is an $\mathcal{O}$-operator of $(A, \cdot)$ associated to
coregular bimodule $(A^{\ast}, -\fr_{A}^{\ast}, -\fl_{A}^{\ast})$, then $T$ is also an
$\mathcal{O}$-operator of $(A, [-,-])$ associated to coregular representation
$(A^{\ast}, \ad^{\ast}_{A})$.
\end{pro}

%\begin{proof}
%Let $T: A^{\ast}\rightarrow A$ be an $\mathcal{O}$-operator of $(A, \cdot)$ associated
%to $(A^{\ast}, -\fr_{A}^{\ast}, -\fl_{A}^{\ast})$. That is, $T(\xi_{1})\cdot T(\xi_{2})
%=-T\big(\fr_{A}^{\ast}(T(\xi_{1}))(\xi_{2})+\fl_{A}^{\ast}(T(\xi_{2}))(\xi_{1})\big)$
%for any $\xi_{1}, \xi_{2}\in A^{\ast}$. Note that
%$$
%\langle\ad^{\ast}_{A}(T(\xi_{1}))(\xi_{2}),\; a\rangle=-\langle\xi_{2},\;
%[T(\xi_{1}), a]\rangle=\langle(\fl_{A}^{\ast}-\fr_{A}^{\ast})
%(T(\xi_{1}))(\xi_{2}),\; a\rangle,
%$$
%for any $a\in A$. That is, $\ad^{\ast}_{A}=\fl_{A}^{\ast}-\fr_{A}^{\ast}$. Thus, we have
%\begin{align*}
%&\; [T(\xi_{1}),\; T(\xi_{2})]-T\big(\ad^{\ast}_{A}(T(\xi_{1}))(\xi_{2})
%-\ad^{\ast}_{A}(T(\xi_{2}))(\xi_{1})\big)\\
%=&\; T(\xi_{1})\cdot T(\xi_{2})-T(\xi_{2})\cdot T(\xi_{1})
%-T\big(\fl_{A}^{\ast}(T(\xi_{1}))(\xi_{2})-\fr_{A}^{\ast}(T(\xi_{1}))(\xi_{2})
%-\fl_{A}^{\ast}(T(\xi_{2}))(\xi_{1})+\fr_{A}^{\ast}(T(\xi_{2}))(\xi_{1})\big)\\
%=&\; 0.
%\end{align*}
%This means that $T$ is an $\mathcal{O}$-operator of $(A, [-,-])$ associated to
%$(A^{\ast}, \ad^{\ast}_{A})$.
%\end{proof}

Thus, let $(B, \diamond)$ be a Zinbiel algebra, $(D, \dashv,
\vdash, \omega)$ be a quadratic diassociative algebra and $r\in B\otimes B$.
Then we have the following commutative diagram:

$$
\xymatrix@C=1.4cm@R=0.5cm{
\txt{$(B, \diamond, \nu_{r})$ \\ {\tiny a triangular Zinbiel bialgebra}}
\ar[d]_{{\rm Thm.}~\ref{thm:indu-qutriass}} &
\txt{$r$ \\ {\tiny a symmetric solution} \\ {\tiny of the $\ZYBE$ in $(B, \diamond)$}}
\ar[d]_-{{\rm Pro.}~\ref{pro:ZYBE-AYBE}}\ar[r]^-{{\rm Pro.}~\ref{pro:o-Zinb}}
\ar[l]_-{{\rm Pro.}~\ref{pro:qtr-zib}}    &
\txt{$r^{\sharp}$\\ {\tiny an $\mathcal{O}$-operator of $(B, \diamond)$} \\
{\tiny associated to coregular bimodule}} \ar[d]^-{\mbox{$\hat{\kappa}^{\sharp}\otimes-$}} \\
\txt{$(D\otimes B, \cdot, \Delta_{\widehat{r}})$ \\
{\tiny a triangular ASI bialgebra}}  \ar[d]_-{{\rm Pro.}~\ref{pro:qtAss-qtLie}} &
\txt{$\widehat{r}$ \\ {\tiny a skew-symmetric solution} \\ {\tiny of the $\AYBE$ in
$(D\otimes B, \cdot)$}} \ar[r]^-{{\rm Pro.}~\ref{pro:o-ass}}
\ar[l]_-{{\rm Pro.}~\ref{pro:quasass-bia}}  \ar[d]_-{{\rm Pro.}~\ref{pro:ass-Lie-YBE}}   &
\txt{$\widehat{r}^{\sharp}=\hat{\kappa}^{\sharp}\otimes r^{\sharp}$ \\
{\tiny an $\mathcal{O}$-operator of $(D\otimes B, \cdot)$ } \\
{\tiny associated to coregular bimodule}} \ar[d]^-{{\rm Pro.}~\ref{pro:o-ass-lie}}\\
\txt{$(D\otimes B, [-,-], \delta_{\widehat{r}})$ \\ {\tiny a triangular Lie bialgebra}} &
\txt{$\widehat{r}$ \\ {\tiny a skew-symmetric solution} \\ {\tiny of the $\CYBE$ in
$(D\otimes B, [-,-])$}} \ar[r]^-{{\rm Pro.}~\ref{pro:o-lie}}
\ar[l]_-{{\rm Pro.}~\ref{pro:splie-bia}}   &
\txt{$\widehat{r}^{\sharp}=\hat{\kappa}^{\sharp}\otimes r^{\sharp}$ \\
{\tiny an $\mathcal{O}$-operator of $(D\otimes B, [-,-])$ } \\
{\tiny associated to coregular representation}}}
$$

Here, we show that there is a Lie bialgebra structure on the tensor product of a Zinbiel
bialgebra and a quadratic diassociative algebra. In \cite{HL1}, we have shown that
there is a Lie bialgebra structure on the tensor product of a Zinbiel bialgebra and a
quadratic Leibniz algebra. In fact, we can prove that these two methods are consistent.
Recall that a bilinear form $\omega(-,-)$ on a Leibniz algebra $(L, \ast)$ is called
{\bf invariant} if $\omega(x_{1}\ast x_{2},\; x_{3})=\omega(x_{1},\; x_{2}\ast x_{3}
+x_{3}\ast x_{2})$ for any $x_{1}, x_{2}, x_{3}\in L$. A {\bf quadratic Leibniz algebra},
denoted by $(L \ast, \omega)$, is a Leibniz algebra $(L, \ast)$ together with a
skew-symmetric invariant nondegenerate bilinear form $\omega(-,-)$.
A {\bf Leibniz coalgebra} $(L, \vartheta)$ is a vector space $L$ with a linear map
$\vartheta: L\rightarrow L\otimes L$ such that $(\id\otimes\vartheta)\vartheta-
(\tau\otimes\id)(\id\otimes\vartheta)\vartheta=(\vartheta\otimes\id)\vartheta$.

\begin{pro}[\cite{HL1}]\label{pro:dua-qleib}
Let $(L, \ast, \omega)$ be a quadratic Leibniz algebra. If we define a linear map
$\vartheta_{\omega}: L\rightarrow L\otimes L$ by $\omega(\vartheta_{\omega}(x_{1}),\;
x_{2}\otimes x_{3})=\omega(x_{1},\; x_{2}\diamond x_{3})$ for any
$x_{1}, x_{2}, x_{3}\in L$, then $(L, \vartheta_{\omega})$ is a Leibniz coalgebra.
\end{pro}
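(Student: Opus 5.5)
We read the defining relation $\omega(\vartheta_{\omega}(x_{1}),\; x_{2}\otimes x_{3})=\omega(x_{1},\; x_{2}\diamond x_{3})$ exactly as stated. Here $x_{2}\diamond x_{3}$ denotes the binary product of the algebra $L$ appearing in the formula, and we use only that this product satisfies the Leibniz identity $x_{1}\diamond(x_{2}\diamond x_{3})=(x_{1}\diamond x_{2})\diamond x_{3}+x_{2}\diamond(x_{1}\diamond x_{3})$ of $(L,\ast)$. The plan is to dualize this identity through the nondegenerate form, exactly as in Lemma \ref{lem:qZ-dual} and Lemma \ref{lem:comd-dual}.

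First, since $\omega$ is nondegenerate and $L$ is finite-dimensional, $\vartheta_{\omega}$ is well defined. Moreover, the extended form $\omega$ on $L^{\otimes 3}$ (the product of pairings factor by factor) is nondegenerate. It therefore suffices to show that the three elements
$$
(\id\otimes\vartheta_{\omega})\vartheta_{\omega}(x_{1}),\qquad
(\tau\otimes\id)(\id\otimes\vartheta_{\omega})\vartheta_{\omega}(x_{1}),\qquad
(\vartheta_{\omega}\otimes\id)\vartheta_{\omega}(x_{1})
$$
pair with an arbitrary $x_{2}\otimes x_{3}\otimes x_{4}$ in a way compatible with the Leibniz coalgebra identity.

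Second, I compute these pairings by writing $\vartheta_{\omega}(x_{1})=\sum u\otimes v$ and applying the defining relation to one tensor factor at a time. For the third element,
$$
\omega\big((\vartheta_{\omega}\otimes\id)\vartheta_{\omega}(x_{1}),\; x_{2}\otimes x_{3}\otimes x_{4}\big)
=\sum\omega(u, x_{2}\diamond x_{3})\,\omega(v, x_{4})
=\omega\big(x_{1},\; (x_{2}\diamond x_{3})\diamond x_{4}\big).
$$
For the first element, the same two-step application gives $\omega(x_{1},\; x_{2}\diamond(x_{3}\diamond x_{4}))$. For the second element, $\tau\otimes\id$ swaps the first two slots, so the pairing becomes $\omega(x_{1},\; x_{3}\diamond(x_{2}\diamond x_{4}))$.

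Third, the Leibniz identity applied to $x_{2},x_{3},x_{4}$ reads
$$
x_{2}\diamond(x_{3}\diamond x_{4})-x_{3}\diamond(x_{2}\diamond x_{4})=(x_{2}\diamond x_{3})\diamond x_{4}.
$$
Pairing this with $x_{1}$ shows that $(\id\otimes\vartheta_{\omega})\vartheta_{\omega}-(\tau\otimes\id)(\id\otimes\vartheta_{\omega})\vartheta_{\omega}$ and $(\vartheta_{\omega}\otimes\id)\vartheta_{\omega}$ have the same pairing against every $x_{2}\otimes x_{3}\otimes x_{4}$. Nondegeneracy then gives equality, so $(L,\vartheta_{\omega})$ is a Leibniz coalgebra. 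Note that skew-symmetry and invariance of $\omega$ are not needed for this step; only nondegeneracy is used.

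The main point requiring care is the bookkeeping of tensor slots under $\tau\otimes\id$. One must also make sure that pairing against the first factor of the iterated coproduct uses $\omega(\vartheta_{\omega}(u),\cdot)$ with $u$ in the first argument, consistent with the convention of the statement. I would verify this once on pure tensors before assembling the three identities.
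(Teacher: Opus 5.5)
Your proof is correct. Reading $\diamond$ as the product $\ast$ of $L$ (the only sensible reading), the pairings of $(\id\otimes\vartheta_{\omega})\vartheta_{\omega}(x_{1})$, $(\tau\otimes\id)(\id\otimes\vartheta_{\omega})\vartheta_{\omega}(x_{1})$ and $(\vartheta_{\omega}\otimes\id)\vartheta_{\omega}(x_{1})$ with $x_{2}\otimes x_{3}\otimes x_{4}$ are $\omega(x_{1}, x_{2}\ast(x_{3}\ast x_{4}))$, $\omega(x_{1}, x_{3}\ast(x_{2}\ast x_{4}))$ and $\omega(x_{1}, (x_{2}\ast x_{3})\ast x_{4})$, so the Leibniz identity together with nondegeneracy yields the coalgebra identity.

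The paper only cites this result from \cite{HL1} without proof, but your argument is the same dualization the paper carries out for Lemma \ref{lem:comd-dual}. You are also right that only nondegeneracy is used, not skew-symmetry or invariance.
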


Let $(B, \diamond)$ be a Zinbiel algebra and $(L, \ast)$ be a Leibniz algebra.
If we define a bracket $[-,-]$ on $L\otimes B$ by $[x_{1}\otimes b_{1},\; x_{2}
\otimes b_{2}]=(x_{1}\ast x_{2})\otimes(b_{1}\diamond b_{2})-(x_{2}\ast x_{1})\otimes
(b_{2}\diamond b_{1})$ for any $x_{1}, x_{2}\in L$ and $b_{1}, b_{2}\in B$, then
$(L\otimes B, [-,-])$ is a Lie algebra, which is called {\bf the Lie algebra induced
from $(B, \diamond)$ and $(L, \ast)$}.
At the level of bialgebra, we have the following conclusion.

\begin{thm}[\cite{HL1}]\label{thm:Z-L-liebi}
Let $(B, \diamond, \nu)$ be a Zinbiel bialgebra, $(L, \ast, \omega)$ be a quadratic
Leibniz algebra and $(L\otimes B, [-,-])$ be the induced Lie algebra. Define a
linear map $\delta: L\otimes B\rightarrow(L\otimes B)\otimes(L\otimes B)$ by
$$
\delta(x\otimes b)=\vartheta_{\omega}(x)\bullet\nu(b)
-\tau(\vartheta_{\omega}(x))\bullet\tau(\nu(b)),
$$
for any $x\in L$ and $b\in B$, where $\vartheta_{\omega}$ is given in Proposition
\ref{pro:dua-qleib}. Then $(L\otimes B, [-, -], \delta)$ is a Lie bialgebra.
Moreover, we have $(L\otimes B, [-, -], \delta)$ is quasi-triangular
(resp. triangular, factorizable) if $(B, \diamond, \nu)$ is quasi-triangular
(resp. triangular, factorizable).
\end{thm}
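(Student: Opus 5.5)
The plan is to split the statement into three parts: the Lie coalgebra axioms for $\delta$, the Lie bialgebra compatibility, and the (quasi-)triangular and factorizable refinements. This mirrors the proofs of Theorem \ref{thm:dias-asbia} and Theorem \ref{thm:indu-qutriass}.

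\emph{Step 1: $\delta$ is a Lie cobracket, by duality.} Since $L$ and $B$ are finite-dimensional, $(L^{\ast}, \vartheta_{\omega}^{\ast})$ is a Leibniz algebra by Proposition \ref{pro:dua-qleib}, and $(B^{\ast}, \nu^{\ast})$ is a Zinbiel algebra because $(B, \nu)$ is a Zinbiel coalgebra. Proposition \ref{pro:L-Z-lie} then gives a Lie bracket on $L^{\ast}\otimes B^{\ast}=(L\otimes B)^{\ast}$:
$$
[\eta_{1}\otimes\xi_{1},\; \eta_{2}\otimes\xi_{2}]
=\vartheta_{\omega}^{\ast}(\eta_{1}\otimes\eta_{2})\otimes\nu^{\ast}(\xi_{1}\otimes\xi_{2})
-\vartheta_{\omega}^{\ast}(\eta_{2}\otimes\eta_{1})\otimes\nu^{\ast}(\xi_{2}\otimes\xi_{1}).
$$
A direct pairing computation should show that the transpose of this bracket is exactly $\vartheta_{\omega}(x)\bullet\nu(b)-\tau(\vartheta_{\omega}(x))\bullet\tau(\nu(b))=\delta(x\otimes b)$. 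Hence $\tau\delta=-\delta$ and the co-Jacobi identity hold with no further computation.

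\emph{Step 2: the compatibility condition.} I expand $\delta([x\otimes b,\; x'\otimes b'])$ and the right-hand side $(\ad(x\otimes b)\otimes\id+\id\otimes\ad(x\otimes b))\delta(x'\otimes b')-(\cdots)\delta(x\otimes b)$ as sums of terms $X\bullet Y$, with $X\in L\otimes L$ and $Y\in B\otimes B$.

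The key tool is invariance of $\omega$ together with nondegeneracy, used exactly as in the proof of Theorem \ref{thm:den-perm-ass}. With it I rewrite $\vartheta_{\omega}(x\ast x')$ and $\tau\vartheta_{\omega}(x\ast x')$ in terms of $(\fl(x')\otimes\id)\vartheta_{\omega}(x)$, $(\id\otimes\fl(x))\vartheta_{\omega}(x')$, $(\fr(x')\otimes\id)\vartheta_{\omega}(x)$, and similar terms, where $\fl,\fr$ are the left and right multiplications of $\ast$. Concretely, I pair each candidate tensor against $u\otimes v$ and use $\omega(x_{1}\ast x_{2}, x_{3})=\omega(x_{1}, x_{2}\ast x_{3}+x_{3}\ast x_{2})$ together with skew-symmetry.

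After these substitutions, every $L\otimes L$-factor is one of a small number of independent tensors, for example $\vartheta_{\omega}(x\ast x')$, $\tau\vartheta_{\omega}(x\ast x')$, $\vartheta_{\omega}(x'\ast x)$ and $\tau\vartheta_{\omega}(x'\ast x)$. I then collect the $B\otimes B$-coefficients. Each coefficient should be a combination of \eqref{zbialg1}, \eqref{zbialg2}, or their $\tau$-images, and so it vanishes.

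\emph{Main obstacle.} I expect the hard part to be this bookkeeping: finding the correct list of $\omega$-identities relating the various $L$-tensors, so that the $B$-coefficients are \emph{exactly} the Zinbiel bialgebra relations. If I get stuck, I would first check the identities on the $\kt$-graded quadratic diassociative example. The induced Leibniz algebra of that example is quadratic, which pins down the signs.

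\emph{Step 3: quasi-triangular, triangular and factorizable cases.} For $\nu=\nu_{r}$, let $\{e_{j}\}$ be a basis of $L$ and $\{f_{j}\}$ its dual basis with respect to $\omega$, and set $\widehat{r}=\sum_{i,j}(e_{j}\otimes x_{i})\otimes(f_{j}\otimes y_{i})$.

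First, I show $\delta=\delta_{\widehat{r}}$ as in Theorem \ref{thm:indu-qutriass}, expressing $\vartheta_{\omega}(x)$ through $\sum_{j}e_{j}\otimes(x\ast f_{j})$ and $\sum_{j}(e_{j}\ast x)\otimes f_{j}$. This step uses Zinb-invariance of $r-\tau(r)$.

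Second, as in Proposition \ref{pro:ZYBE-AYBE}, I prove $\mathbf{C}_{\widehat{r}}=\Big(\sum_{p,q}(e_{p}\ast e_{q})\otimes f_{p}\otimes f_{q}\Big)\bullet\mathbf{Z}_{r}$ up to a permutation of tensor factors. I also prove that $\widehat{r}+\tau(\widehat{r})$ is Lie-invariant when $r-\tau(r)$ is Zinb-invariant. Proposition \ref{pro:splie-bia} then gives the quasi-triangular and triangular cases; in the triangular case, symmetry of $r$ gives skew-symmetry of $\widehat{r}$ because $\sum_{j}e_{j}\otimes f_{j}=-\sum_{j}f_{j}\otimes e_{j}$.

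Finally, $\widehat{r}^{\sharp}+\tau(\widehat{r})^{\sharp}=\hat{\kappa}^{\sharp}\otimes(r^{\sharp}-\tau(r)^{\sharp})$ is a tensor product of isomorphisms, so it is an isomorphism and the factorizable case follows.
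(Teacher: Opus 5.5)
The paper does not prove this theorem; it imports it from \cite{HL1}. The natural comparison is therefore with the paper's proofs of the analogous Theorems \ref{thm:den-perm-ass} and \ref{thm:indu-qutriass}.

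\textbf{Steps 1 and 3 are sound.} In Step 1, $\delta$ is the transpose of the bracket of Proposition \ref{pro:L-Z-lie} on $L^{\ast}\otimes B^{\ast}$. The Leibniz and Zinbiel coalgebra axioms dualize to the left Leibniz and left Zinbiel identities, so the Lie coalgebra axioms follow. In Step 3, one checks $\sum_{j}e_{j}\otimes(f_{j}\ast x)=\vartheta_{\omega}(x)$ and $\sum_{j}(e_{j}\ast x)\otimes f_{j}=-\tau\vartheta_{\omega}(x)$. Zinb-invariance of $r-\tau(r)$ is exactly what turns the $\tau\vartheta_{\omega}(x)$-coefficient of $\delta_{\widehat r}(x\otimes b)$ into $-\tau\nu_{r}(b)$. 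Likewise $\widehat r+\tau(\widehat r)=\hat\kappa\bullet(r-\tau(r))$ is Lie-invariant, and the factorizable argument is fine.

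One imprecision in Step 3: $\mathbf{C}_{\widehat r}$ is not a single product with $\mathbf{Z}_{r}$. First rewrite $\mathbf{Z}_{r}$ using Zinb-invariance of $r-\tau(r)$ and its consequence $(\bar{\fr}_{B}(b)\otimes\id+\id\otimes\bar{\fr}_{B}(b))(r-\tau(r))=0$. One then gets $\mathbf{C}_{\widehat r}=(\id-\sigma_{23})\big((\sum_{p,q}(e_{p}\ast e_{q})\otimes f_{p}\otimes f_{q})\bullet\mathbf{Z}_{r}\big)$, where $\sigma_{23}$ swaps the last two factors. This still suffices.

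\textbf{The gap is Step 2, which is the whole content of the first assertion.} The mechanism you sketch is not available here. In Theorem \ref{thm:den-perm-ass}, every one-sided action on $\theta_{\vdash,\omega}$ or $\theta_{\dashv,\omega}$ collapses to $\theta$ of a product, which is why coefficients can be collected against a few such tensors. In a quadratic Leibniz algebra this fails. Apart from $(\fr(x)\otimes\id)\vartheta_{\omega}(x')=0$, only the sum collapses: $\vartheta_{\omega}(x\ast x')=(\fl(x)\otimes\id+\id\otimes\fl(x))\vartheta_{\omega}(x')$. The remaining one-sided terms produced by $\ad(X)\otimes\id+\id\otimes\ad(X)$ lie outside the span of $\vartheta_{\omega}(x\ast x')$, $\vartheta_{\omega}(x'\ast x)$ and their flips.

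A concrete instance: take the quadratic Leibniz algebra induced from Example \ref{ex:qu-dia}$(ii)$, listed in Example \ref{ex:comdiagm}. There $\vartheta_{\omega}(x_{2}\ast x_{3})=x_{3}\otimes x_{4}-x_{4}\otimes x_{3}$ and $\vartheta_{\omega}(x_{3}\ast x_{2})=0$, but $(\id\otimes\fl(x_{2}))\vartheta_{\omega}(x_{3})=-x_{4}\otimes x_{3}$. So collecting $B\otimes B$-coefficients against your four tensors cannot give a proof.

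\textbf{What closes the computation.} Invariance makes each $\fl(x)$ skew-adjoint for $\omega$ and gives $\omega(\fr(x)a,c)=\omega(a,(\fl+\fr)(x)c)$. Combined with $(a\ast b+b\ast a)\ast c=0$, this yields three identities:
\begin{itemize}
\item[(i)] $(\fr(x)\otimes\id)\vartheta_{\omega}(x')=0$;
\item[(ii)] $(\fl(x')\otimes\id)\vartheta_{\omega}(x)=(\id\otimes\fr(x))\vartheta_{\omega}(x')$, together with the same identity with $x\leftrightarrow x'$;
\item[(iii)] $(\id\otimes\fl(x'))\vartheta_{\omega}(x)=-(\fl(x)\otimes\id)\vartheta_{\omega}(x')-\tau\big((\id\otimes(\fl+\fr)(x))\vartheta_{\omega}(x')\big)$.
\end{itemize}
Put $X=x\otimes b$, $X'=x'\otimes b'$, and
\[
T_{1}=\vartheta_{\omega}(x\ast x'),\qquad T_{2}=(\id\otimes\fl(x))\vartheta_{\omega}(x'),\qquad T_{3}=(\id\otimes(\fl+\fr)(x))\vartheta_{\omega}(x').
\]
Using (i)--(iii) and the collapse identity, the defect of the Lie bialgebra compatibility becomes $(\id-\tau)\big(T_{1}\bullet\Xi_{1}+T_{2}\bullet\Xi_{2}+T_{3}\bullet\Xi_{3}\big)$. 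Up to sign, the coefficients are:
\begin{itemize}
\item[] $\Xi_{1}$ is \eqref{zbialg1} at $(b_{1},b_{2})=(b,b')$;
\item[] $\Xi_{2}$ is the difference of the right-hand sides of \eqref{zbialg1} at $(b',b)$ and $(b,b')$, which vanishes because the left-hand side of \eqref{zbialg1} is symmetric;
\item[] $\Xi_{3}$ is \eqref{zbialg2} at $(b_{1},b_{2})=(b',b)$.
\end{itemize}
So your predicted outcome is correct, but it is reached only with this larger family of tensors and these extra $L$-side identities.
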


Every diassociative algebra is naturally a Leibniz algebra (see Proposition
\ref{pro:commtor}). Direct verification shows that this conclusion is also correct at
the level of quadratic algebras.

\begin{pro}\label{pro:qdiass-qleib}
Let $(D, \dashv, \vdash, \omega)$ be a quadratic diassociative algebra and $(D, \ast)$
is the induced Leibniz algebra from $(D, \dashv, \vdash)$. Then $(D, \ast, \omega)$
is a quadratic Leibniz algebra.
\end{pro}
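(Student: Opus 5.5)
The proof is a direct verification. Skew-symmetry and nondegeneracy of $\omega(-,-)$ do not depend on the algebra structure, so they carry over unchanged from $(D, \dashv, \vdash, \omega)$ to $(D, \ast, \omega)$. The only thing to check is the Leibniz invariance identity
$$
\omega(x_{1}\ast x_{2},\; x_{3})=\omega(x_{1},\; x_{2}\ast x_{3}+x_{3}\ast x_{2}),
$$
for all $x_{1}, x_{2}, x_{3}\in D$, where $x_{1}\ast x_{2}=x_{1}\vdash x_{2}-x_{2}\dashv x_{1}$ is the bracket of Proposition \ref{pro:commtor}.

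The plan is to expand the left-hand side as $\omega(x_{1}\vdash x_{2}, x_{3})-\omega(x_{2}\dashv x_{1}, x_{3})$ and move every product into the second slot of $\omega$, so that $x_{1}$ sits alone in the first slot.

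For the first term, the $\vdash$-invariance of $\omega$ gives directly $\omega(x_{1}\vdash x_{2}, x_{3})=\omega(x_{1}, x_{2}\vdash x_{3}-x_{2}\dashv x_{3})$.

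For the second term, the $\dashv$-invariance gives $\omega(x_{2}\dashv x_{1}, x_{3})=\omega(x_{2}, x_{1}\vdash x_{3})$. Here $x_{1}$ is inside a product, so I use skew-symmetry to rewrite this as $-\omega(x_{1}\vdash x_{3}, x_{2})$. Applying $\vdash$-invariance once more yields $-\omega(x_{1}, x_{3}\vdash x_{2}-x_{3}\dashv x_{2})$.

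Combining the two terms gives
$$
\omega(x_{1}\ast x_{2}, x_{3})=\omega\big(x_{1},\; x_{2}\vdash x_{3}-x_{2}\dashv x_{3}+x_{3}\vdash x_{2}-x_{3}\dashv x_{2}\big).
$$
Expanding the right-hand side of the target identity, $x_{2}\ast x_{3}+x_{3}\ast x_{2}=x_{2}\vdash x_{3}-x_{3}\dashv x_{2}+x_{3}\vdash x_{2}-x_{2}\dashv x_{3}$, which is the same four-term element. This proves invariance.

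There is no real obstacle. The only point needing care is the second term, where the invariance condition must be applied to $x_{2}\dashv x_{1}$ with $x_{2}$ rather than $x_{1}$ in front. Skew-symmetry is what brings $x_{1}$ back to the first slot, and the minus sign it produces is exactly what makes the $\dashv$-terms match.
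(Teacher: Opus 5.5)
Your proof is correct and is the direct verification the paper has in mind. The paper omits the details, and your computation (using $\vdash$-invariance, then $\dashv$-invariance followed by skew-symmetry and $\vdash$-invariance again) supplies them correctly, term by term.
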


In fact, the Lie bialgebra obtained by the tensor product of a Zinbiel bialgebra and a
quadratic diassociative algebra is the same as the Lie bialgebra obtained by the tensor
product of this Zinbiel bialgebra and the induced quadratic Leibniz algebra.

\begin{thm}\label{pro:Z-leibcomm}
Let $(B, \diamond, \nu)$ be a Zinbiel bialgebra, $(D, \dashv, \vdash, \omega)$ be a
quadratic diassociative algebra and $(D, \ast, \omega)$ be a quadratic Leibniz algebra
induced from $(D, \dashv, \vdash, \omega)$. Denote by $(D\otimes B, [-,-]_{1}, \delta_{1})$
the induced Lie bialgebra form $(B, \diamond, \nu)$ and $(D, \dashv, \vdash, \omega)$,
and by $(D\otimes B, [-,-]_{2}, \delta_{2})$ the induced Lie bialgebra form $(B, \diamond,
\nu)$ and $(D, \ast, \omega)$. Then $(D\otimes B, [-,-]_{1}, \delta_{1})
=(D\otimes B, [-,-]_{2}, \delta_{2})$ as Lie bialgebras. That is, we have the following
commutative diagram:
$$
\xymatrix@C=2cm@R=0.2cm{
&\txt{$(D\otimes B, [-,-], \delta)$ \\ {\tiny a Lie bialgebra}} &  \\
\txt{$(B, \diamond, \nu)$ \\ {\tiny a Zinbiel bialgebra}}
\ar[rr]^-{{\rm Cor.}~\ref{cor:indassbia}}\ar[ru]^-{{\rm Thm.}~\ref{thm:Z-L-liebi}}
&& \txt{$(D\otimes B, \cdot, \Delta)$ \\ {\tiny an ASI bialgebra}}
\ar[lu]_-{{\rm Pro.}~\ref{pro:ASI-Liebia}}}
$$
\end{thm}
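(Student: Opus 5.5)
The plan is to compare the two Lie bialgebra structures component by component: first the brackets, then the cobrackets, reducing everything to an identity between the coproducts on $D$ induced by $\omega$.

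\emph{Brackets.} Here nothing new is needed. By Proposition \ref{pro:ASI-Liebia}, $[-,-]_{1}$ is the commutator of the induced associative product $\cdot$ on $D\otimes B$. By Theorem \ref{thm:Z-L-liebi}, $[-,-]_{2}$ is the bracket induced from $(D,\ast)$ and $(B,\diamond)$. Proposition \ref{pro:comm-diag} says exactly that these coincide; equivalently, both equal the bracket \eqref{Z-lie}.

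\emph{Cobracket $\delta_{1}$.} By Proposition \ref{pro:ASI-Liebia}, $\delta_{1}=\Delta-\tau\Delta$, where $\Delta$ is given by \eqref{ind-coass}. Here $\tau$ is the flip on $(D\otimes B)\otimes(D\otimes B)$. For $X\in D\otimes D$ and $Y\in B\otimes B$, a flip of the pure-tensor factors gives $\tau(X\bullet Y)=\tau(X)\bullet\tau(Y)$. Using $\tau^{2}=\id$, this yields
\begin{align*}
\delta_{1}(d\otimes b)=\theta_{\vdash,\omega}(d)\bullet\nu(b)+\theta_{\dashv,\omega}(d)\bullet\tau(\nu(b))
-\tau(\theta_{\vdash,\omega}(d))\bullet\tau(\nu(b))-\tau(\theta_{\dashv,\omega}(d))\bullet\nu(b),
\end{align*}
which is the formula of Theorem \ref{thm:Z-Liebia}.

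\emph{Cobracket $\delta_{2}$ and the key identity.} By definition $\delta_{2}(d\otimes b)=\vartheta_{\omega}(d)\bullet\nu(b)-\tau(\vartheta_{\omega}(d))\bullet\tau(\nu(b))$, where $\vartheta_{\omega}$ is defined from $\ast$ as in Proposition \ref{pro:dua-qleib}. Proposition \ref{pro:qdiass-qleib} guarantees that $(D,\ast,\omega)$ is quadratic, so Theorem \ref{thm:Z-L-liebi} applies. The key step is the identity
$$
\vartheta_{\omega}=\theta_{\vdash,\omega}-\tau\circ\theta_{\dashv,\omega}.
$$
To prove it, pair both sides with $d_{2}\otimes d_{3}$ using the extended form $\omega$ on $D\otimes D$. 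The left side gives $\omega(d_{1},d_{2}\ast d_{3})=\omega(d_{1},d_{2}\vdash d_{3})-\omega(d_{1},d_{3}\dashv d_{2})$. The first term is the pairing of $\theta_{\vdash,\omega}(d_{1})$ by definition. The second is the pairing of $\theta_{\dashv,\omega}(d_{1})$ with $d_{3}\otimes d_{2}$, which is the pairing of $\tau(\theta_{\dashv,\omega}(d_{1}))$ with $d_{2}\otimes d_{3}$. This uses that the extended form is componentwise, so $\omega(\tau X,\tau Y)=\omega(X,Y)$. Nondegeneracy of $\omega$ on $D\otimes D$ then gives the identity.

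\emph{Conclusion.} Substituting the identity into $\delta_{2}$ and using $\tau(\vartheta_{\omega}(d))=\tau(\theta_{\vdash,\omega}(d))-\theta_{\dashv,\omega}(d)$ gives
$$
\delta_{2}(d\otimes b)=\theta_{\vdash,\omega}(d)\bullet\nu(b)-\tau(\theta_{\dashv,\omega}(d))\bullet\nu(b)-\tau(\theta_{\vdash,\omega}(d))\bullet\tau(\nu(b))+\theta_{\dashv,\omega}(d)\bullet\tau(\nu(b)).
$$
This is term by term the expression for $\delta_{1}$, so $\delta_{1}=\delta_{2}$ and the two Lie bialgebras coincide, giving the commutative diagram.

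The main obstacle is bookkeeping of conventions rather than any conceptual point. I would check carefully that $\omega$ is placed in the same slot in the definition of $\vartheta_{\omega}$ (Proposition \ref{pro:dua-qleib}) as in that of $\theta_{\dashv,\omega},\theta_{\vdash,\omega}$ (Lemma \ref{lem:comd-dual}). Since $\omega$ is skew-symmetric, a mismatch would only introduce a global sign, but that sign would have to be tracked. I would also verify the flip rule $\tau(X\bullet Y)=\tau(X)\bullet\tau(Y)$ directly on pure tensors.
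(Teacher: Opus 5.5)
Your proposal is correct and follows the paper's approach. The paper only asserts that the equality follows by direct verification from Corollary \ref{cor:indassbia}, Proposition \ref{pro:ASI-Liebia} and Theorem \ref{thm:Z-L-liebi}. Your computation (brackets via Proposition \ref{pro:comm-diag}, cobrackets via $\vartheta_{\omega}=\theta_{\vdash,\omega}-\tau\theta_{\dashv,\omega}$ obtained by pairing with $\omega$) supplies that verification, mirroring the calculation the paper writes out for Theorem \ref{thm:commdig}. Your convention check also works out: $\omega$ sits in the same slot in both definitions (the $\diamond$ in Proposition \ref{pro:dua-qleib} is a typo for $\ast$), so no extra sign appears.
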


\begin{proof}
It can be directly verified by the results of Corollary \ref{cor:indassbia}, Proposition
\ref{pro:ASI-Liebia} and Theorem \ref{thm:Z-L-liebi}.
\end{proof}

\begin{ex}\label{ex:comdiagm}
Let $(B=\Bbbk\{e_{1}, e_{2}\}, \diamond, \nu)$ be the $2$-dimensional Zinbiel bialgebra
given in Example \ref{ex:ZYBE} and $(D=\Bbbk\{x_{1}, x_{2}, x_{3}, x_{4}\}, \dashv,
\vdash, \omega)$ be the $4$-dimensional quadratic diassociative algebra given in
Example \ref{ex:qu-dia}. Then we get a $8$-dimensional Lie bialgebra $(D\otimes B, [-,-],
\delta)$, where the nonzero brackets and nonzero coproducts are given by
\begin{align*}
&[x_{1}\otimes e_{1},\; x_{2}\otimes e_{1}]=2x_{1}\otimes e_{2},\qquad
\delta(x_{1}\otimes e_{1})=(x_{4}\otimes e_{2})\otimes(x_{1}\otimes e_{2})
-(x_{1}\otimes e_{2})\otimes(x_{4}\otimes e_{2}),\\
&[x_{1}\otimes e_{1},\; x_{3}\otimes e_{1}]=-x_{4}\otimes e_{2},\qquad
\delta(x_{2}\otimes e_{1})=(x_{1}\otimes e_{2})\otimes(x_{3}\otimes e_{2})
-(x_{3}\otimes e_{2})\otimes(x_{1}\otimes e_{2}),\\
&[x_{2}\otimes e_{1},\; x_{3}\otimes e_{1}]=x_{3}\otimes e_{2},\qquad
\delta(x_{3}\otimes e_{1})=2(x_{3}\otimes e_{2})\otimes(x_{4}\otimes e_{2})
-2(x_{4}\otimes e_{2})\otimes(x_{3}\otimes e_{2}).
\end{align*}
On the other hand, form the quadratic diassociative algebra $(D, x_{4}\}, \dashv,
\vdash, \omega)$, we obtain a quadratic Leibniz algebra $(D, \ast, \omega)$, where
$x_{1}\ast x_{2}=x_{1}=-x_{2}\ast x_{1}$, $x_{1}\ast x_{3}=-x_{4}$ and $x_{2}\ast
x_{3}=x_{3}$. Direct calculation shows that the Lie bialgebra induced by $(B,
\diamond, \nu)$ and $(D, \ast, \omega)$ is exactly given as above.
\end{ex}

%%%%%%%%%%%%%%%%%%%%%%%%%%%%%%%%%%%%%%%%%%%%%%%%%%%%%%%%%%%%%%%%%%%%%%%%%%%%%%%%
\subsection{Two approaches to constructing Lie bialgebras from diassociative bialgebras}
\label{subsec:diass-Lie}
In this subsection, we present two methods for constructing Lie bialgebras starting from
diassociative bialgebras, and the resulting Lie bialgebras from these two methods are
consistent. First, let $(D, \dashv, \vdash, \theta_{\dashv}, \theta_{\vdash})$ be a
diassociative bialgebra and $(B, \diamond, \varpi)$ be a quadratic Zinbiel algebra.
We have the following path to obtain a Lie bialgebra:
$$
{\bf (I)}: \qquad\qquad\quad
\xymatrix@C=2cm@R=0.6cm{
\txt{$(D, \dashv, \vdash, \theta_{\dashv}, \theta_{\vdash})$ \\
{\tiny a diassociative bialgebra}}
\ar[r]^-{\mbox{\tiny Thm. \ref{thm:dias-asbia}}}
&\txt{$(D\otimes B, \cdot, \Delta)$ \\ {\tiny an ASI bialgebra}}
\ar[r]^-{\mbox{\tiny Pro. \ref{pro:ASI-Liebia}}}
& \txt{$(D\otimes B, [-,-], \delta)$ \\ {\tiny a Lie bialgebra}}}\qquad\qquad\qquad\qquad
$$
Moreover, by Theorem \ref{thm:indu-triASI} and Proposition \ref{pro:qtAss-qtLie},
we get $(D\otimes B, [-,-], \delta)$ is a triangular Lie bialgebra if $(D, \dashv,
\vdash, \theta_{\dashv}, \theta_{\vdash})$ is a triangular diassociative bialgebra.
Furthermore, by using the one-to-one correspondence between the symmetric (resp.
skew-symmetric) solution of the (classical) Yang-Baxter equation and the
$\mathcal{O}$-operators, we have the following exchange diagram:
$$
\xymatrix@C=1.4cm@R=0.5cm{
\txt{$(D, \dashv, \vdash, \theta_{\dashv,r}, \theta_{\vdash,r})$ \\
{\tiny a triangular diassociative bialgebra}}
\ar[d]_{{\rm Thm.}~\ref{thm:indu-triASI}} &
\txt{$r$ \\ {\tiny a symmetric solution} \\ {\tiny of the $\DAYBE$ in $(D, \dashv, \vdash)$}}
\ar[d]_-{{\rm Pro.}~\ref{pro:DAYBE-AYBE}}\ar[r]^-{{\rm Pro.}~\ref{pro:o-dia}}
\ar[l]_-{{\rm Pro.}~\ref{pro:tri-di}}    &
\txt{$r^{\sharp}$\\ {\tiny an $\mathcal{O}$-operator of $(D, \dashv, \vdash)$} \\
{\tiny associated to coregular bimodule}} \ar[d]^-{\mbox{$-\otimes\kappa^{\sharp}$}} \\
\txt{$(D\otimes B, \cdot, \Delta_{\widetilde{r}})$ \\
{\tiny a triangular ASI bialgebra}}  \ar[d]_-{{\rm Pro.}~\ref{pro:qtAss-qtLie}} &
\txt{$\widetilde{r}$ \\ {\tiny a skew-symmetric solution} \\ {\tiny of the $\AYBE$ in
$(D\otimes B, \cdot)$}} \ar[r]^-{{\rm Pro.}~\ref{pro:o-ass}}
\ar[l]_-{{\rm Pro.}~\ref{pro:quasass-bia}}  \ar[d]_-{{\rm Pro.}~\ref{pro:ass-Lie-YBE}}   &
\txt{$\widetilde{r}^{\sharp}=r^{\sharp}\otimes\kappa^{\sharp}$ \\
{\tiny an $\mathcal{O}$-operator of $(D\otimes B, \cdot)$ } \\
{\tiny associated to coregular bimodule}} \ar[d]^-{{\rm Pro.}~\ref{pro:o-ass-lie}}\\
\txt{$(D\otimes B, [-,-], \delta_{\widetilde{r}})$ \\ {\tiny a triangular Lie bialgebra}} &
\txt{$\widetilde{r}$ \\ {\tiny a skew-symmetric solution} \\ {\tiny of the $\CYBE$ in
$(D\otimes B, [-,-])$}} \ar[r]^-{{\rm Pro.}~\ref{pro:o-lie}}
\ar[l]_-{{\rm Pro.}~\ref{pro:splie-bia}}   &
\txt{$\widetilde{r}^{\sharp}=r^{\sharp}\otimes\kappa^{\sharp}$ \\
{\tiny an $\mathcal{O}$-operator of $(D\otimes B, [-,-])$ } \\
{\tiny associated to coregular representation}}}
$$

On the other hand, a diassociative bialgebra induces a Leibniz bialgebra, which we can
use to construct a Lie bialgebra. Following, we will elaborate on this construction
method in detail.
Recently, in \cite{HLLZ,Lu}, it has also been proven that the result in Proposition
\ref{pro:ASI-Liebia} is true for diassociative bialgebras and Leibniz bialgebras.
Let us first review some basic facts of Leibniz bialgebras. Recall that a
{\bf representation} of a Leibniz algebra $(L, \ast)$ is a triple $(V, \tilde{\kl},
\tilde{\kr})$, where $V$ is a vector space, $\tilde{\kl}, \tilde{\kr}: L\rightarrow\gl(V)$
are linear maps such that the following equalities hold for all $x_{1}, x_{2}\in L$,
\begin{align*}
&\qquad\; \tilde{\kl}(x_{1}\ast x_{2})=\tilde{\kl}(x_{1})\tilde{\kl}(x_{2})
-\tilde{\kl}(x_{2})\tilde{\kl}(x_{1}),\\
& \tilde{\kr}(x_{1})\tilde{\kr}(x_{2})=\tilde{\kr}(x_{2}\ast x_{1})
-\tilde{\kl}(x_{2})\tilde{\kr}(x_{1})=-\tilde{\kr}(x_{1})\tilde{\kl}(x_{2}).
\end{align*}
In particular, if we define $\tilde{\fl}_{L}, \tilde{\fr}_{L}: L\rightarrow\gl(L)$ by
$\tilde{\fl}_{L}(x_{1})(x_{2})=x_{1}\ast x_{2}=\tilde{\fr}_{L}(x_{2})(x_{1})$, then
$(L, \tilde{\fl}_{L}, \tilde{\fr}_{L})$ is a representation of $(L, \ast)$, which is
called the {\bf regular representation}. The {\bf coregular representation} of
$(L, \ast)$ is given by $(L^{\ast}, \tilde{\fl}_{L}^{\ast},
-\tilde{\fl}_{L}^{\ast}-\tilde{\fr}_{L}^{\ast})$.
A {\bf Leibniz bialgebra} is a triple $(L, \ast, \vartheta)$, where $(L, \ast)$ is a
Leibniz algebra, $(L, \vartheta)$ is a Leibniz coalgebra and the following equations hold:
\begin{align*}
&\qquad\qquad\qquad\qquad\quad \tau((\tilde{\fr}_{L}(x_{2})\otimes\id)(\vartheta(x_{1})))
=(\tilde{\fr}_{L}(x_{1})\otimes\id)(\vartheta(x_{2})),\\
&\vartheta(x_{1}\ast x_{2})=(\id\otimes\tilde{\fr}_{L}(x_{2})
-(\tilde{\fl}_{L}+\tilde{\fr}_{L})(x_{2})\otimes\id)
((\id\otimes\id+\tau)(\vartheta(x_{1})))+(\id\otimes\tilde{\fl}_{L}(x_{1})
+\tilde{\fl}_{L}(x_{1})\otimes\id)(\vartheta(x_{2})),
\end{align*}
for any $x_{1}, x_{2}\in L$.

\begin{pro}[\cite{HLLZ,Lu}]\label{pro:DASI-Leibbia}
Let $(D, \dashv, \vdash, \theta_{\dashv}, \theta_{\vdash})$ be a diassociative bialgebra
and $(D, \ast)$ be the Leibniz algebra induced by $(D, \dashv, \vdash)$.
Define a linear map $\vartheta: D\rightarrow D\otimes D$ by $\vartheta=\theta_{\vdash}
-\tau\theta_{\dashv}$. Then $(D, \ast, \vartheta)$ is a Leibniz bialgebra, which is
called the {\bf Leibniz bialgebra induced by $(D, \dashv, \vdash,
\theta_{\dashv}, \theta_{\vdash})$}.
\end{pro}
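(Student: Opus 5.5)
We have to show three things: $(D,\ast)$ is a Leibniz algebra, $(D,\vartheta)$ is a Leibniz coalgebra, and the two compatibility conditions of a Leibniz bialgebra hold. The first is Proposition \ref{pro:commtor}. For the second, the plan is to dualize: $(D,\theta_{\dashv},\theta_{\vdash})$ is a diassociative coalgebra exactly when $(D^{\ast},\theta_{\dashv}^{\ast},\theta_{\vdash}^{\ast})$ is a diassociative algebra. By Proposition \ref{pro:commtor}, $D^{\ast}$ then carries the Leibniz product $\xi_{1}\ast\xi_{2}=\xi_{1}\vdash\xi_{2}-\xi_{2}\dashv\xi_{1}$. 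Its transpose is precisely $\theta_{\vdash}-\tau\theta_{\dashv}=\vartheta$, and dualizing the Leibniz identity gives $(\id\otimes\vartheta)\vartheta-(\tau\otimes\id)(\id\otimes\vartheta)\vartheta=(\vartheta\otimes\id)\vartheta$. This avoids any coalgebra computation.

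The main work is the compatibility conditions, which are linear in the coproduct. First, expand $\vartheta(x_{1}\ast x_{2})=\theta_{\vdash}(x_{1}\vdash x_{2})-\theta_{\vdash}(x_{2}\dashv x_{1})-\tau\theta_{\dashv}(x_{1}\vdash x_{2})+\tau\theta_{\dashv}(x_{2}\dashv x_{1})$. Replace each of the four terms using Eqs. \eqref{bidi4}--\eqref{bidi7}. On the right-hand side of the Leibniz compatibility, write $\tilde{\fl}_{D}(x)=\fl_{\vdash}(x)-\fr_{\dashv}(x)$ and $\tilde{\fr}_{D}(x)=\fr_{\vdash}(x)-\fl_{\dashv}(x)$, so that $(\tilde{\fl}_{D}+\tilde{\fr}_{D})(x)=(\fl_{\vdash}-\fl_{\dashv})(x)+(\fr_{\vdash}-\fr_{\dashv})(x)$. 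Substitute $\vartheta=\theta_{\vdash}-\tau\theta_{\dashv}$, and use $(\id\otimes\id+\tau)\vartheta=(\theta_{\vdash}-\theta_{\dashv})+\tau(\theta_{\vdash}-\theta_{\dashv})$.

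Both sides are then sums of terms of the form $(\beta(x_{i})\otimes\id)\theta(x_{j})$, $(\id\otimes\beta(x_{i}))\theta(x_{j})$ and their $\tau$-twists, with $\beta\in\{\fl_{\dashv},\fr_{\dashv},\fl_{\vdash},\fr_{\vdash}\}$ and $\theta\in\{\theta_{\dashv},\theta_{\vdash}\}$. Sort these by type. Terms of matching type cancel directly. The mixed ones, where an action of $x_{1}$ is applied to $\theta(x_{2})$ in the ``wrong'' slot, are rewritten using the exchange identities \eqref{bidi1}--\eqref{bidi3} together with \eqref{bidi8} and \eqref{bidi9}, or their $\tau$-conjugates.

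The first compatibility condition, $\tau((\tilde{\fr}(x_{2})\otimes\id)\vartheta(x_{1}))=(\tilde{\fr}(x_{1})\otimes\id)\vartheta(x_{2})$, expands into eight terms. I expect it to follow from \eqref{bidi1}--\eqref{bidi3}, \eqref{bidi8} and \eqref{bidi9}, after twisting by $\tau$ where needed.

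The main obstacle is the bookkeeping in the second condition: identifying exactly which combinations of \eqref{bidi1}--\eqref{bidi9} absorb the cross terms, with consistent signs under $\tau$. To organize this I would imitate the proof of Theorem \ref{thm:dias-asbia}. That is, group the terms by the ``shape'' of the element they act on, and verify each group by a single axiom. As a consistency check I would test the resulting identity on a coboundary case: take $\theta_{\dashv,r},\theta_{\vdash,r}$ from Proposition \ref{pro:tri-di} and confirm that $\vartheta$ becomes the expected Leibniz coboundary of $r$.
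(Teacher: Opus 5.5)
Your proposal is correct. The paper gives no proof of this proposition; it is quoted from the cited works, so your argument has to stand on its own, and it does.

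The dualization step is right. On $(D^{\ast},\theta_{\dashv}^{\ast},\theta_{\vdash}^{\ast})$, the transpose of $\xi_{1}\ast\xi_{2}=\xi_{1}\vdash\xi_{2}-\xi_{2}\dashv\xi_{1}$ is exactly $\theta_{\vdash}-\tau\theta_{\dashv}$. The term $\xi_{2}\ast(\xi_{1}\ast\xi_{3})$ of the Leibniz identity dualizes to $(\tau\otimes\id)(\id\otimes\vartheta)\vartheta$, so you recover precisely the paper's Leibniz coalgebra axiom.

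The compatibility computation you left as an expectation also closes, though with slightly different ingredients than you predicted.

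\emph{First condition.} With $\tilde{\fr}_{D}(x)=\fr_{\vdash}(x)-\fl_{\dashv}(x)$ it reads $(\id\otimes(\fr_{\vdash}-\fl_{\dashv})(x_{2}))(\tau\theta_{\vdash}(x_{1})-\theta_{\dashv}(x_{1}))=((\fr_{\vdash}-\fl_{\dashv})(x_{1})\otimes\id)(\theta_{\vdash}(x_{2})-\tau\theta_{\dashv}(x_{2}))$. Its eight terms pair off as follows:
\begin{enumerate}
\item[$(a)$] by \eqref{bidi1} with $(d_{1},d_{2})=(x_{2},x_{1})$;
\item[$(b)$] by \eqref{bidi2} and \eqref{bidi3} with $(d_{1},d_{2})=(x_{1},x_{2})$;
\item[$(c)$] by $\tau$ applied to \eqref{bidi1} with $(d_{1},d_{2})=(x_{1},x_{2})$.
\end{enumerate}
Neither \eqref{bidi8} nor \eqref{bidi9} is needed here.

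\emph{Second condition.} Substitute \eqref{bidi4} and \eqref{bidi6} with $(d_{1},d_{2})=(x_{1},x_{2})$, and \eqref{bidi5} and \eqref{bidi7} with $(d_{1},d_{2})=(x_{2},x_{1})$, into the four pieces of $\vartheta(x_{1}\vdash x_{2}-x_{2}\dashv x_{1})$. Six of the resulting terms appear verbatim on the expanded right-hand side. Merge the remaining two with their partners there. What is left splits into four groups, which vanish by:
\begin{enumerate}
\item[$(a)$] \eqref{bidi9}, \eqref{bidi8} and \eqref{bidi2}, each with $(d_{1},d_{2})=(x_{1},x_{2})$;
\item[$(b)$] $\tau$ applied to \eqref{bidi1} with $(d_{1},d_{2})=(x_{1},x_{2})$.
\end{enumerate}
Here \eqref{bidi3} is not used.

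\emph{Comparison with a conceptual route.} A more conceptual argument runs through double constructions. It parallels Bai's proof that ASI bialgebras give Lie bialgebras and fits Propositions \ref{pro:dual}, \ref{pro:commtor} and \ref{pro:qdiass-qleib}:
\begin{enumerate}
\item[$(a)$] the bialgebra axioms amount to a diassociative structure on $D\oplus D^{\ast}$ with $D$, $D^{\ast}$ isotropic subalgebras for the natural skew-symmetric pairing;
\item[$(b)$] the commutator of Proposition \ref{pro:commtor} turns this into a quadratic Leibniz algebra with the same isotropic subalgebras, i.e.\ a Manin triple of Leibniz algebras;
\item[$(c)$] your dualization remark then identifies the induced Leibniz product on $D^{\ast}$ as the transpose of $\vartheta$.
\end{enumerate}
That route is short and explains why the result holds. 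However, it rests on the equivalences between both kinds of bialgebras and Manin triples, with matching sign conventions, and none of this is established in the present paper. Your direct verification needs nothing beyond the displayed axioms \eqref{bidi1}--\eqref{bidi9}, and it records exactly which of them each Leibniz condition uses. Its only fragility is that it is only as reliable as those displayed formulas.
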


The classical Yang-Baxter equation in Leibniz algebras and triangular Leibniz bialgebras
have been studied in \cite{TS}. Let $(L, \ast)$ be a Leibniz algebra.
If there exists an element $r\in L\otimes L$ such that $(L, \ast, \vartheta_{r})$ is a
Leibniz bialgebra, where $\vartheta_{r}: L\rightarrow L\otimes L$ is given by
\begin{align}
\vartheta_{r}(x)=\big((\tilde{\fl}_{L}+\tilde{\fr}_{L})(x)\otimes\id
-\id\otimes\tilde{\fr}_{L}(x)\big)(r),         \label{cobLeb}
\end{align}
for any $x\in L$, then $(L, \ast, \vartheta_{r})$ is called a {\bf coboundary Leibniz
bialgebra} associated with $r$. Let $(L, \ast)$ be a Leibniz algebra and
$r=\sum_{i}x_{i}\otimes y_{i}\in L\otimes L$. The equation
$$
\mathbf{L}_{r}=r_{12}\ast r_{13}-r_{12}\ast r_{23}-r_{23}\ast r_{12}+r_{23}\ast r_{13}=0
$$
is called the (classical) {\bf Leibniz Yang-Baxter equation} ($\LYBE$) in the Leibniz
algebra $(L, \ast)$, where $r_{12}\ast r_{13}=\sum_{i,j}(x_{i}\ast x_{j})\otimes y_{i}
\otimes y_{j}$, $r_{12}\ast r_{23}=\sum_{i,j}x_{i}\otimes(y_{i}\ast x_{j})\otimes y_{j}$,
$r_{23}\ast r_{12}=\sum_{i,j}x_{j}\otimes(x_{i}\ast y_{j})\otimes y_{i}$ and
$r_{23}\ast r_{13}=\sum_{i,j}x_{j}\otimes x_{i}\otimes(y_{i}\ast y_{j})$.

\begin{pro}[\cite{TS,BLST}]\label{pro:sLib-bia}
Let $(B, \ast)$ be a Leibniz algebra, $r\in B\otimes B$ and $\vartheta_{r}:
B\rightarrow B\otimes B$ is given by Eq. \eqref{cobLeb}.
If $r$ is a symmetric solution of the $\LYBE$ in $(B, \ast)$, then $(B, \ast,
\vartheta_{r})$ is a Leibniz bialgebra, which is called a {\bf triangular Leibniz
bialgebra} associated with $r$.
\end{pro}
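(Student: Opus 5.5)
The plan is to show that $(B,\ast,\vartheta_{r})$ satisfies the two axioms of a Leibniz bialgebra beyond $(B,\ast)$ being a Leibniz algebra: the co-Leibniz identity for $\vartheta_{r}$, and the two compatibility conditions. The natural route is the standard coboundary one: express every defect as an operator built from $\tilde{\fl}_{B},\tilde{\fr}_{B}$ acting on $\mathbf{L}_{r}$ or on $r-\tau(r)$.

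I would organise it around three observations, checked in this order.

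\textbf{The first compatibility condition.} The identity $\tau((\tilde{\fr}(x_{2})\otimes\id)\vartheta_{r}(x_{1}))=(\tilde{\fr}(x_{1})\otimes\id)\vartheta_{r}(x_{2})$ only involves $r$ linearly. I expect it to follow from the Leibniz identity (in the form $\tilde{\fr}(x_{1})\tilde{\fr}(x_{2})=-\tilde{\fr}(x_{1})\tilde{\fl}(x_{2})$ and $\tilde{\fr}(x_{1})\tilde{\fr}(x_{2})=\tilde{\fr}(x_{2}\ast x_{1})-\tilde{\fl}(x_{2})\tilde{\fr}(x_{1})$) together with the symmetry $r=\tau(r)$.

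\textbf{The cocycle-type condition.} Expanding $\vartheta_{r}(x_{1}\ast x_{2})$ using Eq. \eqref{cobLeb}, I would use that $\tilde{\fl}$ is a representation of $(B,\ast)$ and that $\tilde{\fl}+\tilde{\fr}$ satisfies $(\tilde{\fl}+\tilde{\fr})(x_{1}\ast x_{2})=0$-type relations (indeed $x\ast x$ lies in the left kernel). The difference between the two sides then becomes a sum of terms that are again linear in $r$. Symmetry of $r$ kills the antisymmetrised pieces $(\id\otimes\id+\tau)$-corrections. So this compatibility holds for any symmetric $r$, with no Yang--Baxter equation needed.

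\textbf{The co-Leibniz identity.} This is the main obstacle. Computing $(\id\otimes\vartheta_{r})\vartheta_{r}(x)-(\tau\otimes\id)(\id\otimes\vartheta_{r})\vartheta_{r}(x)-(\vartheta_{r}\otimes\id)\vartheta_{r}(x)$ yields terms quadratic in $r$. I would aim to rewrite this defect as
\[
\big(\Phi(x)\big)(\mathbf{L}_{r})+\big(\Psi(x)\big)\big((\tau\otimes\id)\mathbf{L}_{r}\big)
\]
for explicit operators $\Phi(x),\Psi(x)$ on $B^{\otimes 3}$ of the form $(\tilde{\fl}+\tilde{\fr})(x)\otimes\id\otimes\id-\id\otimes\id\otimes\tilde{\fr}(x)$ and permutations thereof. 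Getting this requires substituting $r=\sum_i x_i\otimes y_i=\sum_i y_i\otimes x_i$ freely and using the Leibniz identity to collapse terms of the form $(x\ast x_{i})\ast x_{j}$. To find the right regrouping, I would first test on the basis of a small example, e.g.\ a $2$-dimensional Leibniz algebra. Then $\mathbf{L}_{r}=0$ gives the claim.

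Alternatively, and more efficiently, I would invoke the $\mathcal{O}$-operator picture. For symmetric $r$, $\mathbf{L}_{r}=0$ is equivalent to $r^{\sharp}$ being an $\mathcal{O}$-operator for the coregular representation $(B^{\ast},\tilde{\fl}^{\ast},-\tilde{\fl}^{\ast}-\tilde{\fr}^{\ast})$. Then $\vartheta_{r}^{\ast}$ is the induced Leibniz product $\xi_{1}\circ\xi_{2}=\tilde{\fl}^{\ast}(r^{\sharp}\xi_{1})\xi_{2}-(\tilde{\fl}^{\ast}+\tilde{\fr}^{\ast})(r^{\sharp}\xi_{2})\xi_{1}$ on $B^{\ast}$, whose Leibniz identity is standard for $\mathcal{O}$-operators. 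This gives the co-Leibniz property by dualising.
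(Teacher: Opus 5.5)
The paper does not prove this proposition; it quotes it from \cite{TS,BLST}, so there is no internal argument to compare with. Judged on its own, your plan is correct.

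Both compatibility conditions do hold for every symmetric $r$, and $\mathbf{L}_r=0$ is needed only for the co-Leibniz identity. Your formula $\xi_1\circ\xi_2=\tilde{\fl}^{\ast}(r^{\sharp}\xi_1)\xi_2-(\tilde{\fl}^{\ast}+\tilde{\fr}^{\ast})(r^{\sharp}\xi_2)\xi_1$ for the dual of $\vartheta_r$ is right. It uses $r=\tau(r)$ to express $\sum_i(x\ast x_i+x_i\ast x)\otimes y_i$ through $r^{\sharp}$.

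Your second route for the co-Leibniz identity combines two facts. The first is Proposition~\ref{pro:o-leib}. The second is that an $\mathcal{O}$-operator $T$ for a representation $(V,\kl,\kr)$ makes $u\cdot v=\kl(Tu)v+\kr(Tv)u$ a left Leibniz product, which is a short check from the representation axioms. Together these give a complete proof, and this is the standard mechanism behind triangular statements of this kind.

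There are two points to fix.

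\textbf{The second step.} The relation $(\tilde{\fl}+\tilde{\fr})(x_1\ast x_2)=0$ is false in general. What vanishes is $\tilde{\fl}(x_1\ast x_2+x_2\ast x_1)$, equivalently $\tilde{\fr}(y)(\tilde{\fl}+\tilde{\fr})(x)=0$, and that is what the first condition uses. The second condition goes as follows:
\begin{enumerate}
\item Symmetry of $r$ gives $(\id\otimes\id+\tau)\vartheta_r(x_1)=(\tilde{\fl}(x_1)\otimes\id+\id\otimes\tilde{\fl}(x_1))(r)$.
\item The mixed terms $\tilde{\fl}(x_1)\otimes\tilde{\fr}(x_2)$ and $(\tilde{\fl}+\tilde{\fr})(x_2)\otimes\tilde{\fl}(x_1)$ then cancel between the two summands on the right.
\item What remains equals $\vartheta_r(x_1\ast x_2)$ by $[\tilde{\fl}(x_1),(\tilde{\fl}+\tilde{\fr})(x_2)]=(\tilde{\fl}+\tilde{\fr})(x_1\ast x_2)$ and $[\tilde{\fr}(x_2),\tilde{\fl}(x_1)]=-\tilde{\fr}(x_1\ast x_2)$. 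Both are immediate from the Leibniz identity.
\end{enumerate}

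\textbf{The first route to the co-Leibniz identity.} You do not need to find the regrouping by experiment. Set $E(\xi,\eta)=r^{\sharp}\xi\ast r^{\sharp}\eta-r^{\sharp}(\xi\circ\eta)$. Then $\langle\zeta,E(\xi,\eta)\rangle=\langle\zeta\otimes\xi\otimes\eta,\mathbf{L}_r\rangle$. The Leibniz defect $\xi_1\circ(\xi_2\circ\xi_3)-\xi_2\circ(\xi_1\circ\xi_3)-(\xi_1\circ\xi_2)\circ\xi_3$ equals $\tilde{\fl}^{\ast}(E(\xi_1,\xi_2))\xi_3-(\tilde{\fl}^{\ast}+\tilde{\fr}^{\ast})(E(\xi_1,\xi_3))\xi_2+(\tilde{\fl}^{\ast}+\tilde{\fr}^{\ast})(E(\xi_2,\xi_3))\xi_1$. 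Dualized, this expresses the co-Leibniz defect as operators in $x$ applied to cyclically permuted copies of $\mathbf{L}_r$, not just $\mathbf{L}_r$ and $(\tau\otimes\id)\mathbf{L}_r$. So your two routes are the same computation.
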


For the relationship between the solutions of $\DAYBE$ and the solutions
of $\LYBE$ in the induced Leibniz algebra, we have the following conclusion.

\begin{pro}[\cite{HL}]\label{pro:diass-Leib-YBE}
Let $(D, \dashv, \vdash)$ be a diassociative algebra and $(D, \ast)$ be the induced
Leibniz algebra from $(D, \dashv, \vdash)$. If $r\in D\otimes D$ is a symmetric solution
of the $\DAYBE$ in $(D, \dashv, \vdash)$, then $r$ is also a symmetric solution
of the $\LYBE$ in $(D, \ast)$.
\end{pro}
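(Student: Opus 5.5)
Symmetry is preserved trivially, so the whole content is that $\mathbf{L}_{r}=0$ whenever $\mathbf{D}_{r}=0$ and $r=\tau(r)$. Write $r=\sum_{i}x_{i}\otimes y_{i}$ and expand each of the four terms of $\mathbf{L}_{r}$ using $d_{1}\ast d_{2}=d_{1}\vdash d_{2}-d_{2}\dashv d_{1}$. This gives eight sums in $D\otimes D\otimes D$:
\begin{align*}
r_{12}\ast r_{13}&=\sum_{i,j}(x_{i}\vdash x_{j})\otimes y_{i}\otimes y_{j}-(x_{j}\dashv x_{i})\otimes y_{i}\otimes y_{j},\\
r_{12}\ast r_{23}&=\sum_{i,j}x_{i}\otimes(y_{i}\vdash x_{j})\otimes y_{j}-x_{i}\otimes(x_{j}\dashv y_{i})\otimes y_{j},
\end{align*}
and similarly for $r_{23}\ast r_{12}$ and $r_{23}\ast r_{13}$.

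Using $r=\tau(r)$, relabel indices and swap $x\leftrightarrow y$ so that every term is in one of the shapes occurring in $\mathbf{D}_{r}$. In the first line, $\sum(x_{j}\dashv x_{i})\otimes y_{i}\otimes y_{j}$ becomes $(\tau_{23})\big(\sum (x_{i}\dashv x_{j})\otimes y_{i}\otimes y_{j}\big)$, i.e.\ $r_{12}\dashv r_{13}$ up to a flip of the last two factors. The aim is to show
$$
\mathbf{L}_{r}=-\mathbf{D}_{r}+\sigma(\mathbf{D}_{r}),
$$
or some similar signed combination of $\mathbf{D}_{r}$ and its images under permutations of tensor factors, e.g.\ $(\tau\otimes\id)$ or $(\id\otimes\tau)$. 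The symmetric form of $\mathbf{D}_{r}$ displayed in the proof of Proposition \ref{pro:DAYBE-AYBE} gives two equivalent expressions for $\mathbf{D}_{r}$, and these are the most useful for matching. Concretely, I would collect the $\vdash$-terms and the $\dashv$-terms separately. The $\vdash$-terms of $\mathbf{L}_{r}$ should match $r_{12}\vdash r_{13}$, $r_{13}\vdash r_{23}$ and a middle-slot term. The $\dashv$-terms should then match $r_{23}\dashv r_{12}$, $r_{12}\dashv r_{13}$ and the middle $\dashv$-term.

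The main obstacle is this bookkeeping: the terms with a product in the middle slot, $x_{i}\otimes(y_{i}\vdash x_{j})\otimes y_{j}$ and $x_{j}\otimes(x_{i}\vdash y_{j})\otimes y_{i}$. Symmetry identifies these only up to the order of the factors inside the product, so some of them may not cancel pairwise. If a residual remains after substituting $\mathbf{D}_{r}$ and its permuted copies, I would first try identifying it with $\mathbf{D}_{r}$ precomposed with the cyclic permutation $(123)$, again using $r=\tau(r)$. This mirrors the Leibniz case via Proposition \ref{pro:commtor}: the commutator-type expansion turns the three-term associative-type equation into a combination of its permuted copies, and in the associative case this is exactly how AYBE implies CYBE for skew-symmetric $r$ (Proposition \ref{pro:ass-Lie-YBE}).
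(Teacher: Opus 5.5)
Your strategy is viable, and a combination of the kind you describe does exist. But the proposal stops exactly where the proof begins: the combination of permuted copies of $\mathbf{D}_{r}$ is never identified, and the one you write down cannot work.

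After normalizing with $r=\tau(r)$, $\mathbf{L}_{r}+\mathbf{D}_{r}$ has eight distinct terms as a formal identity, whereas any single permuted copy of $\mathbf{D}_{r}$ has only four. So no relation $\mathbf{L}_{r}=-\mathbf{D}_{r}+\pi(\mathbf{D}_{r})$ exists.

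Your predicted matching is also off. $\mathbf{D}_{r}$ contains no $\vdash$-term in the middle slot, while $\mathbf{L}_{r}$ contains two:
\[
-\sum_{i,j}x_{i}\otimes(y_{i}\vdash x_{j})\otimes y_{j},\qquad -\sum_{i,j}x_{j}\otimes(x_{i}\vdash y_{j})\otimes y_{i}.
\]
Moreover, $r_{13}\vdash r_{23}$ occurs in $\mathbf{L}_{r}$ only with its first two legs exchanged. So matching against the terms of $\mathbf{D}_{r}$ itself cannot close; permuted copies are needed from the start.

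The identity that does the job is the following. For symmetric $r$, with $\sigma(a\otimes b\otimes c)=c\otimes a\otimes b$,
\[
\mathbf{L}_{r}=(\tau\otimes\id)(\mathbf{D}_{r})+\sigma(\mathbf{D}_{r}).
\]
After expanding $\ast$, the two copies reproduce $\mathbf{L}_{r}$ term by term, using only the relabelling $i\leftrightarrow j$ and the substitution $\sum_{i}x_{i}\otimes y_{i}=\sum_{i}y_{i}\otimes x_{i}$:
\begin{itemize}
\item $(\tau\otimes\id)(\mathbf{D}_{r})$ gives the $\vdash$-part of $r_{23}\ast r_{13}$, the $\dashv$-part of $r_{12}\ast r_{13}$, the $\vdash$-part of $-r_{12}\ast r_{23}$ and the $\dashv$-part of $-r_{23}\ast r_{12}$.
\item $\sigma(\mathbf{D}_{r})$ gives the remaining four: $r_{12}\vdash r_{13}$, $-\sum_{i,j} x_{j}\otimes x_{i}\otimes(y_{j}\dashv y_{i})$, $-\sum_{i,j} x_{j}\otimes(x_{i}\vdash y_{j})\otimes y_{i}$ and $\sum_{i,j} x_{i}\otimes(x_{j}\dashv y_{i})\otimes y_{j}$.
\end{itemize}
Each term is hit exactly once, so the residual you worried about never appears: the two middle-slot $\vdash$-terms come from different copies.

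Two side remarks. First, the first ``symmetric form'' of $\mathbf{D}_{r}$ displayed in the proof of Proposition \ref{pro:DAYBE-AYBE} is really $\sigma(\mathbf{D}_{r})$, not $\mathbf{D}_{r}$. This is harmless for vanishing, and it is exactly the second summand above. Second, for symmetric $r$ one has $(\id+\sigma+\sigma^{2})(\mathbf{D}_{r})=0$ identically, so the representation is not unique. The one closest to your guess is
\[
\mathbf{L}_{r}=-\mathbf{D}_{r}+(\tau\otimes\id)(\mathbf{D}_{r})-\sigma^{2}(\mathbf{D}_{r}),
\]
and its last term is the kind of cyclic residual your fallback anticipates.

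The paper itself gives no proof here; it quotes \cite{HL}. A shorter route using only results quoted in the paper goes through $\mathcal{O}$-operators:
\begin{itemize}
\item By Proposition \ref{pro:o-dia}, $r^{\sharp}$ is an $\mathcal{O}$-operator of $(D,\dashv,\vdash)$ for the coregular bimodule.
\item By Proposition \ref{pro:o-diass-leib}, it is then an $\mathcal{O}$-operator of $(D,\ast)$ for the coregular representation. This is a one-line check from $T(\xi)\ast T(\eta)=T(\xi)\vdash T(\eta)-T(\eta)\dashv T(\xi)$.
\item By Proposition \ref{pro:o-leib}, $r$ then solves the $\LYBE$.
\end{itemize}
That route avoids the bookkeeping but leans on two cited equivalences whose conventions must match the definitions of $\mathbf{D}_{r}$ and $\mathbf{L}_{r}$. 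The direct identity above is self-contained and shows precisely how $\mathbf{L}_{r}$ is controlled by $\mathbf{D}_{r}$.
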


Thus, considering the triangular diassociative bialgebras and triangular Leibniz bialgebras,
we have:

\begin{pro}[\cite{HL}]\label{pro:qtdiAss-qtLeib}
Let $(D, \dashv, \vdash)$ be a diassociative algebra and $r\in D\otimes D$.
Suppose $(D, \dashv, \vdash, \theta_{\dashv,r}, \theta_{\dashv,r})$ is a diassociative
bialgebra and $(D, \ast, \vartheta)$ is the induced Leibniz bialgebra, where
$\theta_{\dashv,r}, \theta_{\dashv,r}$ are given by Eq. \eqref{cobdi}. If $(D, \dashv,
\vdash, \theta_{\dashv,r}, \theta_{\dashv,r})$ is triangular, that is, $r$ is a
symmetric solution of the $\DAYBE$ in $(D, \dashv, \vdash)$, then $(D, \ast, \vartheta)
=(D, \ast, \vartheta_{r})$ is also a triangular Leibniz bialgebra.
\end{pro}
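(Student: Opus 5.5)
The plan is to prove the equality $\vartheta=\vartheta_{r}$ by a direct comparison in Sweedler-free form, using only the symmetry of $r$. Triangularity will then follow from Propositions \ref{pro:diass-Leib-YBE} and \ref{pro:sLib-bia}. Write $r=\sum_{i}x_{i}\otimes y_{i}$ with $r=\tau(r)$.

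First I expand the two coproducts of the triangular diassociative bialgebra from Eq. \eqref{cobdi}, exactly as in the proof of Theorem \ref{thm:indu-triASI}:
\begin{align*}
\theta_{\vdash,r}(d)&=\sum_{i}\big((x_{i}\dashv d)\otimes y_{i}-(x_{i}\vdash d)\otimes y_{i}-x_{i}\otimes(d\dashv y_{i})\big),\\
\theta_{\dashv,r}(d)&=\sum_{i}\big(x_{i}\otimes(d\vdash y_{i})-x_{i}\otimes(d\dashv y_{i})-(x_{i}\vdash d)\otimes y_{i}\big).
\end{align*}
Next I apply $\tau$ to the second expression. Then I use $r=\tau(r)$ to swap the roles of $x_{i}$ and $y_{i}$, so that every term is written with $y_{i}$ in a fixed position relative to $x_{i}$:
$$\tau\theta_{\dashv,r}(d)=\sum_{i}\big((d\vdash x_{i})\otimes y_{i}-(d\dashv x_{i})\otimes y_{i}-x_{i}\otimes(y_{i}\vdash d)\big).$$
Subtracting this from $\theta_{\vdash,r}(d)$ gives
$$\vartheta(d)=\sum_{i}\big((x_{i}\dashv d-x_{i}\vdash d-d\vdash x_{i}+d\dashv x_{i})\otimes y_{i}+x_{i}\otimes(y_{i}\vdash d-d\dashv y_{i})\big).$$

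On the other side, I expand $\vartheta_{r}(d)$ from Eq. \eqref{cobLeb} using $d\ast x=d\vdash x-x\dashv d$ from Proposition \ref{pro:commtor}. The term $(\tilde{\fl}_{L}+\tilde{\fr}_{L})(d)\otimes\id$ contributes $(d\ast x_{i}+x_{i}\ast d)\otimes y_{i}$, whose first factor is $d\vdash x_{i}-x_{i}\dashv d+x_{i}\vdash d-d\dashv x_{i}$. The term $-\id\otimes\tilde{\fr}_{L}(d)$ contributes $-x_{i}\otimes(y_{i}\vdash d-d\dashv y_{i})$. Comparing term by term, the two expressions agree up to one global sign that depends on the orientation conventions in Eqs. \eqref{cobdi} and \eqref{cobLeb}.

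The main obstacle is precisely this sign and convention bookkeeping, and I expect to have to check it carefully. If an overall factor $-1$ genuinely survives, I will absorb it by replacing $r$ with $-r$. This is harmless for two reasons. First, $-r$ is still symmetric. Second, $\mathbf{D}_{r}$ and $\mathbf{L}_{r}$ are homogeneous quadratic in $r$, so $-r$ is again a symmetric solution of both the $\DAYBE$ and the $\LYBE$. In that case $(D,\ast,\vartheta)$ is triangular associated with $-r$, and the conclusion is unchanged.

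Finally, Proposition \ref{pro:DASI-Leibbia} shows that $(D,\ast,\vartheta)$ is a Leibniz bialgebra. Proposition \ref{pro:diass-Leib-YBE} shows that $r$ is a symmetric solution of the $\LYBE$ in $(D,\ast)$. Proposition \ref{pro:sLib-bia} then shows that $(D,\ast,\vartheta_{r})$ is a triangular Leibniz bialgebra. Together with the equality established above, this gives the claim.
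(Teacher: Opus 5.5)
Your route is the natural one: expand $\vartheta=\theta_{\vdash,r}-\tau\theta_{\dashv,r}$ and $\vartheta_r$, normalize with $r=\tau(r)$, then invoke Propositions \ref{pro:diass-Leib-YBE} and \ref{pro:sLib-bia}. The paper gives no proof of this statement (it only cites it), and your expansions are all correct.

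The problem is that you leave the decisive sign open, and it does not depend on any choice. The paper fixes \eqref{cobdi}, \eqref{cobLeb}, $\vartheta=\theta_{\vdash}-\tau\theta_{\dashv}$ and $d_1\ast d_2=d_1\vdash d_2-d_2\dashv d_1$. With these, your two displays are exact negatives term by term:
\begin{itemize}
\item the coefficient of $\otimes y_i$ is $x_i\dashv d-x_i\vdash d-d\vdash x_i+d\dashv x_i$ in $\vartheta$, and its negative in $\vartheta_r$;
\item the $x_i\otimes(\cdot)$ parts are $+(y_i\vdash d-d\dashv y_i)$ in $\vartheta$ and $-(y_i\vdash d-d\dashv y_i)$ in $\vartheta_r$.
\end{itemize}
Hence $\vartheta=-\vartheta_r=\vartheta_{-r}$ for every symmetric $r$. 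The printed identity $(D,\ast,\vartheta)=(D,\ast,\vartheta_r)$ is therefore false whenever $\vartheta_r\neq 0$.

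A concrete instance is the paper's own Example \ref{ex:ind-triASI}, where $r$ does solve the $\DAYBE$. Eq. \eqref{cobdi} gives $\theta_{\vdash,r}(x_2)=x_1\otimes x_4$ and $\theta_{\dashv,r}(x_2)=0$, so $\vartheta(x_2)=x_1\otimes x_4$. The only nonzero products of $(D,\ast)$ are $x_2\ast x_2=-x_1$ and $x_3\ast x_2=-x_4$, which give $\vartheta_r(x_2)=-x_1\otimes x_4$. Note that Example \ref{ex:ind-diabi} lists $\theta_{\vdash}(x_2)=-x_1\otimes x_4$, so it silently uses the opposite overall sign in \eqref{cobdi}. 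With that sign, your computation gives $\vartheta=\vartheta_r$ exactly.

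Your fallback $r\mapsto -r$ is sound as far as it goes. The element $-r$ is symmetric, and $\mathbf{D}_{-r}=\mathbf{D}_{r}$, $\mathbf{L}_{-r}=\mathbf{L}_{r}$. So $(D,\ast,\vartheta)=(D,\ast,\vartheta_{-r})$ is a triangular Leibniz bialgebra, and the triangularity part of the statement is proved.

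However, this is a corrected statement, not ``the claim''. The proposition asserts the identification with $r$ itself, and that identification is used downstream: Corollary \ref{cor:comdig} labels all four corners by the same $r$ and $\widetilde{r}$. So the sign is not immaterial there. You should state precisely what you prove: $\vartheta=\vartheta_{-r}$, or equivalently $\vartheta=\vartheta_r$ once the overall sign in \eqref{cobdi} is reversed. You should also drop the assertion that the discrepancy is harmless for the conclusion.
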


Let $(L, \ast)$ be a Leibniz algebra and $(V, \bar{\kl}, \bar{\kr})$ be a representation of
$(L, \ast)$. Recall that a linear map $T: V\rightarrow L$ is called an {\bf
$\mathcal{O}$-operator of $(L, \ast)$ associated to $(V, \bar{\kl}, \bar{\kr})$} if for
any $v_{1}, v_{2}\in V$,
$$
T(v_{1})\ast T(v_{2})=T\big(\kl(T(v_{1}))(v_{2})+\kr(T(v_{2}))(v_{1})\big).
$$

\begin{pro}[\cite{TS,BLST}]\label{pro:o-leib}
Let $(L, \ast)$ be a Leibniz algebra and $r\in L\otimes L$ be symmetric.
Then $r$ is a solution of the $\LYBE$ in $(L, \ast)$ if and only if
$r^{\sharp}: L^{\ast}\rightarrow L$ is an $\mathcal{O}$-operator of $(L, \ast)$
associated to the coregular representation $(L^{\ast}, \bar{\fl}_{L}^{\ast},
-\bar{\fl}_{L}^{\ast}-\bar{\fr}_{L}^{\ast})$.
\end{pro}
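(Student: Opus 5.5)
The plan is to translate both conditions into identities of trilinear forms on $L^{\ast}\otimes L^{\ast}\otimes L^{\ast}$ and compare them term by term. Write $r=\sum_{i}x_{i}\otimes y_{i}$ and $T=r^{\sharp}$. By the convention fixed in the introduction, $\langle\xi_{2}, T(\xi_{1})\rangle=\langle\xi_{1}\otimes\xi_{2}, r\rangle$. Symmetry of $r$ then gives
$$
T(\xi)=\sum_{i}\langle\xi, x_{i}\rangle y_{i}=\sum_{i}\langle\xi, y_{i}\rangle x_{i},
\qquad
\langle\xi_{2}, T(\xi_{1})\rangle=\langle\xi_{1}, T(\xi_{2})\rangle ,
$$
so $T$ is self-adjoint. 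Both sides of the claimed equivalence are linear in the test functionals. Since the pairing of $L^{\ast\otimes 3}$ with $L^{\otimes 3}$ is nondegenerate in finite dimensions, it suffices to show
$$
\Big\langle\xi_{3},\; T(\xi_{1})\ast T(\xi_{2})-T\big(\tilde{\fl}_{L}^{\ast}(T(\xi_{1}))(\xi_{2})-(\tilde{\fl}_{L}^{\ast}+\tilde{\fr}_{L}^{\ast})(T(\xi_{2}))(\xi_{1})\big)\Big\rangle
=\pm\langle\xi_{\sigma(1)}\otimes\xi_{\sigma(2)}\otimes\xi_{\sigma(3)},\; \mathbf{L}_{r}\rangle
$$
for all $\xi_{1},\xi_{2},\xi_{3}\in L^{\ast}$, with a fixed sign and a fixed permutation $\sigma$.

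I will expand the left-hand side into four terms. The first is
$$
\langle\xi_{3}, T(\xi_{1})\ast T(\xi_{2})\rangle=\sum_{i,j}\langle\xi_{1},x_{i}\rangle\langle\xi_{2},x_{j}\rangle\langle\xi_{3},y_{i}\ast y_{j}\rangle,
$$
which is $\langle\xi_{1}\otimes\xi_{2}\otimes\xi_{3}, r_{13}\ast r_{23}\rangle$ in the obvious notation. For the remaining terms, self-adjointness moves $T$ onto $\xi_{3}$, and the definition $\langle\mu^{\ast}(a)\xi, v\rangle=-\langle\xi,\mu(a)v\rangle$ then yields
$$
\langle\xi_{3}, T(\tilde{\fl}_{L}^{\ast}(T\xi_{1})\xi_{2})\rangle=-\langle\xi_{2}, T(\xi_{1})\ast T(\xi_{3})\rangle,
$$
$$
\langle\xi_{3}, T(\tilde{\fl}_{L}^{\ast}(T\xi_{2})\xi_{1})\rangle=-\langle\xi_{1}, T(\xi_{2})\ast T(\xi_{3})\rangle,
\qquad
\langle\xi_{3}, T(\tilde{\fr}_{L}^{\ast}(T\xi_{2})\xi_{1})\rangle=-\langle\xi_{1}, T(\xi_{3})\ast T(\xi_{2})\rangle .
$$
Each of these is again a sum $\sum_{i,j}$ of three pairings, with the Leibniz product landing in a different tensor slot. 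This gives four terms in total, with product patterns in slots $3$, $2$, $1$, $1$, matching the four terms of $\mathbf{L}_{r}$.

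The final step is to rewrite $\mathbf{L}_{r}$ in the same normal form. In $r_{12}\ast r_{23}$, $r_{23}\ast r_{12}$ and $r_{23}\ast r_{13}$, I will use $\sum_{i}x_{i}\otimes y_{i}=\sum_{i}y_{i}\otimes x_{i}$ to rename components, so that every term reads "$x$-legs outside, product of $y$'s inside", exactly as in the expansions above. I will then choose $\sigma$ (I expect $\sigma$ to send the product slot of $\mathbf{L}_{r}$ to $\xi_{3}$, i.e.\ $\sigma$ a transposition such as $(13)$ or $(23)$) and check that the four terms match with consistent signs.

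The main obstacle is this bookkeeping. $\mathbf{L}_{r}$ has its products spread over slots $1$, $2$, $2$, $3$, while the operator side has slots $3$, $2$, $1$, $1$. So the matching must use symmetry of $r$ to move a product from one slot to another, and possibly requires taking the $\mathcal{O}$-operator identity with $\xi_{1},\xi_{2}$ interchanged, or combined with its image under $\tau$. If a direct matching fails, I will first try comparing $\mathbf{L}_{r}$ with the sum of the identity and its $\xi_{1}\leftrightarrow\xi_{2}$ swap, using the Leibniz symmetry $\tilde{\fr}_{L}(x)\tilde{\fr}_{L}(y)=-\tilde{\fr}_{L}(x)\tilde{\fl}_{L}(y)$ of the coregular representation to cancel the extra terms. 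Once the identification holds for all $\xi$'s, both directions of the ``if and only if'' follow at once.
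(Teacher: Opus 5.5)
The paper gives no proof of this proposition; it is cited from the literature. Your route is the standard one: pair the $\mathcal{O}$-operator defect with $\xi_{3}$ and compare it with a permuted pairing against $\mathbf{L}_{r}$. The self-adjointness of $T=r^{\sharp}$, your four expansions, and the reduction to a single identity for all $\xi_{1},\xi_{2},\xi_{3}$ are all correct, and that identity does hold. However, the step you defer is the entire content of the proof, and two things you say about it are wrong.

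\textbf{The symmetry of $r$ does not move products between slots.} Symmetry only lets you rename legs so that every term reads ``$x$'s outside, product of $y$'s inside.'' Doing this with $r=\sum_{i}x_{i}\otimes y_{i}=\sum_{i}y_{i}\otimes x_{i}$ gives
\begin{align*}
\mathbf{L}_{r}=\sum_{i,j}\big((y_{i}\ast y_{j})\otimes x_{i}\otimes x_{j}
-x_{i}\otimes(y_{i}\ast y_{j})\otimes x_{j}
-x_{i}\otimes(y_{j}\ast y_{i})\otimes x_{j}
+x_{i}\otimes x_{j}\otimes(y_{j}\ast y_{i})\big).
\end{align*}
Your four terms give $\langle\xi_{1}\otimes\xi_{2}\otimes\xi_{3},\,Q\rangle$, where
\begin{align*}
Q=\sum_{i,j}\big(x_{i}\otimes x_{j}\otimes(y_{i}\ast y_{j})
+x_{i}\otimes(y_{i}\ast y_{j})\otimes x_{j}
-(y_{i}\ast y_{j})\otimes x_{i}\otimes x_{j}
-(y_{j}\ast y_{i})\otimes x_{i}\otimes x_{j}\big).
\end{align*}
So the product-slot pattern $1,2,2,3$ of $\mathbf{L}_{r}$ must be carried to the pattern $3,2,1,1$ of $Q$ by $\sigma$ alone. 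This forces $\sigma(2)=1$.

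Neither $(13)$ nor $(23)$ can work. In fact $(13)$ leaves $\mathbf{L}_{r}$ invariant when $r$ is symmetric. Instead, swap the first two legs of $\mathbf{L}_{r}$ and rename $i\leftrightarrow j$ in the last term; this gives exactly $Q$. Hence, with sign $+$,
\begin{align*}
\big\langle\xi_{3},\; T(\xi_{1})\ast T(\xi_{2})-T\big(\tilde{\fl}_{L}^{\ast}(T(\xi_{1}))(\xi_{2})-(\tilde{\fl}_{L}^{\ast}+\tilde{\fr}_{L}^{\ast})(T(\xi_{2}))(\xi_{1})\big)\big\rangle
=\langle\xi_{2}\otimes\xi_{1}\otimes\xi_{3},\;\mathbf{L}_{r}\rangle
\end{align*}
for all $\xi_{1},\xi_{2},\xi_{3}\in L^{\ast}$. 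Both directions of the equivalence then follow immediately by nondegeneracy of the pairing.

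\textbf{Drop the fallback plan.} Adding the identity to its $\xi_{1}\leftrightarrow\xi_{2}$ swap would only control a symmetrized defect. That would lose at least one direction of the ``if and only if'': from $\mathbf{L}_{r}=0$ you could no longer conclude that the defect itself vanishes. Also, no Leibniz identity is needed anywhere, neither that of $(L,\ast)$ nor that of the coregular representation. The displayed identity is purely formal and holds for any bilinear product once $r$ is symmetric.
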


For the $\mathcal{O}$-operator of an associative algebra and the $\mathcal{O}$-operator of
the induced Lie algebra, similar to Proposition \ref{pro:o-ass-lie}, we have:

\begin{pro}[\cite{HL}]\label{pro:o-diass-leib}
Let $(D, \dashv, \vdash)$ be a diassociative algebra and $(D, \ast)$ be the induced
Leibniz algebra. If $T: D^{\ast}\rightarrow D$ is an $\mathcal{O}$-operator of $(D, \dashv,
\vdash)$ associated to coregular bimodule $(D^{\ast}, \fr_{\vdash}^{\ast} -\fr_{\dashv}^{\ast},
-\fl_{\vdash}^{\ast}, -\fr_{\dashv}^{\ast}, \fl_{\dashv}^{\ast}-\fl_{\vdash}^{\ast})$,
then $T$ is also an $\mathcal{O}$-operator of $(D, \ast)$ associated to
coregular representation $(D^{\ast}, \fl_{\vdash}^{\ast}-\fr_{\dashv}^{\ast},
\fl_{\dashv}^{\ast}+\fr_{\dashv}^{\ast}-\fl_{\vdash}^{\ast}-\fr_{\vdash}^{\ast})$.
\end{pro}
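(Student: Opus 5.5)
The plan is a direct computation, parallel to Proposition \ref{pro:o-ass-lie}. First I would express the coregular representation of the induced Leibniz algebra $(D,\ast)$ in terms of the diassociative multiplications. Since $d_{1}\ast d_{2}=d_{1}\vdash d_{2}-d_{2}\dashv d_{1}$, the regular representation satisfies
$$
\tilde{\fl}_{D}=\fl_{\vdash}-\fr_{\dashv},\qquad \tilde{\fr}_{D}=\fr_{\vdash}-\fl_{\dashv}.
$$
Dualizing gives $\tilde{\fl}_{D}^{\ast}=\fl_{\vdash}^{\ast}-\fr_{\dashv}^{\ast}$ and $-\tilde{\fl}_{D}^{\ast}-\tilde{\fr}_{D}^{\ast}=\fl_{\dashv}^{\ast}+\fr_{\dashv}^{\ast}-\fl_{\vdash}^{\ast}-\fr_{\vdash}^{\ast}$. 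This confirms that the triple in the statement is exactly the coregular representation $(D^{\ast},\tilde{\fl}_{D}^{\ast},-\tilde{\fl}_{D}^{\ast}-\tilde{\fr}_{D}^{\ast})$. It is therefore a representation, and we only need to check the $\mathcal{O}$-operator identity.

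Next, take $\xi_{1},\xi_{2}\in D^{\ast}$ and write
$$
T(\xi_{1})\ast T(\xi_{2})=T(\xi_{1})\vdash T(\xi_{2})-T(\xi_{2})\dashv T(\xi_{1}).
$$
I expand each term with the two $\mathcal{O}$-operator identities for the coregular bimodule, where $\kl_{\dashv}=\fr_{\vdash}^{\ast}-\fr_{\dashv}^{\ast}$, $\kr_{\dashv}=-\fl_{\vdash}^{\ast}$, $\kl_{\vdash}=-\fr_{\dashv}^{\ast}$ and $\kr_{\vdash}=\fl_{\dashv}^{\ast}-\fl_{\vdash}^{\ast}$. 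The $\vdash$-identity gives
$$
T(\xi_{1})\vdash T(\xi_{2})=T\big(-\fr_{\dashv}^{\ast}(T(\xi_{1}))(\xi_{2})+(\fl_{\dashv}^{\ast}-\fl_{\vdash}^{\ast})(T(\xi_{2}))(\xi_{1})\big).
$$
The $\dashv$-identity, with the roles of $\xi_{1}$ and $\xi_{2}$ swapped, gives
$$
T(\xi_{2})\dashv T(\xi_{1})=T\big((\fr_{\vdash}^{\ast}-\fr_{\dashv}^{\ast})(T(\xi_{2}))(\xi_{1})-\fl_{\vdash}^{\ast}(T(\xi_{1}))(\xi_{2})\big).
$$

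Subtracting and using linearity of $T$, I collect the terms acting on $\xi_{2}$ and on $\xi_{1}$. This yields
$$
T(\xi_{1})\ast T(\xi_{2})=T\big((\fl_{\vdash}^{\ast}-\fr_{\dashv}^{\ast})(T(\xi_{1}))(\xi_{2})+(\fl_{\dashv}^{\ast}+\fr_{\dashv}^{\ast}-\fl_{\vdash}^{\ast}-\fr_{\vdash}^{\ast})(T(\xi_{2}))(\xi_{1})\big),
$$
which is precisely the $\mathcal{O}$-operator condition for $(D,\ast)$ associated to the stated coregular representation.

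There is no real obstacle here. The only delicate point is bookkeeping: matching the four coregular bimodule maps in the correct order, and using the paper's sign convention $\mu^{\ast}(a)=-\mu(a)^{\ast}$ consistently when identifying $\tilde{\fl}_{D}^{\ast}$ and $\tilde{\fr}_{D}^{\ast}$.
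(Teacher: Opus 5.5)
Your proposal is correct: identifying $\tilde{\fl}_{D}=\fl_{\vdash}-\fr_{\dashv}$ and $\tilde{\fr}_{D}=\fr_{\vdash}-\fl_{\dashv}$, applying the $\vdash$-identity to $T(\xi_{1})\vdash T(\xi_{2})$ and the swapped $\dashv$-identity to $T(\xi_{2})\dashv T(\xi_{1})$, and regrouping gives exactly the stated coregular representation, and your signs check out. The paper only cites this result from \cite{HL} without proof, and your argument is the same direct verification used for the analogous associative--Lie statement, Proposition \ref{pro:o-ass-lie}.
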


Since the operad of Leibniz algebras and the operad of Zinbiel algebras are Koszul dual,
we have constructed Lie bialgebras by using the tensor product of a Leibniz bialgebras and
a quadratic Zinbiel algebra \cite{HL1}.

\begin{thm}[\cite{HL1}]\label{thm:liebia-LZ}
Let $(L, \ast, \vartheta)$ be a Leibniz bialgebra, $(B, \diamond, \varpi)$
be a quadratic Zinbiel algebra and $(L\otimes B, [-,-])$ be the induced Lie
algebra by $(L, \ast)$ and $(B, \diamond)$. Define a linear map $\delta:
L\otimes B\rightarrow(L\otimes B)\otimes(L\otimes B)$ by
\begin{align}
\delta(x\otimes b)=(\id\otimes\id-\tau)(\vartheta(x)\bullet\nu_{\varpi}(b))
:=(\id\otimes\id-\tau)\Big(\sum_{(x)}\sum_{(b)}
(x_{(1)}\otimes b_{(1)})\otimes(x_{(2)}\otimes b_{(2)})\Big),\label{copro}
\end{align}
for any $x\in L$ and $b\in B$, where $\vartheta(x)=\sum_{(x)}x_{(1)}\otimes
x_{(2)}$ and $\nu_{\varpi}(b)=\sum_{(b)}b_{(1)}\otimes b_{(2)}$ in the Sweedler notation.
Then $(L\otimes B, [-,-], \delta)$ is a Lie bialgebra, which is called the
{\bf Lie bialgebra induced from $(L, \ast, \vartheta)$ by $(B, \diamond, \varpi)$}.
\end{thm}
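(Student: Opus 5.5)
The plan is to dualize the Leibniz bialgebra into data matching the hypotheses of Theorem \ref{thm:Z-L-liebi}, or else to verify the three Lie bialgebra axioms directly by reducing them to the Leibniz bialgebra identities. I take the direct route, since Theorem \ref{thm:Z-L-liebi} requires a quadratic Leibniz algebra, which we do not have.

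\emph{Lie algebra and antisymmetry.} By Proposition \ref{pro:L-Z-lie}, $(L\otimes B,[-,-])$ is a Lie algebra. The map $\delta$ is skew-symmetric by construction, because $\tau(\id\otimes\id-\tau)=-(\id\otimes\id-\tau)$.

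\emph{Co-Jacobi identity.} Lemma \ref{lem:qZ-dual} gives that $(B,\nu_{\varpi})$ is a Zinbiel coalgebra. I would then prove the coalgebra analogue of Proposition \ref{pro:L-Z-lie}: if $(L,\vartheta)$ is a Leibniz coalgebra and $(B,\nu)$ is a Zinbiel coalgebra, then $\delta=(\id\otimes\id-\tau)(\vartheta\bullet\nu)$ satisfies co-Jacobi. The cleanest argument is by duality, assuming finite dimensionality. The linear dual of $\vartheta\bullet\nu$ is the product $(\xi\otimes\eta)(\xi'\otimes\eta')=\vartheta^{\ast}(\xi\otimes\xi')\otimes\nu^{\ast}(\eta\otimes\eta')$ on $L^{\ast}\otimes B^{\ast}$, with $(L^{\ast},\vartheta^{\ast})$ Leibniz and $(B^{\ast},\nu^{\ast})$ Zinbiel. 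The dual of $\delta$ is then exactly the bracket of Proposition \ref{pro:L-Z-lie}, so co-Jacobi follows from the Jacobi identity there.

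\emph{Cocycle condition.} This is the main computation. I would expand both sides on $x\otimes b$ and $x'\otimes b'$ in Sweedler notation. The left side is $\delta\big((x\ast x')\otimes(b\diamond b')-(x'\ast x)\otimes(b'\diamond b)\big)$, which involves $\vartheta(x\ast x')$, $\vartheta(x'\ast x)$, $\nu_{\varpi}(b\diamond b')$ and $\nu_{\varpi}(b'\diamond b)$. I would express every $B$-factor through a small fixed family of elements of $B\otimes B$, defined by pairing with $\varpi$ against $e\otimes f$ as in the $\Phi^{b,b'}_{k}$ device of the proof of Theorem \ref{thm:dias-asbia}. The invariance of $\varpi$ and the Zinbiel identity rewrite $\nu_{\varpi}(b\diamond b')$, $(b_{(1)}\diamond b')\otimes b_{(2)}$, $b'_{(1)}\otimes(b\diamond b'_{(2)})$ and the remaining terms as signed sums of these elements. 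Collecting coefficients, each one should be an $L\otimes L$-expression that vanishes by the two Leibniz bialgebra compatibility conditions, the second applied with $x,x'$ in either order. The first, symmetric condition $\tau((\tilde{\fr}_{L}(x_{2})\otimes\id)\vartheta(x_{1}))=(\tilde{\fr}_{L}(x_{1})\otimes\id)\vartheta(x_{2})$ absorbs the terms produced by the antisymmetrizer $\tau$.

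The hard part is the bookkeeping of the $\tau$-twisted terms. I expect about eight $B$-patterns, each paired with several $L$-terms, and the grouping must match the compatibility conditions exactly. If the grouping fails, I would fall back on a structural argument. Using Proposition \ref{pro:DASI-Leibbia} and the commutative diagram does not work, because it only handles Leibniz bialgebras that come from diassociative ones. Instead I would use the matched-pair/Manin triple characterization: build the quadratic Leibniz algebra $L\oplus L^{\ast}$, the double of the Leibniz bialgebra, tensor it with $(B,\diamond,\varpi)$ to obtain a Lie algebra with an invariant symmetric form, and check that it is a Manin triple containing $L\otimes B$ and $L^{\ast}\otimes B$. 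That reduces the cocycle condition to invariance of the form, which is a short computation.
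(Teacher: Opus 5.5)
The paper does not prove this statement; it quotes it from \cite{HL1}, so there is no in-paper argument to compare with. Judged on its own merits, three parts of your proposal are correct: the Lie algebra structure, the antisymmetry of $\delta$, and the co-Jacobi identity. For the last, $\delta^{\ast}$ is exactly the bracket of Proposition \ref{pro:L-Z-lie} on $L^{\ast}\otimes B^{\ast}$, built from the Leibniz algebra $(L^{\ast},\vartheta^{\ast})$ and the Zinbiel algebra $(B^{\ast},\nu_{\varpi}^{\ast})$. The latter is isomorphic to $(B,\diamond)$ via $c\mapsto\varpi(-,c)$, and co-Jacobi is the transpose of Jacobi.

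The gap is the cocycle condition, which is the only substantive part. Your main route stops at the assertion that, after expanding in $\Phi$-type patterns, every coefficient ``should'' vanish by the two Leibniz bialgebra identities. Nothing in the proposal checks this, and you leave open that the grouping may fail. Compare the proof of Theorem \ref{thm:dias-asbia}: the same device counts as a proof only once each coefficient is written out and matched to specific compatibility equations. As written, the direct route is a plan, not a proof.

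Your fallback is the right way to close the gap, and it gives a complete argument once these points are made precise.

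(1) On $L\oplus L^{\ast}$ you must use the skew-symmetric pairing $\Omega(x+\xi,y+\eta)=\langle\xi,y\rangle-\langle\eta,x\rangle$, not the symmetric one. Only then is $\Omega\otimes\varpi$ symmetric; with the symmetric pairing you would get a skew form and no Manin triple of Lie algebras. The external input is the Manin-triple characterization of Leibniz bialgebras \cite{TS}, checked against the paper's two compatibility identities. It says the double is a Leibniz algebra on which $\Omega(a\ast b,c)=\Omega(a,b\ast c+c\ast b)$ holds, which is the paper's notion of quadratic Leibniz algebra.

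(2) Invariance of $\Omega\otimes\varpi$ under the bracket of Proposition \ref{pro:L-Z-lie}. Skew-symmetry plus the defining invariances give $\omega(y\ast x,z)=-\omega(x,y\ast z)$ for a quadratic Leibniz algebra and $\varpi(a\diamond c,b)=-\varpi(a,b\diamond c)$ for a quadratic Zinbiel algebra. With these four identities, $\langle[u,v],w\rangle=\langle u,[v,w]\rangle$ follows in two lines.

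(3) $L\otimes B$ and $L^{\ast}\otimes B$ are isotropic subalgebras, and the pairing between them is nondegenerate.

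(4) Drinfeld's theorem then produces the cobracket $-\delta$ on $L\otimes B$, because $\Omega(x,\xi)=-\langle\xi,x\rangle$. This is harmless, since the Lie bialgebra axioms are invariant under $\delta\mapsto-\delta$.

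On this route co-Jacobi comes for free, so your separate duality argument becomes redundant. What the direct computation would buy instead is independence from the external Leibniz Manin-triple theorem, but only if you actually carry out the bookkeeping.
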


Let $(D, \dashv, \vdash, \theta_{\dashv}, \theta_{\vdash})$ be a diassociative bialgebra
and $(B, \diamond, \varpi)$ be a quadratic Zinbiel algebra. Here we obtained the second
way to obtain Lie bialgebra from diassociative bialgebra:
$$
{\bf (II)}: \qquad\qquad\quad
\xymatrix@C=2cm@R=0.6cm{
\txt{$(D, \dashv, \vdash, \theta_{\dashv}, \theta_{\vdash})$ \\
{\tiny a diassociative bialgebra}}
\ar[r]^-{\mbox{\tiny Pro. \ref{pro:DASI-Leibbia}}}
&\txt{$(D, \ast, \vartheta)$ \\ {\tiny a Leibniz bialgebra}}
\ar[r]^-{\mbox{\tiny Thm. \ref{thm:liebia-LZ}}}
& \txt{$(D\otimes B, [-,-], \delta)$ \\ {\tiny a Lie bialgebra}}}\qquad\qquad\qquad\qquad
$$
What we are going to say next is that starting from a given diassociative bialgebra,
the Lie bialgebras obtained using these two methods are the same.

\begin{thm}\label{thm:commdig}
Let $(D, \dashv, \vdash, \theta_{\dashv}, \theta_{\vdash})$ be a diassociative
bialgebra and $(B, \diamond, \varpi)$ be a quadratic Zinbiel algebra. Then
we have the following commutative diagram:
$$
\xymatrix@C=2cm@R=0.7cm{
\txt{$(D, \dashv, \vdash, \theta_{\dashv}, \theta_{\vdash})$ \\
{\tiny a diassociative bialgebra}}
\ar[d]_{{\rm Pro.}~\ref{pro:DASI-Leibbia}} \ar[r]^{{\rm Thm.}~\ref{thm:dias-asbia}}
&\txt{$(D\otimes B, \cdot, \Delta)$\\  {\tiny an ASI bialgebra}}
\ar[d]^{{\rm Pro.}~\ref{pro:ASI-Liebia}} \\
\txt{$(D, \ast, \vartheta)$ \\ {\tiny a Leibniz bialgebra}}
\ar[r]^{{\rm Thm.}~\ref{thm:liebia-LZ}\quad}
& \txt{$(D\otimes B, [-,-], \delta)$ \\ {\tiny a Lie bialgebra}}}
$$
\end{thm}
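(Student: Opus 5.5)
We have to show that the two Lie bialgebra structures on $D\otimes B$ coincide: the one from path (I), via Theorem \ref{thm:dias-asbia} and then Proposition \ref{pro:ASI-Liebia}, and the one from path (II), via Proposition \ref{pro:DASI-Leibbia} and then Theorem \ref{thm:liebia-LZ}. Since both live on the same vector space $D\otimes B$, this amounts to checking two equalities.

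\emph{The brackets.} Path (I) gives the commutator of the induced associative product $\cdot$. Path (II) gives the bracket induced from the Leibniz product $d_{1}\ast d_{2}=d_{1}\vdash d_{2}-d_{2}\dashv d_{1}$ and $\diamond$. Proposition \ref{pro:comm-diag} says exactly that these agree, so nothing further is needed here.

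\emph{The cobrackets, setup.} Write $\nu:=\nu_{\varpi}$. Path (I) gives $\delta_{I}=(\id\otimes\id-\tau)\Delta$, where
$$
\Delta(d\otimes b)=\theta_{\vdash}(d)\bullet\nu(b)+\theta_{\dashv}(d)\bullet\tau(\nu(b)).
$$
Path (II) gives $\delta_{II}=(\id\otimes\id-\tau)(\vartheta(d)\bullet\nu(b))$ with $\vartheta=\theta_{\vdash}-\tau\theta_{\dashv}$.

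\emph{The cobrackets, key identity.} The whole comparison rests on the elementary rule for the twist on $(D\otimes B)^{\otimes 2}$:
$$
\tau(X\bullet Y)=\tau(X)\bullet\tau(Y)\qquad\text{for all } X\in D\otimes D,\ Y\in B\otimes B.
$$
This holds because the factors are shuffled in parallel. In Sweedler notation it reads $\tau\big((d_{[1]}\otimes b_{(2)})\otimes(d_{[2]}\otimes b_{(1)})\big)=(d_{[2]}\otimes b_{(1)})\otimes(d_{[1]}\otimes b_{(2)})$.

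\emph{The cobrackets, computation.} Expanding $\vartheta$ in $\delta_{II}$ gives
$$
\delta_{II}(d\otimes b)=(\id\otimes\id-\tau)\big(\theta_{\vdash}(d)\bullet\nu(b)\big)-(\id\otimes\id-\tau)\big(\tau(\theta_{\dashv}(d))\bullet\nu(b)\big).
$$
By the key identity, $\tau(\theta_{\dashv}(d))\bullet\nu(b)=\tau\big(\theta_{\dashv}(d)\bullet\tau(\nu(b))\big)$. Since $(\id\otimes\id-\tau)\circ\tau=-(\id\otimes\id-\tau)$, the second summand equals $+(\id\otimes\id-\tau)\big(\theta_{\dashv}(d)\bullet\tau(\nu(b))\big)$. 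Adding the first summand gives exactly $(\id\otimes\id-\tau)\Delta(d\otimes b)=\delta_{I}(d\otimes b)$. This holds for every pure tensor $d\otimes b$, so linearity gives $\delta_{I}=\delta_{II}$.

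\emph{Where care is needed.} The only real risk is bookkeeping: the Sweedler index conventions in \eqref{thes-ass} and \eqref{copro}, and the precise meaning of $\bullet$ when a twisted element appears. I would therefore write out one Sweedler term of each side explicitly to confirm the sign and the placement of $b_{(1)}$ and $b_{(2)}$. The genuine algebraic content has already been proved in Theorem \ref{thm:dias-asbia}, Proposition \ref{pro:DASI-Leibbia} and Theorem \ref{thm:liebia-LZ}, so no bialgebra compatibility needs to be rechecked.
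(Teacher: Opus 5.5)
Your proposal is correct and follows the paper's proof: the brackets agree by Proposition \ref{pro:comm-diag}, and the cobrackets agree by regrouping $(\id\otimes\id-\tau)\Delta$ via $\tau(\theta_{\dashv}(d))\bullet\nu_{\varpi}(b)=\tau\big(\theta_{\dashv}(d)\bullet\tau(\nu_{\varpi}(b))\big)$, which is exactly the one-line computation in the paper. Your explicit justification of the twist identity and of $(\id\otimes\id-\tau)\circ\tau=-(\id\otimes\id-\tau)$ spells out steps the paper leaves implicit.
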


\begin{proof}
Denote the Lie bialgebras obtain from methods {\bf (I)} and {\bf (II)} by
$(D\otimes B, [-,-]_{1}, \delta_{1})$ and $(D\otimes B, [-,-]_{2}, \delta_{2})$ respectively.
By Proposition \ref{pro:comm-diag}, we know that $[-,-]_{1}=[-,-]_{2}$. Hence we only
need to show that $\delta_{1}=\delta_{2}$. For any $d\in D$ and $b\in B$,
\begin{align*}
\delta_{1}(d\otimes b)
&=\theta_{\vdash,\omega}(d)\bullet\nu(b)+\theta_{\dashv,\omega}(d)\bullet\tau(\nu(b))
-\tau(\theta_{\vdash,\omega}(d))\bullet\tau(\nu(b))
-\tau(\theta_{\dashv,\omega}(d))\bullet\nu(b)\\
%&=(\id\otimes\fd)(\vartheta(p))\bullet\theta_{\omega}(c)
%+\tau((\fd\otimes\id)(\vartheta(p)))\bullet\tau(\theta_{\omega}(c))\\
%&=(\id\otimes\hat{\fd})(\vartheta(p)\bullet\theta_{\omega}(c)+\tau(\vartheta(p))
%\bullet\tau(\theta_{\omega}(c)))\\
&=(\id\otimes\id-\tau)\big(\theta_{\vdash}(d)\bullet\nu_{\varpi}(b)
-\tau\theta_{\dashv}(d)\bullet\nu_{\varpi}(b)\big)\\
&=\delta_{2}(d\otimes b).
\end{align*}
That is, $\delta_{1}=\delta_{2}$. Thus, $(D\otimes B, [-,-]_{1}, \delta_{1})=
(D\otimes B, [-,-]_{2}, \delta_{2})$ as Lie bialgebras.
\end{proof}

The relationship between solutions of the $\LYBE$ in a Leibniz algebra and solutions of
the $\CYBE$ in the induced Lie algebra has been discussed in detail \cite{HL1}.
In particular, we have the following proposition.

\begin{pro}[\cite{HL1}]\label{pro:LYBE-CYBE}
Let $(L, \ast)$ be a Leibniz algebra, $(B, \diamond, \varpi)$ be a quadratic Zinbiel
algebra and $(L\otimes B, [-,-])$ be the induced Lie algebra. Suppose that
$r=\sum_{i}x_{i}\otimes y_{i}\in L\otimes L$ is a symmetric solution of the $\LYBE$
in $(L, \ast)$, then
\begin{align}
\widetilde{r}=\sum_{i,j}(x_{i}\otimes e_{j})\otimes(y_{i}\otimes f_{j})
\in(L\otimes B)\otimes(L\otimes B)  \label{Lr-max}
\end{align}
is a skew-symmetric solution of the $\CYBE$ in $(L\otimes B, [-,-])$, where $\{e_{1},
e_{2},\cdots, e_{n}\}$ is a basis of $B$ and $\{f_{1}, f_{2},\cdots, f_{n}\}$ is the
dual basis of $\{e_{1}, e_{2},\cdots, e_{n}\}$ with respect to $\varpi(-,-)$.
\end{pro}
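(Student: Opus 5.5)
The plan is a direct tensor computation: substitute $\widetilde{r}$ into $\mathbf{C}_{\widetilde{r}}$, separate the $L$-factors from the $B$-factors, and reduce everything to the three basic tensors in $B\otimes B\otimes B$
$$
P:=\sum_{p,q}(e_{p}\diamond e_{q})\otimes f_{p}\otimes f_{q},\qquad
Q:=\sum_{p,q}(e_{q}\diamond e_{p})\otimes f_{p}\otimes f_{q},\qquad
\tau_{12}(P),
$$
exactly as in the proof of Proposition \ref{pro:DAYBE-AYBE}. The identity $\sum_{j}e_{j}\otimes f_{j}=-\sum_{j}f_{j}\otimes e_{j}$ was already established there, and it settles skew-symmetry at once: $r=\tau(r)$ gives $\tau(\widetilde{r})=-\widetilde{r}$.

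\textbf{Expanding $\mathbf{C}_{\widetilde{r}}$.} Each of the three summands $[\widetilde{r}_{12},\widetilde{r}_{13}]$, $[\widetilde{r}_{13},\widetilde{r}_{23}]$ and $[\widetilde{r}_{12},\widetilde{r}_{23}]$ is expanded using the bracket of Proposition \ref{pro:L-Z-lie}. Each one splits into two pieces of the form $(\text{tensor in }L^{\otimes3})\bullet(\text{tensor in }B^{\otimes3})$, giving six terms in total. In the $L$-factors the products appearing are $x_{i}\ast x_{j}$, $x_{j}\ast x_{i}$, $y_{i}\ast y_{j}$, $y_{j}\ast y_{i}$, $y_{i}\ast x_{j}$ and $x_{j}\ast y_{i}$. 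In the $B$-factors they are the six Zinbiel sums
$$
\sum(e_{p}\diamond e_{q})\otimes f_{p}\otimes f_{q},\quad
\sum e_{p}\otimes e_{q}\otimes(f_{p}\diamond f_{q}),\quad
\sum e_{p}\otimes(f_{p}\diamond e_{q})\otimes f_{q},
$$
together with the three sums obtained by reversing the order of each product.

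\textbf{Reducing the $B$-factors.} Pair each of these six sums against $e_{s}\otimes e_{u}\otimes e_{v}$ via $\varpi$, then use invariance, $\varpi(b_{1}\diamond b_{2},b_{3})=\varpi(b_{2},b_{1}\diamond b_{3}+b_{3}\diamond b_{1})$, together with skew-symmetry. By nondegeneracy this expresses each sum as a combination of $P$ and $Q$. Proposition \ref{pro:DAYBE-AYBE} already lists four of these relations:
\begin{align*}
\sum e_{p}\otimes e_{q}\otimes(f_{p}\diamond f_{q})&=Q,\\
\sum e_{p}\otimes e_{q}\otimes(f_{q}\diamond f_{p})&=-(P+Q),\\
\sum e_{p}\otimes(e_{q}\diamond f_{p})\otimes f_{q}&=P+Q,\\
\sum e_{p}\otimes(f_{p}\diamond e_{q})\otimes f_{q}&=-P.
\end{align*}
The two remaining sums, $\sum(e_{p}\diamond e_{q})\otimes f_{p}\otimes f_{q}=P$ and $\sum(e_{q}\diamond e_{p})\otimes f_{p}\otimes f_{q}=Q$, hold by definition.

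\textbf{Collecting the coefficients.} Substituting these relations gives $\mathbf{C}_{\widetilde{r}}=X\bullet P+Y\bullet Q$ with explicit $X,Y\in L^{\otimes3}$ built from $r$. Using $r=\tau(r)$ to rename indices, I expect $X=\mathbf{L}_{r}$. I expect $Y$ to equal either $\mathbf{L}_{r}$ with its tensor legs permuted, or $\mathbf{L}_{r}$ minus terms that cancel by symmetry, in the same way the two expressions for $\mathbf{D}_{r}$ matched in the proof of Proposition \ref{pro:DAYBE-AYBE}. Then $\mathbf{L}_{r}=0$ forces $\mathbf{C}_{\widetilde{r}}=0$.

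\textbf{Main obstacle.} The hard step is this final bookkeeping: showing that the $Q$-coefficient really is a permuted copy of $\mathbf{L}_{r}$, since each bracket contributes sign-twisted terms. My first attempt will be to apply $\tau(r)=r$ in each leg separately and compare term by term against the four summands of $\mathbf{L}_{r}$.

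\textbf{Alternative route.} If this bookkeeping becomes unwieldy, I would instead use a diagrammatic shortcut: verify that $\widetilde{r}^{\sharp}=r^{\sharp}\otimes\kappa^{\sharp}$ is an $\mathcal{O}$-operator of $(L\otimes B,[-,-])$ associated to the coregular representation. This would follow from Proposition \ref{pro:o-leib} combined with the symmetry of $\kappa^{\sharp}$ (shown in Proposition \ref{pro:o-dia-ass}), after which Proposition \ref{pro:o-lie} gives the conclusion. Checking that $\mathcal{O}$-operator condition, however, reduces to essentially the same $B$-identities as above.
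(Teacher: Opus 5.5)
Your direct computation is correct, and it is the same technique the paper uses for the analogous Proposition \ref{pro:DAYBE-AYBE}; this statement itself is only cited from \cite{HL1}, with no proof given here. The bookkeeping you flagged closes cleanly: the $P$-coefficient is exactly $\mathbf{L}_{r}$ after renaming $i\leftrightarrow j$, and after using $r=\tau(r)$ on one copy of $r$ the $Q$-coefficient is $-(\id\otimes\tau)(\mathbf{L}_{r})$, so $\mathbf{C}_{\widetilde{r}}=\mathbf{L}_{r}\bullet P-\big((\id\otimes\tau)(\mathbf{L}_{r})\big)\bullet Q$, which vanishes when $\mathbf{L}_{r}=0$.
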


Thus, by using the correspondence between the skew-symmetric solution of $\CYBE$ in
a Lie algebra and triangular Lie bialgebra, the symmetric
solution of $\LYBE$ in a Leibniz algebra and triangular Leibniz bialgebra, we have:

\begin{pro}[\cite{HL1}]\label{pro:sLeib-sLie}
Let $(L, \ast, \vartheta)$ be a Leibniz bialgebra, $(B, \diamond, \varpi)$
be a quadratic Zinbiel algebra and $(L\otimes B, [-,-], \delta)$ be the Lie bialgebra
induced from $(L, \ast, \vartheta)$ by $(B, \diamond, \varpi)$.
If $(L, \ast, \vartheta)$ is triangular, i.e., there exists a symmetric solution $r$
of the $\LYBE$ in $(L, \ast)$ such that $\vartheta_{r}$, then $(L\otimes B, [-,-],
\delta)=(L\otimes B, [-,-], \delta_{\widetilde{r}})$ is a triangular Lie bialgerba,
where $\widetilde{r}$ is given by Eq. \eqref{Lr-max}.
\end{pro}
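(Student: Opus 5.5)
Since $r$ is a symmetric solution of the $\LYBE$ in $(L, \ast)$ and $\vartheta=\vartheta_{r}$, Proposition \ref{pro:LYBE-CYBE} already tells us that $\widetilde{r}$ is a skew-symmetric solution of the $\CYBE$ in $(L\otimes B, [-,-])$. By Proposition \ref{pro:splie-bia}$(ii)$, $(L\otimes B, [-,-], \delta_{\widetilde{r}})$ is therefore a triangular Lie bialgebra. So the only thing to prove is the equality of coproducts $\delta=\delta_{\widetilde{r}}$, where $\delta$ is given by Eq. \eqref{copro} with $\vartheta=\vartheta_{r}$. The plan mirrors the proof of Theorem \ref{thm:indu-triASI}.

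\textbf{Step 1: expand both sides.} Write $r=\sum_{i}x_{i}\otimes y_{i}$ and fix $x\otimes b\in L\otimes B$. By Eq. \eqref{cobLeb},
$$
\vartheta_{r}(x)=\sum_{i}\big((x\ast x_{i}+x_{i}\ast x)\otimes y_{i}-x_{i}\otimes(y_{i}\ast x)\big).
$$
Substituting this into $(\id\otimes\id-\tau)(\vartheta_{r}(x)\bullet\nu_{\varpi}(b))$ gives $\delta(x\otimes b)$ as a combination of terms of the form $(\text{element of }L\otimes L)\bullet(\text{element of }B\otimes B)$. Here the $B$-parts are $\nu_{\varpi}(b)$ and $\tau(\nu_{\varpi}(b))$.

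On the other side, using the induced bracket,
$$
\delta_{\widetilde{r}}(x\otimes b)=\sum_{i,j}\Big((x_{i}\otimes e_{j})\otimes[x\otimes b,\; y_{i}\otimes f_{j}]+[x\otimes b,\; x_{i}\otimes e_{j}]\otimes(y_{i}\otimes f_{j})\Big).
$$
Expanding each bracket produces four families of terms. Their $B$-parts are
$$
\sum_{j}e_{j}\otimes(b\diamond f_{j}),\qquad \sum_{j}e_{j}\otimes(f_{j}\diamond b),\qquad \sum_{j}(b\diamond e_{j})\otimes f_{j},\qquad \sum_{j}(e_{j}\diamond b)\otimes f_{j}.
$$

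\textbf{Step 2: rewrite the dual-basis sums.} Using the nondegeneracy of $\varpi$, we express these four dual-basis sums through $\nu_{\varpi}(b)=\sum_{(b)}b_{(1)}\otimes b_{(2)}$ and its flip. We reuse the identities already obtained in the proof of Theorem \ref{thm:indu-triASI}:
$$
\sum_{j}e_{j}\otimes(f_{j}\diamond b)=\nu_{\varpi}(b),\qquad \sum_{j}(e_{j}\diamond b)\otimes f_{j}=-\tau(\nu_{\varpi}(b)),
$$
$$
\sum_{j}(b\diamond e_{j})\otimes f_{j}=\sum_{j}e_{j}\otimes(b\diamond f_{j})=\nu_{\varpi}(b)-\tau(\nu_{\varpi}(b)).
$$
After this substitution, both $\delta(x\otimes b)$ and $\delta_{\widetilde{r}}(x\otimes b)$ are written as
$$
X\bullet\nu_{\varpi}(b)+Y\bullet\tau(\nu_{\varpi}(b))
$$
for explicit elements $X, Y\in L\otimes L$. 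These elements are built from $x_{i}\ast x$, $x\ast x_{i}$, $y_{i}\ast x$, $x\ast y_{i}$ together with $x_{i}$ and $y_{i}$.

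\textbf{Step 3: compare coefficients (main obstacle).} The hardest step is the bookkeeping of signs and flips when matching the two coefficients $X$ and $Y$. In particular, $(\id\otimes\id-\tau)$ flips both the $L$-factor and the $B$-factor simultaneously, so the flipped $B$-part $\tau(\nu_{\varpi}(b))$ must be tracked carefully alongside the flipped $L$-part. The matching uses the symmetry $r=\tau(r)$, which allows renaming $x_{i}\leftrightarrow y_{i}$ inside the sums. After this renaming, the coefficients of $\nu_{\varpi}(b)$ and $\tau(\nu_{\varpi}(b))$ should agree term by term, just as the corresponding cancellation closes the proof of Theorem \ref{thm:indu-triASI}. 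If a residual term survives, I would check whether it is a multiple of $\nu_{\varpi}(b)+\tau(\nu_{\varpi}(b))$ and whether it vanishes by symmetry of $r$, as happened in that proof.

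Once $\delta=\delta_{\widetilde{r}}$ is established, the triangularity claim follows from the first paragraph.
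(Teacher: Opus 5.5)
Your reduction via Propositions \ref{pro:LYBE-CYBE} and \ref{pro:splie-bia}$(ii)$ is fine, and so is your Step 1 expansion. Write $S_1=\sum_j e_j\otimes(b\diamond f_j)$, $S_2=\sum_j e_j\otimes(f_j\diamond b)$, $S_3=\sum_j(b\diamond e_j)\otimes f_j$, $S_4=\sum_j(e_j\diamond b)\otimes f_j$. Then
\[
\delta_{\widetilde{r}}(x\otimes b)=\sum_{i}\Big(\big(x_{i}\otimes(x\ast y_{i})\big)\bullet S_{1}-\big(x_{i}\otimes(y_{i}\ast x)\big)\bullet S_{2}+\big((x\ast x_{i})\otimes y_{i}\big)\bullet S_{3}-\big((x_{i}\ast x)\otimes y_{i}\big)\bullet S_{4}\Big).
\]
The gap is in Step 2. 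The four identities you import from the proof of Theorem \ref{thm:indu-triASI} are false under the paper's normalization $\varpi(f_i,e_j)=\delta_{ij}$. For example, in Example \ref{ex:qu-zib} the dual basis is $f_1=e_3$, $f_2=e_4$, $f_3=-e_1$, $f_4=-e_2$. There one computes $\sum_j e_j\otimes(f_j\diamond e_1)=2e_2\otimes e_2-e_2\otimes e_3-e_3\otimes e_2$, whereas $\nu_{\varpi}(e_1)=-e_2\otimes e_2-e_2\otimes e_3+2e_3\otimes e_2$.

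To get the correct identities, set $\phi(a,b,c)=\varpi(a\diamond b,c)$. Invariance plus skew-symmetry give $\phi(a,b,c)+\phi(a,c,b)+\phi(c,a,b)=0$. Subtracting this relation with $b,c$ exchanged gives $\phi(c,a,b)=\phi(b,a,c)$. Now pair with $a\otimes c$, using $\sum_j\varpi(e_j,a)f_j=-a$ and $\sum_j\varpi(f_j,c)e_j=c$. You find
\[
S_{1}=\tau(\nu_{\varpi}(b)),\qquad S_{3}=-\nu_{\varpi}(b),\qquad S_{2}=-S_{4}=-\nu_{\varpi}(b)-\tau(\nu_{\varpi}(b)).
\]
With these corrected identities, the conclusion $\Delta=\Delta_{\widetilde r}$ of Theorem \ref{thm:indu-triASI} still holds exactly. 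That is why mirroring that proof misleads you here.

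Substituting the correct identities into your expansion, Step 3 does close, but with the opposite sign. The coefficient of $\nu_{\varpi}(b)$ is $-\vartheta_r(x)$. The coefficient of $\tau(\nu_{\varpi}(b))$ is $\sum_i\big(x_i\otimes(x\ast y_i+y_i\ast x)-(x_i\ast x)\otimes y_i\big)$, which equals $\tau(\vartheta_r(x))$ because $r=\tau(r)$. Hence
\[
\delta_{\widetilde r}(x\otimes b)=-(\id\otimes\id-\tau)\big(\vartheta_r(x)\bullet\nu_{\varpi}(b)\big)=-\delta(x\otimes b).
\]
This is not a bookkeeping artifact. Take $L=\Bbbk\{u,w\}$ with $u\ast u=w$. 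Then $r=u\otimes w+w\otimes u$ is a symmetric solution of the $\LYBE$ with $\vartheta_r(u)=w\otimes w$. With $B$ from Example \ref{ex:qu-zib}, a direct computation of both sides from the brackets and $\nu_{\varpi}(e_1)$ gives
\[
\delta_{\widetilde{r}}(u\otimes e_{1})=3(w\otimes e_{2})\otimes(w\otimes e_{3})-3(w\otimes e_{3})\otimes(w\otimes e_{2})=-\delta(u\otimes e_{1})\neq0.
\]

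So no term-by-term matching can produce $\delta=\delta_{\widetilde r}$ with the stated normalization. What the argument actually proves is $\delta=\delta_{-\widetilde r}$, or equivalently $\delta=\delta_{\widetilde r}$ if the dual basis is normalized by $\varpi(e_i,f_j)=\delta_{ij}$. Since $-\widetilde r$ is again a skew-symmetric solution of the $\CYBE$, the triangularity conclusion survives. However, the equality you set out to prove needs this sign correction. Your Step 3, which only asserts that the coefficients ``should agree,'' is exactly where the discrepancy would have surfaced.
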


Moreover, as a consequence of Theorem \ref{thm:commdig}, we can provide a commutative
diagram about triangle bialgebra structures in the following corollary.

\begin{cor}\label{cor:comdig}
Let $(D, \dashv, \vdash, \theta_{\dashv,r}, \theta_{\vdash,r})$ be a triangle diassociative
bialgebra and $(B, \diamond, \varpi)$ be a quadratic Zinbiel algebra. Then
we have the following commutative diagram:
$$
\xymatrix@C=2cm@R=0.7cm{
\txt{$(D, \dashv, \vdash, \theta_{\dashv,r}, \theta_{\vdash,r})$ \\
{\tiny a triangle diassociative bialgebra}}
\ar[d]_{{\rm Pro.}~\ref{pro:qtdiAss-qtLeib}} \ar[r]^{{\rm Thm.}~\ref{thm:indu-triASI}}
&\txt{$(D\otimes B, \cdot, \Delta_{\widetilde{r}})$\\  {\tiny a triangle ASI bialgebra}}
\ar[d]^{{\rm Pro.}~\ref{pro:qtAss-qtLie}} \\
\txt{$(D, \ast, \vartheta_{r})$ \\ {\tiny a triangle Leibniz bialgebra}}
\ar[r]^{{\rm Pro.}~\ref{pro:sLeib-sLie}\quad}
& \txt{$(D\otimes B, [-,-], \delta_{\widetilde{r}})$ \\ {\tiny a triangle Lie bialgebra}}}
$$
\end{cor}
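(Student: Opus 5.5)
The diagram consists of four arrows, and each one is already supplied by an earlier result. So the real content is that the two composite paths produce the same Lie bialgebra on $D\otimes B$, and also that this Lie bialgebra is the coboundary one $\delta_{\widetilde{r}}$ attached to the same element $\widetilde{r}$. I will check the two paths separately and then compare.

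\emph{Top--right path.} Let $r$ be a symmetric solution of the $\DAYBE$, with $\theta_{\dashv}=\theta_{\dashv,r}$ and $\theta_{\vdash}=\theta_{\vdash,r}$. By Theorem \ref{thm:indu-triASI}, the induced ASI bialgebra equals $(D\otimes B,\cdot,\Delta_{\widetilde{r}})$, where $\widetilde{r}$ is given by Eq. \eqref{r-max}. By Proposition \ref{pro:DAYBE-AYBE}, $\widetilde{r}$ is skew-symmetric, so $\widetilde{r}+\tau(\widetilde{r})=0$ is trivially ass-invariant. Proposition \ref{pro:qtAss-qtLie} then gives that the induced Lie bialgebra is $(D\otimes B,[-,-],\delta_{\widetilde{r}})$, and it is triangular. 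This uses Proposition \ref{pro:ass-Lie-YBE} to know that $\widetilde{r}$ is a skew-symmetric solution of the $\CYBE$.

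\emph{Left--bottom path.} By Proposition \ref{pro:qtdiAss-qtLeib}, the Leibniz bialgebra induced from the triangular diassociative bialgebra is $(D,\ast,\vartheta_r)$. By Proposition \ref{pro:diass-Leib-YBE}, $r$ is a symmetric solution of the $\LYBE$ in $(D,\ast)$. Proposition \ref{pro:sLeib-sLie} then shows that the Lie bialgebra induced from $(D,\ast,\vartheta_r)$ by $(B,\diamond,\varpi)$ is $(D\otimes B,[-,-],\delta_{\widetilde{r}'})$. Here $\widetilde{r}'$ is given by Eq. \eqref{Lr-max}.

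\emph{Comparison.} I will note that Eq. \eqref{Lr-max} with $L=D$ is literally the same element as Eq. \eqref{r-max}, since both use the same $x_i,y_i$ and the same dual bases $e_j,f_j$ of $B$ with respect to $\varpi$; hence $\widetilde{r}'=\widetilde{r}$. I will also note that the two Lie brackets on $D\otimes B$ coincide by Proposition \ref{pro:comm-diag}. Since $\delta_{\widetilde{r}}$ is defined by Eq. \eqref{lie-cobo} purely in terms of the bracket and $\widetilde{r}$, the two coproducts agree, and the diagram commutes.

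As an independent consistency check, the commutativity also follows from Theorem \ref{thm:commdig} applied to $\theta_{\dashv,r},\theta_{\vdash,r}$. The identifications above then just label both corners as triangular and $\delta=\delta_{\widetilde{r}}$.

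The only delicate point is to make sure the conventions line up. One must confirm that Proposition \ref{pro:qtdiAss-qtLeib} really yields $\vartheta_r$ in the normalization of Eq. \eqref{cobLeb}, namely $\vartheta=\theta_{\vdash,r}-\tau\theta_{\dashv,r}$, and that Proposition \ref{pro:sLeib-sLie} is stated for exactly this $\vartheta_r$. If a sign discrepancy appeared, I would fall back on Theorem \ref{thm:commdig} combined with Proposition \ref{pro:qtAss-qtLie}. That route avoids the Leibniz-side convention altogether: it identifies the bottom-right corner with $\delta_{\widetilde{r}}$ through the ASI side.
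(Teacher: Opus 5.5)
Your proposal is correct, but your main line differs from the paper's. The paper gets the corollary in one step from Theorem~\ref{thm:commdig}. That theorem makes the square commute for every diassociative bialgebra, via the convention-free identity $(\Delta-\tau\Delta)(d\otimes b)=(\id\otimes\id-\tau)\big((\theta_{\vdash}-\tau\theta_{\dashv})(d)\bullet\nu_{\varpi}(b)\big)$. Specialising to $\theta_{\dashv,r},\theta_{\vdash,r}$, the corners are then only relabelled. You instead push $r$ along both paths separately and compare the two resulting $\delta_{\widetilde{r}}$'s, keeping Theorem~\ref{thm:commdig} as a cross-check.

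Your worry about normalizations is justified, and it is what makes the paper's route the sound one. With Eq.~\eqref{cobdi}, Eq.~\eqref{cobLeb} and $d_{1}\ast d_{2}=d_{1}\vdash d_{2}-d_{2}\dashv d_{1}$, every symmetric $r$ satisfies $\theta_{\vdash,r}-\tau\theta_{\dashv,r}=-\vartheta_{r}$. For example, for the algebra and $r$ of Example~\ref{ex:ind-triASI}, Eq.~\eqref{cobdi} gives $\theta_{\vdash,r}(x_{2})=x_{1}\otimes x_{4}$ and $\theta_{\dashv,r}(x_{2})=0$, while Eq.~\eqref{cobLeb} gives $\vartheta_{r}(x_{2})=-x_{1}\otimes x_{4}$. (The coproducts printed in Example~\ref{ex:ind-diabi} are in fact those of $-r$.)

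The coproduct \eqref{copro} is linear in $\vartheta$. Theorems~\ref{thm:commdig} and \ref{thm:indu-triASI}, together with Proposition~\ref{pro:qtAss-qtLie}, give $\delta_{\widetilde{r}}$ for $\vartheta=-\vartheta_{r}$. Hence the coproduct built from $\vartheta_{r}$ itself is $-\delta_{\widetilde{r}}$.

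So, read literally in this paper's conventions, Propositions~\ref{pro:qtdiAss-qtLeib} and \ref{pro:sLeib-sLie} are each off by a sign. Your left--bottom computation reaches the correct corner only because these two slips cancel. The fallback you describe, Theorem~\ref{thm:commdig} plus identifying the corner through the ASI side, is exactly the paper's argument and is the one to rely on. It also shows that the lower-left corner is really $(D,\ast,\vartheta_{-r})$, which is still triangular since the $\LYBE$ is quadratic in $r$.
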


The $\mathcal{O}$-operator is considered to be the operator form of solution of the
classical Yang-Baxter equation. By using the correspondence between symmetric
solutions of $\LYBE$ in a Leibniz algebra and skew-symmetric solutions of $\CYBE$
in the induced Lie algebra, we can give a corresponding between $\mathcal{O}$-operators
of this Leibniz algebra and $\mathcal{O}$-operators of induced Lie algebra.

\begin{pro}\label{pro:o-Lei-L}
Let $(L, \ast)$ be a Leibniz algebra, $(B, \diamond)$ be a Zinbiel algebra and
$(L\otimes B, [-,-], \delta)$ be the Lie bialgebra induced from $(L, \ast)$ and
$(B, \diamond)$. If $r$ is a symmetric solution of the $\LYBE$ in $(L, \ast)$, then
we have the following commutative diagram:
$$
\xymatrix@C=1.4cm@R=0.5cm{
\txt{$(D, \dashv, \vdash, \theta_{\dashv,r}, \theta_{\vdash,r})$ \\
{\tiny a triangular diassociative bialgebra}}
\ar[d]_{{\rm Pro.}~\ref{pro:qtdiAss-qtLeib}} &
\txt{$r$ \\ {\tiny a symmetric solution} \\ {\tiny of the $\DAYBE$ in $(D, \dashv, \vdash)$}}
\ar[d]_-{{\rm Pro.}~\ref{pro:diass-Leib-YBE}}\ar[r]^-{{\rm Pro.}~\ref{pro:o-Zinb}}
\ar[l]_-{{\rm Pro.}~\ref{pro:qtr-zib}}    &
\txt{$r^{\sharp}$\\ {\tiny an $\mathcal{O}$-operator of $(D, \dashv, \vdash)$} \\
{\tiny associated to coregular bimodule}} \ar[d]^-{{\rm Pro.}~\ref{pro:o-diass-leib}} \\
\txt{$(D, \ast, \vartheta_{r})$ \\
{\tiny a triangular Leibniz bialgebra}}  \ar[d]_-{{\rm Pro.}~\ref{pro:sLeib-sLie}} &
\txt{$r$ \\ {\tiny a symmetric solution} \\ {\tiny of the $\LYBE$ in
$(D, \ast)$}} \ar[r]^-{{\rm Pro.}~\ref{pro:o-leib}}
\ar[l]_-{{\rm Pro.}~\ref{pro:sLib-bia}}  \ar[d]_-{{\rm Pro.}~\ref{pro:LYBE-CYBE}}   &
\txt{$r^{\sharp}$ \\ {\tiny an $\mathcal{O}$-operator of $(D, \ast)$ } \\
{\tiny associated to coregular representation}} \ar[d]^-{\mbox{$-\otimes\kappa^{\sharp}$}} \\
\txt{$(D\otimes B, [-,-], \delta_{\widetilde{r}})$ \\ {\tiny a triangular Lie bialgebra}} &
\txt{$\widetilde{r}$ \\ {\tiny a skew-symmetric solution} \\ {\tiny of the $\CYBE$ in
$(D\otimes B, [-,-])$}} \ar[r]^-{{\rm Pro.}~\ref{pro:o-lie}}
\ar[l]_-{{\rm Pro.}~\ref{pro:splie-bia}}   &
\txt{$\widetilde{r}^{\sharp}=r^{\sharp}\otimes\kappa^{\sharp}$ \\
{\tiny an $\mathcal{O}$-operator of $(D\otimes B, [-,-])$ } \\
{\tiny associated to coregular representation}}}
$$
where $\kappa:=\sum_{j}e_{j}\otimes f_{j}\in B\otimes B$, $\{e_{1}, e_{2},\cdots, e_{n}\}$
is a basis of $B$ and $\{f_{1}, f_{2},\cdots, f_{n}\}$ is the dual basis of $\{e_{1}, e_{2},
\cdots, e_{n}\}$ with respect to $\varpi(-,-)$.
\end{pro}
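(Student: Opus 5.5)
The plan is to check the diagram one square at a time. Every arrow already comes from an earlier result, so the only work is to show that the two composite paths around each square produce the same object. All the objects are determined by the single element $r$ (or $\widetilde{r}$), so each check reduces to an identification of coproducts or of linear maps.

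\emph{Top-left square.} Start from a symmetric solution $r$ of the $\DAYBE$. Proposition \ref{pro:tri-di} gives the triangular diassociative bialgebra $(D,\dashv,\vdash,\theta_{\dashv,r},\theta_{\vdash,r})$. Proposition \ref{pro:qtdiAss-qtLeib} says its induced Leibniz bialgebra is $(D,\ast,\vartheta_r)$. On the other path, Proposition \ref{pro:diass-Leib-YBE} shows that $r$ is a symmetric solution of the $\LYBE$ in $(D,\ast)$, and Proposition \ref{pro:sLib-bia} attaches to it the same $(D,\ast,\vartheta_r)$. So the square commutes.

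\emph{Top-right square.} Here the vertical arrows do not change the underlying map $r^{\sharp}:D^{\ast}\to D$. Proposition \ref{pro:o-dia} makes $r^{\sharp}$ an $\mathcal{O}$-operator of $(D,\dashv,\vdash)$, and Proposition \ref{pro:o-diass-leib} turns it into an $\mathcal{O}$-operator of $(D,\ast)$ for the coregular representation. Proposition \ref{pro:o-leib}, applied to $r$ viewed as a symmetric $\LYBE$-solution, yields the same map $r^{\sharp}$. Hence commutativity is tautological once the coregular representation in Proposition \ref{pro:o-diass-leib} is identified with the one in Proposition \ref{pro:o-leib}. This identification is a short check: write $\tilde{\fl}_{D}=\fl_{\vdash}-\fr_{\dashv}$ and $\tilde{\fr}_{D}=\fr_{\vdash}-\fl_{\dashv}$, then dualize.

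\emph{Bottom-left square.} By Proposition \ref{pro:LYBE-CYBE}, $\widetilde{r}$ is a skew-symmetric solution of the $\CYBE$, and Proposition \ref{pro:splie-bia}(ii) gives $(D\otimes B,[-,-],\delta_{\widetilde r})$. Proposition \ref{pro:sLeib-sLie} states exactly that the Lie bialgebra induced from $(D,\ast,\vartheta_r)$ by $(B,\diamond,\varpi)$ equals $(D\otimes B,[-,-],\delta_{\widetilde r})$. So this square commutes.

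\emph{Bottom-right square.} This is the only step requiring a computation, and I expect it to be the main (though mild) obstacle. Along the lower path, Propositions \ref{pro:LYBE-CYBE} and \ref{pro:o-lie} show that $\widetilde{r}^{\sharp}$ is an $\mathcal{O}$-operator of $(D\otimes B,[-,-])$ for the coregular representation. It remains to prove $\widetilde{r}^{\sharp}=r^{\sharp}\otimes\kappa^{\sharp}$, with $\kappa=\sum_j e_j\otimes f_j$. I would repeat the pairing computation from the proof of Proposition \ref{pro:o-dia-ass}. For $\eta_1,\eta_2\in D^{\ast}$ and $\xi_1,\xi_2\in B^{\ast}$, expand $\langle(\eta_1\otimes\xi_1)\otimes(\eta_2\otimes\xi_2),\widetilde r\rangle$. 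It factors as $\langle r^{\sharp}(\eta_1),\eta_2\rangle\langle\kappa^{\sharp}(\xi_1),\xi_2\rangle$, because $\widetilde r$ has the separated form $\sum_{i,j}(x_i\otimes e_j)\otimes(y_i\otimes f_j)$. Nondegeneracy of the pairing then gives the identity. The only care needed is bookkeeping the order of tensor factors under the identification $(D\otimes B)^{\ast}\cong D^{\ast}\otimes B^{\ast}$. Combining the four squares gives the commutative diagram.
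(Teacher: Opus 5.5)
Your proposal is correct and follows the paper's proof. The paper likewise takes the other squares as immediate from the cited results and reduces the claim to the identity $\widetilde{r}^{\sharp}=r^{\sharp}\otimes\kappa^{\sharp}$, checked by the same factorized pairing computation as in Proposition~\ref{pro:o-dia-ass}. Your square-by-square check, including matching the representation in Proposition~\ref{pro:o-diass-leib} with the coregular representation of $(D,\ast)$ via $\tilde{\fl}_{D}=\fl_{\vdash}-\fr_{\dashv}$ and $\tilde{\fr}_{D}=\fr_{\vdash}-\fl_{\dashv}$, just spells out what the paper leaves implicit.
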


\begin{proof}
Here we only need to show $\widetilde{r}^{\sharp}=r^{\sharp}\otimes\kappa^{\sharp}$.
This conclusion can be found in the proof process of \cite[Theorem 3.20]{HL1},
or in a proof similar to Proposition \ref{pro:o-dia-ass}, which directly verifies it.
\end{proof}

Thus, by the commutative diagram  before Proposition \ref{pro:DASI-Leibbia},
the commutative diagrams in Propositions \ref{cor:comdig} and \ref{pro:o-Lei-L},
we can provide the complete three-dimensional commutative diagram presented in Section
\ref{sec:intr}. At the end of this paper, continuing with the calculations in
Example \ref{ex:ind-triASI}, we provide a small example of the three-dimensional
commutative diagram in Section \ref{sec:intr}.

\begin{ex}\label{ex:3-comdiagm}
Let $(D\otimes B, \cdot, \Delta)$ be the $16$-dimensional triangular ASI bialgebra
associated with $\widetilde{r}$ given in Example \ref{ex:ind-triASI}. On the one hand,
this ASI bialgebra induces a Lie bialgebra $(D\otimes B, [-,-], \delta)$, where the
nonzero bracket and coproducts are given by
\begin{align*}
& [x_{2}\otimes e_{1},\; x_{2}\otimes e_{4}]=3x_{1}\otimes e_{2}-3x_{1}\otimes e_{3},
\qquad\qquad\quad [x_{2}\otimes e_{1},\; x_{3}\otimes e_{1}]=x_{4}\otimes e_{2},\\
& [x_{3}\otimes e_{1},\; x_{2}\otimes e_{4}]=x_{4}\otimes e_{2}-2x_{4}\otimes e_{3},
\qquad\qquad\quad \ \  [x_{2}\otimes e_{4},\; x_{3}\otimes e_{4}]=x_{4}\otimes e_{3},\\
& [x_{2}\otimes e_{4},\; x_{3}\otimes e_{4}]=2x_{4}\otimes e_{2}-x_{4}\otimes e_{3},\\
& \Delta(x_{2}\otimes e_{1})=(x_{1}\otimes e_{2})\otimes(x_{4}\otimes e_{2})
+(x_{1}\otimes e_{2})\otimes(x_{4}\otimes e_{3})
-2(x_{1}\otimes e_{3})\otimes(x_{4}\otimes e_{2})\\[-1mm]
&\qquad\qquad\quad-(x_{4}\otimes e_{2})\otimes(x_{1}\otimes e_{2})
-(x_{4}\otimes e_{3})\otimes(x_{1}\otimes e_{2})
+2(x_{4}\otimes e_{2})\otimes(x_{1}\otimes e_{3}),\\
& \Delta(x_{2}\otimes e_{4})=2(x_{1}\otimes e_{2})\otimes(x_{4}\otimes e_{3})
-(x_{1}\otimes e_{3})\otimes(x_{4}\otimes e_{3})
-(x_{1}\otimes e_{3})\otimes(x_{4}\otimes e_{2})\\[-1mm]
&\qquad\qquad\quad
-2(x_{4}\otimes e_{3})\otimes(x_{1}\otimes e_{2})
+(x_{4}\otimes e_{3})\otimes(x_{1}\otimes e_{3})
+(x_{4}\otimes e_{2})\otimes(x_{1}\otimes e_{3}),\\
&\qquad \delta(x_{3}\otimes e_{1})=3(x_{4}\otimes e_{3})\otimes(x_{4}\otimes e_{2})
-3(x_{4}\otimes e_{2})\otimes(x_{4}\otimes e_{3})=\delta(x_{3}\otimes e_{4}).
\end{align*}
On the other hand, one can check that the element $\widetilde{r}\in(D\otimes B)\otimes
(D\otimes B)$ given in Example \ref{ex:ind-triASI} is a skew-symmetric solution of the
$\CYBE$ in the $16$-dimensional lie algebra $(D\otimes B, [-,-])$, and the Lie bialgebra
$(D\otimes B, [-,-], \delta_{\widetilde{r}})$ associated with $\widetilde{r}$ is
exactly the $16$-dimensional Lie bialgebra we provided above. This actually provides
a specific example of the top surface of the three-dimensional commutative diagram
in Section \ref{sec:intr} being commutative.
\end{ex}

\bigskip
\noindent
{\bf Acknowledgements. } This work was financially supported by National
Natural Science Foundation of China (No.11771122).

\smallskip
\noindent
{\bf Declaration of interests.} The authors have no conflicts of interest to disclose.

\smallskip
\noindent
{\bf Data availability.} Data sharing is not applicable to this article as no new data were
created or analyzed in this study.

 \end{document}